\documentclass{amsproc}

\usepackage{amsmath}%
\usepackage{amsthm}
\usepackage{thmtools}
\usepackage{amsfonts}%
\usepackage{amscd}
\usepackage{amssymb}%
\usepackage{graphicx}
\usepackage[all]{xy}
\usepackage{varioref}
\usepackage[hypertexnames=false]{hyperref}
\usepackage{mathtools}
\usepackage{stmaryrd}
\usepackage{xcolor}
\usepackage[pageref]{backref}
\usepackage{dsfont}
\usepackage{url}

\newtheorem{theorem}{Theorem}[section]
\newtheorem{lemma}[theorem]{Lemma}
\newtheorem{corollary}[theorem]{Corollary}
\newtheorem{proposition}[theorem]{Proposition}

\theoremstyle{definition}
\newtheorem{definition}[theorem]{Definition}
\newtheorem{example}[theorem]{Example}
\newtheorem{examples}[theorem]{Examples}

\theoremstyle{remark}
\newtheorem{remark}[theorem]{Remark}

\usepackage[capitalise]{cleveref}

\numberwithin{equation}{section}

\newcommand{\im}{\mathrm{im}}
\newcommand{\id}{\mathrm{id}}

\newcommand{\h}{\mathrm{h}}
\newcommand{\U}{\mathrm{U}}

\newcommand{\isogr}{\cong_{\rm gr}}

\newcommand{\supp}{\mathrm{supp}}

\newcommand{\soc}{\mathrm{soc}}

\newcommand{\rad}{\mathrm{rad}}
\newcommand{\radgr}{\mathrm{rad}^{\mathrm{gr}}}

\newcommand{\subgr}{\leq_{\mathrm{gr}}}

\newcommand{\proj}{\mathrm{proj}}

\DeclareMathOperator{\Fun}{Fun}
\newcommand{\Homgr}{\mathrm{Hom}_{\textrm{gr-}R}}

\newcommand{\Hom}{\mathrm{Hom}}
\newcommand{\HOM}{\mathrm{HOM}}
\newcommand{\End}{\mathrm{End}}
\newcommand{\END}{\mathrm{END}}
\newcommand{\Mod}{\mathrm{Mod}}
\newcommand{\fgmod}{\mathrm{mod}}

\newcommand{\M}{\mathrm{M}}
\newcommand{\UT}{\mathrm{UT}}

\begin{document}

\title[Hopkins--Levitzki Type Theorems for Groupoid Graded Rings]{Hopkins--Levitzki Type Theorems for Groupoid Graded Rings}


\author[Z. Cristiano]{Zaqueu Cristiano}
\address{Department of Mathematics - IME, University of São Paulo, Rua do Matão 1010, São Paulo, SP, 05508-090, Brazil}
\email{zaqueucristiano@ime.usp.br}

\author[W. Marques de Souza]{Wellington Marques de Souza}
\address{Institute of Mathematics (INMA), Federal University of Mato Grosso do Sul, Cidade Universitária, Caixa Postal 549, Campo Grande, MS 79070-900, Brazil}
\email{marques.souza@ufms.br}

\author[J. Sánchez]{Javier Sánchez}
\address{Department of Mathematics - IME, University of São Paulo, Rua do Matão 1010, São Paulo, SP, 05508-090, Brazil}
\email{jsanchez@ime.usp.br}

\thanks{This study was financed in part by the Coordenação de Aperfeiçoamento de Pessoal de Nível Superior - Brasil (CAPES) - Finance Code 001. 
This study was financed, in part, by the São Paulo Research Foundation (FAPESP), Brasil. Process Numbers \texttt{\#}2021/14132-2 and \texttt{\#}2020/16594-0.}

\subjclass[2020]{Primary 16W50, 16N20, 16N40, 16P20, 16P40, 16L30, 20L05; Secondary 16E60, 16S50, 18E05, 16G60, 16D60}

\date{August 28, 2026}

\keywords{Groupoid graded module, graded Jacobson radical, nilpotency, gr-artinian module, gr-noetherian module, gr-semiprimary ring, gr-hereditary ring, upper triangular matrix ring}

\begin{abstract}
    We continue the study of the basic theory of object-unital groupoid graded rings.
    In this work, we are especially interested in nilpotency conditions on the graded Jacobson radical.
    We introduce the concept of left/right objectwise nilpotency of graded ideals, and prove that this condition is appropriate for obtaining graded generalizations of the Hopkins--Levitzki theorem.
    Although this condition is not symmetric, we show that its two-sided version is  suitable for defining gr-semiprimary rings.
    It is known that one-sided $\Gamma_0$-artinian rings need not be $\Gamma_0$-noetherian, but using our tools we prove that one-sided gr-hereditary $\Gamma_0$-artinian rings, two-sided $\Gamma_0$-artinian rings, and $d$-finitely generated one-sided $\Gamma_0$-artinian rings are $\Gamma_0$-noetherian.
    However, the first class need not be  gr-semiprimary, whereas the other two always are.
    We illustrate our results with several (counter)examples, especially involving graded upper triangular matrices.
\end{abstract}

\maketitle

\setcounter{tocdepth}{2} 
\tableofcontents 


\section{Introduction}

The Hopkins--Levitzki theorem plays a central role in ring theory. Beyond the standard formulation that a one-sided artinian ring is noetherian on the same side, it can also be expressed as the condition that, over a semiprimary ring (i.e., a semilocal ring whose Jacobson radical is nilpotent), a module is artinian if and only if it is noetherian (see, e.g., \cite[Theorem~4.15]{Lam1}). Analogs of these results exist for group-graded rings \cite[Section~2.9]{NastasescuVanOystaeyen}. In this work, we address groupoid-graded versions of these classical theorems, adding to an active line of research focused on variants and generalizations of the Hopkins–Levitzki theorem (see, among many others, \cite{Zimmermann, Gera_Patel_Gupta, OstadhadiDehkordi_Shum, PhuongThao_VanSanh}, and  \cite{Albu} for a survey).

Throughout this section, let $\Gamma$ denote a groupoid (i.e., a small category in which every morphism is invertible) whose object set $\Gamma_0$ may be infinite. The $\Gamma$-graded rings $R$ studied here are non-unital in general, but are endowed with a (very good) set of local units $\{1_e\}_{e\in\Gamma_0}$. Every graded right $R$-module $M$ decomposes as $M=\bigoplus_{e\in\Gamma_0}M(e)$, and the ring $R$ itself admits the canonical decompositions $R=\bigoplus_{e\in\Gamma_0}1_eR=\bigoplus_{e\in\Gamma_0}R1_e$ into right and left graded ideals. Consequently, traditional graded artinian and noetherian conditions are too restrictive for this setting. For this reason, we adopt a natural extension to groupoid-graded rings of the notion of categorically artinian (resp., categorically noetherian) modules introduced in \cite[Definition~1.1]{Abrams_ArandaPino_Perera_SilesMolina} and \cite[Section~4.2]{Abrams_Ara_SilesMolina}. Specifically, we say that $M$ is \emph{$\Gamma_0$-artinian} (resp., \emph{$\Gamma_0$-noetherian}) if $M(e)$ is gr-artinian (resp., gr-noetherian) for every $e \in \Gamma_0$. 
Extending the Hopkins--Levitzki theorem to this setting is not straightforward, as there exist one-sided $\Gamma_0$-artinian rings that fail to be $\Gamma_0$-noetherian on the same side \cite[Corollary~3.59]{chainconditions_Jacobsonradical}. Furthermore, the classical notion of nilpotency proves overly restrictive for rings that decompose as infinite direct sums of one-sided ideals. 
The nilpotency type condition that best suits our purposes is what we call \emph{objectwise nilpotency} (see Section~\ref{sec:nilpotency}). Unlike classical nilpotency, this concept is asymmetric; that is, there exist ideals that are right objectwise nilpotent but fail to be left objectwise nilpotent, even for the graded Jacobson radical of one-sided $\Gamma_0$-artinian rings (\Cref{prop: contra-exemplos J fort nilp}). Our Weak Hopkins--Levitzki Theorem asserts that a right $\Gamma_0$-artinian ring $R$ is right $\Gamma_0$-noetherian if and only if its graded Jacobson radical $\radgr(R)$ is right objectwise nilpotent (\Cref{teo: art => (noet <=> fort nilp)}). Defining $R$ as gr-semiprimary when $R/\radgr(R)$ is gr-semisimple and $\radgr(R)$ is objectwise nilpotent on both sides, our Strong Hopkins--Levitzki Theorem proves that $\Gamma_0$-artinianness and $\Gamma_0$-noetherianness are equivalent for every graded $R$-module (\Cref{teo: hopkins-levitski}). 
Furthermore, we provide an example of a graded ring $R$ that satisfies the Weak Hopkins--Levitzki hypotheses but possesses a right $\Gamma_0$-artinian module that is not $\Gamma_0$-noetherian; remarkably, however, every right $\Gamma_0$-noetherian module over such a ring is always $\Gamma_0$-artinian (\Cref{teo: left str. G_0-nilp. ==> right noeth.}). 

Along the way, we also highlight additional conditions under which a one-sided $\Gamma_0$-artinian ring becomes either $\Gamma_0$-noetherian on the same side or gr-semiprimary, namely: $\Gamma_0$-artinianity on the other side (\Cref{teo: G_0-art => G_0-noeth}), strong $\Gamma_0$-artinianity or gr-artinianity (\Cref{coro: fort art => fort noet}), gr-hereditarity  (\Cref{coro: R G0-art gr-hered => G0-noet}), certain graded matrix rings (\Cref{coro:R_gr-art=>M_I(R)_G0-semiprimary}), and $d$-finite generation (\Cref{coro: R=R1_eR G_0-art => rad(R) nilpot}).

Throughout this work, to clarify the underlying concepts and demonstrate the necessity of our hypotheses, we provide several examples and counterexamples, primarily involving rings of graded upper triangular matrices. We conclude by establishing a link between our theoretical framework and the representation theory of algebras.

\medskip

The paper is organized as follows.
In the next section, we present the preliminary definitions and results about groupoids, groupoid graded rings, and groupoid graded modules. 
Moreover, we recall the necessary machinery regarding graded chain conditions and the graded Jacobson radical from \cite{chainconditions_Jacobsonradical}, together with some improvements to the results therein.

In \Cref{sec:nilpotency}, we present the nilpotency conditions on graded ideals that will be used throughout the paper. 
In addition to the standard conditions of nilpotency and gr-nil, we introduce the $\Gamma_0$-nilpotency and left/right objectwise nilpotency, which are specific to the groupoid graded context. 

The graded Jacobson radical of one-sided $\Gamma_0$-artinian rings is studied in \Cref{sec:artinian}. Here, we show that for these rings the graded Jacobson radical is gr-nil and $\Gamma_0$-nilpotent, but need not be nilpotent.
However, we prove that one-sided strongly $\Gamma_0$-artinian rings and one-sided gr-artinian rings have a nilpotent graded Jacobson radical.

The main results of the paper are presented in \Cref{sec:main}.
We prove a weak graded version of the Hopkins--Levitzki theorem, characterizing when a $\Gamma$-graded ring is both right $\Gamma_0$-artinian and right $\Gamma_0$-noetherian, and showing that objectwise nilpotency is a reasonable notion of nilpotency.
Before proving the main theorem, we present an asymmetric graded version of the Hopkins--Levitzki theorem, which shows that one-sided objectwise nilpotency guarantees at least one of the implications between gr-artinianity and gr-noetherianity for graded modules over a gr-semilocal ring.
We then present the strong graded version of the Hopkins--Levitzki theorem, showing that, over a gr-semiprimary ring, the notions of gr-artinian and gr-noetherian modules coincide.
Furthermore, we prove that two-sided $\Gamma_0$-artinian rings are gr-semiprimary.

In order to illustrate the concepts and present some important counterexamples, we study in \Cref{sec:UT_I(A)} the graded upper triangular matrix rings $\UT_I(A)$, where $A$ is a unital ring and $I$ is a partially ordered set.
We characterize the various nilpotency conditions considered here for the graded Jacobson radical of these rings in terms of properties of $A$ and $I$.
Moreover, we determine when these graded rings are gr-semiprimary.
Finally, we exhibit examples of graded rings that are gr-semilocal (indeed one-sided $\Gamma_0$-artinian) whose graded Jacobson radical is one-sided objectwise nilpotent, but for which the notions of gr-artinian and gr-noetherian are not equivalent for graded modules.

In \Cref{sec:gr-hered_gr-art}, we study an important class of graded rings for which right $\Gamma_0$-artinian rings are right $\Gamma_0$-noetherian: the right gr-hereditary rings.
To this end, we establish graded versions of classical results such as the description of the graded Jacobson radical of graded matrix rings and the existence of gr-maximal graded submodules in nonzero gr-projective modules.
We conclude the section characterizing when a graded upper triangular matrix ring is right gr-hereditary and right $\Gamma_0$-artinian, and showing that such graded rings need not be gr-semiprimary.

Finally, we introduce the notion of $d$-finitely generated graded rings in \Cref{sec:d-fg}.
We show that these rings have the interesting property that concepts such as $\Gamma_0$-artinianity, $\Gamma_0$-noetherianity, gr-semisimplicity, gr-semilocality and gr-semiprimarity can be studied by considering small parts of the ring.
Moreover, we prove that the notions of objectwise nilpotency, $\Gamma_0$-nilpotency and nilpotency are equivalent for graded ideals in these rings.
We then use our terminology and results to give a characterization of representation-finite algebras over a field.

\section{Preliminaries}
\label{sec:preliminares}

In this section, we present some basic definitions and results that will be used throughout the paper.

\begin{definition}
    A \emph{groupoid} is the set of morphisms of a small category in which every morphism is an isomorphism.
    For a groupoid $\Gamma$:
    \begin{enumerate}
    \renewcommand{\theenumi}{\roman{enumi}}
        \item we denote by  $\Gamma_0$ the set of all identity (also called \emph{idempotent}) morphisms.
        \item we define, for each $\gamma\in\Gamma$, the elements
        \[d(\gamma):=\gamma^{-1}\gamma\quad\text{and}\quad r(\gamma):=\gamma\gamma^{-1},\]
        which are the identity morphisms corresponding to the domain and the codomain of $\gamma$, respectively.
        \item we say that $\delta\gamma$ \emph{is (not) defined} in $\Gamma$ if   $d(\delta)= r(\gamma)$ (resp.\ $d(\delta)\neq r(\gamma)$).
        \item for $\Sigma,\Sigma'\subseteq \Gamma$, we define
        \[\Sigma\Sigma':=\{\sigma\sigma': \sigma\in \Sigma, \sigma'\in \Sigma' \text{ and } d(\sigma)=r(\sigma')\}\quad\text{and}\quad \Sigma^{-1}:=\{\sigma^{-1} : \sigma\in \Sigma\},\]
        and we denote $\alpha\Sigma':=\{\alpha\}\Sigma'$ and $\Sigma\beta:=\Sigma\{\beta\}$ for all $\alpha,\beta\in\Gamma$.
    \end{enumerate}
\end{definition}

\begin{example}
\label{ex:groupoids}
    Let $G$ be a group and $I$ be a nonempty set. 
    Then $I\times G\times I$ can be regarded as the groupoid consisting of the morphisms of the category $\mathcal{C}$, whose objects are the elements of $I$, morphisms are given by $\hom_{\mathcal{C}}(i,j):=\{j\}\times G\times\{i\}$ for all $i,j\in I$, and composition satisfies $(k,h,j)(j,g,i)=(k,hg,i)$ for all $i,j,k\in I$ and $g,h\in G$.
    This example is important because  any groupoid is a disjoint union of subgroupoids isomorphic (in a non canonical way) to groupoids of this form \cite[p.~125]{Brown}.
    As important particular cases, we have:
    \begin{enumerate}
        \item If $I=\{*\}$ is a singleton, then $I\times G\times I\equiv G$, and therefore every group is a groupoid.
        \item Taking $G$ as the trivial group, then $I\times I$ is a groupoid with $(k,j)(j,i)=(k,i)$ for all $i,j,k\in I$.\qed
    \end{enumerate}
\end{example}

We recommend \cite{Higgins} or \cite{IR} for interested readers in the study of groupoids. 

\bigskip

\emph{Throughout the remainder of the paper, let $\Gamma$ be a groupoid.}

\medskip

\begin{definition}
\label{def:grading_in_additivegroups}
    An additive group $X$ is \emph{$\Gamma$-graded} if there exists a family $\{X_\gamma:\gamma\in\Gamma\}$ of subgroups of $X$ such that $X=\bigoplus_{\gamma\in\Gamma}X_\gamma$.   
    In this case, we fix the following standard notations:
    \begin{enumerate}
    \renewcommand{\theenumi}{\roman{enumi}}
        \item for each $\gamma\in\Gamma$, $X_\gamma$ is called the \emph{homogeneous component of degree $\gamma$ of $X$}
        \item for all $\gamma\in\Gamma$ and $0\neq x\in X_\gamma$, we say that $x$ is \emph{homogeneous of degree $\gamma$} and we write $\deg(x)=\gamma$.
        \item for each $x \in X$, the expression $x = \sum_{\gamma \in \Gamma} x_\gamma$ indicates that $x_\gamma \in X_\gamma$ for all $\gamma \in \Gamma$, with $x_\gamma = 0$ for all but finitely many $\gamma$.
        \item we denote by $\h(X):=\bigcup_{\gamma\in\Gamma}X_\gamma$ the set of \emph{homogeneous elements} of $X$.
        \item we denote by $\supp(X):=\{\gamma\in\Gamma:X_\gamma\neq0\}$ the \emph{support} of $X$. 
        \item if $\sigma,\tau\in\Gamma$ are such that $d(\sigma)\neq r(\tau)$, then we will write $X_{\sigma\tau}=\{0\}$.
    \end{enumerate}
\end{definition}

\subsection{Groupoid graded rings}\label{sec:grupoid graded rings}
\emph{Throughout this work, rings are assumed to be associative but not necessarily unital.}

\begin{definition}
\label{def:groupoid_graded_ring}
    Let $R$ be a ring.
    We say that $R$ is a \emph{$\Gamma$-graded ring} if there is a family $\{R_\gamma\}_{\gamma\in\Gamma}$ of additive subgroups of $R$ such that 
    \begin{enumerate}
        \renewcommand{\theenumi}{\roman{enumi}}
        \item $R=\bigoplus_{\gamma\in\Gamma}R_\gamma$;
        \item $R_\gamma R_\delta\subseteq  R_{\gamma\delta}$ for all $\gamma,\delta\in\Gamma$ (adopting the convention in \Cref{def:grading_in_additivegroups}(vi)). 
    \end{enumerate}
    Following \cite{CLP,CLP2}, we say that $R$ is \emph{object unital}
    if, moreover,
    \begin{enumerate}
        \renewcommand{\theenumi}{\roman{enumi}}
        \setcounter{enumi}{2}
        \item for each $e\in\Gamma_0$, the ring $R_e$ is unital  with identity element $1_e$ such that, for all $\gamma\in \Gamma$ and $a\in R_{\gamma}$, we have $1_{r(\gamma)}a=a1_{d(\gamma)}=a$.
    \end{enumerate}
    Given an object unital $\Gamma$-graded ring $R$, we define the set 
    $$\Gamma_0'(R):=\{e\in\Gamma_0 \colon 1_e\neq 0\}.$$ 
\end{definition}

It follows from \cite[Proposition 2.1.1]{Lund} that the object unital $\Gamma$-graded ring $R=\bigoplus_{\gamma\in\Gamma}R_\gamma$ is unital if and only if $\Gamma_0'(R):=\{e\in\Gamma_0 \colon 1_e\neq 0\}$ is finite. 

\medskip 

\emph{In this work, all groupoid graded rings are supposed to be object unital.}

\medskip

By \Cref{ex:groupoids}, every unital group graded ring is a groupoid graded ring. 
Another key example of a groupoid graded ring is the following, which appeared in \cite{Gabriel} but, to the best of our knowledge, was studied as a groupoid graded ring for the first time in \cite{groupoid_graded_semisimple}. 
        
    \begin{example} \label{ex: anel de categoria}
        Let $\mathcal{C}$ be a small preadditive category, i.e., the class of objects is a set and every class of morphisms is an additive group such that the composition of morphisms is $\mathbb{Z}$-bilinear.
        Denote the set of objects of $\mathcal{C}$ by $\mathcal{C}_0$ and the set of morphisms from $X\in\mathcal{C}_0$ to $Y\in\mathcal{C}_0$ by $\mathcal{C}(X,Y)$. 
        We have a ring
        \[R_\mathcal{C}:=\bigoplus_{X,Y\in \mathcal{C}_0}{\mathcal{C}}(X,Y)\] 
        where, given morphisms $g$ and $g'$, the product $g'$ is defined by $g\circ g'$ if the composition is possible and $0$ otherwise. 
        This ring has a natural $\mathcal{C}_0\times \mathcal{C}_0$-grading via 
        \[(R_\mathcal{C})_{(Y,X)}:={\mathcal{C}}(X,Y)\]
        for all $X,Y\in \mathcal{C}_0$. 
        The identity morphisms of $\mathcal{C}$ play the role of the local units in \Cref{def:groupoid_graded_ring}.\qed
\end{example}

For a generalization of \Cref{ex: anel de categoria} with group graded categories, see \cite[Example~2.3(2)]{groupoid_graded_semisimple}. 
For other examples of groupoid graded rings, see \cite[Example~2.2]{groupoid_graded_semisimple}.
Further examples involving matrices will appear in \Cref{sec:UT_I(A),sec:gr-hered_gr-art}.

\begin{definition}
    Let $R$ be a $\Gamma$-graded ring. 
    A \emph{graded right (left) ideal} $U$ of $R$ is a right (left) ideal of $R$ such that $U=\bigoplus_{\gamma\in\Gamma}U_\gamma$, where $U_\gamma=U\cap R_\gamma$ for each $\gamma\in\Gamma$. 
    We say that $U$ is a \emph{graded ideal} of $R$ if $U$ is a graded right ideal and a graded left ideal of $R$. 
    In this event, $R/U$ is a $\Gamma$-graded ring via $(R/U)_\gamma:=\frac{R_\gamma+U}{U}$ for each $\gamma\in\Gamma$. 
\end{definition}

\begin{definition}
    Given $\Gamma$-graded rings $R$ and $S$, a \emph{gr-homomorphism of rings} is a homomorphism of rings $\varphi:R\to S$ which preserves homogeneous components and satisfies $\varphi(1^R_e)=1^S_e$ for all $e\in\Gamma_0$, where $1^R_e$ (resp. $1^S_e$) denotes the unity of the ring $R_e$ (resp. $S_e$).     A \emph{gr-isomorphism} of $\Gamma$-graded rings is a bijective gr-homomorphism of $\Gamma$-graded rings.     When there exists a gr-isomorphism of $\Gamma$-graded rings $\varphi: R\to S$, we say that $R$ is \emph{gr-isomorphic} to $S$ as rings and we write $R\isogr S$.
\end{definition}

 \emph{For the remainder of this section, let $R=\bigoplus_{\gamma\in \Gamma}R_\gamma$ be a $\Gamma$-graded ring.}

\subsection{Groupoid graded modules}

\begin{definition}
\label{def:graded_module}
    A right $R$-module $M$ is said to be a \emph{$\Gamma$-graded right $R$-module} if there exists a family $\{M_\gamma:\gamma\in\Gamma\}$ of additive subgroups of $M$ such that
    \begin{enumerate}
        \renewcommand{\theenumi}{\roman{enumi}}
        \item $M=\bigoplus_{\gamma\in\Gamma}M_\gamma$
        \item $M_\sigma R_\tau \subseteq  M_{\sigma\tau}$ for all $\sigma,\tau\in\Gamma$ (adopting the convention in \Cref{def:grading_in_additivegroups}(vi))
        \item $m_\sigma1_{d(\sigma)}=m_\sigma$ for all $\sigma\in\Gamma$ and $m_\sigma\in M_\sigma$.
    \end{enumerate}
\end{definition}

By \cite[Proposition 5]{CLP}, assuming (i) and (ii) in \Cref{def:graded_module}, the condition (iii) is equivalent to $MR=M$.

\begin{definition}
    Given a $\Gamma$-graded right $R$-module $M$ and a submodule $N$ of $M$, we say that $N$ is a \emph{graded submodule}, and we denote $N\subgr M$, if $N=\bigoplus_{\gamma\in\Gamma}N_\gamma$ where $N_\gamma:=N\cap M_\gamma$ for each $\gamma\in\Gamma$. 
    In this case, the quotient module $M/N$ is $\Gamma$-graded via $(M/N)_\gamma:=\frac{M_\gamma+N}{N}$ for each $\gamma\in\Gamma$.
\end{definition}

The first example of a groupoid graded module is $R_R$, and its graded submodules are precisely the graded right ideals.

Clearly, any intersection and any sum of graded submodules of a $\Gamma$-graded $R$-module $M$ is a graded submodule of $M$. 
Furthermore, direct sums, graded direct products and graded direct summands of $\Gamma$-graded $R$-modules also are $\Gamma$-graded $R$-modules \cite[Definition~2.12]{chainconditions_Jacobsonradical}.

One of the main constructions of groupoid graded modules is the shift.

\begin{definition}
    Let  $M$ be a $\Gamma$-graded right  $R$-module and $\sigma\in\Gamma$. 
    The \emph{shift  of $M$ by $\sigma$} is the $\Gamma$-graded right $R$-module 
    $$M(\sigma):=\bigoplus_{\gamma\in\Gamma}M(\sigma)_\gamma \quad\text{where } M(\sigma)_\gamma:=\begin{cases}
        M_{\sigma\gamma}, &\text{if } d(\sigma)=r(\gamma)\\
        \{0\}, &\text{if } d(\sigma)\neq r(\gamma).
    \end{cases}$$ 
    By \cite[Lemma 2.6]{groupoid_graded_semisimple}, $M(\sigma)$ equals to $M(r(\sigma))$ as $R$-modules.
    
    Note that $M(e)\subgr M$ for all $e\in\Gamma_0$ and $M=\bigoplus_{e\in\Gamma_0}M(e)$. 
    We denote $$\Gamma'_0(M):=\{e\in\Gamma_0:M(e)\neq0\}.$$
    By \cite[Lemma 2.7]{groupoid_graded_semisimple}, $\Gamma'_0(R_R)=\Gamma'_0(R)$. 
\end{definition}

\begin{definition}
    Let  $M$ and $N$ be $\Gamma$-graded right  $R$-modules.
    \begin{enumerate}
        \item A homomorphism of modules $g:M\to N$  is said to be a \emph{gr-homomorphism} if $g(M_\sigma)\subseteq N_\sigma$ for all $\sigma\in\Gamma$. 
        We denote by $\Homgr(M,N)$ the abelian group consisting of the gr-homomorphisms of modules $g:M\to N$.  
        A bijective gr-homomorphism of modules $g:M\to N$ is called a \emph{gr-isomorphism of modules}.
        When such a gr-isomorphism exists, we say that $M$ is \emph{gr-isomorphic} to $N$ as modules and denote this by $M\cong_{gr}N$.
        
        \item Given $\gamma\in\Gamma$, we say that a homomorphism of right $R$-modules $g:M\to N$ is a \emph{homomorphism of degree $\gamma$} if $g(M_\sigma)\subseteq N(\gamma)_{\sigma}$  for all $\sigma\in\Gamma$.
        In this case, $M(f)\subseteq\ker g$ for all $f\in \Gamma_0\setminus\{d(\gamma)\}$ and $\im\, g\subseteq N(r(\gamma))$ by \cite[Lemma 2.9]{groupoid_graded_semisimple}.
        We denote by $\HOM_R(M,N)_\gamma$ the additive group of the homomorphisms $g:M\to N$ of degree $\gamma$, and we can define the $\Gamma$-graded additive group $\HOM_R(M,N):=\bigoplus_{\gamma\in\Gamma}\HOM_R(M,N)_\gamma$. 
    \end{enumerate}
\end{definition}

\begin{definition}
    Let  $M$ be a $\Gamma$-graded right $R$-module and $e\in\Gamma_0$. 
    By \cite[Proposition 2.10]{groupoid_graded_semisimple}, the canonical projection $\mathds{1}_e:M\to M(e)$ with respect  the decomposition $M=\bigoplus_{e\in\Gamma_0}M(e)$ is the unity of degree $e$ in the $\Gamma$-graded ring $\END_R(M):=\HOM_R(M,M)$.
    Furthermore, $\Gamma'_0(\END_R(M))=\Gamma'_0(M)$ by \cite[Lemma 2.11]{groupoid_graded_semisimple}, and $R\isogr\END(R_R)$ as $\Gamma$-graded rings by \cite[Lemma 3.4]{groupoid_graded_semisimple}.
\end{definition}

\subsection{Graded chain conditions}

We recall some definitions and results from \cite{chainconditions_Jacobsonradical} and extend some of them.

\begin{definition}
\label{def:chain_conditions}
	Let $M$ be a $\Gamma$-graded right $R$-module.
	\begin{enumerate}
		\item We say that $M$ is \emph{gr-artinian} (resp.\ \emph{gr-noetherian}) if $M$ does not contain a strictly decreasing (resp.\ increasing) infinite chain of graded submodules.
        
        \item We say that $R$ is a \emph{right gr-artinian} (resp.\ \emph{right gr-noetherian}) ring if the $R$-module $R_R$ is gr-artinian (resp.\ gr-noetherian).

        \item We say that $M$ is \emph{$\Gamma_0$-artinian} (resp.\ \emph{$\Gamma_0$-noetherian}) if $M(e)$ is a gr-artinian (resp.\ gr-noetherian) $R$-module for each $e\in\Gamma_0$. 

        \item We say that $R$ is a \emph{right $\Gamma_0$-artinian} (resp.\ \emph{right $\Gamma_0$-noetherian}) ring if $R_R$ is a $\Gamma_0$-artinian (resp.\ $\Gamma_0$-noetherian) $R$-module.
        
        \item We say that $M$ is \emph{strongly $\Gamma_0$-artinian} (resp.\ \emph{strongly $\Gamma_0$-noetherian}) if there does not exist a strictly decreasing (resp.\ increasing) infinite chain $\{M_i\}_{i \geq 1}$ of graded submodules of $M$ satisfying the following implication for all $e \in \Gamma_0$ and $i \geq 1$:
        $$M_i(e) = M_{i+1}(e) \implies M_i(e) = M_{i+l}(e) \text{ for all } l \ge 1.$$
        Such a chain is called \emph{tight}. See \cite[Example~3.40]{chainconditions_Jacobsonradical} for examples of these chains.

        \item We say that $R$ is \emph{right strongly $\Gamma_0$-artinian} (resp.\ \emph{right strongly $\Gamma_0$-noetherian}) if $R_R$ is a strongly $\Gamma_0$-artinian (resp.\ strongly $\Gamma_0$-noetherian) $R$-module.	
    \end{enumerate}
\end{definition}

All the chain conditions on groupoid graded modules defined above are related by the following useful result that will be used throughout the text.

\begin{lemma}{\cite[Lemma~3.42]{chainconditions_Jacobsonradical}}
\label{lem:implications_chain_conditions}
Let $M$ be $\Gamma$-graded right $R$-module.
\begin{enumerate}
    \item $M$ gr-artinian $\Rightarrow$ $M$ strongly $\Gamma_0$-artinian $\Rightarrow$ $M$ $\Gamma_0$-artinian.
    \item $M$ gr-noetherian $\Rightarrow$ $M$ strongly $\Gamma_0$-noetherian $\Rightarrow$ $M$ $\Gamma_0$-noetherian.\qed
\end{enumerate}
\end{lemma}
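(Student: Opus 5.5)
We prove the two chains of implications separately; the artinian and noetherian cases are dual, so we treat the artinian case and indicate the changes. Throughout we use the decomposition $M=\bigoplus_{e\in\Gamma_0}M(e)$. We also use that if $N\subgr M$, then $N(e)=N\cap M(e)$, so $N=\bigoplus_{e}N(e)$. This holds because $N$ is graded and $N(e)=\bigoplus_{r(\gamma)=e}N_\gamma$.

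\emph{First implication (gr-artinian $\Rightarrow$ strongly $\Gamma_0$-artinian).} Suppose $M$ is gr-artinian. A tight strictly decreasing infinite chain $\{M_i\}_{i\ge1}$ would in particular be a strictly decreasing infinite chain of graded submodules of $M$. This is impossible, so $M$ is strongly $\Gamma_0$-artinian. The noetherian case is identical with increasing chains.

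\emph{Second implication (strongly $\Gamma_0$-artinian $\Rightarrow$ $\Gamma_0$-artinian).} We argue by contraposition. Suppose some $M(e_0)$ is not gr-artinian, and pick a strictly decreasing infinite chain $N_1\supsetneq N_2\supsetneq\cdots$ of graded submodules of $M(e_0)$. Define
\[
M_i:=N_i\oplus\bigoplus_{f\in\Gamma_0\setminus\{e_0\}}M(f).
\]
Each $M_i$ is a graded submodule of $M$, being a sum of graded submodules. The chain $\{M_i\}_{i\ge1}$ is strictly decreasing.

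We now compute the components. For $f\ne e_0$ we have $M_i(f)=M(f)$ for all $i$. For $f=e_0$ we have $M_i(e_0)=N_i$. The latter follows from $N_i\subseteq M(e_0)$ and the directness of the decomposition, together with the observation above that $M_i(e)=M_i\cap M(e)$.

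We check tightness. If $f\ne e_0$, then $M_i(f)=M_{i+1}(f)$ always holds, and indeed $M_i(f)=M_{i+l}(f)$ for all $l\ge1$. If $f=e_0$, the hypothesis $M_i(e_0)=M_{i+1}(e_0)$ never holds, since $N_i\ne N_{i+1}$, so the implication is vacuous. Hence $\{M_i\}$ is a tight strictly decreasing infinite chain, and $M$ is not strongly $\Gamma_0$-artinian. The noetherian case uses a strictly increasing chain in $M(e_0)$ and the same construction.

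The only point requiring care is the identification $M_i(e)=M_i\cap M(e)$, which gives the components of $M_i$. It follows from the description of $N(e)$ as the sum of the homogeneous components $N_\gamma$ with $r(\gamma)=e$ and from the definition of the shift.
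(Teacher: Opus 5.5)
Your proof is correct. The first implication holds because a tight chain is in particular a strictly monotone chain of graded submodules. For the second, your chain is tight because its $e_0$-components never stabilize while all its other components are constant.

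The paper gives no argument of its own here; it only imports the lemma from its companion paper. Your argument is the natural one. The padding by $\bigoplus_{f\neq e_0}M(f)$ is harmless but unnecessary: $\{N_i\}$ is already tight, since $N_i(f)=0$ for every $f\neq e_0$.
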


\cite[Example~3.45]{chainconditions_Jacobsonradical} shows that the implications in \Cref{lem:implications_chain_conditions} are not reversible in general.
However, they are reversible when $\Gamma'_0(M)$ is finite \cite[Lemma 3.20]{chainconditions_Jacobsonradical}.

Remember that a nonzero $\Gamma$-graded right $R$-module $M$ is said to be \emph{gr-simple} if its only graded submodules are $\{0\}$ and $M$.

\begin{definition}
\label{def:gr-length}
    Let $M$ be a $\Gamma$-graded right $R$-module.
    \begin{enumerate}
        \item We say that $M$ has \emph{finite gr-length} $n\in\mathbb{N}$ if there exists a finite chain
        \begin{equation*}
        \label{eq: serie de submod}
            M_0=\{0\}\subgr M_1\subgr \cdots \subgr M_n=M,
        \end{equation*}
        where $M_j/M_{j-1}$ is gr-simple for each $1\leq j \leq n$. See \cite[Theorem~3.15]{chainconditions_Jacobsonradical}. 
        
        \item We say that $M$ has \emph{$\Gamma_0$-finite gr-length} if $M(e)$ has finite gr-length for all $e\in\Gamma_0$.
        
        \item We say that $M$ has \emph{strongly $\Gamma_0$-finite gr-length} if there exists $n\in\mathbb{N}$ such that $M(e)$ has finite gr-length less than $n$ for all $e\in\Gamma_0$.
    \end{enumerate}
\end{definition}

We recall the following important result connecting \Cref{def:chain_conditions,def:gr-length}.

\begin{proposition}{\cite[Propositions 3.14, 3.34, and 3.54]{chainconditions_Jacobsonradical}}
\label{prop: comp finito = art + noet}
    Let $M$ be a $\Gamma$-graded right $R$-module.
    The following assertions hold:
    \begin{enumerate}
        \item $M$ has finite gr-length $\Leftrightarrow$ $M$ is both gr-artinian and gr-noetherian.
        \item $M$ has $\Gamma_0$-finite gr-length $\Leftrightarrow$ $M$ is both $\Gamma_0$-artinian and $\Gamma_0$-noetherian. 
        \item $M$ has strongly $\Gamma_0$-finite gr-length $\Leftrightarrow$ $M$ is both strongly $\Gamma_0$-artinian and strongly $\Gamma_0$-noetherian.\qed
    \end{enumerate}
\end{proposition}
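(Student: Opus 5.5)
We argue each of the three equivalences separately, reducing (2) and (3) to (1) via the decomposition $M=\bigoplus_{e\in\Gamma_0}M(e)$ into graded submodules.

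\textbf{Part (1).} This follows the classical Jordan--H\"older argument for graded submodules.
\begin{itemize}
\item[($\Rightarrow$)] Suppose $M$ has a gr-composition series of length $n$. We show by induction on $n$ that every chain of graded submodules of $M$ has length at most $n$. This is the graded Schreier/Jordan--H\"older refinement; it works because sums and intersections of graded submodules are graded. Equivalently, for $N\subgr M$, both $N$ and $M/N$ have finite gr-length with lengths adding to $n$. Hence no infinite strictly monotone chain of graded submodules exists, so $M$ is gr-artinian and gr-noetherian.
\item[($\Leftarrow$)] Start from $M_0=\{0\}$. Using the artinian condition, pick $M_1$ minimal among the nonzero graded submodules; then $M_1/M_0$ is gr-simple. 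Next pick $M_2$ minimal among the graded submodules properly containing $M_1$. Minimality among graded submodules of $M$ containing $M_1$ gives exactly that $M_2/M_1$ is gr-simple, because graded submodules of $M/M_1$ correspond to graded submodules of $M$ containing $M_1$. The resulting chain is strictly increasing, so the gr-noetherian condition forces it to stop, which can only happen at $M$.
\end{itemize}

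\textbf{Part (2).} This is immediate from (1) applied to each $M(e)$, since all three conditions are defined objectwise.

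\textbf{Part (3).} This is the substantive part.
\begin{itemize}
\item[($\Rightarrow$)] Suppose every $M(e)$ has gr-length less than $n$. First we show that any tight strictly increasing chain $\{M_i\}$ of $M$ is impossible. For each $e$, the chain $M_i(e)$ is a chain in $M(e)$, which by (1) has length less than $n$. Tightness says that once $M_i(e)$ stops growing it is constant forever. Hence for each $e$ the set of indices $i$ with $M_i(e)\neq M_{i+1}(e)$ is an initial segment of length less than $n$. Strictness says every $i$ has some $e$ with $M_i(e)\neq M_{i+1}(e)$. Then $M_n(e)=M_{n+1}(e)$ for all $e$, so $M_n=M_{n+1}$, a contradiction. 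The same argument handles decreasing chains.
\item[($\Leftarrow$)] By (2), each $M(e)$ has finite gr-length $\ell(e)$. It remains to show that these lengths are bounded. Suppose not, and choose $e_1,e_2,\dots$ with $\ell(e_k)\geq k$. We build a tight strictly increasing chain as follows. Set $M_i:=\bigoplus_k N_{k,i}$, where $N_{k,i}\subgr M(e_k)$ is the $\min(i,k)$-th term of a gr-composition series of $M(e_k)$, and $N_{k,i}=0$ on components not among the $e_k$. Each component chain grows strictly for $i<k$ and is constant afterwards, which is exactly the tightness condition. The chain is strictly increasing because component $e_{i+1}$ grows at step $i$. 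This contradicts strong $\Gamma_0$-noetherianity, so the lengths are bounded.
\end{itemize}

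The main obstacle is constructing this tight chain correctly, in particular checking that it is indexed so that component $e_k$ keeps growing until step $k$ and then freezes.
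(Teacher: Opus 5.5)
Your proof is correct. The paper gives no argument of its own here; all three equivalences are imported from \cite{chainconditions_Jacobsonradical}. Where the paper itself passes between strong and ordinary $\Gamma_0$-conditions (e.g.\ in the proof of \Cref{prop: comp G-finito => fort art = fort noet}), it goes through the characterization \cite[Proposition~3.43]{chainconditions_Jacobsonradical} of the strong $\Gamma_0$-chain conditions.

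Your tight-chain arguments in (3) replace that black box with a direct, self-contained argument. In $(\Rightarrow)$, tightness turns each component's growth indices into an initial segment of length less than $n$. In $(\Leftarrow)$, a diagonal chain certifies unbounded lengths. Your $(\Leftarrow)$ construction also gives a little more than needed. It uses only strong $\Gamma_0$-noetherianity together with $\Gamma_0$-finite gr-length. Its mirror image, taking the $(\ell(e_k)-\min(i,k))$-th composition terms, produces a tight strictly decreasing chain and so uses only strong $\Gamma_0$-artinianity. Together these yield \Cref{prop: comp G-finito => fort art = fort noet}(3) at once.

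Two small points should be made explicit. First, to invoke (2) at the start of (3)$(\Leftarrow)$ you need that the strong $\Gamma_0$-conditions imply the ordinary ones. This is \Cref{lem:implications_chain_conditions}, or directly: a strictly monotone chain contained in a single $M(e)$ is automatically tight, since the other components are constantly $0$ and the implication at $e$ is vacuous. Second, the $e_k$ must be chosen pairwise distinct, for instance with $\ell(e_k)$ strictly increasing, which unboundedness allows. Otherwise $\bigoplus_k N_{k,i}$ would put two different chains on the same component and is not well defined.
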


The strong $\Gamma_0$-conditions are characterized in \cite[Proposition~3.43]{chainconditions_Jacobsonradical}. This characterization establishes useful connections between the strong $\Gamma_0$-conditions and the $\Gamma_0$-conditions, as shown in the following result, which will allow us to derive results for strong $\Gamma_0$-conditions from corresponding results for $\Gamma_0$-conditions.

\begin{proposition}
\label{prop: comp G-finito => fort art = fort noet}
    Let $M$ be a $\Gamma$-graded right $R$-module.
    \begin{enumerate}
        \item Suppose that $M$ is $\Gamma_0$-noetherian and strongly $\Gamma_0$-artinian. Then $M$ is strongly $\Gamma_0$-noetherian.
        \item Suppose that  $M$ is $\Gamma_0$-artinian and strongly $\Gamma_0$-noetherian. Then $M$ is strongly $\Gamma_0$-artinian.
        \item Suppose that $M$ has $\Gamma_0$-finite gr-length.
        Then $M$ is strongly $\Gamma_0$-artinian if and only if   $M$ is strongly $\Gamma_0$-noetherian.
    \end{enumerate}
\end{proposition}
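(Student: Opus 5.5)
We do not know the exact statement of \cite[Proposition~3.43]{chainconditions_Jacobsonradical}, so we argue directly from the definition of tight chains, using only \Cref{lem:implications_chain_conditions} and \Cref{prop: comp finito = art + noet}. The key observation is that for a tight chain $\{M_i\}_{i\ge 1}$ of graded submodules, each component chain $\{M_i(e)\}_{i\geq 1}$ is eventually constant. Indeed, once $M_i(e)=M_{i+1}(e)$ for some $i$, it stays equal forever. So each $e$ contributes only an initial segment of strict steps, possibly infinite. Since $M=\bigoplus_{e} M(e)$ and each $M_i$ is graded, we have $M_i=\bigoplus_e M_i(e)$. Hence $M_i\neq M_{i+1}$ if and only if $M_i(e)\neq M_{i+1}(e)$ for some $e$.

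For (1), suppose $\{M_i\}$ is a strictly increasing tight chain in $M$. We aim to produce a strictly decreasing tight chain, contradicting strong $\Gamma_0$-artinianity. Since $M$ is $\Gamma_0$-noetherian, each chain $\{M_i(e)\}_i$ stabilizes at some index $n_e$, and it is strictly increasing before $n_e$ by tightness. Strictness of the chain means that for every $i$ there is $e$ with $n_e>i$, so the set $\{n_e\}$ is unbounded. Define $L_i:=\bigoplus_e N_e^{(i)}$, where $N_e^{(i)}$ is chosen so that the chain reverses along $e$. Concretely, we try $L_i:=\bigoplus_{e}\, M_{\max(n_e-i,\,1)}(e)$. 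This is a graded submodule because each $M_j(e)\subgr M(e)$ and $M(e)$ is a submodule. As $i$ grows, the $e$-component decreases strictly while $n_e-i\ge 1$ and is constant afterwards. Hence $\{L_i\}$ is decreasing and tight componentwise. It is strictly decreasing because for each $i$ some $e$ has $n_e-i-1\ge1$, using unboundedness of $n_e$. This contradicts strong $\Gamma_0$-artinianity.

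Part (2) is symmetric. Let $\{M_i\}$ be a strictly decreasing tight chain. By $\Gamma_0$-artinianity each component stabilizes at some index $n_e$, and the $n_e$ are unbounded. Setting $L_i:=\bigoplus_e M_{\max(n_e-i,1)}(e)$ now gives a strictly increasing tight chain, contradicting strong $\Gamma_0$-noetherianity.

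For (3), $\Gamma_0$-finite gr-length means $M$ is both $\Gamma_0$-artinian and $\Gamma_0$-noetherian by \Cref{prop: comp finito = art + noet}(2), so (1) and (2) give both directions.

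The main obstacle is checking carefully that the reversed chain is tight and strictly monotone, in particular handling the components with $n_e=1$ (constant components) and confirming that the strict steps of the reversed chain occur at every index. If this index bookkeeping fails, the fallback is to invoke the characterization \cite[Proposition~3.43]{chainconditions_Jacobsonradical}, which we expect to state that strong $\Gamma_0$-artinianity (respectively noetherianity) amounts to the $\Gamma_0$-condition together with a uniform bound on the lengths of strict component chains. The bound would then transfer directly under finite componentwise length.
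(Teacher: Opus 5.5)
Your proof is correct, but it takes a different route from the paper. The paper's proof consists of two citations. First, the artinian case of the characterization of strong $\Gamma_0$-conditions in \cite[Proposition~3.43]{chainconditions_Jacobsonradical}, together with \Cref{prop: comp finito = art + noet}(1), shows that $M$ has strongly $\Gamma_0$-finite gr-length. Then \Cref{prop: comp finito = art + noet}(3) gives strong $\Gamma_0$-noetherianity. Part (2) is analogous, and (3) is deduced exactly as you do.

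You instead work directly with tight chains, and the reversal $L_i=\bigoplus_{e}M_{\max(n_e-i,1)}(e)$ does what you need. Take $n_e$ to be the first index with $M_{n_e}(e)=M_{n_e+1}(e)$. Then $L_i(e)=L_{i+1}(e)$ exactly when $n_e\le i+1$, and in that case $\max(n_e-i-l,1)=1$ for all $l\ge 0$. So the new chain is tight, and components with $n_e=1$ are harmless. Strictness of the original chain at index $i+1$ gives some $e$ with $n_e\ge i+2$, so every step of $\{L_i\}$ is strict. The bookkeeping you were worried about therefore goes through, and no fallback is needed.

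One small wording point: tightness alone does not make the component chains eventually constant. That comes from the $\Gamma_0$-chain condition on the opposite side, which is how you actually use it.

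As for what each route buys: yours is elementary and self-contained, avoiding both the external characterization and the gr-length machinery. It also isolates the mechanism: a tight chain whose components all stabilize can be reversed. The paper's route is shorter given the companion results, and it delivers the stronger intermediate conclusion that $M$ has strongly $\Gamma_0$-finite gr-length.
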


\begin{proof}
    (1) By the artinian case of \cite[Proposition~3.43]{chainconditions_Jacobsonradical} and \Cref{prop: comp finito = art + noet}(1), $M$ has strongly $\Gamma_0$-finite gr-length. Then, $M$ is strongly $\Gamma_0$-noetherian by \Cref{prop: comp finito = art + noet}(3).

    (2) Analogous to (1).

    (3) This follows from the previous items and \Cref{prop: comp finito = art + noet}(2).
\end{proof}

\subsection{The graded Jacobson radical}

\begin{definition}
	We say that a graded right ideal $\mathfrak{m}$ of $R$ is \emph{gr-maximal} if $\mathfrak{m}$ is maximal in the set of nonzero graded right ideals of $R$.
    We denote by $\radgr(R)$ the \emph{graded Jacobson radical of $R$}, defined as the intersection of all gr-maximal graded right ideals of $R$. 
\end{definition}

We recall some properties of the graded Jacobson radical proved in \cite{chainconditions_Jacobsonradical}.

\begin{theorem}{\cite[Theorem 5.3]{chainconditions_Jacobsonradical}}
\label{teo: a carac de radgr(R)}
    The following assertions are equivalent for $\gamma \in \Gamma$ and $a \in R_{\gamma}$:
    \begin{enumerate}
        \item $a \in \radgr(R)$.
        \item $1_{r(\gamma)} - ax$ is right invertible in $R_{r(\gamma)}$ for each $x \in R_{\gamma^{-1}}$.
        \item $1_{r(\gamma)} - ax$ is invertible in $R_{r(\gamma)}$ for each $x \in R_{\gamma^{-1}}$.\qed
    \end{enumerate}
\end{theorem}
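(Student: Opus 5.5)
Since $\radgr(R)$ is the intersection of the gr-maximal graded right ideals, the natural route is to prove (1)$\Rightarrow$(2)$\Rightarrow$(3)$\Rightarrow$(1) by working entirely with homogeneous elements and the local units $1_e$. Throughout, fix $a\in R_\gamma$, $x\in R_{\gamma^{-1}}$ and put $e=r(\gamma)$, so that $ax\in R_e$ and $1_e-ax\in R_e$.

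\textbf{(1)$\Rightarrow$(2).} Suppose $1_e-ax$ is not right invertible in $R_e$. Then the graded right ideal $U=(1_e-ax)R$ does not contain $1_e$. Indeed, if $1_e=(1_e-ax)r$, then comparing components of degree $e$ gives $1_e=(1_e-ax)r_e$ with $r_e\in R_e$, which is a contradiction. Also $U\subseteq 1_eR$, and $1_eR$ is generated by $1_e$, so $U$ is a proper graded submodule of $1_eR$. A Zorn argument, using that $1_e$ is homogeneous so that unions of chains avoiding $1_e$ still avoid $1_e$, produces a graded right ideal $\mathfrak{n}$ of $1_eR$ that contains $U$, avoids $1_e$, and is maximal among such. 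Then $\mathfrak{m}=\mathfrak{n}\oplus\bigoplus_{f\neq e}1_fR$ is a gr-maximal graded right ideal of $R$. Here I use the characterization of gr-maximal ideals as those $\mathfrak m$ with $R/\mathfrak m$ gr-simple. Since $a\in\radgr(R)\subseteq\mathfrak m$ and $a\in 1_eR$, we have $ax\in\mathfrak n$. Together with $1_e-ax\in\mathfrak n$ this gives $1_e\in\mathfrak n$, a contradiction.

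\textbf{(2)$\Rightarrow$(3).} Let $u\in R_e$ satisfy $(1_e-ax)u=1_e$. Then $u=1_e+axu=1_e-a(-xu)$, where $-xu\in R_{\gamma^{-1}}$. By (2) applied to $-xu$, the element $u$ is itself right invertible in $R_e$. Hence $u$ has a left inverse $1_e-ax$ and a right inverse, so it is invertible, and therefore $1_e-ax$ is invertible. This step is the standard unital-ring argument carried out inside the unital ring $R_e$.

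\textbf{(3)$\Rightarrow$(1).} Suppose $a\notin\mathfrak m$ for some gr-maximal $\mathfrak m$. Then $aR+\mathfrak m$ is a graded right ideal strictly larger than $\mathfrak m$, so maximality forces $aR+\mathfrak m=R$. In particular $1_e=ar+m$, and taking components of degree $e$ gives $1_e=ax+m_e$ with $x\in R_{\gamma^{-1}}$, since $R_\gamma R_\delta\subseteq R_{\gamma\delta}$ and the only $\delta$ with $\gamma\delta=e$ is $\gamma^{-1}$. Thus $1_e-ax=m_e\in\mathfrak m$ is invertible, so $1_e\in\mathfrak m$. Then $1_eR\subseteq\mathfrak m$ for every such $e$, and hence $\mathfrak m$ contains $R_\sigma$ for all $\sigma$ with $r(\sigma)=e$.

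This alone is not yet a contradiction, and it is the main obstacle: gr-maximal ideals may contain entire summands $1_eR$. The fix is to also vary the target. Apply the same argument to $ba$ for homogeneous $b\in R_\delta$ with $d(\delta)=e$, using that $\radgr(R)$ is a two-sided ideal; alternatively, first reduce to $a\in 1_eR$ and note that a gr-maximal $\mathfrak m$ with $a\notin\mathfrak m$ must have $\mathfrak m\cap 1_eR$ gr-maximal in $1_eR$. Concretely, $R/\mathfrak m$ is gr-simple, and a nonzero image of $a$ generates it; but the image of $a=1_ea$ lies in $(1_e+\mathfrak m)R=0$. This contradiction shows $a\in\mathfrak m$ for every gr-maximal $\mathfrak m$, i.e.\ $a\in\radgr(R)$.
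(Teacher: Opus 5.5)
Your proof is correct in all three implications.

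\begin{itemize}
\item (1)$\Rightarrow$(2): you pass to the gr-maximal right ideal $\mathfrak n\oplus\bigoplus_{f\neq e}1_fR$ built from a maximal graded submodule of $1_eR$ avoiding $1_e$.
\item (2)$\Rightarrow$(3): this is the usual left/right-inverse trick inside the unital ring $R_e$. It works because $-xu\in R_{\gamma^{-1}}$.
\item (3)$\Rightarrow$(1): you take the degree-$e$ component of $1_e=ar+m$. This works because $R_\gamma R_\delta\subseteq R_{\gamma\delta}$ forces $\delta=\gamma^{-1}$.
\end{itemize}

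The present paper does not prove this statement; it quotes it without proof from \cite{chainconditions_Jacobsonradical}. So there is no in-text argument to compare routes against. Your direct, self-contained argument using gr-maximal right ideals and the local units $1_e$ is adequate as it stands.

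Three clean-ups are worth making.

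\begin{itemize}
\item In (3)$\Rightarrow$(1) there is no ``main obstacle''. As soon as you know $1_e\in\mathfrak m$, you get $a=1_ea\in 1_eR\subseteq\mathfrak m$, which contradicts $a\notin\mathfrak m$. This is exactly your final ``concretely'' sentence, and it finishes the proof on its own. Delete the suggested detour through $ba$ ``using that $\radgr(R)$ is a two-sided ideal''. Ideal membership passes from $a$ to $ba$, not back. With $b=1_e$ the detour is just the original case again, so it contributes nothing.
\item Since $R$ is non-unital, state explicitly that $a=a1_{d(\gamma)}\in aR$. This is what guarantees that $aR+\mathfrak m$ is strictly larger than $\mathfrak m$.
\item You are reading ``gr-maximal'' as maximal among proper graded right ideals, equivalently $R/\mathfrak m$ gr-simple. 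That is the intended meaning, despite the word ``nonzero'' in the paper's definition, and it is what your quotient characterization in (1)$\Rightarrow$(2) relies on.
\end{itemize}
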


\begin{proposition}
\label{prop:outras_caracs_rad(R)}
    The following statements hold:
    \begin{enumerate}
        \item $\radgr(R)$ is the largest graded ideal $J$ of $R$ such that $1_e+J_e\subseteq \U(R_e)$ for every $e\in\Gamma_0$.
        \item $\radgr(R)$ is the largest graded ideal $J$ of $R$ such that $J_e=\rad(R_e)$ for every $e\in\Gamma_0$.
        \item $\radgr(R)$ is the largest graded ideal $J$ of $R$ such that $1_eJ1_e=\radgr(1_eR1_e)$ for every $e\in\Gamma_0$.
        \item $\radgr(R)$ is the smallest graded ideal $J$ of $R$ such that $\radgr(R/J) = 0$.
    \end{enumerate}
\end{proposition}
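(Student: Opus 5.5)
The plan is to derive all four items from \Cref{teo: a carac de radgr(R)}, together with the standard fact that $\radgr(R)$ is a graded ideal (two-sided), which I take as known from \cite{chainconditions_Jacobsonradical}. Write $J:=\radgr(R)$ throughout.

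\textbf{Item (1).} First, $J$ has the stated property. For $a\in J_e$, take $x=1_e\in R_{e}=R_{e^{-1}}$ in \Cref{teo: a carac de radgr(R)}(3); this shows $1_e-a\in \U(R_e)$. Since $J_e$ is closed under negation, $1_e+J_e\subseteq \U(R_e)$.

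Now let $I$ be any graded ideal with $1_e+I_e\subseteq\U(R_e)$ for all $e$. Take $a\in I_\gamma$ and $x\in R_{\gamma^{-1}}$. Then $ax\in I_{r(\gamma)}$, because $I$ is a right ideal and $\gamma\gamma^{-1}=r(\gamma)$. Hence $1_{r(\gamma)}-ax\in\U(R_{r(\gamma)})$, so $a\in J$ by \Cref{teo: a carac de radgr(R)}. As $I$ is graded, $I\subseteq J$.

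\textbf{Item (2).} First, $J_e=\rad(R_e)$.
\begin{itemize}
\item[$\subseteq$:] For $a\in J_e$ and $x\in R_e$, we have $xa\in J_e$, since $J$ is a left ideal. By item (1), $1_e-xa$ is invertible. The classical characterization of the Jacobson radical of the unital ring $R_e$ then gives $a\in\rad(R_e)$.
\item[$\supseteq$:] For $a\in\rad(R_e)$ and $x\in R_{e^{-1}}=R_e$, the element $1_e-ax$ is invertible in $R_e$. So $a\in J$ by \Cref{teo: a carac de radgr(R)}.
\end{itemize}
For maximality, let $I$ be a graded ideal with $I_e=\rad(R_e)$ for all $e$. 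Then $1_e+I_e\subseteq\U(R_e)$, and item (1) gives $I\subseteq J$.

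\textbf{Item (3).} The key is to identify $S:=1_eR1_e$ as a $\Gamma$-graded ring. Its homogeneous components are $S_\gamma=R_\gamma$ when $d(\gamma)=r(\gamma)=e$, and $S_\gamma=0$ otherwise. So $S_f=0$ for $f\neq e$, and $S_e=R_e$.

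Note that $1_eJ1_e$ is a graded ideal of $S$ with $(1_eJ1_e)_e=J_e$. Apply item (2) to $S$: $\radgr(S)$ is the largest graded ideal of $S$ whose $e$-component is $\rad(R_e)=J_e$. Therefore $1_eJ1_e\subseteq\radgr(S)$. For the reverse inclusion, use \Cref{teo: a carac de radgr(R)} directly. If $a\in S_\gamma$ lies in $\radgr(S)$, then for every $x\in R_{\gamma^{-1}}$ we have $x\in S_{\gamma^{-1}}$, since its domain and codomain are again $e$. Thus $1_e-ax$ is invertible, so $a\in J$, and hence $a=1_ea1_e\in 1_eJ1_e$.

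For maximality, let $I$ be a graded ideal with $1_eI1_e=\radgr(1_eR1_e)$ for all $e$. Then $I_e=(1_eI1_e)_e=\radgr(S)_e=J_e=\rad(R_e)$, and item (2) gives $I\subseteq J$.

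\textbf{Item (4).} First, $\radgr(R/J)=0$. Use the gr-maximal right ideals: every gr-maximal right ideal $\mathfrak m$ of $R$ contains $J$, so $\mathfrak m/J$ is gr-maximal in $R/J$. The intersection of these is $0$. Alternatively, lift invertibility: if $\bar a\in\radgr(R/J)_\gamma$, then $1-ax\in \U(R_{r(\gamma)})+J$. Elements of $\U(R_e)+J_e$ are units, because $J_e=\rad(R_e)$. Hence $a\in J$.

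Next, minimality. Let $I$ be a graded ideal with $\radgr(R/I)=0$, and take $a\in J_\gamma$. For any $\bar x$, the element $1-\bar a\bar x$ is the image of a unit, hence a unit. Therefore $\bar a\in\radgr(R/I)=0$, which gives $J\subseteq I$.

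\textbf{Main obstacle.} The step most likely to need care is the homogeneous-component bookkeeping in item (3), specifically:
\begin{itemize}
\item checking that $1_eR1_e$ is an object unital $\Gamma$-graded ring, with $1_f=0$ for $f\neq e$;
\item checking that \Cref{teo: a carac de radgr(R)} applies to it.
\end{itemize}
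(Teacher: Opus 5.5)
Your proof is correct, but it takes a different route from the paper's. The paper proves (1)--(3) purely by citation, namely Corollaries~5.4(1) and~5.6 of the earlier chain-conditions paper. It proves (4) by quoting two further external facts from that paper: $\radgr(R/\radgr(R))=0$ (Proposition~5.8(4) there), and the fact that gr-homomorphisms send the graded Jacobson radical into the graded Jacobson radical (Proposition~5.8(1) there), applied to $R\to R/J$.

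You instead derive all four items from the single elementwise criterion of \Cref{teo: a carac de radgr(R)}, together with classical facts about $\rad(R_e)$:
\begin{enumerate}
\item Item (1) comes from the criterion with $x=1_e$, plus $ax\in I_{r(\gamma)}$.
\item Item (2) comes from (1) and the ungraded characterization of $\rad(R_e)$.
\item Item (3) comes from regarding $1_eR1_e$ as a $\Gamma$-graded ring supported on $e\Gamma e$. The paper uses the $e\Gamma e$-grading instead, but this is harmless: for $\gamma\in e\Gamma e$ you have $(1_eR1_e)_{\gamma^{-1}}=R_{\gamma^{-1}}$ and $(1_eR1_e)_e=R_e$, so the criterion reads identically in either grading.
\item Item (4) comes from lifting units modulo $J_e=\rad(R_e)$, which is where you use item (2). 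Your minimality argument in (4) is exactly an unpacked version of the paper's appeal to Proposition~5.8(1) for the surjection $R\to R/I$; lifting $\bar x$ to some $x\in R_{\gamma^{-1}}$ is where surjectivity is used.
\end{enumerate}

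The paper's route is shorter. Yours is self-contained, modulo \Cref{teo: a carac de radgr(R)} and the fact that $\radgr(R)$ is a graded two-sided ideal, and it shows the four characterizations are formal consequences of the invertibility criterion.

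Two small points on (4). First, the phrase ``$1-ax\in\U(R_{r(\gamma)})+J$'' should read: the image of $1_{r(\gamma)}-ax$ in $R_{r(\gamma)}/\rad(R_{r(\gamma)})\cong(R/J)_{r(\gamma)}$ is a unit, hence $1_{r(\gamma)}-ax$ is a unit. Second, of your two arguments for $\radgr(R/J)=0$, the invertibility one is the robust one. The gr-maximal-ideal correspondence depends on the precise definition of gr-maximality in the groupoid setting.
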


\begin{proof}
    Statement (1) is \cite[Corollary~5.4(1)]{chainconditions_Jacobsonradical}, while the statements (2) and (3) are proved in \cite[Corollary~5.6]{chainconditions_Jacobsonradical}.

    We prove item (4). 
    By \cite[Proposition 5.8(4)]{chainconditions_Jacobsonradical}, we have $\radgr(R/\radgr(R))=0$.
    Let $J$ be a graded ideal of $R$ such that $\radgr(R/J) = 0$.
    Consider the canonical gr-homomorphism, $\varphi:R\to R/J$.
    By \cite[Proposition 5.8(1)]{chainconditions_Jacobsonradical}, we have $\varphi(\radgr(R))\subseteq\radgr(R/J)=0$, and it follows that $\radgr(R)\subseteq J$.
\end{proof}

From \Cref{prop:outras_caracs_rad(R)}(1), $\radgr(R)$ is also the intersection of all gr-maximal graded left ideals of $R$.

\begin{definition}
    A $\Gamma$-graded right $R$-module $M$ is called \emph{gr-semisimple} if it is a sum of gr-simple graded submodules.
    By \cite[Proposition~52]{CLP}, every gr-semisimple graded module is a direct sum of gr-simple graded submodules.
    
    We say that $R$ is a \emph{gr-semisimple ring} if $R_R$ is gr-semisimple. 
    By \cite[Corollary~5.28]{groupoid_graded_semisimple}, $R_R$ is gr-semisimple if and only if $_RR$ is gr-semisimple.
    
    We say that $R$ is a \emph{gr-semilocal ring} if the graded ring $R/\radgr(R)$ is gr-semisimple. 
\end{definition}

One-sided $\Gamma_0$-artinian rings are the main examples of gr-semilocal rings. Further examples can be found in \cite[Proposition~6.16]{chainconditions_Jacobsonradical}.

\begin{proposition}{\cite[Proposition 6.2(1)]{chainconditions_Jacobsonradical}}
\label{prop: R gr-art => R/rad(R) gr-ss}
    If $R$ is a right $\Gamma_0$-artinian ring, then $R$ is a  gr-semilocal ring.\qed
\end{proposition}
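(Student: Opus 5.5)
The statement is cited from \cite[Proposition~6.2(1)]{chainconditions_Jacobsonradical}, but I would reprove it directly. The plan is to decompose $\bar R:=R/\radgr(R)$ objectwise, $\bar R=\bigoplus_{e\in\Gamma_0}\bar R(e)$, and show that each $\bar R(e)=1_e\bar R$ is a gr-semisimple graded right $\bar R$-module. A direct sum of gr-semisimple modules is gr-semisimple, so this gives that $\bar R_{\bar R}$ is gr-semisimple, i.e.\ $R$ is gr-semilocal.

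First, fix $e\in\Gamma_0$. By \Cref{prop:outras_caracs_rad(R)}(4) we have $\radgr(\bar R)=0$. Hence $\radgr(R)1_e$ is contained in every gr-maximal graded right ideal. Intersecting the gr-maximal graded right ideals with $R(e)=1_eR$ then gives
\[\bigcap_{\mathfrak m}\bigl(\mathfrak m\cap 1_eR\bigr)=1_e\radgr(R).\]
One also checks that the gr-maximal graded submodules of $1_eR$ are exactly the proper ones of the form $\mathfrak m\cap 1_eR$, since $\mathfrak m\cap 1_eR\oplus\bigoplus_{f\neq e}1_fR$ is gr-maximal.

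Next, $1_eR$ is gr-artinian by hypothesis, and so is its quotient $1_e\bar R$. The intersection of all gr-maximal graded submodules of $1_e\bar R$ is zero. By the artinian property the family of finite intersections has a minimal element, and minimality forces it to equal the full intersection, which is $0$. So there are finitely many gr-maximal $N_1,\dots,N_k\subgr 1_e\bar R$ with $\bigcap_i N_i=0$. The canonical gr-homomorphism $1_e\bar R\to\bigoplus_{i=1}^k 1_e\bar R/N_i$ is then injective, and each $1_e\bar R/N_i$ is gr-simple. Thus $1_e\bar R$ embeds as a graded submodule of a gr-semisimple module.

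Finally, a graded submodule of a gr-semisimple module is gr-semisimple, by the standard complement argument together with \cite[Proposition~52]{CLP}. Hence each $1_e\bar R$ is gr-semisimple, and the claim follows. I expect the main obstacle to be the identification of the gr-radical of the graded module $1_e\bar R$ with zero, that is, the compatibility between gr-maximal right ideals of $R$ and gr-maximal submodules of $1_eR$. I would handle it through \Cref{teo: a carac de radgr(R)}: if a homogeneous $a\in 1_eR_\gamma$ lies in every gr-maximal submodule of $1_eR$, then $1_e-ax$ is right invertible for all $x$, since otherwise $(1_e-ax)R$ would be contained in a gr-maximal submodule of $1_eR$, and that submodule would then contain $1_e$.
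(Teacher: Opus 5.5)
Your proof is correct. The paper gives no argument of its own for this statement and only cites \cite[Proposition~6.2(1)]{chainconditions_Jacobsonradical}. Judging from how the paper uses that source elsewhere, the underlying route is presumably three steps. First, $\bar R=R/\radgr(R)$ is again right $\Gamma_0$-artinian by \cite[Corollary~3.23]{chainconditions_Jacobsonradical}. Second, $\radgr(\bar R)=0$ by \Cref{prop:outras_caracs_rad(R)}(4). Third, a right $\Gamma_0$-artinian ring with zero graded Jacobson radical is gr-semisimple by the characterization in \cite[Theorem~5.11]{chainconditions_Jacobsonradical}.

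You follow the same outline but prove the last step by hand and objectwise: $\bar R(e)$ is gr-artinian with zero module radical, hence a finite subdirect product of gr-simples, hence gr-semisimple. What this buys is self-containedness: you use only \Cref{teo: a carac de radgr(R)}, Zorn's lemma, and closure of gr-semisimplicity under graded submodules. The price is that you must check $\radgr(1_eS_S)=1_e\radgr(S)$ for the ring $S=\bar R$. Your last paragraph does this correctly, using that $1_eS$ is gr-cyclic, so every proper graded submodule lies in a gr-maximal one.

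One small point of exposition: the sentence ``Hence $\radgr(R)1_e$ is contained in every gr-maximal graded right ideal'' does not follow from $\radgr(\bar R)=0$, although it is trivially true. What you actually need is your displayed identity, or the argument of your final paragraph, applied with $\bar R$ in place of $R$. That yields $\radgr(1_e\bar R)=1_e\radgr(\bar R)=0$.
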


We recall the following useful characterizations of (strongly) ($\Gamma_0$-)finite gr-length for graded modules over a gr-semilocal ring.

\begin{proposition}{\cite[Corollary~6.7]{chainconditions_Jacobsonradical}}
\label{prop: comp finito <=> MJn=0}
    Suppose that $R$ is gr-semilocal, $M$ is a $\Gamma$-graded right $R$-module, and $J=\radgr(R)$. The following assertions are equivalent:
    \begin{enumerate}
        \item $M$ has finite gr-length.
        \item $M$ is gr-artinian and $MJ^n=0$ for some $n\in\mathbb{N}$.
        \item $M$ is gr-noetherian and $MJ^n=0$ for some $n\in\mathbb{N}$.\qed
    \end{enumerate}
\end{proposition}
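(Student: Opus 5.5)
This is a cited result (\cite[Corollary~6.7]{chainconditions_Jacobsonradical}), but I would reprove it as follows. The plan is to filter $M$ by the graded submodules $MJ^k$ and reduce each layer to a module over the gr-semisimple ring $\bar R:=R/J$. Since $J$ is a graded ideal, each $MJ^k$ is a graded submodule of $M$, and
\[
M\supseteq MJ\supseteq MJ^2\supseteq\cdots\supseteq MJ^n=0
\]
is a finite chain of graded submodules. Each factor $MJ^{k}/MJ^{k+1}$ is annihilated by $J$, so it is naturally a $\Gamma$-graded right $\bar R$-module. Its graded $R$-submodules are exactly its graded $\bar R$-submodules, and the unit condition $N\bar R=N$ holds because $NR=N$.

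The key lemma I need is the following: a $\Gamma$-graded right module $N$ over a gr-semisimple ring $\bar R$ is gr-semisimple. Indeed, $\bar R_{\bar R}$ is a sum of gr-simple graded right ideals. For homogeneous $x\in N_\sigma$ we have $x=x1_{d(\sigma)}$, so $N=\sum_{x\in\h(N)}x\bar R$. Each $x\bar R$ is a gr-homomorphic image (a map of degree $\sigma$, which after shifting is a gr-homomorphism) of $1_{d(\sigma)}\bar R$, and hence of a sum of shifts of gr-simples. Images of gr-simple modules under such maps are zero or gr-simple, so $N$ is a sum of gr-simple graded submodules.

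By \cite[Proposition~52]{CLP}, each factor is then a direct sum $\bigoplus_{i\in I}S_i$ of gr-simples. Such a sum is gr-artinian if and only if $I$ is finite, if and only if it is gr-noetherian: an infinite index set gives strictly monotone chains of partial sums, while a finite sum has finite gr-length.

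For (2)$\Rightarrow$(1): if $M$ is gr-artinian, every factor $MJ^k/MJ^{k+1}$ is gr-artinian (being a quotient of a submodule), hence has finite gr-length by the above. Concatenating composition series along the finite filtration shows that $M$ has finite gr-length. The argument for (3)$\Rightarrow$(1) is identical with noetherian in place of artinian. The implications (1)$\Rightarrow$(2) and (1)$\Rightarrow$(3) follow from \Cref{prop: comp finito = art + noet}(1) together with the fact that $MJ^n=0$ for some $n$. For the latter, let $n$ be the gr-length of $M$. Any gr-simple $S$ satisfies $SJ=0$: $SJ$ is a graded submodule, and if $SJ=S$ then for $0\neq x\in S_\sigma$ we get $x\in SJ$. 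Using $S=x R$ and \Cref{teo: a carac de radgr(R)}, we can write $x=xa$ with $a\in J_{d(\sigma)}$ (a Nakayama-type step), so $x(1-a)=0$ and invertibility of $1-a$ gives $x=0$. Hence each composition factor is killed by $J$, and therefore $MJ^n=0$.

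I expect the main obstacle to be the Nakayama step $SJ=0$ in this groupoid setting. The fact that $S=xR$ needs care: $xR$ is a nonzero graded submodule, so it equals $S$. Finding $a$ with $x=xa$ homogeneous of degree $d(\sigma)$ requires projecting onto the right components using the grading.
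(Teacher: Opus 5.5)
Your proof is correct. The paper gives no argument for this statement; it is quoted from the earlier work. However, the companion fact the paper imports from the same source, $\soc_{\rm gr}^n(M)=\{x\in M : xJ^n=0\}$ for gr-semilocal $R$, suggests the cited proof may take the dual route. There, $MJ^n=0$ if and only if $M=\soc_{\rm gr}^n(M)$, and one filters $M$ by the ascending gr-socle series, whose layers are gr-semisimple by construction. You instead filter by the descending radical series $MJ^k$.

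Both routes rest on the same two facts you establish:
\begin{enumerate}
\item graded modules over $R/\radgr(R)$ are gr-semisimple;
\item $\radgr(R)$ annihilates every gr-simple module.
\end{enumerate}
The socle version fits directly with the paper's later use of $\soc_{\rm gr}^\infty$: the characterization of right objectwise nilpotency of $\radgr(R)$ via $\soc_{\rm gr}^\infty(R_R)=R$, and item (6) of the Weak Hopkins--Levitzki Theorem. Your version is self-contained, does not need the socle description, and mirrors the classical proof in Lam.

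One minor tidy-up in your key lemma. Rather than passing through $1_{d(\sigma)}\bar R$ and asserting it is a sum of gr-simples, write $x\bar R=\sum_i xS_i$ directly from $\bar R=\sum_i S_i$. Then note that $t\mapsto xt$ has graded kernel, because distinct degrees $\tau$ with $r(\tau)=d(\sigma)$ are sent to distinct degrees $\sigma\tau$. This is exactly what makes each $xS_i$ either zero or gr-simple.
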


\begin{proposition}{\cite[Corollary~6.8]{chainconditions_Jacobsonradical}}
\label{prop: G0 comp finito <=> MJn=0}
    Suppose that $R$ is gr-semilocal, $M$ is a $\Gamma$-graded right $R$-module, and $J=\radgr(R)$. The following assertions are equivalent:
    \begin{enumerate}
        \item $M$ has  $\Gamma_0$-finite gr-length.
        \item $M$ is $\Gamma_0$-artinian and, for each $e\in\Gamma_0$, $M(e)J^{n_e}=0$ for some $n_e\in\mathbb{N}$.
        \item $M$ is $\Gamma_0$-noetherian and, for each $e\in\Gamma_0$,  $M(e)J^{n_e}=0$ for some \mbox{$n_e\in\mathbb{N}$}.\qed
    \end{enumerate}
\end{proposition}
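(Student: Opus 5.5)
The plan is to reduce everything to the single-object statement \Cref{prop: comp finito <=> MJn=0}, applied separately to each of the graded modules $M(e)$, $e\in\Gamma_0$. The point is that every condition in the statement is a conjunction, over $e\in\Gamma_0$, of a condition on $M(e)$ alone.

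First, $M(e)\subgr M$ is itself a $\Gamma$-graded right $R$-module for each $e\in\Gamma_0$, and the hypothesis that $R$ is gr-semilocal does not depend on the module. So \Cref{prop: comp finito <=> MJn=0} applies to $M(e)$ and says that the following are equivalent:
\begin{enumerate}
\item[(a)] $M(e)$ has finite gr-length;
\item[(b)] $M(e)$ is gr-artinian and $M(e)J^{n}=0$ for some $n\in\mathbb{N}$;
\item[(c)] $M(e)$ is gr-noetherian and $M(e)J^{n}=0$ for some $n\in\mathbb{N}$.
\end{enumerate}

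Next, I unfold the definitions. By \Cref{def:gr-length}(2), $M$ has $\Gamma_0$-finite gr-length exactly when (a) holds for every $e\in\Gamma_0$. By \Cref{def:chain_conditions}(3), $M$ is $\Gamma_0$-artinian (resp.\ $\Gamma_0$-noetherian) exactly when every $M(e)$ is gr-artinian (resp.\ gr-noetherian). Hence assertion (2) of the statement is precisely ``(b) holds for every $e$,'' with $n_e$ taken to be the $n$ supplied by (b) for that $e$. Likewise, assertion (3) is ``(c) holds for every $e$.'' Quantifying the equivalence (a)$\Leftrightarrow$(b)$\Leftrightarrow$(c) over all $e\in\Gamma_0$ then gives (1)$\Leftrightarrow$(2)$\Leftrightarrow$(3).

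There is no real obstacle. The only point needing care is that the exponents $n_e$ are allowed to depend on $e$, with no uniform bound required. This is exactly why the argument works objectwise and why no finiteness of $\Gamma'_0(M)$ is needed. All the substantive content sits in \Cref{prop: comp finito <=> MJn=0}. Its proof would filter $M(e)\supseteq M(e)J\supseteq\cdots\supseteq M(e)J^n=0$ and observe that each factor $M(e)J^{i}/M(e)J^{i+1}$ is a graded module over the gr-semisimple ring $R/J$. Such a factor is therefore gr-semisimple, and for it being gr-artinian, being gr-noetherian and having finite gr-length are all equivalent.
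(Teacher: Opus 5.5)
Your argument is correct: applying \Cref{prop: comp finito <=> MJn=0} to each graded submodule $M(e)$ and quantifying over $e\in\Gamma_0$ gives exactly (1)$\Leftrightarrow$(2)$\Leftrightarrow$(3), with the exponents $n_e$ allowed to depend on $e$. The paper does not reprove this statement but quotes it from \cite[Corollary~6.8]{chainconditions_Jacobsonradical} as the objectwise version of the finite gr-length result, which is the reduction you make and the same device the paper uses elsewhere (e.g.\ in part (1)(b) of \Cref{teo: left str. G_0-nilp. ==> right noeth.}).
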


\begin{proposition}{\cite[Corollary~6.9]{chainconditions_Jacobsonradical}}
\label{prop: fort G0 comp finito <=> MJn=0}
    Suppose that $R$ is gr-semilocal, $M$ is a $\Gamma$-graded right $R$-module, and $J=\radgr(R)$. The following assertions are equivalent:    
    \begin{enumerate}
        \item $M$ has strongly $\Gamma_0$-finite gr-length.
        \item $M$ is strongly $\Gamma_0$-artinian and $MJ^n=0$ for some $n\in\mathbb{N}$.
        \item $M$ is strongly $\Gamma_0$-noetherian and $MJ^n=0$ for some $n\in\mathbb{N}$.\qed
    \end{enumerate}
\end{proposition}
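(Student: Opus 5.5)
The plan is to reduce this statement to its $\Gamma_0$-version, \Cref{prop: G0 comp finito <=> MJn=0}, using the bridges between the strong and plain conditions in \Cref{lem:implications_chain_conditions}, \Cref{prop: comp finito = art + noet}(3) and \Cref{prop: comp G-finito => fort art = fort noet}. The one extra ingredient is a graded Nakayama-type fact:
\[
\text{$J$ annihilates every gr-simple module $S$, i.e. $SJ=0$.}
\]
To see this, take $0\neq s\in S_\sigma$. Then $sR=S$, because $sR$ is a nonzero graded submodule and $s=s1_{d(\sigma)}$. Now $sJ$ is a graded submodule of $S$. If it were nonzero, then $sJ=S$, so $s=\sum s a_\tau$ with $a_\tau\in J_\tau$. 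Comparing degrees, only the component with $\sigma\tau=\sigma$, that is $\tau=d(\sigma)=:e$, survives, so $s=sa$ with $a\in J_e$. By \Cref{prop:outras_caracs_rad(R)}(1), $1_e-a\in\U(R_e)$. From $s(1_e-a)=0$ we get $s=0$, a contradiction.

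\textbf{Showing (2)$\Rightarrow$(1) and (3)$\Rightarrow$(1).} Assume $M$ is strongly $\Gamma_0$-artinian and $MJ^n=0$.
\begin{enumerate}
\item By \Cref{lem:implications_chain_conditions}(1), $M$ is $\Gamma_0$-artinian.
\item Since $M(e)J^n\subseteq MJ^n=0$ for every $e$, \Cref{prop: G0 comp finito <=> MJn=0} shows that $M$ has $\Gamma_0$-finite gr-length. In particular $M$ is $\Gamma_0$-noetherian.
\item \Cref{prop: comp G-finito => fort art = fort noet}(1) then makes $M$ strongly $\Gamma_0$-noetherian.
\item \Cref{prop: comp finito = art + noet}(3) gives strongly $\Gamma_0$-finite gr-length.
\end{enumerate}
The noetherian case (3)$\Rightarrow$(1) is symmetric: use \Cref{lem:implications_chain_conditions}(2) and \Cref{prop: comp G-finito => fort art = fort noet}(2) instead.

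\textbf{Showing (1)$\Rightarrow$(2),(3).} By \Cref{prop: comp finito = art + noet}(3), $M$ is both strongly $\Gamma_0$-artinian and strongly $\Gamma_0$-noetherian. It remains to produce a uniform $n$ with $MJ^n=0$; this is the main step, since the bound must not depend on $e$. Choose $n$ such that every $M(e)$ has gr-length less than $n$.

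Let $N\neq0$ be a graded module of finite gr-length. A composition series gives a graded submodule $L\subgr N$ with $N/L$ gr-simple. By the annihilation fact, $(N/L)J=0$, so $NJ\subseteq L\subsetneq N$.

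Apply this to the descending chain
\[
M(e)\supseteq M(e)J\supseteq M(e)J^2\supseteq\cdots,
\]
whose terms are graded submodules of $M(e)$. The gr-length drops strictly at each step until the term is zero. Hence $M(e)J^{n}=0$ for all $e\in\Gamma_0$. Finally, $M=\bigoplus_e M(e)$ yields $MJ^n=0$.
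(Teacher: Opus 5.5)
Your proof is correct. The paper gives no proof of its own here, since the statement is imported from \cite[Corollary~6.9]{chainconditions_Jacobsonradical}, so the useful comparison is with how the paper handles strong conditions elsewhere.

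Your reduction of (2)$\Rightarrow$(1) and (3)$\Rightarrow$(1) goes through the $\Gamma_0$-version \Cref{prop: G0 comp finito <=> MJn=0}, then \Cref{prop: comp G-finito => fort art = fort noet}, then \Cref{prop: comp finito = art + noet}(3). This is exactly the pattern the paper uses in the proofs of \Cref{teo: left str. G_0-nilp. ==> right noeth.}(1)(c) and \Cref{teo: hopkins-levitski}(3). It is not circular, because \Cref{prop: comp G-finito => fort art = fort noet} rests only on \cite[Proposition~3.43]{chainconditions_Jacobsonradical} and \Cref{prop: comp finito = art + noet}.

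A direct proof would instead filter $M\supseteq MJ\supseteq\cdots\supseteq MJ^n=0$ and note that the factors are gr-semisimple $R/J$-modules. One would then show that a strongly $\Gamma_0$-artinian (resp.\ noetherian) gr-semisimple module has uniformly bounded component lengths. Your route outsources that uniformity to \cite[Proposition~3.43]{chainconditions_Jacobsonradical}, and in exchange it shows slightly more:
\begin{enumerate}
\item In (2)$\Rightarrow$(1) and (3)$\Rightarrow$(1) you only use the objectwise condition $M(e)J^{n}=0$. So for a strongly $\Gamma_0$-artinian or strongly $\Gamma_0$-noetherian $M$, objectwise annihilation already forces a uniform exponent, via your (1)$\Rightarrow$(2).
\item Your (1)$\Rightarrow$(2),(3) uses only that $J$ kills gr-simple modules, which you correctly prove for an arbitrary $\Gamma$-graded ring. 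Hence gr-semilocality is needed only for the reverse implications.
\end{enumerate}

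One small simplification. The phrase ``the gr-length drops strictly at each step'' tacitly uses the graded Jordan--H\"older theorem. You can avoid it: fix a single composition series $0=N_0\subgr N_1\subgr\cdots\subgr N_\ell=M(e)$ with $\ell<n$. Your annihilation fact gives $N_jJ\subseteq N_{j-1}$ for every $j$, and therefore $M(e)J^{\ell}=0$ directly.
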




\section{Nilpotency-type conditions}
\label{sec:nilpotency}

\begin{definition}\label{def: nilpotency conditions}
Let $R$ be a $\Gamma$-graded ring. We say that $U\subseteq R$ is a \emph{graded subrng} of $R$ if it satisfies the following three conditions:
\begin{enumerate}
    \renewcommand{\theenumi}{\roman{enumi}}
    \item $U+U\subseteq U$
    \item $U\cdot U\subseteq U$
    \item If $x_1+\dotsb+x_n\in U$ with $x_i\in R_{\gamma_i}$ for different $\gamma_1,\dotsc,\gamma_n\in \Gamma$, then $x_1,\dotsc,x_n\in U$.    
\end{enumerate}

Important examples of graded subrngs are right, left and two-sided  graded ideals.
In this context,  
\begin{enumerate}
    \item We say that $U$ is \emph{gr-nil} if every homogeneous element of $U$ is nilpotent.
    \item We say that $U$ is \emph{nilpotent} if there exists a positive integer $n$ such that $U^n = 0$.  
    \item  We say that $U$ is \emph{right $\Gamma_0$-nilpotent} if $1_eU$ is nilpotent for each $e\in\Gamma_0$, i.e. there exists a positive integer $n_e$ such that $(1_eU)^{n_e} = 0$ for each $e\in\Gamma_0$. And we say that  $U$ is \emph{left $\Gamma_0$-nilpotent} if, for each $e \in \Gamma_0$, there exists a positive integer $n_e$ such that  $(U1_e)^{n_e} = 0$. 
    \item We say that $U$ is \emph{right objectwise nilpotent} if, for each $e \in \Gamma_0$, there exists a positive integer $n_e$ such that  $1_eU^{n_e} = 0$. And we say that  $U$ is \emph{left objectwise nilpotent} if, for each $e \in \Gamma_0$, there exists a positive integer $n_e$ such that   $U^{n_e}1_e = 0$.    If $U$ is both left and right objectwise nilpotent, then $U$ is said to be \emph{objectwise nilpotent}.
\end{enumerate}
\end{definition}

Now we provide more practical versions of the  concepts of nil and $\Gamma_0$-nilpotent graded right ideals. Moreover, \Cref{lem: eqiv nil e G_0-nilpotent}(3)  shows that being $\Gamma_0$-nilpotent is a symmetric condition, so we will omit the distinction between ‘left’ and ‘right’.  On the other hand,  there are (two-sided)  graded ideals $U$ that are  left objectwise nilpotent, though not right objectwise nilpotent, as \Cref{prop: contra-exemplos J fort nilp}(4) shows. We sometimes use the next result without further reference to it.

\begin{lemma}\label{lem: eqiv nil e G_0-nilpotent}
Let $U$ be a graded subrng of the $\Gamma$-graded ring $R$. The following statements hold:
\begin{enumerate}
    \item  $U$ is gr-nil if and only if $1_eU1_e$ is gr-nil for each $e\in\Gamma_0$.
    \item $U$ is right $\Gamma_0$-nilpotent if and only if, for each $e \in \Gamma_0$, there exists a positive integer $n_e$ such that $(1_eU1_e)^{n_e} = 0$.    
    \item For each $e\in\Gamma_0$, there exists a positive integer $n$ such that $(1_eU)^n=0$ if and only if there exists a positive integer $m$ such that $(U1_e)^m=0$. 
\end{enumerate}
\end{lemma}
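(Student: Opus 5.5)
The plan is to reduce everything to homogeneous computations using the local units, using throughout the grading facts $R_\gamma=1_{r(\gamma)}R_\gamma 1_{d(\gamma)}$ and $1_eR_\gamma=0$ unless $r(\gamma)=e$. The only property of $U$ needed is that it is a graded subrng: by (iii) it is spanned by its homogeneous elements, and every homogeneous element of $U$ lies in exactly one set $1_fU1_g$ (with $f=r(\gamma)$, $g=d(\gamma)$). Note also that $1_eU$ and $U1_e$ need not be subsets of $U$ when $U$ is not an ideal. However, since $U$ is graded, each $1_eu$ is the sum of the homogeneous components $u_\gamma$ of $u$ with $r(\gamma)=e$, so $1_eU=\bigoplus_{r(\gamma)=e}U_\gamma\subseteq U$, and likewise $U1_e\subseteq U$ and $1_eU1_e\subseteq U$.

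For (1), one direction is immediate from $1_eU1_e\subseteq U$. Conversely, let $a\in U_\gamma$ be homogeneous. If $d(\gamma)\neq r(\gamma)$, then $a^2\in R_\gamma R_\gamma=0$ by the convention for products that are not defined. If $d(\gamma)=r(\gamma)=e$, then $a=1_ea1_e\in 1_eU1_e$, and $a$ is nilpotent by hypothesis.

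For (2), the forward direction follows from $(1_eU1_e)^{n}\subseteq(1_eU)^{n}$. For the converse, suppose $(1_eU1_e)^{n}=0$. A product $1_eu_1 1_eu_2\cdots 1_eu_{k}$ with homogeneous $u_i$ is nonzero only when $\deg(u_i)$ starts at $e$ and, for $i<k$, also ends at $e$. So, writing $(1_eU)^{k}$ in terms of homogeneous factors, every nonzero product has the form $x_1\cdots x_{k-1}x_k$ with $x_1,\dots,x_{k-1}\in 1_eU1_e$ and $x_k\in 1_eU$. Taking $k=n+1$ gives $(1_eU)^{n+1}\subseteq (1_eU1_e)^{n}\,U=0$. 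Hence the exponent $n_e+1$ works.

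For (3), I would apply the same decomposition on the left: $(U1_e)^{k}\subseteq U1_e\,(1_eU1_e)^{k-1}$. This yields the chain of implications
\[
(1_eU)^n=0 \Rightarrow (1_eU1_e)^n=0 \Rightarrow (U1_e)^{n+1}=0,
\]
and the reverse implication is symmetric. The main point to be careful about, though it is routine, is the claim that a product of homogeneous elements of the form $1_e u$ vanishes unless all but the last factor lie in $1_eU1_e$. This comes down to $a1_e\,b=a1_e\cdot 1_{r(\deg b)}b=0$ when $r(\deg b)\neq e$, since $1_e1_f=0$ for $e\neq f$, which holds because $1_e1_f\in R_eR_f=0$.
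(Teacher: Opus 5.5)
Your proposal is correct and matches the paper's proof: (1) uses $a^2=0$ when $d(\gamma)\neq r(\gamma)$, and (2)--(3) use $(1_eU)^{k}=(1_eU1_e)^{k-1}1_eU$ and its left analogue, which shift the exponent from $n$ to $n+1$. The only difference is that the paper gets this identity directly from $1_e=1_e1_e$ and associativity, so your reduction to homogeneous factors and the vanishing argument are unnecessary, though harmless.
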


\begin{proof}
(1) Notice that if $a\in U_\gamma$ for some $\gamma\in\Gamma$ with $d(\gamma)\neq r(\gamma)$, then $a^2=0$.

(2) For each $e\in\Gamma_0$ and integer $n\geq2$, we have
\begin{equation}\label{eq: 1eU1e nilp => U G0-nilp1}
(1_eU1_e)^n\subseteq (1_eU)^n = (1_eU)(1_e1_eU)^{n-1} = (1_eU1_e)^{n-1}1_e U,
\end{equation}
from which
\begin{equation}
\label{eq: 1eU1e nilp => U G0-nilp2}
    (1_eU1_e)^{n-1}=0\implies (1_eU)^n=0\implies (1_eU1_e)^n=0.
\end{equation}
Note that the last implication also holds for $n=1$.

(3) Note that there are versions of \eqref{eq: 1eU1e nilp => U G0-nilp1} and \eqref{eq: 1eU1e nilp => U G0-nilp2} for $U1_e$, $e\in\Gamma_0$.
\end{proof}

While the nilpotency-type conditions given in \Cref{def: nilpotency conditions} are related as in \Cref{lem: implications nilpotency conditions} below, they are all distinct, as will be demonstrated in \Cref{prop: nilpotencias radgr UT(A)}.

\begin{lemma}\label{lem: implications nilpotency conditions}
 Let $U$ be a graded subrng of the $\Gamma$-graded ring $R$. Consider the following statements. 
 \begin{enumerate}
    \renewcommand{\theenumi}{\alph{enumi}}
     \item\label{item: nilp} $U$ is nilpotent.
     \item \label{item: strongly G_0-nilp} $U$ is left or right objectwise nilpotent.
     \item \label{item: G_0-nilp} $U$ is $\Gamma_0$-nilpotent.
     \item\label{item: nil} $U$ is gr-nil.
 \end{enumerate}
 Then \eqref{item: nilp}$\Rightarrow$\eqref{item: strongly G_0-nilp}$\Rightarrow$\eqref{item: G_0-nilp}$\Rightarrow$\eqref{item: nil}.
\end{lemma}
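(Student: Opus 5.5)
We prove the three implications in turn, each by a short containment argument using only the local units $1_e$ and the definitions in \Cref{def: nilpotency conditions}.

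\emph{(a)$\Rightarrow$(b).} Suppose $U^n=0$. Then for every $e\in\Gamma_0$ we have $1_eU^n=0$ and $U^n1_e=0$. Taking $n_e:=n$ for all $e$, $U$ is objectwise nilpotent on both sides, and in particular on at least one side.

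\emph{(b)$\Rightarrow$(c).} Assume first that $U$ is right objectwise nilpotent, with $1_eU^{n_e}=0$. Since $U$ is closed under products, $1_eU\cdot 1_eU\subseteq 1_eU^2$, and inductively
\[
(1_eU)^{m}\subseteq 1_eU^{m}.
\]
Hence $(1_eU)^{n_e}\subseteq 1_eU^{n_e}=0$, so $U$ is right $\Gamma_0$-nilpotent. If instead $U$ is left objectwise nilpotent, the symmetric inclusion $(U1_e)^m\subseteq U^m1_e$ gives $(U1_e)^{n_e}=0$ for every $e$. By \Cref{lem: eqiv nil e G_0-nilpotent}(3) this is equivalent to right $\Gamma_0$-nilpotency, so in either case $U$ is $\Gamma_0$-nilpotent. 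The only point needing care in this step is this use of \Cref{lem: eqiv nil e G_0-nilpotent}(3), since $\Gamma_0$-nilpotency was defined on the right.

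\emph{(c)$\Rightarrow$(d).} By \Cref{lem: eqiv nil e G_0-nilpotent}(1) it suffices to show that each $1_eU1_e$ is gr-nil. Let $a\in 1_eU1_e$ be homogeneous. By \Cref{lem: eqiv nil e G_0-nilpotent}(2) there is $n_e$ with $(1_eU1_e)^{n_e}=0$, so $a^{n_e}=0$.

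Alternatively, one can argue directly. Let $a\in U_\gamma$. If $d(\gamma)\neq r(\gamma)$, then $a^2=0$. Otherwise set $e:=r(\gamma)=d(\gamma)$; then $a=1_ea\in 1_eU$, and $a^{n_e}\in(1_eU)^{n_e}=0$.
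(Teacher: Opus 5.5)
Your proof is correct and is essentially the paper's argument, which rests on the same containments $(1_eU)^n\subseteq 1_eU^n\subseteq U^n$ and $(U1_e)^n\subseteq U^n1_e\subseteq U^n$ together with the left/right symmetry of $\Gamma_0$-nilpotency from \Cref{lem: eqiv nil e G_0-nilpotent}(3); the paper treats the step (c)$\Rightarrow$(d) as clear. One small correction: $1_eU\cdot 1_eU\subseteq 1_eU^2$ does not follow from closure under products alone but from $U1_e\subseteq U$, which is where condition (iii) in the definition of a graded subrng is used. It can fail otherwise: for the ungraded subring $U=kN$ of $\M_2(k)$ with $N=E_{11}+E_{12}-E_{21}-E_{22}$, one has $U^2=0$, yet $E_{11}N$ is a nonzero idempotent.
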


\begin{proof}
Clearly, if $U$ is (left or right) $\Gamma_0$-nilpotent, then $U$ is gr-nil. The result now follows because 
 \[(U1_e)^n\subseteq U^n1_e\subseteq U^n \quad\text{ and }\quad (1_eU)^n\subseteq 1_eU^n\subseteq U^n\]
for all $e\in\Gamma_0$ and $n\in\mathbb{N}$.   
\end{proof}

In certain situations, some of the implications in \Cref{lem: implications nilpotency conditions} are reversible.  For example, we will see in \Cref{teo: R G_0-art => rad(R) G_0-nilpotente}(4) that a gr-nil graded right ideal in a  right $\Gamma_0$-artinian ring is $\Gamma_0$-nilpotent. The second part of  \Cref{lem: soma de nilpotentes} offers yet another example of this phenomenon.

\begin{lemma} 
\label{lem: soma de nilpotentes}
Let $R$ be a $\Gamma$-graded ring and let $U_1,U_2, \dots, U_m$ be nilpotent graded right  ideals of $R$.  
Then $U_1 +U_2+ \cdots + U_m$ is nilpotent.
In particular, if $\Gamma'_0(U)$ is finite and $U$ is $\Gamma_0$-nilpotent, then $U$ is nilpotent.
\end{lemma}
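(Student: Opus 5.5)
By induction on $m$, it is enough to treat two nilpotent graded right ideals $U,V$ with $U^p=0$ and $V^q=0$. I will show $(U+V)^{p+q-1}=0$, or at least $(U+V)^{pq}=0$; either bound suffices.

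Expanding, $(U+V)^n$ is a sum of products $W_1W_2\cdots W_n$ with each $W_i\in\{U,V\}$. Since $U$ is a right ideal, $UR\subseteq U$, so any product beginning with $U$ and followed by arbitrary factors lies in $U$. Group a word into maximal blocks of consecutive equal letters, say $U^{a_1}V^{b_1}U^{a_2}\cdots$.

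I will first count the $U$-factors. A word contains either at least $p$ factors $U$ or at least $q$ factors $V$ when $n\ge p+q-1$. Suppose it contains at least $p$ factors $U$, at positions $i_1<\dots<i_p$. Write the word as $X_0\,(U X_1)(U X_2)\cdots(U X_p)$, where each $X_j$ is a (possibly empty) product of factors. Each block $UX_j$ is contained in $U$, because $U$ is a right ideal. Hence the word lies in $X_0U^p=0$ when $X_0$ is nonempty, and in $U^p=0$ otherwise. The same argument applies with $V$ and $q$. Therefore $(U+V)^{p+q-1}=0$.

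For the sum of $m$ ideals, I either iterate this (the sum of two nilpotent graded right ideals is again a nilpotent graded right ideal), or argue directly with the pigeonhole principle: if $U_i^{n_i}=0$, then $(U_1+\cdots+U_m)^{\sum(n_i-1)+1}=0$. Graded-ness plays no role here. I only need that sums of graded right ideals are graded right ideals, so that the induction stays in the class.

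For the ``in particular'' part, I need the summands to be graded right ideals. Since the lemma states it for $U$ among graded subrngs, I take $U$ to be a graded right ideal, which is the intended setting. Then $U=\bigoplus_{e}1_eU$, because $U\subseteq R=\bigoplus_e 1_eR$ and $1_e$ acts on the left on homogeneous elements. Moreover, $1_eU=0$ for all $e$ outside a finite set, namely those with $1_e\neq0$ and $1_eU\neq0$. The relevant set is contained in $\Gamma_0'(U)$ (or its left counterpart; I need to check that $1_eU\neq 0$ forces $e$ into the finite set $\Gamma'_0(U)$, and this is the step needing the most care regarding which side $\Gamma_0'$ refers to). Each $1_eU$ is a graded right ideal, since $1_e$ is homogeneous and the left multiplication preserves the right ideal property. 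By right $\Gamma_0$-nilpotency, each $1_eU$ is nilpotent. So $U$ is a finite sum of nilpotent graded right ideals, and the first part gives the conclusion. If $\Gamma_0'(U)$ is the right-side support $\{e: U1_e\ne0\}$, I will instead use \Cref{lem: eqiv nil e G_0-nilpotent}(3) to pass to the symmetric condition and handle the finite set by noting $U1_e\ne0$ and $1_fU1_e\neq 0$ imply $f=r(\gamma)$ for some degree. In the worst case I will note that $\Gamma'_0$ of a graded right ideal controls both sides through the degrees in its support.
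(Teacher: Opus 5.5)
Your proof is correct and takes the paper's route. The first part is the standard pigeonhole argument behind Lam's Lemma 4.10, which the paper cites (your bound $p+q-1$ is fine), and the second part is the paper's decomposition $U=\bigoplus_{e\in\Gamma'_0(U)}U(e)$ into finitely many nilpotent graded right ideals. The side ambiguity you worry about does not arise: by definition $\Gamma'_0(U)=\{e\in\Gamma_0: U(e)\neq 0\}$ with $U(e)=\bigoplus_{r(\gamma)=e}U_\gamma=1_eU$, so your main line applies directly and the fallback is unnecessary (its claim that $\Gamma'_0$ of a graded right ideal controls both sides is false in general, e.g. for the first row of $\mathrm{UT}_{\mathbb{N}}(A)$).
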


\begin{proof}
It is well known that  a finite sum of nilpotent one-sided ideals is nilpotent, see for example \cite[Lemma 4.10]{Lam1}.

The final part is clear from the decomposition $U=\bigoplus_{e\in\Gamma'_0(U)}U(e)$.
\end{proof}

For the next result we will need the following definition from \cite{chainconditions_Jacobsonradical}. A family $\{M_i:i\in I\}$  of $\Gamma$-graded right $R$-modules is \emph{$\Gamma_0$-finite} if the set
 $I_e:=\{i\in I: M_i(e)\neq0\}$ is finite for all $e\in\Gamma_0$. 

\begin{lemma} 
\label{lem: soma de G_0-nilpotentes}
Let $R$ be a $\Gamma$-graded ring and let $\{U_i:i\in I\}$ be a $\Gamma_0$-finite family of $\Gamma_0$-nilpotent graded right ideals. Then $U=\sum_{i\in I} U_i$ is $\Gamma_0$-nilpotent.
\end{lemma}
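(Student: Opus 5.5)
We need to show that for each $e\in\Gamma_0$ the ideal $1_eU$ is nilpotent. Fix $e$. Since the $U_i$ are graded right ideals, we have $1_eU=\sum_{i\in I}1_eU_i$. The set $I_e=\{i\in I: U_i(e)\neq 0\}$ is finite by $\Gamma_0$-finiteness of the family. The first step is to relate $1_eU_i$ to $U_i(e)$.

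For a graded right ideal $V$, the component $V(e)$ (as a graded right $R$-module) is $\bigoplus_{r(\gamma)=e}V_\gamma=1_eV$, using that $1_{r(\gamma)}a=a$ for homogeneous $a$ and that $1_e a=0$ when $r(\gamma)\neq e$. Hence $1_eU_i=U_i(e)$, and this vanishes for $i\notin I_e$. Consequently
\[
1_eU=\sum_{i\in I_e}1_eU_i,
\]
which is a finite sum.

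Each $1_eU_i$ is a right ideal of $R$, being $1_e$ times a right ideal. It is nilpotent by the hypothesis that $U_i$ is right $\Gamma_0$-nilpotent. It is also graded, since $1_e$ is homogeneous of degree $e$ and $U_i$ is graded. Applying \Cref{lem: soma de nilpotentes} to the finitely many nilpotent graded right ideals $1_eU_i$, $i\in I_e$, we conclude that $1_eU$ is nilpotent. As $e$ was arbitrary, $U$ is right $\Gamma_0$-nilpotent, hence $\Gamma_0$-nilpotent by \Cref{lem: eqiv nil e G_0-nilpotent}(3).

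The only point requiring care is the identification $1_eU_i=U_i(e)$, which is what converts the $\Gamma_0$-finiteness hypothesis into finiteness of the sum. This identification follows directly from object unitality and the graded decomposition, so I expect no real obstacle.
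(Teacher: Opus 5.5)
Your proof is correct and is essentially the paper's argument: fix $e$, use $\Gamma_0$-finiteness to write $U(e)=1_eU$ as a finite sum of the nilpotent graded right ideals $U_i(e)=1_eU_i$, and apply \Cref{lem: soma de nilpotentes}. You spell out the identification $1_eU_i=U_i(e)$ and the final appeal to the left--right symmetry in \Cref{lem: eqiv nil e G_0-nilpotent}(3); the paper leaves both implicit.
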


\begin{proof}
Let $e\in\Gamma_0$. Since $I_e:=\{i\in I: U_i(e)\neq0\}$ is finite, there exist $i_1,\dotsc, i_n\in I$ such that $U(e)=U_{i_1}(e)+\dotsb+U_{i_n}(e)$. Since each $U_{i_k}(e)$ is nilpotent, \Cref{lem: soma de nilpotentes} implies that $U(e)$ is nilpotent.
\end{proof}

\begin{lemma}
    Let $R$ be a $\Gamma$-graded ring and let $U_1,U_2, \dots, U_m$ be right objectwise nilpotent graded  ideals of $R$.  
Then $U_1 +U_2+ \cdots + U_m$ is right objectwise nilpotent.
\end{lemma}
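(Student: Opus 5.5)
By induction on $m$ it is enough to treat two summands. So let $U$ and $V$ be right objectwise nilpotent graded ideals and put $W=U+V$. Fix $e\in\Gamma_0$. We need $n$ with $1_eW^n=0$.

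The natural first try is to expand $W^n$ into words in $U$ and $V$ and use the two-sided ideal property. A word containing at least $p$ letters $U$ lies in $U^p$, since each $V$ letter is absorbed into a neighbouring ideal factor. Similarly, a word with at least $q$ letters $V$ lies in $V^q$. So $W^{p+q-1}\subseteq U^p+V^q$. This does not finish the proof: $1_eU^p=0$ only for $p\ge n_e(U)$, but $1_eV^q$ need not vanish for the same reason. Taking $p=n_e(U)$ and $q=n_e(V)$ does work, because $1_e(U^p+V^q)=1_eU^p+1_eV^q=0$. Hence $n=n_e(U)+n_e(V)-1$ suffices.

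The main step to verify is therefore the absorption claim. Let $x_1\cdots x_N$ be a product with each $x_i\in U\cup V$. Suppose at least $p$ of the factors lie in $U$, in positions $i_1<\dots<i_p$. Group the product as $(x_1\cdots x_{i_1})(x_{i_1+1}\cdots x_{i_2})\cdots(x_{i_{p-1}+1}\cdots x_N)$, so that the $k$-th group contains $x_{i_k}$.

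Each group is a product of elements of $R$ containing a factor from the two-sided ideal $U$, so each group lies in $U$. The last group also lies in $U$, since it contains $x_{i_p}$ and any trailing factors are in $R$. Hence the whole product lies in $U^p$.

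Since $W^N$ is spanned by such products, and each product of length $N=p+q-1$ has at least $p$ letters from $U$ or at least $q$ letters from $V$, we get $W^N\subseteq U^p+V^q$. Multiplying on the left by $1_e$ then gives $1_eW^N=0$ with $N=n_e(U)+n_e(V)-1$.

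Finally, $W$ is a graded ideal, being a sum of graded ideals. So $W$ is right objectwise nilpotent, and induction gives the statement for $U_1+\dots+U_m$. I do not expect a real obstacle. The only subtlety is that two-sidedness of the ideals is needed for the absorption, which is why the statement assumes graded ideals rather than one-sided ideals.
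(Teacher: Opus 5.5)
Your proof is correct and is essentially the paper's argument, namely the standard proof of Lam's Lemma~4.10 applied after multiplying on the left by $1_e$: expand the product of sums, note that each word has enough factors from one of the two ideals, absorb the remaining factors using two-sidedness, and induct on $m$. The only differences are that you spell out the absorption step and first prove $(U+V)^{p+q-1}\subseteq U^p+V^q$ before multiplying by $1_e$. This gives the slightly sharper exponent $n_e(U)+n_e(V)-1$ in place of the paper's $n_1+n_2$.
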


\begin{proof}
The argument is very similar to \cite[Lemma 4.10]{Lam1}.
Let $e\in\Gamma_0.$
Suppose that $n_1,\dots,n_m\in\mathbb{N}$ are such that $U_i^{n_i}(e)=0$ for each $i=1,\dots,m$.
Note that if $a_1,a_2\dots,a_{n_1+n_2}\in U_1$ and $b_1,b_2\dots,b_{n_1+n_2}\in U_2$ with $a_1\in U_1(e)$ and $b_1\in U_2(e)$, then
\begin{equation}
\label{eq: soma de nilpotentes}
   (a_1+b_1)(a_2+b_2)\cdots(a_{n_1+n_2}+b_{n_1+n_2})
\end{equation}
is a sum of terms of the form $c_1c_2\cdots c_{n_1+n_2}$
where $c_i\in\{a_i,b_i\}$ for each $i$.
In each of these terms, the first element belongs to $R(e)$ and there will be at least $n_1$ factors from $U_1$ or else at least $n_2$ factors from $U_2$.
Since $U_1$ and $U_2$ are graded  ideals of $R$ such that $U_1^{n_1}(e)=U_2^{n_2}(e)=0$, it follows that the product \eqref{eq: soma de nilpotentes} equals to zero.
Therefore, $(U_1+U_2)^{n_1+n_2}(e)=0$.
Inductively, we get $(U_1+U_2+\cdots+U_m)^{n_1+n_2+\cdots+n_m}(e)=0$.  
\end{proof}

\begin{lemma} 
\label{lem: soma de gr-nil}
Let $R$ be a $\Gamma$-graded ring and let $\{U_i:i\in I\}$ be a  family of gr-nil graded ideals. Then $U=\sum_{i\in I} U_i$ is gr-nil.
\end{lemma}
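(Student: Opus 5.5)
By \Cref{lem: eqiv nil e G_0-nilpotent}(1), it suffices to show that $1_eU1_e$ is gr-nil for each $e\in\Gamma_0$. Equivalently, every homogeneous element $a\in U_\gamma$ with $d(\gamma)=r(\gamma)=e$ is nilpotent. Homogeneous elements whose degree has $d(\gamma)\neq r(\gamma)$ already square to zero.

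Fix such an $a$. Since $U$ is a graded ideal, being a sum of graded ideals, we can write $a=u_1+\dots+u_k$ with each $u_j\in U_{i_j}$. Taking homogeneous components of degree $\gamma$, which is allowed because each $U_{i_j}$ is graded, we may assume each $u_j\in (U_{i_j})_\gamma$. Multiplying on both sides by $1_e$, we may further assume $u_j\in 1_eU_{i_j}1_e$. Hence it suffices to prove that a finite sum of gr-nil graded ideals $V_1+\dots+V_k$ is gr-nil, and by induction it is enough to treat $k=2$.

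For two gr-nil graded ideals $V,W$, I would mimic the classical argument that a sum of two nil ideals is nil. Let $a=v+w$ be homogeneous of degree $\gamma$ with $d(\gamma)=r(\gamma)=e$, where $v\in V_\gamma$ and $w\in W_\gamma$. Pass to the graded quotient ring $\bar R=R/V$; this is a $\Gamma$-graded (object unital) ring, and $\bar a=\bar w$. Since $w$ is nilpotent, $w^n=0$ for some $n$, so $a^n\in V$. Moreover, $a^n$ is homogeneous of degree $\gamma^n$, so $a^n\in V_{\gamma^n}$ is a homogeneous element of the gr-nil ideal $V$. Therefore $a^n$ is nilpotent, and so $a$ is nilpotent.

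The main obstacle is only bookkeeping. One has to make sure that quotienting by a graded ideal preserves homogeneity, and that $a^n$ lands in the homogeneous part of $V$; both follow from $V$ being a graded ideal, so that $a^n\in V\cap R_{\gamma^n}=V_{\gamma^n}$. One also needs that the $\Gamma_0$-reduction via $1_e$ keeps the elements inside the $V_i$, which holds because these are two-sided ideals.
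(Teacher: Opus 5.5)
Your proof is correct and follows essentially the same route as the paper. The paper also reduces to the sum of two gr-nil graded ideals and then cites the classical argument from Lam: for homogeneous $a=v+w$ one gets $a^n\in V$, and this forces $a$ to be nilpotent. Two small differences from the paper: your preliminary reduction to $1_eU1_e$ is harmless but not needed, and your explicit checks that $v,w$ can be taken in degree $\gamma$ and that $a^n\in V\cap R_{\gamma^n}$ is homogeneous are exactly the graded bookkeeping the paper leaves implicit.
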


\begin{proof}

It suffices to prove that if $U_1$ and $U_2$ are gr-nil graded ideals, then $U_1+U_2$ is gr-nil.
The proof of this is the same as in the ungraded case; see, for example, \cite[Lemma~10.25]{Lam1}.
\end{proof}

We now state some results concerning gr-nil graded ideals and the graded Jacobson radical of graded rings.

\begin{proposition}
\label{prop: radgr contem todos os gr-nil}
Let $R$ be a $\Gamma$-graded ring and $U$ be a graded right ideal of $R$
 If $U_e$ is a nil right ideal of $R_e$ for every $e\in\Gamma_0$. then $U \subseteq \radgr (R)$.  In particular,  $\radgr (R)$ contains every gr-nil graded right ideal of $R$.
\end{proposition}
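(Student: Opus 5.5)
We verify criterion (2) of \Cref{teo: a carac de radgr(R)} for every homogeneous element of $U$. Since $U$ is a graded right ideal, $U=\bigoplus_{\gamma}U_\gamma$, and $\radgr(R)$ is a graded ideal. So it suffices to show $U_\gamma\subseteq\radgr(R)$ for each $\gamma\in\Gamma$. Fix $a\in U_\gamma$ and $x\in R_{\gamma^{-1}}$, and put $e:=r(\gamma)$.

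First, $ax\in U$ because $U$ is a right ideal. Also $ax\in R_\gamma R_{\gamma^{-1}}\subseteq R_{\gamma\gamma^{-1}}=R_e$. Hence $ax\in U\cap R_e=U_e$. By hypothesis $U_e$ is a nil right ideal of $R_e$, so $ax$ is nilpotent, say $(ax)^n=0$.

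Next, $1_e-ax$ is invertible in $R_e$, since $\sum_{k=0}^{n-1}(ax)^k$ (with $(ax)^0=1_e$) is a two-sided inverse by the usual geometric series computation in the unital ring $R_e$. In particular $1_e-ax$ is right invertible in $R_e$. By \Cref{teo: a carac de radgr(R)}, (2)$\Rightarrow$(1), we get $a\in\radgr(R)$. Therefore $U\subseteq\radgr(R)$.

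For the final assertion, let $U$ be a gr-nil graded right ideal. Then for each $e\in\Gamma_0$, $U_e=U\cap R_e$ is a right ideal of $R_e$: it is closed under right multiplication by $R_e$, because $U$ is a right ideal and $R_eR_e\subseteq R_e$. Moreover every element of $U_e$ is homogeneous of degree $e$, hence nilpotent. So $U_e$ is nil, and the first part gives $U\subseteq\radgr(R)$. No step poses a real obstacle. The only care needed is in checking that $ax$ lands in the degree-$e$ component, so that the nil hypothesis on $U_e$ applies. The case $1_e=0$ is trivial, since then $R_e=0$ and the condition holds vacuously.
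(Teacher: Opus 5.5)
Your proof is correct and follows essentially the paper's argument: $ax$ lies in $U_{r(\gamma)}$ and is therefore nilpotent, so $1_{r(\gamma)}-ax$ is invertible via the geometric series, and \Cref{teo: a carac de radgr(R)} gives $a\in\radgr(R)$. The paper leaves the ``in particular'' clause implicit; you check it explicitly, namely that a gr-nil $U$ has each $U_e$ a nil right ideal of $R_e$.
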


\begin{proof}
Let $\gamma \in \Gamma$ and nonzero $a \in U_{\gamma}$.  
Then, for each $x \in R_{\gamma^{-1}}$, $ax \in U_{r(\gamma)}$ is nilpotent, say $(ax)^n = 0$ for some positive integer $n$.  
Therefore, $1_{r(\gamma)} - ax$ is invertible in $R_{r(\gamma)}$ with inverse $\sum_{i = 0}^{n-1} (ax)^i$.  
Hence, $a \in \radgr(R)$ by \Cref{teo: a carac de radgr(R)}(3).
\end{proof}

\begin{corollary}
\label{coro: J gr-nil e rad(R/J)=0 => J=rad(R)}
    Let $R$ be a $\Gamma$-graded ring.
    If $J$ is a graded ideal of $R$ such that $J_e$ is nil for every $e\in\Gamma_0$ and $\radgr(R/J)=0$, then $J=\radgr(R)$.
\end{corollary}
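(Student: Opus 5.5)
The plan is to prove the two inclusions $J\subseteq\radgr(R)$ and $\radgr(R)\subseteq J$ separately, each from one earlier result.

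For $J\subseteq\radgr(R)$, note that $J$ is in particular a graded right ideal of $R$. By hypothesis, $J_e$ is a nil right ideal of $R_e$ for every $e\in\Gamma_0$; it is a right ideal of $R_e$ because $J$ is a graded ideal and $R_e$ preserves degree $e$. \Cref{prop: radgr contem todos os gr-nil} then gives $J\subseteq\radgr(R)$ directly.

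For the reverse inclusion, use the hypothesis $\radgr(R/J)=0$ together with \Cref{prop:outras_caracs_rad(R)}(4). That result says $\radgr(R)$ is the smallest graded ideal $J'$ of $R$ with $\radgr(R/J')=0$, so it yields $\radgr(R)\subseteq J$.

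Alternatively, one can argue directly: the canonical map $\varphi\colon R\to R/J$ is a gr-homomorphism, and by \cite[Proposition 5.8(1)]{chainconditions_Jacobsonradical} it sends $\radgr(R)$ into $\radgr(R/J)=0$. Hence $\radgr(R)\subseteq\ker\varphi=J$.

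Combining the two inclusions gives $J=\radgr(R)$. There is no real obstacle. The only point needing care is the first step, where one must check that the nil hypothesis on each $J_e$ is exactly the form required by \Cref{prop: radgr contem todos os gr-nil}, namely that $U_e$ is a nil right ideal of $R_e$.
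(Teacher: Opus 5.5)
Your proposal is correct and matches the paper's proof: \Cref{prop: radgr contem todos os gr-nil} gives $J\subseteq\radgr(R)$, and \Cref{prop:outras_caracs_rad(R)}(4) gives $\radgr(R)\subseteq J$. Your alternative argument via the canonical map $R\to R/J$ is exactly how the paper proves item (4) itself, so it is the same argument unpacked.
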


\begin{proof}
    If $J_e$ is a nil ideal for every $e\in\Gamma_0$, then $J\subseteq\radgr(R)$ by \Cref{prop: radgr contem todos os gr-nil}. 
    If $\radgr(R/J)=0$, then it follows from \Cref{prop:outras_caracs_rad(R)}(4) that $\radgr(R)\subseteq J$.
\end{proof}

\begin{proposition}
\label{prop: rad(R) = conj dos a tq ax nilpot}
    Let $R$ be a $\Gamma$-graded ring such that $\rad(R_e)$ is a nil ideal of $R_e$ for every $e\in\Gamma_0$.
    Then, for each $\gamma\in\Gamma$, we have
    \begin{align*}
        \radgr(R)_\gamma&=\{a\in R_\gamma: ax \text{ is nilpotent for every } x\in R_{\gamma^{-1}}\}\\
        &=\{a\in R_\gamma: ya \text{ is nilpotent for every } y\in R_{\gamma^{-1}}\}.
    \end{align*}
\end{proposition}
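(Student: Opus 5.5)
We prove the two equalities by comparing each set with $\radgr(R)_\gamma$ through \Cref{teo: a carac de radgr(R)} and \Cref{prop:outras_caracs_rad(R)}(2). Write $e=r(\gamma)$ and $f=d(\gamma)$. Then for $a\in R_\gamma$ and $x\in R_{\gamma^{-1}}$ we have $ax\in R_e$ and $xa\in R_f$.

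\emph{First inclusion.} Let $a\in\radgr(R)_\gamma$ and $x\in R_{\gamma^{-1}}$. Since $\radgr(R)$ is a graded ideal, $ax\in\radgr(R)_e$. By \Cref{prop:outras_caracs_rad(R)}(2), $\radgr(R)_e=\rad(R_e)$, which is nil by hypothesis. Hence $ax$ is nilpotent. Likewise $xa\in\radgr(R)_f=\rad(R_f)$ is nilpotent. Since the statement's second set uses $ya$ with $y\in R_{\gamma^{-1}}$, this gives $\radgr(R)_\gamma$ inside both sets.

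\emph{Reverse inclusion for the first set.} Suppose $ax$ is nilpotent for every $x\in R_{\gamma^{-1}}$. Then for each such $x$, the element $1_e-ax$ is invertible in $R_e$, with inverse $\sum_{i\ge0}(ax)^i$ (a finite sum). By \Cref{teo: a carac de radgr(R)}(3), $a\in\radgr(R)$.

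\emph{Reverse inclusion for the second set.} Suppose $ya$ is nilpotent for every $y\in R_{\gamma^{-1}}$. We reduce to the first set using the standard identity that $ax$ is nilpotent whenever $xa$ is: if $(xa)^n=0$, then $(ax)^{n+1}=a(xa)^nx=0$. Taking $y=x$, every $ax$ is nilpotent, so $a\in\radgr(R)$ by the previous paragraph.

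There is no real obstacle. The only point needing care is that the hypothesis "$\rad(R_e)$ is nil" is applied at the correct object, $r(\gamma)$ for products $ax$ and $d(\gamma)$ for products $xa$, together with the identification $\radgr(R)_e=\rad(R_e)$ from \Cref{prop:outras_caracs_rad(R)}(2).
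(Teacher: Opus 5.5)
Your proof is correct and follows the paper's argument. The inclusion of $\radgr(R)_\gamma$ in each set uses that $\radgr(R)_{r(\gamma)}=\rad(R_{r(\gamma)})$ (resp.\ $\radgr(R)_{d(\gamma)}=\rad(R_{d(\gamma)})$) is nil. For the reverse inclusion you invert $1_{r(\gamma)}-ax$ by a finite geometric series and apply \Cref{teo: a carac de radgr(R)}(3), which is exactly what the paper's appeal to \Cref{prop: radgr contem todos os gr-nil} with $U=aR$ amounts to.

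The one difference is in the second equality, where the paper just says ``by symmetry''. You instead reduce to the first set via $(ax)^{n+1}=a(xa)^nx$. This shows directly that the two right-hand sets coincide, and it avoids needing a left-handed version of \Cref{teo: a carac de radgr(R)}.
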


\begin{proof}
    We only prove the first equality; the second follows by symmetry.

    The inclusion $(\supseteq)$ holds by \Cref{prop: radgr contem todos os gr-nil} applied to $U=aR$ for each $a$ that belongs to the set on the right-hand side of the equality.
    For the other inclusion, note that if $a \in \radgr(R)_{\gamma}$ and $x \in R_{\gamma^{-1}}$, then $ax \in \radgr(R)_{r(\gamma)}=\rad(R_{r(\gamma)})$ by \Cref{prop:outras_caracs_rad(R)}(2).
\end{proof}

The following consequence of \Cref{prop: rad(R) = conj dos a tq ax nilpot}  can be regarded as a generalization of \cite[Corollary~II.1.18]{Coelho}.

\begin{corollary}
\label{coro: rad of the category of finite length mod}
    Let $A$ be a unital ring and $\Mod$-$A$ be the category of all right $A$-modules.
    Suppose $\mathcal{C}$ is a small full subcategory of $\Mod$-$A$ such that all objects of $\mathcal{C}$ are modules of finite length. 
    The following assertions are equivalent for $M,N\in\mathcal{C}_0$ and $g\in\Hom_A(M,N)$:
    \begin{enumerate}
        \item $g\in\radgr(R_\mathcal{C})_{(N,M)}$.
        \item $gg'$ is nilpotent for every $g'\in\Hom_A(N,M)$. 
        \item $g'g$ is nilpotent for every $g'\in\Hom_A(N,M)$. 
    \end{enumerate}
\end{corollary}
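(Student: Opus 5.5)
The plan is to apply \Cref{prop: rad(R) = conj dos a tq ax nilpot} to the $\mathcal{C}_0\times\mathcal{C}_0$-graded ring $R_\mathcal{C}$ of \Cref{ex: anel de categoria}. The homogeneous component of degree $(N,M)$ is $\Hom_A(M,N)$, and the inverse degree $(M,N)$ has component $\Hom_A(N,M)$. The identity components are the endomorphism rings $(R_\mathcal{C})_{(M,M)}=\End_A(M)$ for $M\in\mathcal{C}_0$. The only hypothesis we need to check is therefore that $\rad(\End_A(M))$ is a nil ideal for every object $M$ of $\mathcal{C}$.

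This is where the finite length assumption is used, and it is the only real step. For a module $M$ of finite length, Fitting's lemma says that for each $f\in\End_A(M)$ there is $n$ with $M=\im f^n\oplus\ker f^n$, where $f$ restricts to an automorphism of $\im f^n$ and is nilpotent on $\ker f^n$.

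Now let $f\in\rad(\End_A(M))$ and let $\pi$ be the projection onto $\im f^n$ along $\ker f^n$, so $\pi f=f\pi$. The element $\pi f\in\rad(\End_A(M))$ acts on $\im f^n$ as an automorphism $u$. Let $h$ be the endomorphism equal to $u^{-1}$ on $\im f^n$ and to $0$ on $\ker f^n$. Then $h\pi f=\pi$. Since $f\in\rad$, the element $1-\pi=1-h\pi f$ is invertible. But $1-\pi$ is an idempotent, and an invertible idempotent is the identity, so $\pi=0$.

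Hence $\im f^n=0$, i.e.\ $f^n=0$, and $\rad(\End_A(M))$ is nil. (Alternatively, one could cite that $\End_A(M)$ is semiprimary for finite length $M$, but the Fitting argument is self-contained.)

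With the hypothesis verified, \Cref{prop: rad(R) = conj dos a tq ax nilpot} with $\gamma=(N,M)$ gives that $g\in\radgr(R_\mathcal{C})_{(N,M)}$ if and only if $g\cdot g'$ is nilpotent in $R_\mathcal{C}$ for all $g'\in\Hom_A(N,M)$. Equivalently, this holds if and only if $g'\cdot g$ is nilpotent for all such $g'$.

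It remains to match the products with compositions. By the convention in \Cref{ex: anel de categoria}, the product of $g$ and $g'$ in $R_\mathcal{C}$ is a composite of morphisms, either $gg'\in\End_A(N)$ or $g'g\in\End_A(M)$ depending on the order. So the two characterizations become (2) and (3) respectively. Since each of them is equivalent to (1), it does not matter which ring product corresponds to which composition. This gives (1)$\Leftrightarrow$(2)$\Leftrightarrow$(3).
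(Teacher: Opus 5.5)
Your proof is correct and follows the paper's route: apply \Cref{prop: rad(R) = conj dos a tq ax nilpot} to $R_\mathcal{C}$ after checking that $\rad(\End_A(M))$ is nil for each $M\in\mathcal{C}_0$. The only difference is in that check: the paper cites the stronger fact that $\rad(\End_A(M))$ is nilpotent for finite-length $M$ (from Lam's exercises), whereas you prove nil-ness directly with Fitting's lemma, and nil-ness is all the proposition requires.
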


\begin{proof}
    This follows from \Cref{prop: rad(R) = conj dos a tq ax nilpot}, since $\rad((R_\mathcal{C})_{(M,M)})=\rad(\End_A(M))$ is nilpotent for every $M\in\mathcal{C}_0$ \cite[Exercise 21.24]{LamExercises}.
\end{proof}

\section{Graded Jacobson radical of one-sided \texorpdfstring{$\Gamma_0$}{Gamma0}-artinian rings}
\label{sec:artinian}

In what follows, we turn our attention to the graded Jacobson radical of right $\Gamma_0$-artinian rings.
The following example demonstrates that the graded Jacobson radical need not be nilpotent for this class of rings. 
It also provides an example of a graded ring whose graded Jacobson radical is right and left objectwise nilpotent, but not nilpotent.

\begin{example}\label{ex: radical not nilpotent for G_0-artinian}
    For each integer $n>1$, let $A_n$ be a two-sided artinian ring such that $(\rad(A_n))^n=0$ and  $(\rad(A_n))^{n-1}\neq0$ (for example, $A_n$ being the ring of $n\times n$ upper triangular matrices over a semisimple ring).
    Consider $R:=\bigoplus_{n>1}A_n$ with the obvious $\mathbb{N}\times\mathbb{N}$-grading with support $\{(n,n):n>1\}$.
    Then $R$ is a two-sided $\Gamma_0$-artinian ring such that $\radgr(R)$ is not nilpotent, since $(\radgr(R))^n\supseteq(\rad(A_{n+1}))^n\neq0$. If we set $e_n=(n,n)$, then $(\radgr(R))^n(e_n)=(e_n)(\radgr(R))^n=\rad(A_{n})^n=0$. \qed
\end{example}

Although not nilpotent, the graded Jacobson radical of a  one-sided $\Gamma_0$-artinian ring satisfies the following nilpotency-type conditions.

\begin{theorem} 
\label{teo: R G_0-art => rad(R) G_0-nilpotente}
The following statements hold for a right  $\Gamma_0$-artinian ring $R$:
\begin{enumerate}
    \item $\radgr(R)$ is the greatest $\Gamma_0$-nilpotent graded (right, left) ideal of $R$.
    \item $\radgr(R)$ is the greatest gr-nil graded (right, left) ideal of $R$.
    \item  If $U$ is a graded (right, left)  ideal of $R$ such that $U_e$ is a nil (right, left)  ideal of $R_e$ for every $e\in\Gamma_0$, then $U$ is $\Gamma_0$-nilpotent.
    \item Every gr-nil graded (right, left) ideal of $R$ is $\Gamma_0$-nilpotent.
\end{enumerate}
\end{theorem}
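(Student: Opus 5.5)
The key claim is that $\radgr(R)$ is $\Gamma_0$-nilpotent; the remaining statements then follow from the containment results of the previous section. By \Cref{lem: eqiv nil e G_0-nilpotent}(2), it suffices to fix $e\in\Gamma_0$ and show that $(1_eJ1_e)^{n}=0$ for some $n$, where $J=\radgr(R)$. Equivalently, by the proof of that lemma, it suffices to show $(1_eJ)^n=0$.

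For this we use the classical Nakayama-type argument, carried out inside the gr-artinian module $1_eR=R(e)$. Consider the descending chain of graded right ideals
\[
1_eJ\supseteq (1_eJ)^2\supseteq\cdots ,
\]
all contained in $1_eR$. Each $(1_eJ)^k=1_eJ(1_eJ)^{k-1}$ is a graded right ideal inside $1_eR$, so gr-artinianity of $1_eR$ gives $k$ with $I:=(1_eJ)^k=(1_eJ)^{k+1}$, hence $I\cdot 1_eJ=I$.

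Suppose $I\neq0$. Among graded right ideals $K\subseteq 1_eR$ with $K\cdot 1_eJ\neq 0$, choose a minimal one $K$ by the artinian property; $I$ itself is such a $K$, so the family is nonempty. Then there is a homogeneous $b\in 1_eJ$ with $Kb\neq0$. Since $Kb\cdot 1_eJ\neq 0$ requires care, we instead choose $b$ so that $Kb\cdot(1_eJ)\neq0$, using $K(1_eJ)(1_eJ)\ne0$. This holds because $K\,1_eJ$ is again in the family, which gives $K\,1_eJ=K$ by minimality and hence $K(1_eJ)^2=K\ne0$. With $b\in h(1_eJ)$ of degree $\gamma$, $r(\gamma)=e$, the graded right ideal $Kb$ lies in $1_eR$ (as $K\subseteq 1_eR$) and in the family, and $Kb\subseteq K$; so $Kb=K$ by minimality. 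Picking homogeneous $a\in K$ with $ab$ of degree suitable, one gets $a=kb$, and by homogeneity we get some homogeneous $c\in K$ and $y\in R$ with $c=cyb$, and $yb\in J_{d(\deg c)}$. Then $c(1-yb)=0$ with $1-yb$ invertible in $R_{d(\deg c)}$ by \Cref{prop:outras_caracs_rad(R)}(1), so $c=0$, a contradiction. This homogeneity bookkeeping, ensuring the element $yb$ lands in a single $R_f$ with $f\in\Gamma_0$, is the main obstacle; I would handle it by working degreewise and projecting onto homogeneous components, exactly as in the group-graded proof \cite[Section~2.9]{NastasescuVanOystaeyen}. Hence $I=0$ and $J$ is $\Gamma_0$-nilpotent.

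Now the items follow.
\begin{enumerate}
\item[(3)] If $U$ is a graded one-sided ideal with each $U_e$ nil, then $U\subseteq J$ by \Cref{prop: radgr contem todos os gr-nil}; for left ideals, use its left-right symmetric version, valid by \Cref{teo: a carac de radgr(R)} and \Cref{prop:outras_caracs_rad(R)}(1). Thus $1_eU\subseteq 1_eJ$ is nilpotent, so $U$ is $\Gamma_0$-nilpotent.
\item[(4)] This is a special case of (3).
\item[(2)] $J$ is $\Gamma_0$-nilpotent, hence gr-nil by \Cref{lem: implications nilpotency conditions}, and it contains every gr-nil graded one-sided ideal.
\item[(1)] $J$ is $\Gamma_0$-nilpotent, and every $\Gamma_0$-nilpotent graded one-sided ideal is gr-nil, hence contained in $J$.
\end{enumerate}
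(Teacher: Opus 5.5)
Your reduction to showing $(1_eJ)^n=0$, the stabilization $I=(1_eJ)^k=(1_eJ)^{k+1}$ in the gr-artinian module $R(e)$, and your derivation of items (1)--(4) from the $\Gamma_0$-nilpotency of $J$ are all fine. The central minimality argument, however, has a genuine gap, and the problem is not homogeneity bookkeeping.

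\emph{First}, with the family defined by $K\cdot 1_eJ\neq 0$, the claim that $K\,1_eJ$ again lies in the family is circular. Membership means exactly $K(1_eJ)^2\neq 0$, which is what you are trying to deduce. Minimality of $K$ only says that graded right ideals $K'\subsetneq K$ satisfy $K'\,1_eJ=0$. This is perfectly compatible with $K\,1_eJ\subsetneq K$ and $K(1_eJ)^2=0$; the only object you know to be idempotent is $I$, not $1_eJ$.

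\emph{Second}, for a single element $b\in 1_eJ$, the set $Kb$ is in general not a right ideal, since $kbr$ need not lie in $Kb$. So minimality among graded right ideals says nothing about it.

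\emph{Third}, even granting $Kb=K$, you only get $a=kb$ with $k$ a different element of $K$ (iterating, $a=k_nb^n$). That is not the fixed-point relation $c=c\,yb$ for a single $c$ that your last step needs.

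The fix is to put the idempotent $I$ in the family (note $I\cdot I=(1_eJ)^{2k}=I$) and to multiply a single element against $I$. Take $K\subgr R(e)$ minimal with $KI\neq 0$, and a homogeneous $a\in K_\alpha$ with $aI\neq0$. Then $aI$ is a graded right ideal contained in $K$ with $(aI)I=aI\neq0$, so $aI=K\ni a$. Write $a=ay$ with $y\in I$ and compare components of degree $\alpha$; only $\gamma=d(\alpha)$ satisfies $\alpha\gamma=\alpha$. This gives $a=ay'$ with $y'\in I_{d(\alpha)}\subseteq J_{d(\alpha)}$. Hence $a(1_{d(\alpha)}-y')=0$, and $a=0$ by \Cref{prop:outras_caracs_rad(R)}(1), a contradiction.

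This is exactly the argument the paper uses for \Cref{prop:nilpotence of rad(R)}(1), applied here to $1_eJ$ instead of $J$. With this repair your route is a valid, self-contained alternative to the paper's proof. The paper instead passes to the corner ring $1_eR1_e$, which is right gr-artinian and graded by the group $e\Gamma e$. It quotes the nilpotency of that ring's graded Jacobson radical from the group-graded theory of N\u{a}st\u{a}sescu--Van Oystaeyen. It then transfers back via $1_e\radgr(R)1_e=\radgr(1_eR1_e)$ from \Cref{prop:outras_caracs_rad(R)}(3).
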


\begin{proof}
We prove (1) and (2) at the same time.

Since any gr-nil (right, left) ideal of $R$ is contained in $\radgr(R)$  by \Cref{prop: radgr contem todos os gr-nil}, and $\Gamma_0$-nilpotence implies the gr-nil property, it is enough to show that $\radgr(R)$ is $\Gamma_0$-nilpotent.

Let $e\in\Gamma_0$.
By \cite[Lemma~3.9(2)]{chainconditions_Jacobsonradical}, $1_eR1_e$ is a right gr-artinian $e\Gamma e$-graded ring. 
It is shown in the proof of \cite[Corollary 2.9.7]{NastasescuVanOystaeyen} that $\radgr(1_eR1_e)$ is nilpotent.
Therefore, by \Cref{prop:outras_caracs_rad(R)}(3), $1_e\radgr(R)1_e$ is nilpotent for each $e\in\Gamma_0$.
Hence, $\radgr(R)$ is $\Gamma_0$-nilpotent.

(3) By \Cref{prop: radgr contem todos os gr-nil}, $U\subseteq\radgr(R)$, and thus $U$ is $\Gamma_0$-nilpotent by (1).

(4) Follows from (3).
\end{proof}

\begin{corollary}
\label{coro: rad(R) como unico nil com quoc gr-ss}
    Let $R$ be a right $\Gamma_0$-artinian ring and $J$ be a graded ideal of $R$. 
    Then $J=\radgr(R)$ if and only if $J$ is gr-nil and $R/J$ is a gr-semisimple ring.
\end{corollary}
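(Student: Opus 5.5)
The two implications can be handled separately, each by combining results already established.

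For the forward direction, suppose $J=\radgr(R)$. By \Cref{teo: R G_0-art => rad(R) G_0-nilpotente}(1), $\radgr(R)$ is $\Gamma_0$-nilpotent, hence gr-nil by \Cref{lem: implications nilpotency conditions}. Since $R$ is right $\Gamma_0$-artinian, \Cref{prop: R gr-art => R/rad(R) gr-ss} shows that $R$ is gr-semilocal, so $R/J=R/\radgr(R)$ is gr-semisimple.

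For the converse, assume $J$ is gr-nil and $R/J$ is gr-semisimple.

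\emph{Inclusion $J\subseteq\radgr(R)$.} Since $J$ is gr-nil, \Cref{prop: radgr contem todos os gr-nil} gives this directly.

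\emph{Inclusion $\radgr(R)\subseteq J$.} By \Cref{prop:outras_caracs_rad(R)}(4), it suffices to show $\radgr(R/J)=0$. So the key step is: a gr-semisimple graded ring $S$ has $\radgr(S)=0$. Write $S_S=\bigoplus_{i}T_i$ with each $T_i$ gr-simple. For each $k$, the submodule $\mathfrak m_k=\bigoplus_{i\neq k}T_i$ is a graded right ideal with $S/\mathfrak m_k\cong_{gr}T_k$ gr-simple. Hence $\mathfrak m_k$ is gr-maximal, since any graded right ideal strictly between $\mathfrak m_k$ and $S$ would give a proper nonzero graded submodule of $T_k$. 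The intersection of all the $\mathfrak m_k$ is $0$, so $\radgr(S)=0$.

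One wrinkle is that the paper's gr-maximality is stated as "maximal among proper graded right ideals", and one must check it agrees with the quotient being gr-simple; this is immediate from the correspondence theorem for graded submodules. Alternatively, one could cite the result in \cite{chainconditions_Jacobsonradical} identifying $\radgr$ of a gr-semisimple ring as zero. Either way, we obtain $\radgr(R)\subseteq J$, and hence equality.

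This is the only non-formal point, and I expect it to be routine. Note also that the right $\Gamma_0$-artinian hypothesis is used only in the forward direction.
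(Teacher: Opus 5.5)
Your proof is correct and follows the paper's argument. The forward direction uses that $\radgr(R)$ is gr-nil and that right $\Gamma_0$-artinian rings are gr-semilocal. The converse combines two facts: $\radgr(R)$ contains every gr-nil graded right ideal, and $\radgr(R)$ is the smallest graded ideal $J$ with $\radgr(R/J)=0$. The paper packages these as \Cref{coro: J gr-nil e rad(R/J)=0 => J=rad(R)}.

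The only difference is that you check $\radgr(R/J)=0$ for the gr-semisimple ring $R/J$ directly, using the gr-maximal complements $\bigoplus_{i\neq k}T_i$. The paper instead cites this fact from its companion work on chain conditions (Theorem~5.11 there).
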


\begin{proof}
    $\radgr(R)$ is gr-nil by \Cref{teo: R G_0-art => rad(R) G_0-nilpotente}(2), and $R/\radgr(R)$ is a gr-semisimple ring by \Cref{prop: R gr-art => R/rad(R) gr-ss}.

    If $R/J$ is a gr-semisimple ring, then $\radgr(R/J)=0$ by \cite[Theorem 5.11]{chainconditions_Jacobsonradical}. 
    Therefore, the result follows from \Cref{coro: J gr-nil e rad(R/J)=0 => J=rad(R)}.
\end{proof}

As we will see in \Cref{prop: contra-exemplos J fort nilp}, the conclusion of \Cref{teo: R G_0-art => rad(R) G_0-nilpotente}(1) is optimal, in the sense that there exist one-sided $\Gamma_0$-artinian rings whose graded Jacobson radical is neither left nor right objectwise nilpotent. Moreover, \Cref{prop: contra-exemplos J fort nilp} demonstrates that there exist one-sided $\Gamma_0$-artinian rings whose graded Jacobson radical is one-sided objectwise nilpotent, but not objectwise nilpotent on the opposite side.

Under additional conditions on $\radgr(R)$ or $R$, the conclusions of \Cref{teo: R G_0-art => rad(R) G_0-nilpotente}(1) can be improved.
\begin{proposition}\label{prop:nilpotence of rad(R)}
Let $R$ be a $\Gamma$-graded ring. The following statements hold:
\begin{enumerate}
    \item Suppose that $R$ is right $\Gamma_0$-artinian and that there exists a positive integer $k$ such that $\radgr(R)^k=\radgr(R)^{k+1}$, then $\radgr(R)$ is the largest nilpotent graded left ideal and the largest nilpotent graded right ideal.
    \item If $R$ is right strongly $\Gamma_0$-artinian, then $\radgr(R)$ is  the largest nilpotent graded left ideal and the largest nilpotent graded right ideal.

    \item If $R$ is right gr-artinian, then $\radgr(R)$ is  the largest nilpotent graded left ideal and the largest nilpotent graded right ideal.
\end{enumerate}
\end{proposition}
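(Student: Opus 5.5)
The plan is to prove (1) first and then deduce (2) and (3) from it. In each case the "largest" part comes for free once nilpotency is known. Indeed, every nilpotent graded one-sided ideal is gr-nil, hence contained in $\radgr(R)$ by \Cref{prop: radgr contem todos os gr-nil}; for left ideals we use the left-right symmetric description of $\radgr(R)$ from \Cref{prop:outras_caracs_rad(R)}(1). So the real content is showing that $J:=\radgr(R)$ is nilpotent.

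For (1), put $N:=J^k=J^{k+1}$, so that $NJ=N$. Suppose $N\neq0$. We imitate the classical Hopkins argument, but inside one component $R(e)=1_eR$. Choose $e$ with $1_eN\neq0$. Since $1_eN\,J=1_eN$, the family of graded right ideals $I\subseteq 1_eR$ with $IN\neq0$ is nonempty: $1_eR\cdot N\supseteq 1_eN\neq0$.

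By the right $\Gamma_0$-artinian hypothesis, $1_eR$ is gr-artinian, so this family has a minimal element $I$. Then:
\begin{itemize}
\item There is a homogeneous $a\in I$ with $aN\neq0$, because $N$ is graded and $IN$ is spanned by such products.
\item $aR$ is a graded right ideal contained in $I$ with $(aR)N=aN\neq0$, using $RN=N$; more carefully, $aN\subseteq aRN$, so $aRN\neq 0$. Minimality gives $aR=I$.
\item $(aN)N=aN^2=aN\neq0$ and $aN\subseteq I$, so minimality also gives $aN=aR\ni a$.
\end{itemize}
Hence $a=ax$ for some homogeneous $x\in N_{\,d(\deg a)}\subseteq J_{d(\deg a)}=\rad(R_{d(\deg a)})$. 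The degree bookkeeping is the fussy point: take homogeneous components and use that $x$ must have degree $d(\deg a)$. Then $a(1-x)=0$, and $1-x$ is invertible by \Cref{prop:outras_caracs_rad(R)}(1), so $a=0$, a contradiction. Thus $N=0$ and $J$ is nilpotent.

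For (2), we want to produce the stabilization $J^k=J^{k+1}$ required by (1). By \Cref{lem:implications_chain_conditions}, $R$ is right $\Gamma_0$-artinian. Consider the descending chain of graded right ideals $J\supseteq J^2\supseteq\cdots$.
\begin{itemize}
\item It cannot stabilize in a component and then move again. For each $e$, $1_eJ^i$ is a descending chain in the gr-artinian module $1_eR$, so it stabilizes. If $1_eJ^i=1_eJ^{i+1}$, then $1_eJ^{i+1}=1_eJ^iJ=1_eJ^{i+1}J=1_eJ^{i+2}$, and by induction it stays equal forever.
\item So the chain satisfies the tightness implication. If it were strictly decreasing forever, it would be a tight strictly decreasing infinite chain, contradicting strong $\Gamma_0$-artinianity.
\end{itemize}
Hence $J^k=J^{k+1}$ for some $k$, and (1) applies. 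The one subtlety is that the definition requires a strictly decreasing chain, so we argue by contradiction from non-stabilization.

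For (3), gr-artinian implies strongly $\Gamma_0$-artinian by \Cref{lem:implications_chain_conditions}, so (3) follows from (2).

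The main obstacle is (1): adapting the Nakayama-type minimal-counterexample argument to non-unital rings with local units, and in particular tracking degrees so that $x$ lands in $R_{d(\deg a)}$ and the invertibility of $1_{d(\deg a)}-x$ can be used.
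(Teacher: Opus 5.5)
Your proposal is correct and takes essentially the same route as the paper. For (1) you use the minimal-element argument inside $R(e)=1_eR$ to produce $a=ax$ with $x\in \radgr(R)_{d(\deg a)}$; for (2) you use tightness of $J\supseteq J^2\supseteq\cdots$, which you verify explicitly where the paper only asserts it; and you reduce (3) to (2) via \Cref{lem:implications_chain_conditions}. The one step you leave implicit, that $N^2=N$ follows from $NJ=N$ (since $NJ^k=N$), is harmless; the paper uses it too when it notes $J^k=J^{k+l}$ for all $l\geq 0$.
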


\begin{proof}
Set $J:=\radgr(R)$.

(1) Since every nilpotent graded right (left) ideal is gr-nil, it follows from \Cref{prop: radgr contem todos os gr-nil} that it suffices to prove that $J$ is nilpotent.

Observe that $J^k = J^{k+l}$ for every $l \geq 0$. Set  $ I:=J^k$.
Suppose that $I(e) \neq 0$ for some $e \in \Gamma_0$.
Since $R$ is a right $\Gamma_0$-artinian ring, we can choose a minimal element $U_0$ in $\{U \subgr R(e) : UI \neq 0\}$.
Take $x \in (U_0)_\gamma$, for some $\gamma \in \Gamma$, such that $xI \neq 0$.
By the idempotency of $I$ and the minimality of $U_0$, we have $xI = U_0$.
Thus, there exists $a \in I_{d(\gamma)}$ such that $x = xa$.
Hence, $x(1_{d(\gamma)} - a) = 0$.
Since $a \in \radgr(R)_{d(\gamma)}$, it follows that $1_{d(\gamma)} - a$ is invertible in $R_{d(\gamma)}$, which implies that $x = 0$, a contradiction.
Therefore, $I = 0$, which shows that $J$ is nilpotent.

(2) The descending chain
$J \supseteq J^2 \supseteq J^3 \supseteq \cdots \supseteq J^n \supseteq \cdots$
is tight.
Since $R_R$ is right strongly $\Gamma_0$-artinian, there exists a positive integer $k$ such that $J^k = J^{k+l}$ for every $l \geq 0$. Hence the result follows from (1).

(3) If $R$ is right gr-artinian, then $R$ is right strongly $\Gamma_0$-artinian by \cite[Lemma 3.42]{chainconditions_Jacobsonradical}. Hence $\radgr(R)$ is nilpotent by (2). We observe that the nilpotence of $\radgr(R)$ could also be shown as follows. By \cite[Lemma~3.20]{chainconditions_Jacobsonradical}, $R$ is a right $\Gamma_0$-artinian ring and $\Gamma'_0(R)$ is a finite set. 
Then $\radgr(R)$ is $\Gamma_0$-nilpotent by \Cref{teo: R G_0-art => rad(R) G_0-nilpotente}, and it follows that $\radgr(R)$ is nilpotent by \Cref{lem: soma de nilpotentes}.
\end{proof}

For the proofs that follow, we will need an auxiliary result involving  the concepts in the following definition. 

\begin{definition}
    Let $R$ be a $\Gamma$-graded ring and $M$ a $\Gamma$-graded right  $R$-module. 
    
    We say that $M$ is \emph{$\Gamma_0$-finitely generated} if $M(e)$ is finitely generated for all $e \in \Gamma_0$.

    We say that $M$ is \emph{gr-cyclic} if $M = mR$ for some $m \in h(M)$, and \emph{$\Gamma_0$-gr-cyclic} if $M(e)$ is gr-cyclic for all $e \in \Gamma_0$.
\end{definition}

\begin{lemma}
\label{lema: M fg e J G0-fg => MJ fg}
    Let $R$ be a $\Gamma$-graded ring, $M$ be a $\Gamma$-graded right $R$-module, and $J$ be a graded ideal of $R$.
    If $M$ is finitely generated (resp. gr-cyclic) and $J$ is $\Gamma_0$-finitely generated (resp. $\Gamma_0$-gr-cyclic) as a right $R$-module, then $MJ$ is finitely generated (resp. gr-cyclic).
    In particular, if $J$ is $\Gamma_0$-finitely generated (resp. $\Gamma_0$-gr-cyclic) as a right $R$-module, then $J^n$  is also for all $n\geq1$.
\end{lemma}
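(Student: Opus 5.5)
The argument reduces everything to homogeneous generators and uses the decomposition $J=\bigoplus_{e\in\Gamma_0}1_eJ$. First I would check that $J(e)=1_eJ$ as right $R$-modules. The homogeneous components of $J(e)$ are the $J_\gamma$ with $r(\gamma)=e$, and $1_{r(\gamma)}a=a$ for $a\in J_\gamma$, so these components sum to $1_eJ$. Next, I would reduce to homogeneous generators of $M$. If $M$ is finitely generated, decomposing each generator into homogeneous components gives finitely many homogeneous generators $m_1,\dots,m_k$, so $M=\sum_i m_iR$. Then
\[MJ=\sum_i m_iRJ=\sum_i m_iJ,\]
using $RJ\subseteq J$ for one inclusion and, for the other, that $J$ is contained in $RJ$ by the local units ($a=1_{r(\gamma)}a$). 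In the gr-cyclic case we have $k=1$ and $M=mR$ with $m$ homogeneous.

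Fix a homogeneous $m_i$ of degree $\sigma_i$ and put $e_i=d(\sigma_i)$. Since $m_i=m_i1_{e_i}$, we get $m_iJ=m_i1_{e_i}J=m_iJ(e_i)$. By hypothesis $J(e_i)$ is finitely generated, say $J(e_i)=\sum_j a_{ij}R$; after decomposing, we may take the $a_{ij}$ homogeneous. Then
\[m_iJ=\sum_j m_ia_{ij}R.\]
Hence $MJ$ is generated by the finitely many elements $m_ia_{ij}$. In the gr-cyclic case $J(e)=aR$ with $a$ homogeneous of degree $\gamma$, $r(\gamma)=e$, so $MJ=maR$. Here $ma$ is homogeneous: it lies in $M_{\sigma\gamma}$, and $\sigma\gamma$ is defined because $d(\sigma)=e=r(\gamma)$. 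Thus $MJ$ is gr-cyclic. (If $ma=0$ then $MJ=0$, which is trivially gr-cyclic with $m=0$, or we adopt the convention that $0$ counts as gr-cyclic.)

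For the "in particular" statement, I would argue by induction on $n$ that $J^n(e)$ is finitely generated (resp. gr-cyclic) for each $e$. Note that
\[J^{n+1}(e)=1_eJ^{n+1}=(1_eJ^n)J=J^n(e)\,J,\]
and $J^n(e)$ is a graded right $R$-module which is finitely generated (resp. gr-cyclic) by the induction hypothesis. Applying the first part with $M=J^n(e)$ finishes the induction.

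The only delicate step is the identity $m_iJ=m_iJ(e_i)$, which kills the other components $J(f)$ with $f\neq e_i$. This is where the object-unital hypothesis is essential: $m_i1_f=0$ for $f\neq d(\sigma_i)$, by the grading convention $M_\sigma R_f\subseteq M_{\sigma f}=0$. Without it, $MJ$ would require infinitely many generators. The rest is routine bookkeeping.
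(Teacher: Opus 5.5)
Your proof is correct and follows essentially the same route as the paper. You pass to finitely many homogeneous generators $m_i$ of degree $\sigma_i$ and use $m_iJ=m_iJ(d(\sigma_i))$ to obtain the generating set $\{m_ia_{ij}\}$ (or the single homogeneous generator $ma$ in the gr-cyclic case), then iterate via $J^{n+1}(e)=J^n(e)\cdot J$.
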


\begin{proof}
    Suppose that $M=m_1R+\cdots+m_nR$ for some $m_i\in M_{\gamma_i}$, $\gamma_i\in\Gamma$, and $J(d(\gamma_i))=x_{i1}R+\cdots+x_{in_i}R$ for each $i=1,\dots,n$.
    Then
    \[MJ\subseteq \sum_{i=1}^n m_iRJ=\sum_{i=1}^n m_iJ=\sum_{i=1}^n m_iJ(d(\gamma_i))\subseteq\sum_{i=1}^n\sum_{k=1}^{n_i} m_ix_{ik}R\subseteq MJ.\]
    If $M=mR$ for some $m\in M_\gamma$ and $J(d(\gamma))=xR$ for some $x\in\h(R)$, then $MJ=mJ=mJ(d(\gamma))=mxR$ is gr-cyclic.

    In particular, if $J$ is $\Gamma_0$-finitely generated (resp. $\Gamma_0$-gr-cyclic) as a right $R$-module and $e\in\Gamma_0$, then $J^2(e)=J(e)\cdot J$ is finitely generated (resp. gr-cyclic), $J^3(e)=J^2(e)\cdot J$ is finitely generated (resp. gr-cyclic), and so on.
\end{proof}

We end this subsection with an important object in the study of $\Gamma_0$-artinian rings and their graded Jacobson radicals.

\begin{definition}
    Let $R$ be a $\Gamma$-graded ring and $J$ a graded subrng of $R$. We define $J^\infty$ as the graded subrng $J^\infty=\bigcap_{n\geq 1}J^n$.
\end{definition}

\begin{proposition}\label{prop: J infinity}
Let $R$ be a $\Gamma$-graded ring. Suppose that $R$ is a right $\Gamma_0$-artinian ring and set $J:=\radgr(R)$. The following assertions hold:
\begin{enumerate}
    \item  For each $e\in\Gamma_0$, there exists a positive integer $n_e$ such that $J^\infty(e)=J^{n_e+l}(e)$ for every $l\geq0$. 
    \item $J^\infty\cdot J=J^\infty$.
    \item $J$ is right objectwise nilpotent if and only if $J^\infty=\{0\}$.
    \item  If $J^k$ is $\Gamma_0$-finitely generated as a right ideal for some  $k\in\mathbb{N}\cup\{\infty\}$, then $J$ is right objectwise nilpotent.
    \item $\radgr\left(R/J^\infty\right)=J/J^\infty$ is right objectwise nilpotent.
\end{enumerate}
\end{proposition}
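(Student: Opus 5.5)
The plan is to work objectwise throughout, using that $J^n(e)=1_eJ^n$ is a graded right ideal inside $R(e)=1_eR$, which is gr-artinian.

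For (1), fix $e\in\Gamma_0$. The chain $J(e)\supseteq J^2(e)\supseteq\cdots$ is a descending chain of graded submodules of the gr-artinian module $R(e)$, so it stabilizes: there is $n_e$ with $J^{n_e}(e)=J^{n_e+l}(e)$ for all $l\ge 0$. Since $J^\infty(e)=\bigcap_n 1_eJ^n$ and the chain is decreasing, this intersection equals $J^{n_e}(e)$, which gives (1).

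For (2), the inclusion $J^\infty J\subseteq J^\infty$ is clear, because $J^\infty J\subseteq J^nJ\subseteq J^n$ for every $n$. For the reverse, I argue componentwise. Using (1),
\[
J^\infty(e)=J^{n_e+1}(e)=1_eJ^{n_e}J=J^{n_e}(e)J=J^\infty(e)J.
\]
Summing over $e$ and using $J^\infty=\bigoplus_e J^\infty(e)$ yields $J^\infty=J^\infty J$.

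For (3), suppose first that $J$ is right objectwise nilpotent. Then for each $e$ we have $1_eJ^{m_e}=0$, so $J^\infty(e)\subseteq J^{m_e}(e)=0$, and hence $J^\infty=0$. Conversely, if $J^\infty=0$, then (1) gives $J^{n_e}(e)=0$ for each $e$.

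For (4), the case $k=\infty$ is the core: I will show that if $J^\infty(e)$ is finitely generated, then $J^\infty(e)=0$. Given (2), $J^\infty(e)=J^\infty(e)J$, so I want a graded Nakayama lemma.

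1. Take generators $m_1,\dots,m_t$ of $J^\infty(e)$, which may be assumed homogeneous since the module is graded.
2. Argue by induction on $t$, or minimality, in the standard way. Write $m_t=\sum_i m_ia_i$ with $a_i\in J$ homogeneous of the appropriate degree.
3. The coefficient $a_t$ lies in $J_{d(\deg m_t)}$, so $1-a_t$ is invertible in $R_{d(\deg m_t)}$ by \Cref{prop:outras_caracs_rad(R)}(1). Hence $m_t$ lies in the span of $m_1,\dots,m_{t-1}$, and induction gives $0$.

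Then (3) applies. For finite $k$, suppose $J^k$ is $\Gamma_0$-finitely generated. I need $J^\infty(e)$ finitely generated, and I expect this to be the main obstacle. By \Cref{lema: M fg e J G0-fg => MJ fg}, $J^{km}(e)=J^k(e)\cdot(J^k)^{m-1}$ is finitely generated for every $m$. Choosing $m$ with $km\ge n_e$, (1) gives $J^\infty(e)=J^{km}(e)$, which is therefore finitely generated, and the $k=\infty$ case applies.

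For (5), set $\bar R=R/J^\infty$. Since $J^\infty$ is a graded ideal contained in $J$, the standard correspondence gives $\radgr(\bar R)=J/J^\infty$; I would verify this via \Cref{prop:outras_caracs_rad(R)}(1), since $1_e+J_e$ consists of units, and via the fact that $\bar R/(J/J^\infty)\cong R/J$ has zero radical, using (4) of that proposition. Also, $\bar R$ is right $\Gamma_0$-artinian as a quotient. Let $\bar J=J/J^\infty$. Then $\bar J^n(e)=(J^n(e)+J^\infty)/J^\infty$, and by (1) this is $0$ for $n=n_e$. So $\bar J$ is right objectwise nilpotent directly, without needing (3) for $\bar R$.
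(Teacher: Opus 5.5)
Your proposal is correct and follows the paper's proof essentially step for step. Parts (1)--(3) are identical, and (4) uses the same reduction via \Cref{lema: M fg e J G0-fg => MJ fg} to the case where $J^\infty$ is $\Gamma_0$-finitely generated. The only differences are that you prove the graded Nakayama step by hand, where the paper cites the graded Nakayama lemma, and that you derive $\radgr(R/J^\infty)=J/J^\infty$ from \Cref{prop:outras_caracs_rad(R)}(1),(4), where the paper cites this equality from the companion paper.
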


\begin{proof}
(1) For each $e\in\Gamma_0$, consider the descending chain of right ideals 
\[
J(e)\supseteq J^2(e)\supseteq \dotsb \supseteq J^r(e)\supseteq \dotsb
\]
Since $R$ is right $\Gamma_0$-artinian, there exists a positive integer $n_e$ such that $J^{n_e}(e)=J^{n_e+l}(e)$ for all $l\geq 0$.
Therefore, $J^\infty(e)=J^{n_e}(e)$ 
for each $e\in\Gamma_0$. 

(2) $J^\infty=J^\infty\cdot J$ because $J^{n_e}(e)\cdot J=J^{n_e+1}(e)=J^{n_e}(e)$ 
for each $e\in \Gamma_0$. 

(3) By (1), $J^\infty=\{0\}$ if and only if $J$ is right objectwise nilpotent. 

(4)  First, we show that if $J^k$ is $\Gamma_0$-finitely generated as a right ideal for some positive integer $k$, then $J^\infty$  is also $\Gamma_0$-finitely generated.
For each $e\in\Gamma_0$, let $n_e$ be a positive integer as in (1). Now let $k_e$ be a positive integer such that $kk_e\geq n_e$ for each $e\in\Gamma_0$. 
Since $J^k$ is $\Gamma_0$-finitely generated,  \Cref{lema: M fg e J G0-fg => MJ fg} implies that $J^\infty(e)=J^{kk_e}(e)=(J^{k})^{k_e}(e)$ is finitely generated for each $e\in\Gamma_0$. Therefore, $J^\infty$ is $\Gamma_0$-finitely generated. From (2) and the graded Nakayama's Lemma \cite[Proposition~5.12]{chainconditions_Jacobsonradical}, we get that $J^\infty=\{0\}$.

(5) By \cite[Proposition 5.8(3)]{chainconditions_Jacobsonradical}, $\radgr(R/J^\infty)=J/J^\infty$. Now
    \[\radgr(R/J^\infty)^{n_e}(e)=(J^{n_e}/J^\infty)(e)\isogr J^{n_e}(e)/J^\infty(e)=0.\]
    Therefore, $\radgr(R/J^\infty)$ is right objectwise nilpotent.    
\end{proof}

    Recall the following definitions from \cite[Definition~4.29]{chainconditions_Jacobsonradical}.

    \begin{definition}
        Let $R$ be a $\Gamma$-graded ring and $M$ be a $\Gamma$-graded right $R$-module. 
     We define the \emph{gr-socle} $\soc_{\rm gr}^1(M)$ of $M$ as the sum of all gr-simple graded submodules of $M$.
    If there does not exist a gr-simple graded submodule of $M$, we set $\soc_{\rm gr}^1(M)=0$.
    Set $\soc_{\rm gr}^0(M)=\{0\}$. Inductively, for each $n\in\mathbb{N}$, we define $\soc_{\rm gr}^{n+1}(M)$ as the graded submodule of $M$ containing $\soc_{\rm gr}^n(M)$ such that 
    $$\frac{\soc_{\rm gr}^{n+1}(M)}{\soc_{\rm gr}^n(M)}=\soc_{\rm gr}\left(\frac{M}{\soc_{\rm gr}^n(M)}\right).$$ 

    We have the following chain  for each graded right $R$-module $M$:
\begin{equation*}
\label{eq: socle series}    
    0=\soc_{\rm gr}^0(M)\subseteq \soc_{\rm gr}^1(M)\subseteq\soc_{\rm gr}^2(M)\subseteq\cdots\subseteq\soc_{\rm gr}^n(M)\subseteq\cdots\subseteq M
\end{equation*}

We define $\soc_{\rm gr}^{\infty}(M)=\bigcup_{n\geq 0}\soc_{\rm gr}^n(M).$
\end{definition}

\begin{proposition}\label{prop: soc infty}
Let $R$ be a $\Gamma$-graded ring. Suppose that $R$ is a right $\Gamma_0$-artinian ring and set $J:=\radgr(R)$. The following assertions hold.
\begin{enumerate}
    \item For all $\Gamma$-graded right $R$-modules $M$, $$\soc_{\rm gr}^\infty(M)=\{x\in M\colon xJ^\infty=0\}.$$ 
    \item The following statements are equivalent:
    \begin{enumerate}
        \item $\soc_{\rm gr}^\infty(R_R)=R$.
        \item $J$ is right objectwise nilpotent.
        \item $\soc_{\rm gr}^\infty(M)=M$ for all $\Gamma$-graded right $R$-modules $M$.
    \end{enumerate}
     \end{enumerate}
\end{proposition}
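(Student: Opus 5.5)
For (1) I will prove two inclusions, using the canonical socle series and the description of $J^\infty$ in \Cref{prop: J infinity}. The basic input is that, since $R/J$ is gr-semisimple by \Cref{prop: R gr-art => R/rad(R) gr-ss}, a graded module $N$ with $NJ=0$ is a graded $R/J$-module. It is therefore gr-semisimple, so $\soc_{\rm gr}(N)=N$. Conversely, $\soc_{\rm gr}(N)J=0$ for every $N$, because each gr-simple $S$ satisfies $SJ=0$. This last fact holds since $SJ$ is a graded submodule of $S$, and $SJ=S$ would contradict the graded Nakayama lemma \cite[Proposition~5.12]{chainconditions_Jacobsonradical} applied to the cyclic module $S$.

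I will first show by induction that $\soc_{\rm gr}^n(M)=\{x\in M: xJ^n=0\}$. For the step, put $N=M/\soc_{\rm gr}^n(M)$; then $x+\soc^n_{\rm gr}\in\soc_{\rm gr}(N)$ if and only if $xJ\subseteq \soc^n_{\rm gr}(M)$, which is equivalent to $xJ^{n+1}=0$. Next I will pass to the limit. If $x\in\soc^n_{\rm gr}(M)$, then $xJ^\infty\subseteq xJ^n=0$. Conversely, suppose $xJ^\infty=0$. It suffices to treat homogeneous $x$, say of degree $\gamma$ with $e=d(\gamma)$, since both sides are graded submodules. Then $xJ=xJ(e)$, and by \Cref{prop: J infinity}(1) we have $J^{n_e}(e)=J^\infty(e)$. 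Hence $xJ^{n_e}=x1_eJ^{n_e}=xJ^\infty(e)=0$, so $x\in\soc_{\rm gr}^{n_e}(M)$. The main subtlety here is making sure that $xJ^n=x\,(1_eJ^n)$, which follows because $x=x1_e$.

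For (2), (a)$\Rightarrow$(b) is immediate: by (1), $RJ^\infty=0$, and since $1_e\in R$ this gives $J^\infty(e)=1_eJ^\infty=0$ for all $e$. Then \Cref{prop: J infinity}(3) shows that $J$ is right objectwise nilpotent. For (b)$\Rightarrow$(c), the same result gives $J^\infty=0$, so (1) yields $\soc^\infty_{\rm gr}(M)=M$. Finally, (c)$\Rightarrow$(a) is trivial by taking $M=R_R$.
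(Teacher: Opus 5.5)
Your proposal is correct and follows essentially the same route as the paper. Like the paper, you use the finite-level identity $\soc_{\rm gr}^n(M)=\{x\in M: xJ^n=0\}$, reduce to homogeneous $x$ with $e=d(\deg x)$ via $J^{n_e}(e)=J^\infty(e)$ from \Cref{prop: J infinity}(1), and close the cycle (a)$\Rightarrow$(b)$\Rightarrow$(c)$\Rightarrow$(a) through \Cref{prop: J infinity}(3). The only difference is that the paper cites \cite[Proposition~6.5]{chainconditions_Jacobsonradical} for the finite-level identity, whereas you reprove it by induction from the gr-semisimplicity of $R/J$ and graded Nakayama applied to gr-simple (hence gr-cyclic) modules, which is a valid self-contained argument.
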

\begin{proof}
 (1)  $R$ is gr-semilocal because it is right $\Gamma_0$-artinian \cite[Proposition~6.2]{chainconditions_Jacobsonradical}.   By \cite[Proposition~6.5]{chainconditions_Jacobsonradical},
 \begin{equation} \label{eq:socn Jn}
    \soc_{\rm gr}^n(M)=\{x\in M\colon xJ^n=0\} 
 \end{equation} for all $\Gamma$-graded right $R$-modules $M$ and $n\in\mathbb{N}$. Thus, if $x\in \soc_{\rm gr}^\infty(M)$, then $xJ^\infty =0$. On the other hand, let $M$ be a $\Gamma$-graded right $R$-module. Suppose that $y$ is a homogeneous element of degree $\gamma\in\Gamma$ that belongs to $\{x\in M\colon xJ^{\infty}=0\}$.     For $e=d(\gamma)$, there exists a positive integer $n_e$ such that $J^{n_e}(e)=J^{\infty}(e)$  by \Cref{prop: J infinity}(1). Hence, $yJ^{\infty}=0$ is equivalent to $yJ^{n_e}=0$. Therefore, $y\in \soc_{\rm gr}^{n_e}(M)$ by \eqref{eq:socn Jn}.

 (2) (a)$\implies$(b): If (a) holds, then $J^\infty=RJ^\infty=\{0\}$ by (1), and it follows from \Cref{prop: J infinity}(3) that $J$ is right objectwise nilpotent.

 (b)$\implies$(c): If (b) holds, then $J^\infty=\{0\}$ by \Cref{prop: J infinity}(3), and it follows from (1) that $\soc_{\rm gr}^\infty(M)=M$.

 (c)$\implies$(a) is clear.
\end{proof} 


\section{Main results}
\label{sec:main}

In the ungraded context, a ring $R$ is semiprimary if it is semilocal and $\rad(R)$ is nilpotent. An important class of examples of semiprimary rings are one-sided artinian rings.
The Hopkins--Levitzki Theorem asserts that, over a semiprimary ring,
a module is noetherian if and only if it is artinian, see for example \cite[Theorem~4.12]{Lam1}. In particular, a one-sided artinian ring is noetherian on the same side.
In the group graded setting, 
graded versions of these results are valid, see \cite[Corollary~2.9.7]{NastasescuVanOystaeyen}, its proof and then proceeed as in the ungraded case \cite[Theorem~4.12]{Lam1}. In the groupoid graded case, one-sided $\Gamma_0$-artinianity does not imply $\Gamma_0$-noetherianity on the same side, see \cite[Corollary~3.59]{chainconditions_Jacobsonradical}. Thus, we must first realize which is the nilpotency-type condition  
on the graded radical that allows to prove the most general version of the Hopkins--Levitzki theorem. The objectwise nilpotence condition turns out to be the right one, but, unlike nilpotence, this condition is not left-right symmetric, see \Cref{prop: contra-exemplos J fort nilp}(4). Furthermore, one-sided $\Gamma_0$-artinian rings are gr-semilocal by \Cref{prop: R gr-art => R/rad(R) gr-ss}, though their graded Jacobson radical need not be left or right objectwise nilpotent, see \Cref{prop: contra-exemplos J fort nilp}(5).

The natural first step toward identifying the correct nilpotency-type condition is to determine the condition under which a one-sided $\Gamma_0$-artinian ring becomes one-sided $\Gamma_0$-noetherian.

\begin{theorem}[Weak Hopkins--Levitzki Theorem]
\label{teo: art => (noet <=> fort nilp)}
    The following assertions are equivalent for a $\Gamma$-graded ring $R$:
    \begin{enumerate}
        \item $R$ is a right  $\Gamma_0$-artinian and a right  $\Gamma_0$-noetherian ring.
        \item 
        $R$ is a right  $\Gamma_0$-artinian ring and $\radgr(R)$ is $\Gamma_0$-finitely generated as a right  ideal.
        \item  $R$ is a right  $\Gamma_0$-artinian ring and $\radgr(R)^k$ is $\Gamma_0$-finitely generated as a right  ideal for some positive integer $k$.        
        \item $R$ is a right  $\Gamma_0$-artinian ring and $\radgr(R)$ is right objectwise nilpotent.
        \item  $R$ is a right  $\Gamma_0$-artinian ring and $\radgr(R)^\infty=\{0\}$.        
        \item $R$ is a right  $\Gamma_0$-artinian ring and $\soc_{\rm gr}^\infty(R_R)=R$. 
        \item $R$ is a gr-semilocal right  $\Gamma_0$-noetherian ring and $\radgr(R)$ is right objectwise nilpotent.
    \end{enumerate}
\end{theorem}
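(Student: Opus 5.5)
We prove a cycle of implications, using the results on $J^\infty$ and the gr-socle series from \Cref{sec:artinian} together with the characterization of $\Gamma_0$-finite gr-length over gr-semilocal rings. Throughout we write $J:=\radgr(R)$.

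\textbf{Equivalences among (3), (4), (5), (6) and the step (2)$\Rightarrow$(3).} Under the standing right $\Gamma_0$-artinian hypothesis these are essentially already in place. (4)$\Leftrightarrow$(5) is \Cref{prop: J infinity}(3), and (4)$\Leftrightarrow$(6) is \Cref{prop: soc infty}(2). For (3)$\Rightarrow$(4) we invoke \Cref{prop: J infinity}(4) with finite $k$. Finally, (2)$\Rightarrow$(3) is trivial, taking $k=1$.

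\textbf{(1)$\Rightarrow$(2) and (4)$\Rightarrow$(7).} For (1)$\Rightarrow$(2): since $R_R$ is $\Gamma_0$-noetherian, each $J(e)\subgr R(e)$ is a graded submodule of a gr-noetherian module, hence gr-noetherian and therefore finitely generated. So $J$ is $\Gamma_0$-finitely generated. For (4)$\Rightarrow$(7): gr-semilocality comes from \Cref{prop: R gr-art => R/rad(R) gr-ss}. It remains to show $R_R$ is $\Gamma_0$-noetherian, and for this we apply \Cref{prop: G0 comp finito <=> MJn=0} to $M=R_R$. Right objectwise nilpotency gives $n_e$ with $1_eJ^{n_e}=0$. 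Since $R(e)=1_eR$, we get $R(e)J^{n_e}=1_eRJ^{n_e}\subseteq 1_eJ^{n_e}=0$. Together with $\Gamma_0$-artinianity, condition (2) of \Cref{prop: G0 comp finito <=> MJn=0} holds, so $R_R$ has $\Gamma_0$-finite gr-length. By \Cref{prop: comp finito = art + noet}(2), $R_R$ is then $\Gamma_0$-noetherian.

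\textbf{(7)$\Rightarrow$(1).} This is the reverse direction of the same argument. Gr-semilocality allows us to use \Cref{prop: G0 comp finito <=> MJn=0} again. The same computation $R(e)J^{n_e}=0$ combines with $\Gamma_0$-noetherianity to give condition (3) of \Cref{prop: G0 comp finito <=> MJn=0}. Hence $R_R$ has $\Gamma_0$-finite gr-length, and so it is both $\Gamma_0$-artinian and $\Gamma_0$-noetherian, which is (1).

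This yields the chain (1)$\Rightarrow$(2)$\Rightarrow$(3)$\Rightarrow$(4)$\Rightarrow$(7)$\Rightarrow$(1), with (5) and (6) attached to (4). The only place needing care is the identification $R(e)=1_eR$ and the inclusion $1_eRJ^{n}\subseteq 1_eJ^{n}$, which holds because $J^n$ is a two-sided graded ideal. Everything substantive, namely the Nakayama argument in \Cref{prop: J infinity}(4) and the length characterization, has already been established, so no step should be a real obstacle.
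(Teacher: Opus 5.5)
Your proposal is correct and follows essentially the same route as the paper. In both, (4)$\Leftrightarrow$(5), (4)$\Leftrightarrow$(6) and (3)$\Rightarrow$(4) come from the propositions on $J^\infty$ and the gr-socle series, while (1), (4), (7) are linked through the characterization of $\Gamma_0$-finite gr-length over gr-semilocal rings, using $R(e)J^{n}=1_eRJ^{n}\subseteq 1_eJ^{n}$. The only difference is cosmetic: you justify (1)$\Rightarrow$(2) directly (graded submodules of a gr-noetherian module are finitely generated), where the paper cites the corresponding result from the earlier work.
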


\begin{proof}
    By \Cref{prop: R gr-art => R/rad(R) gr-ss}, $R$ is a gr-semilocal ring in all the items.
    Therefore,  the equivalences $(1)\Leftrightarrow(4)\Leftrightarrow(7)$ follow from \Cref{prop: comp finito = art + noet}(2) and \Cref{prop: G0 comp finito <=> MJn=0}, since $R(e)J^n=1_eRJ^n=1_eJ^n=J^n(e)$ for each $e\in\Gamma_0$ and $n\in\mathbb{N}$, where $J=\radgr(R)$.

    The equivalence $(4)\Leftrightarrow(5)$ was proved in \Cref{prop: J infinity}(3).

The equivalence $(4)\Leftrightarrow(6)$ was proved in \Cref{prop: soc infty}(2).
    
    The implication $(1)\Rightarrow(2)$ follows from \cite[Proposition~3.21(1)]{chainconditions_Jacobsonradical}.

    The implication $(2)\Rightarrow(3)$ is clear.

    Finally, $(3)\Rightarrow(4)$ was proved in \Cref{prop: J infinity}(4).    
\end{proof}

We now present an interesting consequence of \Cref{teo: art => (noet <=> fort nilp)}.

\begin{corollary}
\label{coro: R art => R/J^infty noet}
    Let $R$ be a $\Gamma$-graded ring, and set $J:=\radgr(R)$.
    If $R$ is a right $\Gamma_0$-artinian ring, then $R/J^\infty$ is a right $\Gamma_0$-noetherian ring.
\end{corollary}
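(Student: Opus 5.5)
The plan is to apply \Cref{teo: art => (noet <=> fort nilp)} to the quotient ring $\bar R:=R/J^\infty$, using the implication $(4)\Rightarrow(1)$. So we need two facts about $\bar R$: that it is right $\Gamma_0$-artinian, and that $\radgr(\bar R)$ is right objectwise nilpotent.

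First, $J^\infty=\bigcap_{n\geq1}J^n$ is a graded ideal of $R$, being an intersection of graded ideals, so $\bar R$ is a $\Gamma$-graded ring with local units $1_e+J^\infty$. Next I would check that $\bar R$ is right $\Gamma_0$-artinian. For $e\in\Gamma_0$ we have $\bar R(e)=(1_eR+J^\infty)/J^\infty\cong_{\rm gr} 1_eR/J^\infty(e)$. Its graded $\bar R$-submodules are exactly its graded $R$-submodules, since $R$ acts through $\bar R$ and the local units correspond. These are quotients of graded submodules of $R(e)$ containing $J^\infty(e)$. Hence a strictly descending chain in $\bar R(e)$ lifts to a strictly descending chain in $R(e)$. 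So $\bar R(e)$ is gr-artinian because $R(e)$ is. Equivalently, quotients of gr-artinian modules are gr-artinian, and $\bar R_{\bar R}$ is gr-artinian as an $\bar R$-module whenever it is gr-artinian as an $R$-module.

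Second, by \Cref{prop: J infinity}(5), $\radgr(R/J^\infty)=J/J^\infty$, and this ideal is right objectwise nilpotent. With both facts in hand, \Cref{teo: art => (noet <=> fort nilp)} $(4)\Rightarrow(1)$, applied to $\bar R$, gives that $\bar R$ is right $\Gamma_0$-noetherian.

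There is no real obstacle here. The only point requiring care is the identification of graded submodules and of the decomposition $\bar R=\bigoplus_e \bar R(e)$ when passing between $R$ and $\bar R$. This is routine, because $J^\infty$ is graded and $\bar R(e)=(R(e)+J^\infty)/J^\infty$.
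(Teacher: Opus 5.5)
Your proposal is correct and follows the paper's proof. The paper likewise applies the implication $(4)\Rightarrow(1)$ of \Cref{teo: art => (noet <=> fort nilp)} to $R/J^\infty$, using \Cref{prop: J infinity}(5) for the right objectwise nilpotency of $\radgr(R/J^\infty)=J/J^\infty$. The only difference is that you prove directly that $R/J^\infty$ is right $\Gamma_0$-artinian, by pulling chains back along the surjection $R(e)\to (R/J^\infty)(e)$. The paper instead cites \cite[Corollary 3.23]{chainconditions_Jacobsonradical} for this step.
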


\begin{proof}
     By \cite[Corollary 3.23]{chainconditions_Jacobsonradical}, $R/J^\infty$ is a right $\Gamma_0$-artinian ring. By \Cref{prop: J infinity}(5),  $\radgr(R/J^\infty)$ is right objectwise nilpotent. Therefore, $R/J^\infty$ is a right $\Gamma_0$-noetherian ring by \Cref{teo: art => (noet <=> fort nilp)}.
\end{proof}

We have shown in \Cref{teo: art => (noet <=> fort nilp)} that if $R$ right $\Gamma_0$-artinian and right objectwise nilpotent, then $R$ is  right $\Gamma_0$-noetherian. In \Cref{ex:weak_HopLev_but_not_strong_HopLev}, we show that being  right $\Gamma_0$-artinian together with right objectwise nilpotency is not sufficient to guarantee the equivalence of the graded artinian and the graded noetherian properties for graded modules.

Given a gr-semilocal ring whose graded Jacobson radical is one-sided objectwise nilpotent, 
we want to study when gr-artinianity implies gr-noetherianity and viceversa. 
We now present some concepts and a result that are of independent interest and somewhat more general than those required to establish our main results \Cref{teo: left str. G_0-nilp. ==> right noeth.} and \Cref{teo: hopkins-levitski}.

\begin{definition}\label{def:d_finitely_generated_module}
    Let $R$ be a $\Gamma$-graded ring and $M$ a $\Gamma$-graded right $R$-module. 
    \begin{enumerate}
        \item We say that  $M$ is \emph{$d$-finitely generated} if there exist $e_1,\dots,e_n\in\Gamma_0$ such that $M=M1_{e_1}R+\cdots+M1_{e_n}R$.
        
        \item We say that $M$ is \emph{$d$-finitely gr-cogenerated} if, for each family  $\{M_i:i\in I\}$ of graded submodules of $M$ such that, for each $e\in\Gamma_0$,  $\bigcap_{i\in I_e}M_{i}1_e=0$ for some finite subset $I_e\subseteq I$, there exist  $i_1,\dots, i_n\in I$ such that $M_{i_1}\cap\cdots \cap M_{i_n}=0$.
        
    \end{enumerate}
\end{definition}

 Recall from \cite[Definitions 3.1(6) and 3.19(6)]{chainconditions_Jacobsonradical} that a graded module $M$ is called \textit{finitely gr-cogenerated} if, for every family
$\{M_i:i\in I\}$ of graded submodules of $M$ such that
$\bigcap_{i\in I}M_i=0$, there exist $i_1,\dots,i_n\in I$ such that $\bigcap_{t=1}^n M_{i_t}=0$, and $M$ is said to be \textit{$\Gamma_0$-finitely gr-cogenerated} if $M(e)$ is finitely gr-cogenerated for all $e\in\Gamma_0$.

\begin{remark}
\label{rem: fg => d-fg}

    (1) If $\Gamma'_0(R)$ is finite (in particular, if $\Gamma$ is a group), then every $\Gamma$-graded right $R$-module is $d$-finitely generated and $d$-finitely gr-cogenerated.

    (2) Every finitely (gr-co)generated module is $d$-finitely (gr-co)generated.

    (3) Being a $d$-finitely gr-cogenerated module can be regarded as a dual concept of being a $d$-finitely generated module using the following fact: $M$ is $d$-finitely generated if and only if, for each family $\{M_i:i\in I\}$ of graded submodules of $M$ such that, for each $e\in\Gamma_0$,  $\sum_{i\in I_e}M_{i}1_e=M1_e$ for some finite subset $I_e\subseteq I$, there exist $i_1,\dots, i_n\in I$ such that $M_{i_1}+\cdots + M_{i_n}=M$.
     In fact, the ``if part'' follows by considering the family $\{M1_eR:e\in\Gamma_0\}$. Conversely, suppose that  $M=M1_{e_1}R+\cdots+M1_{e_n}R$ for some $e_1,\dots,e_n\in\Gamma_0$, and $\{M_i:i\in I\}$ is a family of graded submodules of $M$ such that, for each $e\in\Gamma_0$, $\sum_{i\in I_e}M_{i}1_e=M1_e$ for some finite subset $I_e\subseteq I$. Then $\sum_{i\in I'}M_i=M$, where $I'=I_{e_1}\cup\cdots\cup I_{e_n}$.
    \qed
\end{remark}

\begin{lemma}
\label{lem: M d-fg => MJ^n=0}
    Let $R$ be a $\Gamma$-graded ring and $M$ be a $\Gamma$-graded right $R$-module. The following assertions hold:
    \begin{enumerate}
        \item If $\radgr(R)$ is right objectwise nilpotent and $M$ is $d$-finitely generated, then there exists $n\in\mathbb{N}$ such that $MJ^n=0$.
        \item If $\radgr(R)$ is left objectwise nilpotent and $M$ is $d$-finitely gr-cogenerated, then there exists $n\in\mathbb{N}$ such that $MJ^n=0$.
    \end{enumerate}
\end{lemma}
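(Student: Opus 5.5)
Throughout, $J:=\radgr(R)$. The basic observation is that $M1_e$ is exactly the homogeneous part of $M$ living over the object $e$. Right multiplication by elements of $R$ never leaves a submodule, so $M1_eR\cdot J^n = M1_eJ^n$ and $M1_eJ^n = M1_e\cdot 1_eJ^n$. Likewise $J^n1_e$ controls which elements of $M$ can land in $M1_e$ after multiplication by $J^n$.

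For (1), write $M=M1_{e_1}R+\cdots+M1_{e_k}R$. Right objectwise nilpotency gives integers $n_{e_i}$ with $1_{e_i}J^{n_{e_i}}=0$. Put $n:=\max_i n_{e_i}$; then $1_{e_i}J^n\subseteq 1_{e_i}J^{n_{e_i}}=0$ for every $i$. Consequently
\[MJ^n=\sum_{i=1}^k M1_{e_i}RJ^n\subseteq\sum_{i=1}^k M1_{e_i}J^n=\sum_{i=1}^k M1_{e_i}\cdot 1_{e_i}J^n=0.\]
Here we use $RJ^n\subseteq J^n$, since $J^n$ is an ideal, and $m1_e=m1_e1_e$. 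This part is routine.

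For (2), we use the dual condition. Left objectwise nilpotency gives $n_e$ with $J^{n_e}1_e=0$ for each $e\in\Gamma_0$. Consider the family of graded submodules $M_j:=\{x\in M: xJ^j=0\}$ for $j\in\mathbb{N}$. This is an increasing chain of graded submodules, and each $M_j$ is graded because $J^j$ is a graded ideal.

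We first claim $M1_e\subseteq$ ... but the dual hypothesis concerns intersections, so instead we use $N_j:=MJ^j$, a decreasing chain of graded submodules. The key check is that for each $e$ we have $N_{n_e}1_e=MJ^{n_e}1_e=0$. Hence for the family $\{N_j\}_{j\in\mathbb{N}}$ we may take $I_e=\{n_e\}$, and then $\bigcap_{j\in I_e}N_j1_e=N_{n_e}1_e=0$.

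To justify reading $\bigcap_{i\in I_e}M_i1_e$ as $(\bigcap_{i\in I_e} M_i)1_e$, note that $(\bigcap M_i)1_e=\bigcap(M_i1_e)$ for graded submodules: $M_i1_e=M_i\cap M1_e$ because $M_i$ is graded.

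By $d$-finite gr-cogeneration there exist $j_1,\dots,j_m$ with $N_{j_1}\cap\cdots\cap N_{j_m}=0$. Since the chain is decreasing, this intersection equals $N_n$ with $n=\max j_t$, so $MJ^n=0$.

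The only step needing care is that $MJ^j$ is a graded submodule and that $MJ^{n_e}1_e\subseteq M(J^{n_e}1_e)$. Both follow because $J^{n_e}$ is a graded ideal and products of homogeneous elements respect the decomposition $M=\bigoplus_e M(e)$. I expect no real obstacle beyond this bookkeeping.
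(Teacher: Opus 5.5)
Your proposal is correct and matches the paper's proof essentially step for step. In (1) you take $n$ to be the maximum of the finitely many $n_{e_i}$ and use $M1_{e_i}RJ^n\subseteq M1_{e_i}\cdot 1_{e_i}J^n=0$; in (2) you apply $d$-finite gr-cogeneration to the decreasing family $\{MJ^j\}_{j\in\mathbb{N}}$ with $I_e=\{n_e\}$, exactly as the paper does. In a final write-up you should delete the abandoned detour through the annihilator submodules $\{x\in M: xJ^j=0\}$.
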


\begin{proof}
Let $J:=\radgr(R)$.

   (1) Suppose that $M=M1_{e_1}R+\cdots+M1_{e_k}R$ for some $e_1,\dots,e_n\in\Gamma_0$.
Then, taking $n_1, n_2, \dotsc, n_k\in\mathbb{N}$ such that 
$J^{n_j}(e_j)=0$ for each $j=1,\dotsc, k$, and $n:=\max\{n_j:1\leq j\leq k\}$, we get
\[MJ^n=\sum_{j=1}^kM1_{e_j}RJ^n=\sum_{j=1}^k M1_{e_j}J^{n}=\sum_{j=1}^kMJ^{n}(e_j)\subseteq \sum_{j=1}^kMJ^{n_j}(e_j)=0.\]

(2) Consider the family $\{MJ^n:n\in\mathbb{N}\}$ of graded submodules of $M$.
For each $e\in\Gamma_0$, there exists $n_e\in\mathbb{N}$ such that $(e)J^{n_e}=0$, and thus $MJ^{n_e}1_e=M\cdot (e)J^{n_e}=0$.
Since $M$ is $d$-finitely gr-cogenerated, we have $0=MJ\cap MJ^2\cap\cdots\cap MJ^n=MJ^n$ for some  $n\in\mathbb{N}$. 
\end{proof}

\begin{theorem}[Asymmetric Hopkins--Levitzki Theorem]
\label{teo: left str. G_0-nilp. ==> right noeth.}
Let $R$ be a $\Gamma$-graded ring  and $M$ be a $\Gamma$-graded right  $R$-module. 
The following statements hold:
\begin{enumerate}
\item  If $R$ is gr-semilocal and $\radgr(R)$ is right objectwise nilpotent, then:
\begin{enumerate}
        \item $M$ is gr-noetherian $\Leftrightarrow$ $M$ is gr-artinian and finitely generated.
        \item $M$ is $\Gamma_0$-noetherian $\Leftrightarrow$ $M$ is $\Gamma_0$-artinian and $\Gamma_0$-finitely generated.
        \item $M$ is strongly $\Gamma_0$-noetherian $\Leftrightarrow$ $M$ is strongly $\Gamma_0$-artinian and $\Gamma_0$-finitely generated.
\end{enumerate}

\item  If $R$ is gr-semilocal and $\radgr(R)$ is left objectwise nilpotent, then:
\begin{enumerate}
    \item $M$ is gr-artinian $\Leftrightarrow$ $M$ is gr-noetherian and finitely gr-cogenerated.
    \item $M$ is $\Gamma_0$-artinian $\Leftrightarrow$ $M$ is $\Gamma_0$-noetherian and $\Gamma_0$-finitely gr-cogenerated. 
    \item $M$ is strongly $\Gamma_0$-artinian $\Leftrightarrow$ $M$ is strongly $\Gamma_0$-noetherian and $\Gamma_0$-finitely gr-cogenerated.
\end{enumerate}

\end{enumerate}
\end{theorem}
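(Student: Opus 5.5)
The plan is to reduce every item to the finite-length characterizations over a gr-semilocal ring, namely \Cref{prop: comp finito <=> MJn=0}, \Cref{prop: G0 comp finito <=> MJn=0} and \Cref{prop: fort G0 comp finito <=> MJn=0}. The nilpotency input, $MJ^n=0$ for $J=\radgr(R)$, will come from \Cref{lem: M d-fg => MJ^n=0}.

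For (1)(a), the direction ($\Rightarrow$) starts from the fact that a gr-noetherian module is finitely generated, since every graded submodule, including $M$ itself, is finitely generated. Then $M$ is $d$-finitely generated by \Cref{rem: fg => d-fg}(2), so \Cref{lem: M d-fg => MJ^n=0}(1) gives $MJ^n=0$. By \Cref{prop: comp finito <=> MJn=0}, $M$ has finite gr-length, hence is gr-artinian. The direction ($\Leftarrow$) is symmetric: finite generation gives $MJ^n=0$, and together with gr-artinianity \Cref{prop: comp finito <=> MJn=0} yields finite gr-length, hence gr-noetherian.

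For (1)(b), I apply the same argument to each $M(e)$. Each $M(e)$ is a graded module, and $M(e)=M(e)1_eR$, so it is even $d$-finitely generated with a single idempotent. This gives $M(e)J^{n_e}=0$, and \Cref{prop: G0 comp finito <=> MJn=0} concludes. Note that $\Gamma_0$-noetherian already forces each $M(e)$ to be finitely generated.

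For (1)(c), I use \Cref{prop: comp finito = art + noet}(2)--(3) together with \Cref{prop: comp G-finito => fort art = fort noet}. By (b), each side implies that $M$ has $\Gamma_0$-finite gr-length, using \Cref{lem:implications_chain_conditions} to pass from strong to plain $\Gamma_0$-conditions. Then \Cref{prop: comp G-finito => fort art = fort noet}(3) converts strong artinianity into strong noetherianity and back. For the finite generation claim in ($\Rightarrow$), I use that a strongly $\Gamma_0$-noetherian module is $\Gamma_0$-noetherian, so each $M(e)$ is finitely generated.

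For (2), I argue dually. For (a), a gr-artinian module is finitely gr-cogenerated, by the standard argument applied to finite intersections. This means $M$ is $d$-finitely gr-cogenerated by \Cref{rem: fg => d-fg}(2), so \Cref{lem: M d-fg => MJ^n=0}(2) gives $MJ^n=0$, and \Cref{prop: comp finito <=> MJn=0} finishes. Conversely, gr-noetherian plus finitely gr-cogenerated gives $MJ^n=0$, hence finite gr-length. For (b), I work with each $M(e)$. The point to check is that the hypothesis of $d$-finite gr-cogeneration for the family $\{M(e)J^n\}_n$ holds: for each $f\in\Gamma_0$, $M(e)J^{n_f}1_f\subseteq M J^{n_f}1_f=0$ by left objectwise nilpotency. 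This is exactly the argument in \Cref{lem: M d-fg => MJ^n=0}(2), which applies to $M(e)$ since it is $d$-finitely gr-cogenerated whenever it is finitely gr-cogenerated. Part (c) then follows as in (1)(c).

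The main obstacle is justifying the auxiliary facts that gr-artinian implies finitely gr-cogenerated and that gr-noetherian implies finitely generated, in the $\Gamma_0$ versions. I expect these to be available from the cited chain-conditions paper (its Definitions 3.1 and 3.19 and Proposition 3.21). If not, I will prove them directly by the usual minimal-finite-intersection and maximal-finitely-generated-submodule arguments.
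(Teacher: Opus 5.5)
Your route is the paper's. You get finite generation (resp.\ finite gr-cogeneration) from the chain condition, pass to $d$-finite (gr-co)generation, obtain $MJ^n=0$ from \Cref{lem: M d-fg => MJ^n=0}, and conclude with the finite-length characterizations; (b) is (a) applied to each $M(e)$, and (c) goes through \Cref{prop: comp G-finito => fort art = fort noet}. Parts (1)(a), (1)(c) and all of (2) are fine as written.

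The one false step is in (1)(b): the claim $M(e)=M(e)1_eR$. Here $M(e)=\bigoplus_{r(\gamma)=e}M_\gamma$, while $M(e)1_e=\bigoplus_{r(\gamma)=d(\gamma)=e}M_\gamma$, and the latter need not generate the former. The equality $1_eR=1_eR1_eR$ holds for $R$ itself only because $1_e\in 1_eR1_e$. A concrete counterexample is the module of \Cref{ex:weak_HopLev_but_not_strong_HopLev}(1): there $M=M(e)$ for $e=(-1,-1)$, but $M(e)1_eR=s_{-1}D\neq M$. The claim also proves too much. It would give $M(e)J^{n_e}=0$ for every graded module. By \Cref{prop: G0 comp finito <=> MJn=0}, every $\Gamma_0$-artinian module over such a ring would then be $\Gamma_0$-noetherian, which that same example refutes.

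The repair uses a fact you already note. In both directions of (1)(b), each $M(e)$ is finitely generated: by hypothesis in the direction $(\Leftarrow)$, and because gr-noetherian modules are finitely generated in the direction $(\Rightarrow)$. So $M(e)$ is $d$-finitely generated by \Cref{rem: fg => d-fg}(2), and \Cref{lem: M d-fg => MJ^n=0}(1) applies. In other words, simply apply (a) to $M(e)$, exactly as you correctly do in (2)(b) via finite gr-cogeneration of $M(e)$.
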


\begin{proof}
Set $J:=\radgr(R)$.

(1)(a) By \cite[Proposition~3.3(1)]{chainconditions_Jacobsonradical}, both  assertions imply that $M$ is finitely generated. Hence, by \Cref{rem: fg => d-fg}(2),  $M$ is  $d$-finitely generated.  By \Cref{lem: M d-fg => MJ^n=0}(1), we have $MJ^n=0$ for some $n\in\mathbb{N}$.  The result now follows from \Cref{prop: comp finito <=> MJn=0}.

(b) Apply (a) to $M(e)$ for each $e\in\Gamma_0$.

(c) If $M$ is strongly $\Gamma_0$-artinian and $M(e)$ is finitely generated for every $e\in\Gamma_0$, then it follows from \Cref{lem:implications_chain_conditions}(1) and (b) that $M$ is $\Gamma_0$-noetherian. Now it follows that $M$ is strongly $\Gamma_0$-noetherian by \Cref{prop: comp G-finito => fort art = fort noet}(1).
The converse follow similarly, by using \Cref{lem:implications_chain_conditions}(2), item (b), and \Cref{prop: comp G-finito => fort art = fort noet}(2).

(2) This is analogous to (1) using \cite[Proposition~3.3(2)]{chainconditions_Jacobsonradical} and \Cref{lem: M d-fg => MJ^n=0}(2).
\end{proof} 

\begin{remark}
    (1) Let $R$ and $M$ be as in \Cref{teo: left str. G_0-nilp. ==> right noeth.}(1). Then $M$ is gr-artinian and finitely generated if and only if $M$ is gr-artinian and $d$-finitely generated. Indeed, if $M$ is gr-artinian and $d$-finitely generated, then $MJ^n=0$ for some positive integer $n$ by \Cref{lem: M d-fg => MJ^n=0}(1). Hence, $M$ is gr-noetherian by \Cref{prop: comp finito <=> MJn=0}. Thus, $M$ is finitely generated by \cite[Proposition~3.3(1)]{chainconditions_Jacobsonradical}. The other implication follows easily from \Cref{rem: fg => d-fg}(2).

    (2) Let $R$ and $M$ be as in \Cref{teo: left str. G_0-nilp. ==> right noeth.}(2). Then  $M$ is gr-noetherian and finitely gr-cogenerated if and only if $M$ is gr-noetherian and $d$-finitely gr-cogenerated. This statement follows by a similar argument to (1). \qed
\end{remark}

\begin{definition}
    Let $R$ be a $\Gamma$-graded ring. We say that $R$ is gr-semiprimary if $R/\radgr(R)$ is a gr-semisimple ring and $\radgr(R)$ is objectwise nilpotent. 
\end{definition}

Now we are ready to state our groupoid graded version of the Hopkins--Levitzki Theorem \cite[(4.15)]{Lam1}.

\begin{theorem}[Strong Hopkins--Levitzki Theorem]
\label{teo: hopkins-levitski}
    Let $R$ be a $\Gamma$-graded gr-semiprimary ring. 
    The following assertions hold true for a $\Gamma$-graded right (or left) $R$-module $M$.
    \begin{enumerate}
        \item $M$ is gr-noetherian $\Leftrightarrow$ $M$ is gr-artinian $\Leftrightarrow$ $M$ has finite gr-length.
        \item $M$ is $\Gamma_0$-noetherian $\Leftrightarrow$ $M$ is $\Gamma_0$-artinian $\Leftrightarrow$ $M$ has $\Gamma_0$-finite gr-length.
        \item $M$ is strongly $\Gamma_0$-noetherian $\Leftrightarrow$ $M$ is strongly $\Gamma_0$-artinian $\Leftrightarrow$ $M$ has strongly $\Gamma_0$-finite gr-length.
    \end{enumerate}
\end{theorem}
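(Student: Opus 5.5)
Since $R$ is gr-semiprimary, it is gr-semilocal and $J:=\radgr(R)$ is objectwise nilpotent on both sides. So both parts of \Cref{teo: left str. G_0-nilp. ==> right noeth.} are available. The plan is to reduce everything to the criteria \Cref{prop: comp finito <=> MJn=0}, \Cref{prop: G0 comp finito <=> MJn=0} and \Cref{prop: fort G0 comp finito <=> MJn=0}. For each chain condition, we want to produce either $MJ^n=0$ (for item (1) and item (3)) or $M(e)J^{n_e}=0$ for every $e\in\Gamma_0$ (for item (2)). Throughout we treat right modules; the left case follows by the left-right symmetric version of the argument, since the hypothesis on $J$ is symmetric.

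For item (1), suppose first that $M$ is gr-noetherian. Then $M$ is finitely generated by \cite[Proposition~3.3(1)]{chainconditions_Jacobsonradical}, hence $d$-finitely generated by \Cref{rem: fg => d-fg}(2). Right objectwise nilpotency and \Cref{lem: M d-fg => MJ^n=0}(1) then give $MJ^n=0$, and \Cref{prop: comp finito <=> MJn=0} gives finite gr-length. Suppose instead that $M$ is gr-artinian. Then $M$ is finitely gr-cogenerated by \cite[Proposition~3.3(2)]{chainconditions_Jacobsonradical}, hence $d$-finitely gr-cogenerated by \Cref{rem: fg => d-fg}(2). Left objectwise nilpotency and \Cref{lem: M d-fg => MJ^n=0}(2) give $MJ^n=0$, and again we get finite gr-length. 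Finally, finite gr-length implies both chain conditions by \Cref{prop: comp finito = art + noet}(1). Equivalently, this is just \Cref{teo: left str. G_0-nilp. ==> right noeth.}(1)(a) and (2)(a), once the extra finiteness hypotheses are seen to be automatic.

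Item (2) follows by applying item (1) to each $M(e)$, since $M(e)$ is itself a graded module and the $\Gamma_0$-notions are defined componentwise; \Cref{prop: comp finito = art + noet}(2) ties this to $\Gamma_0$-finite gr-length.

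Item (3) is the step that needs care, because $\Gamma_0$-finite generation or cogeneration of $M$ does not by itself make $M$ $d$-finitely (co)generated. The plan is to avoid needing a uniform $n$ for all of $M$ by combining item (2) with \Cref{prop: comp G-finito => fort art = fort noet}(3).
\begin{itemize}
\item If $M$ is strongly $\Gamma_0$-noetherian, it is $\Gamma_0$-noetherian by \Cref{lem:implications_chain_conditions}(2). Item (2) then gives $\Gamma_0$-finite gr-length, and \Cref{prop: comp G-finito => fort art = fort noet}(3) yields that $M$ is strongly $\Gamma_0$-artinian.
\item The converse direction is symmetric, using \Cref{lem:implications_chain_conditions}(1).
\end{itemize}
Once both strong conditions hold, \Cref{prop: comp finito = art + noet}(3) gives strongly $\Gamma_0$-finite gr-length, and the reverse implication is again \Cref{prop: comp finito = art + noet}(3). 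The main obstacle is the bookkeeping of the left-module case: the one-sided lemmas must be mirrored, using left objectwise nilpotency for finitely generated left modules and the right version for cogenerated ones.
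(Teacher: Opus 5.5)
Your proof is correct and is essentially the paper's own argument. Item (1) is exactly the two halves of the Asymmetric Hopkins--Levitzki Theorem, which you unpack via finite generation/cogeneration, the $MJ^n=0$ lemma and the gr-semilocal length criterion; item (2) applies item (1) to each $M(e)$; and item (3) passes through $\Gamma_0$-finite gr-length and part (3) of the proposition comparing strong $\Gamma_0$-artinianity with strong $\Gamma_0$-noetherianity, as the paper does (your side-assignment for left modules is also correct).
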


\begin{proof}
(1) 
By \Cref{teo: left str. G_0-nilp. ==> right noeth.}, $M$ is gr-noetherian if and only if $M$ is gr-artinian.
Now the result follows from \Cref{prop: comp finito = art + noet}(1).

(2) This follows from (1) applied to $M(e)$ for each $e\in\Gamma_0$.

(3) If $M$ is strongly $\Gamma_0$-artinian (resp.\ strongly $\Gamma_0$-noetherian), then $M$ has $\Gamma_0$-finite gr-length by \Cref{lem:implications_chain_conditions} and (2), and it follows from \Cref{prop: comp G-finito => fort art = fort noet}(3) that $M$ is strongly $\Gamma_0$-noetherian (resp.\ strongly $\Gamma_0$-artinian).
The result now follows from \Cref{prop: comp finito = art + noet}(3). 
\end{proof}

Now we give our main example of gr-semiprimary rings.

\begin{theorem}
\label{teo: G_0-art => G_0-noeth}
    Let $R$ be a $\Gamma$-graded ring that is both left and right $\Gamma_0$-artinian. The following assertions hold true.
    \begin{enumerate}
        \item $R$ is gr-semiprimary.
        \item $R$ is both a left and a right $\Gamma_0$-noetherian ring.
    \end{enumerate}
\end{theorem}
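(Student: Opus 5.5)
Since $R$ is right $\Gamma_0$-artinian, \Cref{prop: R gr-art => R/rad(R) gr-ss} gives that $R/J$ is gr-semisimple, where $J:=\radgr(R)$. So for (1) it only remains to show that $J$ is both left and right objectwise nilpotent. Once (1) holds, (2) follows from \Cref{teo: hopkins-levitski}(2) applied to $R_R$ and to ${}_RR$: both are $\Gamma_0$-artinian and hence $\Gamma_0$-noetherian. Alternatively, (2) follows from the implication (4)$\Rightarrow$(1) of \Cref{teo: art => (noet <=> fort nilp)} and its left-hand analogue. By the left-right symmetry of the hypotheses, it suffices to prove that $J$ is right objectwise nilpotent, i.e.\ $J^\infty=0$ by \Cref{prop: J infinity}(3).

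Set $I:=J^\infty$. By \Cref{prop: J infinity}(1),(2), for each $e$ we have $I(e)=J^{n_e}(e)$ and $IJ=I$. Applying the left-handed version of \Cref{prop: J infinity} to the left $\Gamma_0$-artinian structure, for each $f$ there is $m_f$ with $J^{m_f}1_f=J^{m}1_f$ for all $m\ge m_f$. Hence $I1_f=\bigcap_n J^n1_f=J^{m_f}1_f$, and $JI=I$ as well, so $I=JI=IJ$. I would also like $I^2=I$. The natural route is to compute $1_eI1_f$ and note that it equals $1_eJ^N1_f$ for all large $N\ge\max(n_e,m_f)$. Then $1_eI\,I1_f\supseteq 1_eJ^{n_e}J^{m_f}1_f=1_eJ^{n_e+m_f}1_f=1_eI1_f$, so $I^2=I$.

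Next, suppose $I\neq 0$, say $I(e)\neq0$. I would copy the argument of \Cref{prop:nilpotence of rad(R)}(1): choose a minimal element $U_0$ of $\{U\subgr R(e): UI\neq0\}$, which exists because $R(e)$ is gr-artinian and $R(e)I\supseteq I(e)\neq0$. Pick a homogeneous $x\in U_0$ of degree $\gamma$ with $xI\neq0$. Since $xI\cdot I=xI\neq0$ and $xI\subseteq U_0$, minimality gives $xI=U_0\ni x$. Hence $x=xa$ for some $a\in I_{d(\gamma)}\subseteq J_{d(\gamma)}$, and $1-a$ is invertible, so $x=0$, a contradiction. Note that this argument only needed $I^2=I$ and right $\Gamma_0$-artinianity; the left artinian hypothesis enters only through establishing $I^2=I$.

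The main obstacle is the idempotency step. Right artinianity alone gives only $I=IJ$, which is not enough: \Cref{prop: contra-exemplos J fort nilp} shows that $J^\infty$ can be nonzero for one-sided artinian rings. The key point is that the left stabilization index $m_f$ depends only on the right-hand object $f$, while $n_e$ depends only on $e$. Consequently $1_eJ^N1_f$ is stable for $N\ge\max(n_e,m_f)$, and this lets us split $J^{n_e+m_f}$ as $J^{n_e}\cdot J^{m_f}$ with each factor already stabilized. I will check carefully that $1_eJ^N1_f=1_eI1_f$ in this range, using the fact that $J^N(e)=I(e)$ for $N\ge n_e$. Indeed, $1_eI1_f=I(e)1_f$ equals $J^N(e)1_f$, so stability in $N$ already follows from the right-hand side alone, and the inclusion $I^2\supseteq I$ then holds componentwise.
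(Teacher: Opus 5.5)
Your proof is correct and follows essentially the paper's route: you use both one-sided stabilizations of the powers of $J$ to get $J^\infty\cdot J^\infty=J^\infty$, then a Nakayama-type minimality argument forces $J^\infty=0$, and (2) follows from the Strong Hopkins--Levitzki Theorem. The differences are cosmetic. You verify idempotency componentwise via $1_eJ^\infty=1_eJ^{n_e}$ and $J^\infty 1_f=J^{m_f}1_f$, whereas the paper iterates $J^\infty=J\cdot J^\infty$. You also run the minimality step with a minimal graded right ideal in $R(e)$, exactly as in \Cref{prop:nilpotence of rad(R)}(1), whereas the paper uses a minimal left ideal inside $(e)J^\infty$.
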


\begin{proof}
 By \Cref{prop: R gr-art => R/rad(R) gr-ss}, $R$ is a gr-semilocal ring. We proceed to show that   $J:=\radgr(R)$ is  left and right objectwise nilpotent. 
Since $R$ is left and right $\Gamma_0$-artinian,  it follows from \Cref{prop: J infinity}(1) and its left version that, for each $e\in\Gamma_0$ there exists a positive integer $n_e$ such that
\[
J^\infty(e)=J^{n_e+l}(e) \quad \text{ and }  \quad (e)J^\infty=(e)J^{n_e+l} \quad \textrm{ for all } l\geq 0. 
\]
 Therefore, 
\[
J^\infty=\bigoplus_{e\in\Gamma_0}J^{n_e}(e)=\bigoplus_{e\in\Gamma_0}(e)J^{n_e}.
\]
By \Cref{prop: J infinity}(2) and its left version, 
 $J^\infty=J^\infty\cdot J=J\cdot J^\infty$.  
 However, we have the following sharper equality.
\begin{equation}
\label{eq: J infty = J infty J infty}
    J^\infty\cdot J^\infty=J^\infty.
\end{equation}
Indeed, it is clear that $J^\infty\cdot J^\infty\subseteq J^\infty$, because  $J^\infty$ is a (graded) ideal of $R$. 
On the other hand, for each $e\in\Gamma_0$, we have
\[
{J^\infty}=J\cdot J^\infty=J^2\cdot J^\infty=\dotsb=J^{n_e} \cdot J^\infty,
\]
and thus
 \[J^\infty(e)= 1_eJ^\infty=1_eJ^{n_e}\cdot J^\infty=J^{n_e}(e)\cdot J^\infty\subseteq J^\infty \cdot J^\infty.\]

Suppose, by  way of contradiction, that $J^\infty\neq 0$. 
By \eqref{eq: J infty = J infty J infty}, there exists $e\in\Gamma_0$ such that $J^\infty\cdot (e)J^\infty\neq 0$. 
Let now $T$ be a minimal left ideal contained in $(e)J^\infty$ such that $J^\infty \cdot T\neq 0$. 
Let $a\in \h(T)$ be with $J^\infty a\neq 0$. 
Suppose $a\in T_\alpha$ for some $\alpha\in \Gamma e$. 
Clearly $J^\infty a\subseteq T$. 
Now, again by \eqref{eq: J infty = J infty J infty}
\[
J^\infty\cdot (J^\infty a)=(J^\infty \cdot J^\infty)a=J^\infty a\neq 0.
\]
By the minimality of $T$, we get $J^\infty a=T$. Thus, there exists a nonzero $z\in \h(J^\infty)$ such that $za=a$.  
Hence, $(1_{r(\alpha)}-z)a=0$. 
Notice that $z\in R_{r(\alpha)}$ and $z\in J^\infty \subseteq J$. 
Thus, $1_{r(\alpha)}-z$ is invertible in $R_{r(\alpha)}$ by \Cref{prop:outras_caracs_rad(R)}(1). 
This implies $a=0$, a contradiction.
Therefore, $\bigoplus_{e\in\Gamma_0}J^{n_e}(e)=\bigoplus_{e\in\Gamma_0}(e)J^{n_e}=0$, and statement (1) is proved.

Statement (2) follows from (1) and \Cref{teo: hopkins-levitski}(2).
\end{proof}

We recall from \cite[pp.\ 18--19]{Mit} that  a small preadditive category $\mathcal{C}$ is a \emph{right artinian category} (resp.\ \emph{right noetherian category}) if the functor $\mathcal{C}(-,X)$ is an artinian (resp. noetherian) object in the abelian category $\Fun(\mathcal{C}^{op},\mathcal{A}b)$ of additive contravariant functors for each $X\in\mathcal{C}_0$.

\begin{corollary}
    Let $\mathcal{C}$ be a small preadditive category that is both left and right artinian.
    Then $\mathcal{C}$ is both left and right noetherian.
    Furthermore, if $F:\mathcal{C}\to\mathcal{A}b$ is a (covariant or contravariant) additive funtor, then $F$ is artinian if and only if $F$ is noetherian.
\end{corollary}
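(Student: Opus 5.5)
The plan is to pass from the category $\mathcal{C}$ to its associated groupoid graded ring $R:=R_{\mathcal{C}}$ from \Cref{ex: anel de categoria}. This ring is $\mathcal{C}_0\times\mathcal{C}_0$-graded and object unital, with local units the identity morphisms $1_X$. Under this translation, the result becomes \Cref{teo: G_0-art => G_0-noeth} together with \Cref{teo: hopkins-levitski}(2).

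\textbf{Dictionary between functors and graded modules.} The first step is the standard correspondence between additive contravariant functors $F:\mathcal{C}\to\mathcal{A}b$ and $\Gamma$-graded right $R$-modules, where $\Gamma=\mathcal{C}_0\times\mathcal{C}_0$.
\begin{enumerate}
\item To $F$ we associate $M_F:=\bigoplus_{X}F(X)$, graded by placing $F(X)$ in the homogeneous components $(X,Y)$ suitably. The cleanest choice is to set $M_F(X)=\bigoplus_{Y}F(X)$-type pieces so that $M_F(1_X)$ recovers $F(X)$ via $M_F1_X=F(X)$. A morphism $g\in\mathcal{C}(Y,X)$ acts on the right by $F(g)$.
\item Subfunctors of $F$ correspond bijectively, and inclusion-preservingly, to graded submodules of $M_F$.
\end{enumerate}
For $F=\mathcal{C}(-,X)$ this gives the graded right ideal $1_XR=R(1_X)$, that is, the component $R(e)$ with $e=(X,X)$. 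Hence $\mathcal{C}$ is right artinian (resp.\ noetherian) if and only if $R$ is right $\Gamma_0$-artinian (resp.\ $\Gamma_0$-noetherian). Covariant functors correspond in the same way to graded left modules, and $\mathcal{C}(X,-)$ corresponds to $R1_X$. So left artinianity of $\mathcal{C}$ is left $\Gamma_0$-artinianity of $R$.

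\textbf{Applying the two theorems.} With the dictionary in place, \Cref{teo: G_0-art => G_0-noeth} shows that $R$ is gr-semiprimary and two-sided $\Gamma_0$-noetherian, which is the first claim. For the second claim, a functor $F$ is artinian (resp.\ noetherian) exactly when its module $M_F$ is gr-artinian (resp.\ gr-noetherian), since subobjects match graded submodules. Then \Cref{teo: hopkins-levitski}(1), which applies to right and left modules, gives the equivalence. Contravariant functors go to right modules and covariant ones to left modules.

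\textbf{Main obstacle.} The hard part is making the dictionary precise with the groupoid grading, because $M_F$ must satisfy condition (iii) of \Cref{def:graded_module}. I would try the grading $(M_F)_{(Y,X)}:=F(X)$ for a fixed chosen $Y$ per $X$. If that is awkward, I would instead invoke the shift-invariance $M(\sigma)=M(r(\sigma))$ and the known equivalence between graded modules and functors from \cite{groupoid_graded_semisimple}. Either way, what must be checked is that subfunctors are exactly the graded submodules, including the requirement $MR=M$.
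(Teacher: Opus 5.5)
Your proposal is correct and follows the paper's own proof: you translate via $R_{\mathcal{C}}$ and the functor--graded-module dictionary, which the paper simply cites from \cite[Theorem~8.1]{groupoid_graded_semisimple}. You then use \Cref{teo: G_0-art => G_0-noeth} for the first claim, and its gr-semiprimary conclusion together with \Cref{teo: hopkins-levitski}(1) for the functor statement; item (1) is the right one, not (2) as you first wrote.

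If you write the dictionary out by hand, take $(M_F)_{(Z,X)}:=F(X)$ for a single object $Z$ fixed for all $X$. Letting the chosen object depend on $X$ would break the axiom $M_\sigma R_\tau\subseteq M_{\sigma\tau}$.
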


\begin{proof}
    By \cite[Theorem 8.1(2)]{groupoid_graded_semisimple}, $\mathcal{C}$ is a right/left artinian (resp. noetherian) category if and only if $R_\mathcal{C}$ is a right/left $\Gamma_0$-artinian (resp. $\Gamma_0$-noetherian) ring, where $\Gamma=\mathcal{C}_0\times\mathcal{C}_0$.
    The first statement now follows from \Cref{teo: G_0-art => G_0-noeth}(2).

    The last part of the statement follows from \Cref{teo: G_0-art => G_0-noeth}(1), \Cref{teo: hopkins-levitski}(1), and the identification between additive functors $\mathcal{C}\to\mathcal{A}b$ and certain $\Gamma$-graded $R_\mathcal{C}$-modules given by \cite[Theorem 8.1(1) and Remark 8.2(1)]{groupoid_graded_semisimple}.
\end{proof}

Since nilpotence of $\radgr(R)$ implies left and right objectwise nilpotence of $\radgr(R)$, we obtain the following immediate
consequence of \Cref{teo: hopkins-levitski}. It can also be shown as a corollary of Propositions \ref{prop: comp finito <=> MJn=0}, \ref{prop: G0 comp finito <=> MJn=0}, and \ref{prop: fort G0 comp finito <=> MJn=0}.

\begin{corollary}
\label{coro: hopkins levitski usual}
    Let $R$ be a gr-semilocal ring such that $\radgr(R)$ is nilpotent. 
    The following assertions hold for a $\Gamma$-graded right (or left) $R$-module $M$:
    \begin{enumerate}
        \item $M$ is gr-noetherian $\Leftrightarrow$ $M$ is gr-artinian $\Leftrightarrow$ $M$ has finite gr-length.
        \item $M$ is $\Gamma_0$-noetherian $\Leftrightarrow$ $M$ is $\Gamma_0$-artinian $\Leftrightarrow$ $M$ has $\Gamma_0$-finite gr-length.
        \item $M$ is strongly $\Gamma_0$-noetherian $\Leftrightarrow$ $M$ is strongly $\Gamma_0$-artinian $\Leftrightarrow$ $M$ has strongly $\Gamma_0$-finite gr-length.\qed
    \end{enumerate}
\end{corollary}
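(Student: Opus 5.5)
The statement follows from \Cref{teo: hopkins-levitski} once we check that its hypothesis holds. So the plan is to show that a gr-semilocal ring $R$ with nilpotent $J:=\radgr(R)$ is gr-semiprimary; each of the three items is then exactly the corresponding item of \Cref{teo: hopkins-levitski}. By definition, gr-semiprimary means that $R/J$ is gr-semisimple and $J$ is objectwise nilpotent on both sides. The first condition is the gr-semilocal hypothesis. For the second, choose $n$ with $J^n=0$. Then for every $e\in\Gamma_0$ we have $1_eJ^n=0$ and $J^n1_e=0$, so $J$ is both right and left objectwise nilpotent, with the uniform exponent $n_e=n$. This is just the implication (a)$\Rightarrow$(b) of \Cref{lem: implications nilpotency conditions} applied on each side.

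As a cross-check, and to follow the alternative route the paper mentions, I will also derive the right-module case directly from \Cref{prop: comp finito <=> MJn=0}, \Cref{prop: G0 comp finito <=> MJn=0} and \Cref{prop: fort G0 comp finito <=> MJn=0}.

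Since $J^n=0$, every graded right module satisfies $MJ^n=0$. In particular $M(e)J^n=0$ for each $e\in\Gamma_0$.

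For item (1), \Cref{prop: comp finito <=> MJn=0} gives that gr-artinian and gr-noetherian are each equivalent to finite gr-length.

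For item (2), \Cref{prop: G0 comp finito <=> MJn=0}, applied with the uniform exponent $n_e=n$, gives the same equivalence at the $\Gamma_0$ level.

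For item (3), \Cref{prop: fort G0 comp finito <=> MJn=0} gives the strong version.

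For left modules, I apply the left-hand analogues of these propositions, which hold by symmetry. This is legitimate because gr-semilocality and the nilpotency of $J$ are left–right symmetric: $\radgr(R)$ is also the intersection of the gr-maximal graded left ideals, and the gr-semisimplicity of $R/J$ is side-independent by \cite[Corollary~5.28]{groupoid_graded_semisimple}.

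No step is a real obstacle. The only point needing care is this left–right symmetry of the hypotheses when the statement is used for left modules. Going through \Cref{teo: hopkins-levitski}, which is already stated for right or left modules, avoids the issue entirely.
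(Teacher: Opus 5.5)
Your proposal is correct and matches the paper's argument. Nilpotency of $\radgr(R)$ gives left and right objectwise nilpotency, so $R$ is gr-semiprimary and \Cref{teo: hopkins-levitski} applies; your direct route through \Cref{prop: comp finito <=> MJn=0}, \Cref{prop: G0 comp finito <=> MJn=0} and \Cref{prop: fort G0 comp finito <=> MJn=0} is exactly the alternative the paper mentions, and your left--right symmetry check correctly handles the left-module case.
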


The following result indicates which of the artinianity conditions in \Cref{def:chain_conditions} yield gr-semiprimary rings.

\begin{corollary} 
\label{coro: fort art => fort noet}
Let $R$ be a $\Gamma$-graded ring.
The following assertions hold:
\begin{enumerate}
    \item If $R$ is right strongly $\Gamma_0$-artinian, then  $\radgr(R)$ is nilpotent and $R$ is  right strongly $\Gamma_0$-noetherian.
    \item If $R$ is right gr-artinian, then  $\radgr(R)$ is nilpotent and $R$ is right gr-noetherian.
\end{enumerate}
\end{corollary}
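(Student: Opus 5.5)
For (1), I will first use \Cref{prop:nilpotence of rad(R)}(2) to get that $J:=\radgr(R)$ is nilpotent, say $J^n=0$. Next I need $R$ to be gr-semilocal. Since strongly $\Gamma_0$-artinian implies $\Gamma_0$-artinian by \Cref{lem:implications_chain_conditions}(1), this follows from \Cref{prop: R gr-art => R/rad(R) gr-ss}.

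With $R$ gr-semilocal and $R_R\cdot J^n=0$, I will apply \Cref{prop: fort G0 comp finito <=> MJn=0} to $M=R_R$. The implication (2)$\Rightarrow$(1) there shows that $R_R$ has strongly $\Gamma_0$-finite gr-length. Then \Cref{prop: comp finito = art + noet}(3) gives that $R_R$ is strongly $\Gamma_0$-noetherian. Equivalently, I could say that $J$ nilpotent implies $J$ is objectwise nilpotent, so $R$ is gr-semiprimary, and then invoke \Cref{teo: hopkins-levitski}(3), or more directly \Cref{coro: hopkins levitski usual}(3).

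For (2), I will use \Cref{lem:implications_chain_conditions}(1) to see that a right gr-artinian ring is right strongly $\Gamma_0$-artinian. By (1), $J$ is therefore nilpotent, and by \Cref{prop:nilpotence of rad(R)}(3) it is also nilpotent directly. As before, $R$ is gr-semilocal. Then \Cref{prop: comp finito <=> MJn=0} applied to $M=R_R$ gives finite gr-length, and \Cref{prop: comp finito = art + noet}(1) gives that $R_R$ is gr-noetherian. Alternatively, \Cref{coro: hopkins levitski usual}(1) gives this immediately.

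I expect no real obstacle. The only substantive input is the nilpotency of $\radgr(R)$, which was already established in \Cref{prop:nilpotence of rad(R)} using the tightness of the chain $J\supseteq J^2\supseteq\cdots$. What remains is to check the hypotheses of the cited propositions, namely gr-semilocality and $R_RJ^n=R J^n\subseteq J^n=0$.
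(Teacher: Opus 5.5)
Your proposal is correct and matches the paper's proof: nilpotency of $\radgr(R)$ comes from \Cref{prop:nilpotence of rad(R)}, gr-semilocality comes from \Cref{lem:implications_chain_conditions}(1) together with \Cref{prop: R gr-art => R/rad(R) gr-ss}, and then the conclusion is drawn for $R_R$. The only difference is that the paper cites \Cref{coro: hopkins levitski usual} directly, whereas your main line unpacks it into \Cref{prop: fort G0 comp finito <=> MJn=0} (resp.\ \Cref{prop: comp finito <=> MJn=0}) plus \Cref{prop: comp finito = art + noet}, which is exactly how that corollary is obtained.
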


\begin{proof}

(1)  By \Cref{prop:nilpotence of rad(R)}(2), $\radgr(R)$ is nilpotent. By \Cref{lem:implications_chain_conditions}(1), $R$ is a right $\Gamma_0$-artinian ring, and thus it is gr-semilocal by \Cref{prop: R gr-art => R/rad(R) gr-ss}.
It follows from \Cref{coro: hopkins levitski usual}(3) that $R$  is a right strongly $\Gamma_0$-noetherian ring.

(2) By \Cref{prop:nilpotence of rad(R)}(3), $\radgr(R)$ is nilpotent. Since $R$ is gr-semilocal by \Cref{prop: R gr-art => R/rad(R) gr-ss}, it follows that $R$ is a right gr-noetherian ring by \Cref{coro: hopkins levitski usual}(1).
\end{proof}

We close this section with the following result.

\begin{proposition}
\label{prop:R_gr-semiprimary=>Re_semiprimary}
    Let $R$ be a $\Gamma$-graded ring. 
    The following assertions hold:
    \begin{enumerate}
        \item If $R$ is gr-semiprimary, then $R_e$ is semiprimary for every $e \in \Gamma_0$.
        \item If $R$ is gr-semiprimary, then $1_e R 1_e$ is gr-semiprimary for every $e \in \Gamma_0$.
        \item The converse in $(2)$ holds if $\Gamma'_0(R)$ is finite (that is, $R$ is unital).
    \end{enumerate}
\end{proposition}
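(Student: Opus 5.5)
For (1), fix $e\in\Gamma_0$ and set $J:=\radgr(R)$. By \Cref{prop:outras_caracs_rad(R)}(2), $J_e=\rad(R_e)$. Since $J$ is right objectwise nilpotent, there is $n_e$ with $1_eJ^{n_e}=0$. Then $\rad(R_e)^{n_e}=J_e^{n_e}\subseteq 1_eJ^{n_e}=0$, so $\rad(R_e)$ is nilpotent.

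For semilocality of $R_e$, write $\bar R:=R/J$, which is gr-semisimple. I plan to show $\bar R_e=R_e/J_e=R_e/\rad(R_e)$ is semisimple artinian. The graded right ideal $\bar 1_e\bar R$ is a direct summand of $\bar R_{\bar R}$, hence a gr-semisimple module. It is gr-cyclic, so it is a finite direct sum of gr-simple modules. Hence $\End(\bar 1_e\bar R)$, computed in degree $e$, is $\bar 1_e\bar R_e\bar 1_e=\bar R_e$. The standard argument then applies: each gr-simple component $S$ has $\END(S)_e$ a division ring. This comes from graded Schur, i.e.\ from the semisimplicity results of \cite{groupoid_graded_semisimple}. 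Consequently $\bar R_e$ is a finite product of matrix rings over division rings, hence semisimple.

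I expect this identification of $\bar R_e$ to be the main obstacle. A cleaner route may be to note that $1_e\bar R1_e$ is a gr-semisimple $e\Gamma e$-graded, i.e.\ group graded, ring. The known group-graded fact that the identity component of a gr-semisimple ring is semisimple (\cite{NastasescuVanOystaeyen}) would then finish the argument.

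For (2), put $S:=1_eR1_e$. By \Cref{prop:outras_caracs_rad(R)}(3), $\radgr(S)=1_eJ1_e$, which is nilpotent because $(1_eJ1_e)^{n}\subseteq 1_eJ^n$. Since $S$ has a single object, nilpotency gives both-sided objectwise nilpotency. It remains to check that $S/\radgr(S)=\bar 1_e\bar R\bar 1_e$ is gr-semisimple. This is a corner ring of the gr-semisimple ring $\bar R$. Via $\END(\bar 1_e\bar R)\isogr \bar 1_e\bar R\bar 1_e$, with the endomorphism ring of a finite direct sum of gr-simple modules, the corner is gr-semisimple; I expect to invoke the structure theorem of \cite{groupoid_graded_semisimple} for this.

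For (3), assume $\Gamma'_0(R)=\{e_1,\dots,e_k\}$ is finite and each $1_{e_i}R1_{e_i}$ is gr-semiprimary. Then $J$ is $\Gamma_0$-nilpotent, since $(1_{e_i}J1_{e_i})^{m}=0$, and therefore nilpotent by \Cref{lem: eqiv nil e G_0-nilpotent}(2) and \Cref{lem: soma de nilpotentes}. In particular $J$ is objectwise nilpotent. For semilocality of $R$, I would use that $R=\bigoplus_i 1_{e_i}R$ has finite gr-length modulo $J$. By \Cref{prop: comp finito <=> MJn=0}-type arguments, it suffices to show that $\bar R=R/J$ is gr-semisimple. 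Each corner $\bar 1_{e_i}\bar R\bar 1_{e_i}$ is gr-semisimple with zero radical. Hence each $\bar 1_{e_i}\bar R$ is a gr-artinian module, because graded submodules $N$ of $\bar 1_{e_i}\bar R$ are determined by the sets $N1_{e_j}$, which are modules over the corners $\bar 1_{e_j}\bar R\bar 1_{e_j}$. Therefore $\bar R$ is right gr-artinian with $\radgr(\bar R)=0$, and is gr-semisimple by \Cref{prop: R gr-art => R/rad(R) gr-ss}.
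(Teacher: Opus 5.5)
Your nilpotency arguments in all three items are the same as the paper's. For semilocality, the paper simply invokes \cite[Proposition~6.13]{chainconditions_Jacobsonradical} in each item, whereas you try to argue directly.

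In (1), your Schur-lemma argument is a correct, self-contained replacement. It identifies $\bar R_e$ with the ring of degree-preserving endomorphisms of the gr-cyclic gr-semisimple module $\bar 1_e\bar R$.

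In (2), you can avoid the unspecified structure theorem. For a graded right ideal $X$ of $T:=\bar 1_e\bar R\bar 1_e$ one has $X=(X\bar R)\bar 1_e$, so $T$ inherits gr-artinianity from the finite-length module $\bar 1_e\bar R$. Moreover $\radgr(T)=\bar 1_e\radgr(\bar R)\bar 1_e=0$, so $T$ is gr-semisimple by \Cref{prop: R gr-art => R/rad(R) gr-ss}.

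\textbf{The gap is in the semilocality half of (3).} Knowing that a graded submodule $N\subseteq\bar 1_{e_i}\bar R$ is determined by the pieces $N\bar 1_{e_j}$ only reduces the problem. It reduces gr-artinianity of $\bar 1_{e_i}\bar R$ to gr-artinianity of each block $\bar 1_{e_i}\bar R\bar 1_{e_j}$ as a graded right module over $T_j:=\bar 1_{e_j}\bar R\bar 1_{e_j}$. For $j=i$ this is fine. For $j\neq i$ it amounts to finite generation of the off-diagonal block over $T_j$, and nothing in your argument provides this. Gr-semisimplicity of $T_j$ imposes no chain condition on infinitely generated modules.

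In fact your reasoning never uses $\radgr(\bar R)=0$ before the last line, and without that hypothesis the claim is false. Take fields $F\subsetneq K$ with $[K:F]=\infty$ and the $\{1,2\}\times\{1,2\}$-graded ring $R=\begin{pmatrix}K&K\\0&F\end{pmatrix}$. Both diagonal corners are fields. Yet $WE_{12}$, for $W$ running through a strictly descending chain of $F$-subspaces of $K$, is a strictly descending chain of graded submodules of $E_{11}R$.

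The missing idea is to use $\radgr(\bar R)=0$ to tie each off-diagonal element back to a diagonal corner. Write $T_i:=\bar 1_{e_i}\bar R\bar 1_{e_i}$ and $\bar 1_{e_i}=s_1+\cdots+s_m$, where the $s_l\in\bar R_{e_i}$ are orthogonal idempotents and each $s_lT_i$ is a gr-simple $T_i$-module. Let $0\neq a\in s_l\bar R$ be homogeneous of degree $\gamma$. Then $a\notin\radgr(\bar R)$, so by \Cref{teo: a carac de radgr(R)} there is $x\in\bar R_{\gamma^{-1}}$ with $\bar 1_{e_i}-ax$ not invertible; in particular $ax\neq 0$. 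Since $ax=s_l(ax)\in s_lT_i$, gr-simplicity gives $s_l\in axT_i\subseteq a\bar R$.

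Hence each $s_l\bar R$ is gr-simple, and therefore $\bar 1_{e_i}\bar R=\bigoplus_l s_l\bar R$ and $\bar R=\bigoplus_i\bar 1_{e_i}\bar R$ are gr-semisimple. This step does not even need $\Gamma'_0(R)$ to be finite; finiteness is only used for nilpotency. Alternatively, cite \cite[Proposition~6.13]{chainconditions_Jacobsonradical} as the paper does.
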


\begin{proof}
    (1) $R_e$ is semilocal by \cite[Proposition 6.13]{chainconditions_Jacobsonradical}, with $\rad(R_e)^n \subseteq \radgr(R)^n(e)$ for all $n \in \mathbb{N}$ by \Cref{prop:outras_caracs_rad(R)}(2).
    
    (2) Each $1_e R 1_e$ is gr-semilocal by \cite[Proposition 6.13]{chainconditions_Jacobsonradical}, and $\radgr(1_e R 1_e)^n \subseteq \radgr(R)^n(e)$ for all $n \in \mathbb{N}$ by \Cref{prop:outras_caracs_rad(R)}(3).

    (3) Suppose that $\Gamma'_0(R)$ is finite and $1_e R 1_e$ is gr-semiprimary for every $e \in \Gamma_0$.
    Then $R$ is gr-semilocal by \cite[Proposition 6.13]{chainconditions_Jacobsonradical}.
    Moreover, $1_e\radgr(R)1_e=\radgr(1_eR1_e)$ is nilpotent for every $e\in\Gamma_0$, and it follows from \Cref{lem: eqiv nil e G_0-nilpotent}(2) that $\radgr(R)$ is $\Gamma_0$-nilpotent.
    By \Cref{lem: soma de nilpotentes}, $\radgr(R)$ is nilpotent.
\end{proof}

The converse of \Cref{prop:R_gr-semiprimary=>Re_semiprimary}(1) fails even in the group graded case. Indeed, if $k$ is a semiprimary ring, then the polynomial ring $R=k[x]$ endowed with the natural $\mathbb{Z}$-grading is not gr-semiprimary because $x^n\in\radgr(R)^n$ for every $n\geq1$, however $R_0=k$ is semiprimary. 
\Cref{coro:when_triangular_is_semiprimary} will show that the converse of \Cref{prop:R_gr-semiprimary=>Re_semiprimary}(2) need not hold if $\Gamma'_0(R)$ is infinite.


\section{Examples involving upper triangular matrix rings}
\label{sec:UT_I(A)}

Throughout this section, we utilize graded  upper triangular matrix rings to construct counterexamples clarifying the conditions in \Cref{def: nilpotency conditions}. First, we show that these nilpotency-type conditions are mutually distinct. Second, we establish that objectwise nilpotency of the graded Jacobson radical fails to be a symmetric property in one-sided $\Gamma_0$-artinian rings. Finally, we demonstrate that the hypotheses of \Cref{teo: hopkins-levitski} cannot be weakened to ensure the equivalence of the graded artinian and noetherian conditions for graded modules.

 To this end, we now define the specific classes of graded triangular matrix rings needed for our examples.
 \begin{definition}
Let $A$ be a unital ring and let $I$ be a nonempty set. Consider the ring $\M_I(A)$ of $I \times I$ matrices with entries in $A$ where only finitely many entries are nonzero.
		Then $\M_I(A)$ has a natural structure of an $I \times I$-graded ring via $\M_I(A)_{(i,j)} := A E_{ij}$, where $E_{ij}$ denotes the elementary matrix with 1 at the $(i,j)$-entry and 0 elsewhere.
        When $I$ is a partially ordered set, we can consider the $I \times I$-graded subring $\UT_I(A)$ of $\M_I(A)$ consisting of matrices $(a_{ij})_{ij}$ such that $a_{ij} = 0$ whenever $i \not\leq j$.
 \end{definition}

In the following result, we compute the powers of the graded Jacobson radical of $\UT_I(A)$, 
generalizing \cite[Proposition 5.15]{chainconditions_Jacobsonradical}.

\begin{proposition}
\label{prop: powers of radgr UT(A)}
    Let $I$ be a partially ordered set, let $A$ be a unital ring, set $R := \UT_I(A)$, and let $n\geq 1$.
    For each $i,j\in I$ with $i\leq j$, set 
    $$m_{i,j,n}:=\max\{0\leq m\leq n: \text{ there exists a chain } i= k_0<k_1<\cdots<k_m= j \text{ in }I\}$$
Then the $n$th power of the graded Jacobson radical of $R$ is given by
    \[
    \radgr(R)^n = \{(a_{ij})_{ij} \in R :  a_{ij} \in\rad(A)^{n-m_{i,j,n}} \text{ for each $i,j$ with $i\leq j$}\},
    \]
    where $\rad(A)^0:=A$.
\end{proposition}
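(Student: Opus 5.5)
First I would identify $J:=\radgr(R)$ itself, i.e.\ the case $n=1$. Here $m_{i,i,1}=0$ and $m_{i,j,1}=1$ for $i<j$. So the claim reads
\[
J=\{(a_{ij})\in R: a_{ii}\in\rad(A)\ \text{for all } i\},
\]
with arbitrary strictly upper entries. This is presumably \cite[Proposition 5.15]{chainconditions_Jacobsonradical}. If I need to reprove it, I would use \Cref{prop:outras_caracs_rad(R)}(2): let $J'$ be the set above.
\begin{itemize}
\item $J'$ is a graded ideal. A product $aE_{ik}\cdot bE_{kj}$ lands on the diagonal only when $i=k=j$, and there $\rad(A)$ is an ideal of $A$.
\item $J'_e=\rad(R_e)$ for every $e=(i,i)$, since $R_{(i,i)}=AE_{ii}\cong A$.
\item Any graded ideal $K$ with $K_e=\rad(R_e)$ for all $e$ has diagonal entries in $\rad(A)$, so $K\subseteq J'$.
\end{itemize}

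Next I would prove the formula for $J^n$ by induction on $n$, homogeneous component by component. The component of $J^n$ in degree $(i,j)$ is spanned by products $x_1E_{k_0k_1}x_2E_{k_1k_2}\cdots x_nE_{k_{n-1}k_n}$ with $k_0=i$, $k_n=j$, $k_0\le k_1\le\cdots\le k_n$, where $x_t\in\rad(A)$ if $k_{t-1}=k_t$ and $x_t\in A$ otherwise. Such a product equals $x_1\cdots x_nE_{ij}$. The number of strict steps $k_{t-1}<k_t$ is the length $m$ of the chain obtained after deleting repetitions, so $m\le n$ and $m\le m_{i,j,n}$. The remaining $n-m\ge n-m_{i,j,n}$ factors lie in the ideal $\rad(A)$. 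Hence the product lies in $\rad(A)^{n-m}E_{ij}\subseteq \rad(A)^{n-m_{i,j,n}}E_{ij}$, giving "$\subseteq$".

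For "$\supseteq$", fix a chain $i=k_0<\cdots<k_m=j$ with $m=m_{i,j,n}$. Given $r_1\cdots r_{n-m}$ with $r_s\in\rad(A)$, pad the chain with $n-m$ repetitions of $k_0$, placed at the start. This gives
\[
(r_1E_{ii})\cdots(r_{n-m}E_{ii})(1E_{k_0k_1})\cdots(1E_{k_{m-1}k_m})=r_1\cdots r_{n-m}E_{ij}\in J^n.
\]
This works because each $1\cdot E_{k_{t-1}k_t}$ with $k_{t-1}<k_t$ lies in $J$. Sums of such products give all of $\rad(A)^{n-m}E_{ij}$. Since $J^n$ is graded, this yields equality.

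The main subtlety is the convention that $m_{i,j,n}$ is capped at $n$ and is a maximum over finite chains. The requirement $m\le n$ matters: if every chain from $i$ to $j$ is longer than $n$, the maximum is taken over an empty set. In that case the component should be zero, while the formula would need $\rad(A)^{n-m}$ with a negative exponent. I would treat this case separately. If no chain of length $\le n$ from $i$ to $j$ exists, then no path of $n$ steps reaches $j$ from $i$, so the component of $J^n$ vanishes. Apart from this bookkeeping about the definition, the argument is routine.
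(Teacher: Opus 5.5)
Your proof is correct and follows the paper's argument essentially step for step. For ``$\subseteq$'' you expand the $(i,j)$-component of $\radgr(R)^n$ along weakly increasing sequences $i=l_0\le\cdots\le l_n=j$ and count the strict steps, so at most $m_{i,j,n}$ factors lie outside $\rad(A)$. For ``$\supseteq$'' you pad a maximal chain with diagonal radical factors. The only difference is that the paper cites \cite[Proposition~5.15]{chainconditions_Jacobsonradical} for the case $n=1$ instead of reproving it.

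The ``subtlety'' you flag never actually occurs. For $i<j$ the chain $i<j$ itself has length $1\le n$, and $m_{i,i,n}=0$. So $m_{i,j,n}$ is always a maximum over a nonempty set, and no separate case is needed.
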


\begin{proof}
    Let $(a_{ij})_{ij}\in \radgr(R)^n$ and fix $i,j\in I$ with $i\leq j$.
    Then $a_{ij}E_{ij}$ is a finite sum of products of the form
    \begin{equation}
    \label{eq: rad(UT(A))^n}
        (a_1E_{il_1})(a_2E_{l_1l_2})\cdots(a_nE_{l_{n-1}j}),
    \end{equation}
    where $a_1,\dots,a_n\in A$, $i=:l_0\leq l_1\leq l_2\leq\cdots\leq l_{n-1}\leq l_n:= j$ in $I$, and $a_tE_{l_{t-1}l_t}\in\radgr(R)$ for each $1\leq t\leq n$.
    Let $N:=\{1\leq t \leq n:l_{t-1}\neq l_t\}$ and note that $N$ has at most $m_{i,j,n}$ elements.
    If $t\in\{1,\dots,n\}\setminus N$, then $a_t\in\rad(A)$ by \cite[Proposition~5.15]{chainconditions_Jacobsonradical}, and it follows that $a_1a_2\cdots a_n\in\rad(A)^{n-|N|}\subseteq\rad(A)^{n-m_{i,j,n}}$.
    Then, \eqref{eq: rad(UT(A))^n} is an element of $\rad(A)^{n-m_{i,j,n}}E_{ij}$, and therefore $a_{ij}E_{ij}\in \rad(A)^{n-m_{i,j,n}}E_{ij}$.

    Conversely, let $i,j\in I$ with $i\leq j$, set $m:=m_{i,j,n}$, and take $a \in\rad(A)^{n-m}$.
    Write $a=\sum_{t=1}^sa_{1,t}a_{2,t}\cdots a_{n-m,t}$, where $a_{1,t},a_{2,t},\dots, a_{n-m,t}\in\rad(A)$ for each $t=1,\dots,s$.
    Take a chain $i= k_0<k_1<\cdots<k_m= j $ in $I$.
    Then
    \[aE_{ij}=\sum_{t=1}^s(a_{1,t}E_{ii})(a_{2,t}E_{ii})\cdots (a_{n-m,t}E_{ii})E_{ik_1}E_{k_1k_2}\cdots E_{k_{m-1}j}\in\radgr(R)^n\]
    by \cite[Proposition 5.15]{chainconditions_Jacobsonradical}.
\end{proof}

As the first main example of this section, we illustrate the nilpotency-type conditions from \Cref{def: nilpotency conditions} in the context of graded upper triangular matrix rings, showing that they are mutually distinct.

\begin{proposition}
\label{prop: nilpotencias radgr UT(A)}
    Let $I$ be a partially ordered set and $A$ be a unital ring. Consider the groupoid  $\Gamma:=I\times I$ and the $\Gamma$-graded ring $R := \UT_I(A)$. 
    The following assertions hold:
    \begin{enumerate}
        \item $\radgr(R)$ is gr-nil if and only if $\rad(A)$ is nil.
        \item $\radgr(R)$ is $\Gamma_0$-nilpotent if and only if $\rad(A)$ is nilpotent.
        \item $\radgr(R)$ is right objectwise nilpotent if and only if $\rad(A)$ is nilpotent and, for each $i\in I$, there exists $n_i\in\mathbb{N}$ such that $I$ contains no chains of the form $i<k_1<k_2<\cdots<k_{n_i}$.
        \item $\radgr(R)$ is left objectwise nilpotent if and only if $\rad(A)$ is nilpotent and, for each $j\in I$, there exists $n_j\in\mathbb{N}$ such that $I$ contains no chains of the form $k_1<k_2<\cdots<k_{n_j}<j$.
        \item $\radgr(R)$ is nilpotent if and only if $\rad(A)$ is nilpotent and there exists $t\in\mathbb{N}$ such that $I$ contains no chains of the form $k_0<k_1<\cdots<k_{t}$.
    \end{enumerate}
\end{proposition}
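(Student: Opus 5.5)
The plan is to read everything off \Cref{prop: powers of radgr UT(A)} together with the description of $\radgr(R)$ itself, namely $\radgr(R)=\{(a_{ij}): a_{ii}\in\rad(A)\}$, with entries $a_{ij}$ for $i<j$ arbitrary (the case $n=1$ of \Cref{prop: powers of radgr UT(A)}). Write $J:=\radgr(R)$ and $e_i:=(i,i)\in\Gamma_0$, so $1_{e_i}=E_{ii}$, $J(e_i)=E_{ii}J$ is the $i$th row of $J$, and $J1_{e_j}$ is the $j$th column.

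For (1), note that $1_{e_i}J1_{e_i}=\rad(A)E_{ii}\cong\rad(A)$. Then \Cref{lem: eqiv nil e G_0-nilpotent}(1) reduces gr-nil to each $\rad(A)E_{ii}$ being gr-nil; since these are trivially graded by $(i,i)$, this means $\rad(A)$ is nil. For (2), \Cref{lem: eqiv nil e G_0-nilpotent}(2) says $J$ is $\Gamma_0$-nilpotent iff each $(\rad(A)E_{ii})^{n_i}=0$, i.e. iff $\rad(A)$ is nilpotent. Here $I\neq\emptyset$ is used, and a single exponent suffices for all $i$ because the corners are all isomorphic.

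For (3), use \Cref{prop: powers of radgr UT(A)}: $1_{e_i}J^n=0$ iff for every $j\ge i$ we have $\rad(A)^{n-m_{i,j,n}}=0$. Note that $\rad(A)^0=A\neq0$, since $A$ is unital and we may assume $A\neq0$; the case $A=0$ is trivial and both sides hold. For the forward direction, taking $j=i$ gives $m=0$ and $\rad(A)^n=0$. Moreover, if there were a chain $i<k_1<\dots<k_n$, then with $j=k_n$ we get $m_{i,j,n}=n$, and the entry $A\neq 0$ would appear; so no chain of length $n$ starts at $i$, and $n_i:=n$ works. Conversely, suppose $\rad(A)^p=0$ and no chain $i<k_1<\dots<k_{n_i}$ exists. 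Then every strict chain from $i$ has length at most $n_i-1$, so $m_{i,j,n}\le n_i-1$ for all $j$. Taking $n:=p+n_i-1$ gives $n-m_{i,j,n}\ge p$, hence $1_{e_i}J^n=0$.

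Part (4) is symmetric, using the columns $J^n1_{e_j}$ and chains ending at $j$. Part (5) is the same argument with a uniform bound: $J^n=0$ iff $\rad(A)^{n-m_{i,j,n}}=0$ for all $i\le j$. This forces $\rad(A)^n=0$ and forbids chains $k_0<\dots<k_n$, so $t:=n$. Conversely, with chain length bounded by $t-1$ and $\rad(A)^p=0$, take $n=p+t-1$.

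The only delicate point is the bookkeeping in $m_{i,j,n}$. It is capped at $n$, so the existence of a chain of length $n$ is exactly what makes $m_{i,j,n}=n$ and produces a full copy of $A$ in $J^n$. We also need that $\rad(A)$ nilpotent of index $p$ together with bounded chain length gives the exponent estimate above. Both are routine once \Cref{prop: powers of radgr UT(A)} is granted.
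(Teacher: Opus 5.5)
Your proposal is correct and is essentially the paper's proof: (1)--(2) go through \Cref{lem: eqiv nil e G_0-nilpotent} applied to the corners $1_{e_i}J1_{e_i}=\rad(A)E_{ii}$, and (3)--(5) are read off from \Cref{prop: powers of radgr UT(A)}, with your $j=i$ entry standing in for the paper's appeal to (2), and your $m_{i,k_n,n}=n$ observation (a copy of $A$ in $J^n$) being the paper's $E_{ik_n}\in J^n$. One correction: the case $A=0$ is not ``trivial with both sides holding''---there $J=0$ is nilpotent, while the chain conditions in (3)--(5) fail for, e.g., $I=\mathbb{Z}$, so the statement (like the paper's argument, which needs $E_{ik_n}\neq 0$) tacitly assumes $A\neq 0$.
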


\begin{proof}
    Set $J:=\radgr(R)$

    (1) By \Cref{lem: eqiv nil e G_0-nilpotent}(1), $J$ is gr-nil if and only if $J_{(i,i)}$ is nil for each $i\in I$. 
    The result now follows from \cite[Proposition 5.15]{chainconditions_Jacobsonradical}, since $J_{(i,i)}=\rad(A)E_{ii}\cong\rad(A)$ for all $i\in I$.

    (2) Analogous to (1), using \Cref{lem: eqiv nil e G_0-nilpotent}(2).

    (3) Suppose that $J$ is right objectwise nilpotent.
    By (2), $\rad(A)$ is nilpotent.
    Suppose, by way of contradiction, that there exists $i\in I$ such that $I$ contains chains of the form $i<k_1<k_2<\cdots<k_{n}$ for any $n\in\mathbb{N}$.
    However, such a chain implies that $E_{ik_n}=E_{ik_1}E_{k_1k_2}\cdots E_{k_{n-1}k_n}\in (J^n)_{(i,k_n)}$ by \cite[Proposition 5.15]{chainconditions_Jacobsonradical}.
    Then, $e=(i,i)\in\Gamma_0$ is such that $J^n(e)\neq0$ for each $n\in\mathbb{N}$, a contradiction.

    Conversely, suppose that $\rad(A)^p=0$ for some $p\in\mathbb{N}$ and, for each $i\in I$, there exists $n_i\in\mathbb{N}$ such that $I$ contains no chains of the form $i<k_1<k_2<\cdots<k_{n_i}$.
    In the language of \Cref{prop: powers of radgr UT(A)}, we have $m_{i,j,n}< n_i$ for all $j\in I$ and $n\in\mathbb{N}$, and it follows that $(J^{p+n_i})_{(i,j)}=0$ for every $j\in I$.
    Therefore, $J^{p+n_i}(e)=0$, where $e=(i,i)$.

    (4)  Analogous to (3).

    (5) Suppose that $J$ is nilpotent.
    By (2), $\rad(A)$ is nilpotent.
    Suppose, by way of contradiction, that $I$ contains chains of the form $k_0<k_1<\cdots<k_t$ for any $t\in\mathbb{N}$.
    However, such a chain implies that $E_{k_0k_t}=E_{k_0k_1}E_{k_1k_2}\cdots E_{k_{t-1}k_t}\in J^t$ by \cite[Proposition 5.15]{chainconditions_Jacobsonradical}.
    Then, $J^t\neq0$ for each $t\in\mathbb{N}$, a contradiction.

    Conversely, suppose that $\rad(A)^p=0$ for some $p\in\mathbb{N}$ and there exists $t\in\mathbb{N}$ such that $I$ contains no chains of the form $k_0<k_1<\cdots<k_t$.
    In the language of \Cref{prop: powers of radgr UT(A)}, we have $m_{i,j,n}< t$ for all $i,j\in I$ and $n\in\mathbb{N}$, and it follows that $J^{p+t}=0$.
\end{proof}

\Cref{prop: nilpotencias radgr UT(A)} allows us to characterize when a graded upper triangular matrix ring is gr-semiprimary.

\begin{corollary}\label{coro:when_triangular_is_semiprimary}
    Let $A$ be a unital ring, and $I$ be a partially ordered set.
    The following assertions are equivalent:
    \begin{enumerate}
        \item $\UT_I(A)$ is a gr-semiprimary $I\times I$-graded ring.
        \item $A$ and $I$ satisfy the following two conditions:
        \begin{enumerate}
            \item $A$ is a semiprimary ring.
            \item For each $i\in I$, there exists $n_i\in\mathbb{N}$ such that $I$ contains neither chains of the form $i<k_1<k_2<\cdots<k_{n_i}$ nor $k_1<k_2<\cdots<k_{n_i}<i$.
        \end{enumerate}
    \end{enumerate}
\end{corollary}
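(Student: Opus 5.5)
The plan is to split gr-semiprimarity into its two defining pieces and handle each with an earlier result. The piece ``$\radgr(R)$ is objectwise nilpotent'' is, by definition, right objectwise nilpotency together with left objectwise nilpotency. \Cref{prop: nilpotencias radgr UT(A)}(3) and (4) characterize these two conditions. Their conjunction says exactly that $\rad(A)$ is nilpotent and that, for each $i\in I$, there are integers $n_i,n_i'$ bounding the ascending chains starting at $i$ and the descending chains ending at $i$. Replacing both by $\max\{n_i,n_i'\}$ gives precisely condition (b). So objectwise nilpotency of $\radgr(\UT_I(A))$ is equivalent to ``(b) and $\rad(A)$ nilpotent''.

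The remaining task is to show that $\UT_I(A)/\radgr(\UT_I(A))$ is gr-semisimple if and only if $A/\rad(A)$ is semisimple, i.e.\ $A$ is semilocal. I would first identify the quotient, using \cite[Proposition~5.15]{chainconditions_Jacobsonradical} (as in \Cref{prop: powers of radgr UT(A)} with $n=1$). That result gives $\radgr(R)_{(i,j)}=AE_{ij}$ for $i<j$ and $\radgr(R)_{(i,i)}=\rad(A)E_{ii}$. Hence $R/\radgr(R)\isogr\bigoplus_{i\in I}(A/\rad(A))E_{ii}$, which is a ring graded by $I\times I$ with support in the diagonal, i.e.\ a direct sum of copies of $\bar A:=A/\rad(A)$ indexed by $I$.

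A right graded module over this ring decomposes as $\bigoplus_i \bar A E_{ii}$, and the graded submodules of $\bar AE_{ii}$ are exactly the right ideals of $\bar A$ (times $E_{ii}$). Therefore $R/\radgr(R)$ is gr-semisimple if and only if $\bar A$ is a semisimple ring. Here $I\neq\emptyset$ is used: for the forward implication, gr-semisimplicity of a graded direct summand forces $\bar A$ semisimple.

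Combining the two pieces: (1) holds iff $A$ is semilocal, $\rad(A)$ is nilpotent, and (b) holds. The first two conditions together are exactly ``$A$ is semiprimary''. An alternative route for the forward direction is \Cref{prop:R_gr-semiprimary=>Re_semiprimary}(1) applied to $R_{(i,i)}\cong A$, but the direct identification of the quotient handles both directions at once.

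The main obstacle I expect is a careful check of the identification of $R/\radgr(R)$. Specifically, the off-diagonal homogeneous components must lie entirely in the radical. This can also be seen via \Cref{teo: a carac de radgr(R)}: for $i<j$ we have $R_{(j,i)}=0$, so $1-ax=1$ is trivially invertible. The gr-semisimplicity of a diagonally supported ring then reduces componentwise, which is routine.
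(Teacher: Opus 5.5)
Your proposal is correct. The nilpotency half is the same as in the paper: both combine \Cref{prop: nilpotencias radgr UT(A)}(3) and (4), and your $\max\{n_i,n_i'\}$ step is the only detail that needs adding.

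The route differs in the gr-semilocality half. The paper does not compute $\UT_I(A)/\radgr(\UT_I(A))$. It cites \cite[Proposition~6.13]{chainconditions_Jacobsonradical}, which, judging from how it is used elsewhere in the paper, transfers gr-semilocality between the graded ring and its unit components. This gives that $\UT_I(A)$ is gr-semilocal if and only if $R_{(i,i)}\cong A$ is semilocal, and combined with the nilpotency of $\rad(A)$ this is semiprimarity.

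You instead obtain this by hand from the explicit shape of the radical:
\begin{itemize}
\item off-diagonal components lie entirely in $\radgr$, since $R_{(j,i)}=0$ for $i<j$;
\item the diagonal components are $\rad(A)E_{ii}$.
\end{itemize}
This yields $R/\radgr(R)\isogr\bigoplus_{i\in I}(A/\rad(A))E_{ii}$. In this ring the graded submodules of each $(A/\rad(A))E_{ii}$ are exactly the right ideals of $A/\rad(A)$, so gr-semisimplicity is equivalent to semisimplicity of $A/\rad(A)$.

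The paper's route is a one-line appeal to a criterion that is not specific to triangular matrices. Yours is self-contained, needing only \Cref{teo: a carac de radgr(R)} and elementary facts about gr-semisimple modules, and it identifies the semisimple quotient explicitly. The cost is that it relies on the triangular structure and on $I\neq\emptyset$, which you correctly flag.
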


\begin{proof}
This follows from    \cite[Proposition 6.13]{chainconditions_Jacobsonradical} and \Cref{prop: nilpotencias radgr UT(A)}(3,4).
\end{proof}

We now give some examples of partially ordered sets $I$ satisfying \Cref{coro:when_triangular_is_semiprimary}(2)(b). Notice that under this hypothesis, $\UT_I(A)$ is gr-semiprimary if and only if $A$ is semiprimary.

\begin{example}
    (1) If $I=\bigcup_{\lambda\in\Lambda}I_\lambda$ is a disjoint union of finite partially ordered sets $I_\lambda$, where the elements of distinct sets are not comparable, then $I$ satisfies \Cref{coro:when_triangular_is_semiprimary}(2)(b).

    (2) If $I$ is a partially ordered set and there exists $n\in\mathbb{N}$ such that $I$ does not contain chains of length greater than $n$, then \Cref{coro:when_triangular_is_semiprimary}(2)(b) is satisfied. Moreover, by \Cref{prop: nilpotencias radgr UT(A)}(5), $\radgr(\UT_I(A))$ is nilpotent if and only if $\rad(A)$ is nilpotent.
    Examples of such a poset $I$ include the set of prime ideals of a commutative unital ring with finite Krull dimension, and the set of right ideals of a right artinian unital ring.

    (3) Suppose that $I$ is a \emph{fence} (also called a \emph{zigzag}) poset, that is, the elements of $I$ can be written in the following form
    \[x_1<x_2<\cdots <x_{n_1}>x_{n_1+1}>x_{n_1+2}>\cdots>x_{n_1+n_2}<x_{n_1+n_2+1}>\cdots.\]
    Then $I$ satisfies \Cref{coro:when_triangular_is_semiprimary}(2)(b).
    For a visual example, if $I$ is the fence 
    \[x_1<x_2>x_3<x_4>x_5<x_6>\cdots,\]
    that is $x_{2n-1}<x_{2n}>x_{2n+1}$ for every $n\geq1$, then $\UT_I(A)$ can be viewed as the set of matrices of the form
    \[
    \begin{pmatrix}
a_{11} & a_{12} & 0      & 0      & 0      & 0      & \cdots \\
0      & a_{22} & 0      & 0      & 0      & 0      & \cdots \\
0      & a_{32} & a_{33} & a_{34} & 0      & 0      & \cdots \\
0      & 0      & 0      & a_{44} & 0      & 0      & \cdots \\
0      & 0      & 0      & a_{54} & a_{55} & a_{56} & \cdots \\
0      & 0      & 0      & 0      & 0      & a_{66} & \cdots \\
\vdots & \vdots & \vdots & \vdots & \vdots & \vdots & \ddots
\end{pmatrix}
    \]

    (4) A poset $(I,\leq)$ is called a \emph{tree} if $\{i'\in I:i'<i\}$ is well-ordered by $<$ for every $i\in I$.
    If $I$ is a tree such that $\{i'\in I:i'>i\}$ is finite for every $i\in I$, then is straithforward to see that $I$ satisfies \Cref{coro:when_triangular_is_semiprimary}(2)(b).
    For example, suppose that $I$ is the tree 
    $\{x_0,x_1^{(1)},x_2^{(1)},x_2^{(2)},x_3^{(1)},x_3^{(2)},x_3^{(3)}, \dots\}$ where the partial order is given by
    \[x_0< x_n^{(1)}<x_n^{(2)}<x_n^{(3)}<\cdots <x_n^{(n)}\quad (n\geq1),\]
    then $\UT_I(A)$ can be viewed as the set of matrices of the form
    \[
    \begin{pmatrix}
a_{11} & a_{12} & a_{13} & a_{14} & a_{15} & a_{16} & a_{17} & \cdots \\
0      & a_{22} & 0      & 0      & 0      & 0      & 0      & \cdots \\
0      & 0      & a_{33} & a_{34} & 0      & 0      & 0      & \cdots \\
0      & 0      & 0      & a_{44} & 0      & 0      & 0      & \cdots \\
0      & 0      & 0      & 0      & a_{55} & a_{56} & a_{57} & \cdots \\
0      & 0      & 0      & 0      & 0      & a_{66} & a_{67} & \cdots \\
0      & 0      & 0      & 0      & 0      & 0      & a_{77} & \cdots \\
\vdots & \vdots & \vdots & \vdots & \vdots & \vdots & \vdots & \ddots
\end{pmatrix}
    \]

    (5) Note that if a partially ordered set $I$ satisfies \Cref{coro:when_triangular_is_semiprimary}(2)(b), then the opposite poset $I^{op}$ does too.
    In particular, if $I$ is the set 
    $$\{x_\infty,x_1^{(1)},x_2^{(1)},x_2^{(2)},x_3^{(1)},x_3^{(2)},x_3^{(3)}, \dots\}$$ where the partial order is given by
    \[x_n^{(1)}<x_n^{(2)}<x_n^{(3)}<\cdots <x_n^{(n)}<x_\infty\quad (n\geq1),\]
    then $I$ satisfies \Cref{coro:when_triangular_is_semiprimary}(2)(b) and $\UT_I(A)$ can be viewed as the set of matrices of the form
    \[
    \begin{pmatrix} 
a_{11}   & 0        & 0        & 0        & 0        & 0        & \cdots & a_{1\infty} \\ 
0        & a_{22}   & a_{23}   & 0        & 0        & 0        & \cdots & a_{2\infty} \\ 
0        & 0        & a_{33}   & 0        & 0        & 0        & \cdots & a_{3\infty} \\ 
0        & 0        & 0        & a_{44}   & a_{45}   & a_{46}   & \cdots & a_{4\infty} \\ 
0        & 0        & 0        & 0        & a_{55}   & a_{56}   & \cdots & a_{5\infty} \\ 
0        & 0        & 0        & 0        & 0        & a_{66}   & \cdots & a_{6\infty} \\ 
\vdots   & \vdots   & \vdots   & \vdots   & \vdots   & \vdots   & \ddots & \vdots \\ 
0        & 0        & 0        & 0        & 0        & 0        & \cdots & a_{\infty \infty} 
\end{pmatrix}
    \]

    (6) If $I$ is a totally ordered set, then it is straightforward to check that \Cref{coro:when_triangular_is_semiprimary}(2)(b) is equivalent to $I$ being finite.
    \qed
\end{example}

In the next proposition, we show there exist one-sided $\Gamma_0$-artinian rings whose graded Jacobson radical is neither left nor right objectwise nilpotent. Moreover, it demonstrates that there exist one-sided $\Gamma_0$-artinian rings whose graded Jacobson radical is one-sided objectwise nilpotent, but not objectwise nilpotent on the opposite side.

\begin{proposition}
\label{prop: contra-exemplos J fort nilp}
    Let $I$ be a totally ordered set and $A$ be a unital ring. Consider the groupoid $\Gamma := I \times I$ and the $\Gamma$-graded ring $R := \UT_I(A)$. The following assertions hold:
    \begin{enumerate}
        \item If $A$ is right artinian and, for each $i\in I$, there exists $n_i\in\mathbb{N}$ such that $I$ contains no chains of the form $i<k_1<k_2<\cdots<k_{n_i}$,
        then $R$ is a right $\Gamma_0$-artinian ring and $\radgr(R)$ is right objectwise nilpotent.
        \item If $A$ is left artinian and, for each $j\in I$, there exists $n_j\in\mathbb{N}$ such that $I$ contains no chains of the form $k_1<k_2<\cdots<k_{n_j}<j$,
        then $R$ is a left $\Gamma_0$-artinian ring and $\radgr(R)$ is left objectwise nilpotent.
        
        \item If $A$ is left artinian and $I$ is an infinite well-ordered set, then $R$ is a left $\Gamma_0$-artinian ring and $\radgr(R)$ is not right objectwise nilpotent.
        
        \item If $A$ is left artinian and $I = \mathbb{N}$, then $R$ is a left $\Gamma_0$-artinian ring and $\radgr(R)$ is left objectwise nilpotent  but not right objectwise nilpotent.
        
        \item Suppose that $A$ is left artinian and $I$ is the set of ordinals less than $\lambda$, where $\lambda$ is an ordinal greater than the first infinite ordinal $\omega$. 
        Then $R$ is a left $\Gamma_0$-artinian ring and $\radgr(R)$ is neither left nor right objectwise nilpotent.
    \end{enumerate}
\end{proposition}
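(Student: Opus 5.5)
The nilpotency assertions reduce to \Cref{prop: nilpotencias radgr UT(A)}, since a one-sided artinian ring $A$ has nilpotent $\rad(A)$. What remains is the $\Gamma_0$-artinian part. So I would first prove (1) and (2), and then derive (3)--(5) from (2) plus the nilpotency criteria.

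\textbf{Step 1: right $\Gamma_0$-artinianity in (1).} Fix $i\in I$. Then $R(e)=1_{(i,i)}R$ for $e=(i,i)$, which is the $i$-th row $\bigoplus_{j\geq i} AE_{ij}$. Because $I$ is totally ordered and no chain $i<k_1<\cdots<k_{n_i}$ exists, the set $\{j\in I: j\geq i\}$ is finite, with at most $n_i$ elements.

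A graded right submodule of this row is a sum $\bigoplus_{j\geq i}U_jE_{ij}$ of additive subgroups $U_j\subseteq A$. Right multiplication by $AE_{jj}$ makes each $U_j$ a right ideal of $A$. Right multiplication by $E_{jk}$ for $j\le k$ gives $U_j\subseteq U_k$.

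Hence a descending chain of graded submodules gives finitely many descending chains of right ideals of $A$, one for each $j\geq i$. Each of these stabilizes because $A$ is right artinian, so $R(e)$ is gr-artinian. Alternatively, $R(e)$ is a finite direct sum of the shifted modules $AE_{ij}R$, each of finite length, which also gives gr-artinianity.

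\textbf{Step 2: nilpotency in (1), and the proof of (2).} Since $A$ is right artinian, $\rad(A)$ is nilpotent. The chain hypothesis is exactly the one in \Cref{prop: nilpotencias radgr UT(A)}(3), so $\radgr(R)$ is right objectwise nilpotent. Part (2) is the mirror argument with columns $R1_{(j,j)}$: their graded left submodules are families of left ideals $U_i\subseteq A$ for $i\le j$ with $U_i\subseteq U_k$ when $k\le i$. The conclusion then comes from \Cref{prop: nilpotencias radgr UT(A)}(4).

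\textbf{Step 3: the main obstacle, (3) and (5).} Here the column $\{i: i\le j\}$ can be infinite, so I cannot argue coordinatewise. Instead I would use the ordering of the left ideals. A graded left submodule of the column $R1_{(j,j)}$ corresponds to a family $(U_i)_{i\le j}$ of left ideals of $A$ with $U_i\subseteq U_k$ whenever $k\le i\le j$, i.e.\ a family that is antitone in $i$.

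Given a strictly descending chain $M^1\supsetneq M^2\supsetneq\cdots$ of such submodules, I would argue as follows. Since $\{i: i\le j\}$ is well-ordered, it has a least element $i_0$. The ideal $U_{i_0}$ is the largest in each family, and its values along the chain descend, so they stabilize. That alone does not finish the argument.

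The better route is to use that $A$ has finite left length $\ell$. An antitone family of left ideals indexed by a well-ordered set has at most $\ell$ jump points. So each $M$ is determined by a finite list of at most $\ell$ jump positions together with the corresponding ideals. I would then show that a strict descent either lowers the multiset of lengths or moves jump positions to the left. Since the index set is well-ordered, the positions can only move left finitely often, and induction on $\ell$ gives gr-artinianity. Of the whole proof, this well-foundedness argument is the one I expect to need the most care.

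\textbf{Step 4: nilpotency in (3)--(5).} For (3), an infinite well-ordered $I$ contains $\omega$. Then the element $0$ starts the arbitrarily long chains $0<1<\cdots<n$, so \Cref{prop: nilpotencias radgr UT(A)}(3) shows $\radgr(R)$ is not right objectwise nilpotent.

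For (4), with $I=\mathbb{N}$, the set below each $j$ is finite, so \Cref{prop: nilpotencias radgr UT(A)}(4) gives left objectwise nilpotency, while (3) gives the failure on the right.

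For (5), take $j=\omega<\lambda$. Then $j$ has arbitrarily long chains $k_1<\cdots<k_n<\omega$ below it, so $\radgr(R)$ is not left objectwise nilpotent, and (3) gives the failure on the right.
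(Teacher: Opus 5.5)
Your proof is correct. For all the nilpotency assertions it is exactly the paper's argument: everything is read off from \Cref{prop: nilpotencias radgr UT(A)}(3),(4), using that $\rad(A)$ is nilpotent for one-sided artinian $A$, plus the chains $1<2<\cdots<n<\omega$ in (5).

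The difference is in the one-sided $\Gamma_0$-artinianity. The paper does not prove it here. It quotes \cite[Proposition~3.58(1),(3)]{chainconditions_Jacobsonradical} for (1) and (2), and \cite[Corollary~3.60(2)]{chainconditions_Jacobsonradical} for (3). You instead derive it from the description of graded one-sided submodules of a row or column as monotone families of one-sided ideals of $A$. That description is the content of \cite[Lemma~3.57]{chainconditions_Jacobsonradical}, which the paper also uses in the proof of \Cref{prop: UT_I(A) gr-hered}. Your route makes the argument self-contained and shows exactly which order property is used: finiteness of $I_{\geq i}$ in (1), and well-ordering of $I_{\leq j}$ in (3). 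The paper's route is simply shorter.

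Two remarks on your write-up.

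\textbf{Step 3.} The bookkeeping ``lowers the multiset of lengths or moves jump positions to the left'' is not clearly a well-founded measure as stated. Actual jump points can appear to the right of existing ones, for instance when you shrink the ideal only on a tail of the last interval. A cleaner encoding uses thresholds. Let $W=\{i\in I: i\le j\}$ and, for $k=1,\dots,\ell$, set
\[
t_k=\min\{i\in W:\ \mathrm{length}(U_i)<k\}\in W\cup\{\infty\},
\]
with $t_k=\infty$ if this set is empty. The set is a final segment of $W$ because the family is antitone, so the minimum exists.
\begin{itemize}
\item Pointwise inclusion $U'\subseteq U$ gives $t_k(U')\le t_k(U)$ for every $k$.
\item A strict inclusion $U'_i\subsetneq U_i$ gives $t_k(U')\le i<t_k(U)$ for $k=\mathrm{length}(U_i)\geq 1$.
\end{itemize}
So a strictly descending chain of graded left submodules of the column yields a sequence in $(W\cup\{\infty\})^\ell$. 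This sequence is coordinatewise non-increasing and drops strictly in some coordinate at each step. Some coordinate then drops infinitely often, contradicting the well-ordering of $W\cup\{\infty\}$. This is the precise form of your ``positions can only move left finitely often''.

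\textbf{Step 1.} The aside that $R(e)$ is a finite direct sum of the modules $AE_{ij}R$ is inaccurate. Since $AE_{ij}R=\bigoplus_{k\ge j}AE_{ik}$, these submodules are nested rather than independent. Your main coordinatewise argument does not need this aside.
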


\begin{proof}
    (1) By \cite[Proposition 3.58(1)]{chainconditions_Jacobsonradical}, $R$ is a right $\Gamma_0$-artinian ring and, by \Cref{prop: nilpotencias radgr UT(A)}(3), $\radgr(R)$ is right objectwise nilpotent.
    
    (2) By \cite[Proposition 3.58(3)]{chainconditions_Jacobsonradical}, $R$ is a left $\Gamma_0$-artinian ring and, by \Cref{prop: nilpotencias radgr UT(A)}(4), $\radgr(R)$ is left objectwise nilpotent.
    
    (3) By  \cite[Corollary~3.60(2)]{chainconditions_Jacobsonradical},  $R$ is a left $\Gamma_0$-artinian ring.
    By \Cref{prop: nilpotencias radgr UT(A)}(3), $\radgr(R)$ is not right objectwise nilpotent.

    (4) This follows from (2) and (3).

    (5) By (3), it suffices to show that $\radgr(R)$ is not left objectwise nilpotent.
    This follows from \Cref{prop: nilpotencias radgr UT(A)}(4), since for each $n\in\mathbb{N}$ we have a chain $1<2<\cdots<n<\omega$ in $I$.
\end{proof}

We showed in \Cref{teo: art => (noet <=> fort nilp)} that if $R$ right $\Gamma_0$-artinian and right objectwise nilpotent, then $R$ is  right $\Gamma_0$-noetherian. In the next example, we show that this is not sufficient to guarantee the equivalence of the graded artinian and the graded noetherian properties for graded modules.
More precisely, items (a)–(c) in \Cref{teo: left str. G_0-nilp. ==> right noeth.} require the assumption of finite (gr-co)generation.

\begin{example}\label{ex:weak_HopLev_but_not_strong_HopLev}
We present examples of gr-semilocal rings (in fact, one-sided $\Gamma_0$-artinian rings) whose graded Jacobson radical is right (left) objectwise nilpotent, but there exist gr-artinian (gr-noetherian) modules that are not gr-noetherian (gr-artinian).
In particular, these rings are not gr-semiprimary.

(1) Let $I:=\mathbb{Z}_{<0}$ be the set of negative integers,  $\Gamma:=I\times I$, and $D$ be a division ring.
By \Cref{prop: contra-exemplos J fort nilp}(1), the $\Gamma$-graded ring $R:=\UT_I(D)$ is right $\Gamma_0$-artinian and its graded Jacobson radical is right objectwise nilpotent.

Let $M=D^{(I)}$. We think of the elements of $M$ as  infinite row vectors with only a finite number of nonzero entries. 
For each $i\in I$, let $s_i\in M$ be the row vector with $i$-entry equal to $1_D$ and any other different entry equal to zero. 
Then $M$ can be regarded as a right $D$-vector space with basis $\{s_i\colon i\in I\}$. 
Hence every element  $m\in M$ can be uniquely expressed as $m=\sum_{i\in I}s_ia_i$ where $a_i\in D$ and $a_i\neq 0$ for only a finite number of $i\in I$. 
We endow $M$ with a structure of $\Gamma$-graded additive group via $M_{(-1,i)}=s_iD$ for each $i \in I$.
Moreover, the natural right action of $R$ on $M$, makes of $M$ a $\Gamma$-graded right $R$-module.
Under this action, we have
\begin{equation*}
\label{eq: skEij=sj}
    s_kE_{ij}=\begin{cases}s_j, &\text{if } k=i\leq j\\
0, &\text{if } k\neq i\leq j   . 
\end{cases}
\end{equation*}
In particular, we obtain the following strict ascending chain
\begin{equation}
\label{eq: siR}
    s_{-1}R\subsetneq s_{-2}R \subsetneq \cdots \subsetneq s_{-n}R \subsetneq \cdots,
\end{equation}
from which $M$ is not gr-noetherian.
We will prove that $M$ is gr-artinian.
It suffices to show that all the nonzero proper graded submodules of $M$ are in the chain \eqref{eq: siR}.
Let  $N$ be a nonzero proper graded submodule of $M$ and let $t$ be the smallest positive integer such that there exists $\sum_{i=-t}^{-1} s_ia_i\in M\setminus N$ with $a_{-t}\neq0$.
Note that  if $-t\geq k$, then $s_k\notin N$ (otherwise, $s_{-1},s_{-2},\dots, s_{-t}\in N$ by \eqref{eq: siR}).
If $m=\sum_{i=k}^{-1} s_ia_i\in N$ for some $k\in I$ with $a_{k}\neq 0$, 
then $s_{k} = m(a_{k}^{-1}E_{kk})\in N$, and thus $k>-t$.
From this and \eqref{eq: siR}, we obtain $t>1$ and $N\subseteq s_{-t+1}R$.
Since $s_{-t+1}\in N$ by the minimality of $t$, we have $N= s_{-t+1}R$, as desired.

(2) Let $I$ be an infinite well-ordered set,  $\Gamma:=I\times I$, and $D$ be a division ring.
By \Cref{prop: contra-exemplos J fort nilp}(2), the $\Gamma$-graded ring $R:=\UT_I(D)$ is left $\Gamma_0$-artinian and its graded Jacobson radical is left objectwise nilpotent.

On the one hand, $R_R$ is right $\Gamma_0$-noetherian according  to \cite[Corollary~3.60(1)]{chainconditions_Jacobsonradical}.
On the other hand, \cite[Corollary~3.60(4)]{chainconditions_Jacobsonradical} shows that $R_R$ is not right $\Gamma_0$-artinian.
\qed
\end{example}



\section{Right gr-hereditary right \texorpdfstring{$\Gamma_0$}{Gamma0}-artinian rings}
\label{sec:gr-hered_gr-art}

The main objective of this section is to introduce right gr-hereditary right $\Gamma_0$-artinian rings. These form an important class of one-sided $\Gamma_0$-artinian rings that are also $\Gamma_0$-noetherian on the same side, thereby satisfying \Cref{teo: art => (noet <=> fort nilp)}. We also show that such rings need not satisfy the equivalences in the Strong Hopkins--Levitzki Theorem~\ref{teo: hopkins-levitski}. To achieve these goals, we develop the theory of the graded Jacobson radical in connection with graded matrix rings and gr-projective modules. Since much of this underlying theory must be developed from scratch, the exposition is somewhat technical. We begin by recalling the necessary terminology and definitions from \cite{groupoid_graded_semisimple}.

 \begin{definition}
 \label{def: M_I(R)(E)}
	Let $R$ be a $\Gamma$-graded ring and $\M_I(R)$ be the set of $I\times I$ matrices with only a finite number of nonzero entries in $R$. 
    
A sequence of nonempty subsets $\overline{\Sigma}=(\Sigma_i)_{i\in I}\in\mathcal{P}(\Gamma)^I$ is \emph{fully matricial} for $R$ if it satisfies the following three conditions:
\begin{enumerate}
    \renewcommand{\labelenumi}{\theenumi)}
    \item The set $I_e=\{i\in I\colon  d(\sigma)=e \textrm{ for some } \sigma\in\Sigma_i\}$
 is finite for each $e\in\Gamma_0$.
 \item For each $i\in I$, the map $\Sigma_i\rightarrow \Gamma_0$, $\sigma\mapsto d(\sigma)$ is injective.
 \item The correspondence $\sigma\mapsto r(\sigma)$ defines a bijective function $\Sigma_i\rightarrow\Gamma_0'(R)$ for each $i\in I$. 
\end{enumerate}
    
    Suppose $\overline{\Sigma}=(\Sigma_i)_{i\in I}\in\mathcal{P}(\Gamma)^I$ is a fully matricial sequence for $R$. 
    For each $\gamma\in \Gamma$, consider the additive subgroup 
	$$\M_I(R)(\overline{\Sigma})_\gamma:=\left\{(a_{ij})\in \M_I(R)\mid a_{ij}\in R_{\Sigma_i \gamma \Sigma_j^{-1}} \right\}.$$
    Note that if $i,j\in I$ and $\gamma\in\Gamma$, then $\Sigma_i \gamma \Sigma_j^{-1}$ is either empty or $\Sigma_i \gamma \Sigma_j^{-1}=\{\sigma_i \gamma \tau_j^{-1}\}$, where $\sigma_i\in\Sigma_i$ is unique in $\Sigma_i$ with  $d(\sigma_i)=r(\gamma)$ and 
    $\tau_j\in\Sigma_j$ is unique in $\Sigma_j$ with $d(\gamma)=d(\tau_j)$.
    By \cite[Proposition 3.7]{groupoid_graded_semisimple},
	$$\M_I(R)(\overline\Sigma):=\bigoplus_{\gamma\in \Gamma} \M_I(R)(\overline{\Sigma})_\gamma$$
    turns $M_I(R)$ into a $\Gamma$-graded ring. Moreover, for each $e\in\Gamma_0$, the unity of $\M_I(R)(\overline\Sigma)_e$ is the matrix 
    $$\mathbb{I}_e:=\sum_{{i\in I_e}}\sum_{\sigma_i\in\Sigma_ie}E_{ii}^{r(\sigma_i)}\in \M_I(R)(\overline\Sigma)_e\,,$$
    where $I_e:=\{i\in I\colon  d(\sigma)=e \textrm{ for some } \sigma\in~\Sigma_i\}$ and $E_{ii}^{r(\sigma_i)}$ is the matrix whose $(i,i)$-entry is $1_{r(\sigma_i)}$ and all its other entries are zero.

    If $i,j\in I$ and $a\in R_{\sigma_i\gamma\tau_j^{-1}}$ for some $\gamma\in \Gamma$, $\sigma_i\in\Sigma_i$ and $\tau_j\in\Sigma_j$ with $d(\sigma_i)=r(\gamma)$ and $d(\gamma)=d(\tau_j)$, then the matrix whose $(i,j)$-entry is $a$ and all its other entries are zero will be denoted by $aE_{ij}$. Note that $aE_{ij}\in\M_I(R)(\overline{\Sigma})_\gamma$.
 \end{definition}

If $U$ is a graded right (resp.\ left) ideal of the $\Gamma$-graded ring $R$, and $\overline{\Sigma}=(\Sigma_i)_{i\in I}\in\mathcal{P}(\Gamma)^I$ is a fully matricial sequence for $R$, we denote 
$$
\M_I(U)(\overline{\Sigma}) := \{(a_{ij})_{ij} \in \M_I(R)(\overline{\Sigma}) :  \, a_{ij} \in U \text{ for all } i,j \in I\}.
$$
It can be easily verified that $\M_I(U)(\overline{\Sigma})$ is a graded right (resp.\ left) ideal of $\M_I(R)(\overline{\Sigma})$.

In classical ring theory, various structural properties transfer seamlessly from a base ring $R$ to the full matrix ring $\mathrm{M}_n(R)$. Now we extend this framework to the graded setting by examining the transfer of gr-semisimplicity and gr-semilocality from a groupoid graded ring $R$ to the corresponding graded matrix ring $\mathrm{M}_I(R)$.
Our next result characterizes  graded two-sided ideals of a graded matrix ring.

\begin{lemma}
\label{lem: ideais dos aneis de matrizes}
Let $R$ be a $\Gamma$-graded ring and let $\overline{\Sigma} = (\Sigma_i)_{i \in I} \in \mathcal{P}(\Gamma)^I$ be a fully matricial sequence for $R$.  
Then the graded ideals of $\M_I(R)(\overline{\Sigma})$ are precisely those of the form $\M_I(U)(\overline{\Sigma})$ for some graded ideal $U$ of $R$.
\end{lemma}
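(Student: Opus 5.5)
The plan is to prove the two inclusions separately: first that every $\M_I(U)(\overline{\Sigma})$ with $U$ a graded ideal of $R$ is a graded ideal, and then that every graded ideal $\mathcal{J}$ of $S:=\M_I(R)(\overline{\Sigma})$ has this form for the graded ideal $U$ generated by its entries.

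\textbf{First inclusion.} Let $U$ be a graded ideal of $R$. We already observed that $\M_I(U)(\overline{\Sigma})$ is a graded right ideal and a graded left ideal of $S$, so it is a graded ideal. The grading is inherited entrywise: a matrix in $\M_I(U)(\overline{\Sigma})$ has its $\gamma$-component given by the entries' components in $R_{\Sigma_i\gamma\Sigma_j^{-1}}$, and these lie in $U$ because $U$ is graded.

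\textbf{Second inclusion.} Let $\mathcal{J}$ be a graded ideal of $S$. Define $U$ as the set of all elements $a\in R$ occurring as an entry of some matrix of $\mathcal{J}$, together with their homogeneous components. We then show three things in turn.

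\begin{enumerate}
\item \emph{Extracting homogeneous single entries.} Matrix units cut out single homogeneous entries. If $A=(a_{kl})\in\mathcal{J}_\gamma$ with $a_{ij}\in R_{\sigma_i\gamma\tau_j^{-1}}$, then
\[
(1_{r(\sigma_i)}E_{ii})\,A\,(1_{r(\tau_j)}E_{jj})=a_{ij}E_{ij}\in\mathcal{J}.
\]
Here $1_{r(\sigma_i)}E_{ii}$ is homogeneous of degree $d(\sigma_i)$ by the description of $\mathbb{I}_e$. For a general matrix in $\mathcal{J}$ we first decompose it into homogeneous components, which lie in $\mathcal{J}$ since $\mathcal{J}$ is graded, and then apply this to each component. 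Hence $U$ consists exactly of the homogeneous elements $a$ with $aE_{ij}\in\mathcal{J}$ for some $i,j$, plus their finite sums.

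\item \emph{Independence of position.} We show that if $aE_{ij}\in\mathcal{J}$, then $aE_{kl}\in\mathcal{J}$ for every position $(k,l)$ at which $a$ is allowed. The key fact is that by condition 3) of fully matricial, for each $k$ the map $\sigma\mapsto r(\sigma)$ is a bijection $\Sigma_k\to\Gamma_0'(R)$. So for $a\in R_\delta$ with $\delta=\rho\gamma\rho'^{-1}$, there is $\sigma_k\in\Sigma_k$ with $r(\sigma_k)=r(\delta)$. Multiplying on the left by $1_{r(\delta)}E_{ki}$ moves $a$ from row $i$ to row $k$; the entry $1_{r(\delta)}$ lies in $R_{r(\delta)}=R_{\sigma_k(\sigma_k^{-1}\rho_i)\rho_i^{-1}}$, so $1_{r(\delta)}E_{ki}$ is a homogeneous element of $S$ of degree $\sigma_k^{-1}\rho_i$. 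Right multiplication by a similar matrix unit moves columns. Consequently $U$ is closed under addition.

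\item \emph{$U$ is a graded ideal.} For $r\in\h(R)$ and $aE_{ij}\in\mathcal{J}$, the product $(rE_{ii})(aE_{ij})=raE_{ij}$ lies in $\mathcal{J}$ whenever $ra\neq0$. The matrix $rE_{ii}$ is a legitimate homogeneous element: $r(\deg r)=r(\deg a)$ lies in $\Gamma_0'(R)$, so it is hit by some $\sigma\in\Sigma_i$. The right action is symmetric, so $U$ is a graded ideal.
\end{enumerate}

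Finally, $\mathcal{J}\subseteq\M_I(U)(\overline{\Sigma})$ by construction, and the reverse inclusion holds because any matrix of $\M_I(U)(\overline{\Sigma})$ is a finite sum of homogeneous single-entry matrices $aE_{kl}$ with $a\in U$, each of which lies in $\mathcal{J}$ by step 2.

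The main obstacle is the degree bookkeeping in step 2, namely checking that the transporting matrix units are genuinely homogeneous elements of $\M_I(R)(\overline{\Sigma})$. This relies precisely on condition 3), and I would write out that verification carefully.
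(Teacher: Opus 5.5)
Your proof is correct and is essentially the paper's argument. In both, homogeneous entries are isolated and moved between positions by the matrix units $1_eE_{ki}$, which are homogeneous elements of $\M_I(R)(\overline{\Sigma})$ because of condition 3) of a fully matricial sequence. The only real difference is bookkeeping: the paper fixes $i_0\in I$ and sets $U:=\{a\in R: aE_{i_0i_0}\in V\}$, which makes additivity and the ideal property of $U$ immediate, whereas you take all entries of $\mathcal{J}$ and so need your position-independence step to get closure under addition. One small slip in step 3: the relevant condition is $d(\deg r)=r(\deg a)$, not $r(\deg r)=r(\deg a)$, although $rE_{ii}$ is homogeneous for any nonzero homogeneous $r$ anyway.
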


\begin{proof}
Let $V$ be a graded ideal of $\M_I(R)(\overline{\Sigma})$. Fix $i_0 \in I$, and consider
$$
U := \{a \in R : aE_{i_0 i_0} \in V\}.
$$
It is easy to verify that $U$ is a graded ideal of $R$. We claim that $V = \M_I(U)(\overline{\Sigma})$.  

Let $(a_{ij})_{ij} \in \h(V)$. For each $i, j \in I$, let $\gamma_{ij} := \deg a_{ij}$. Then:
\begin{align*}
    a_{ij}E_{ij} = E^{r(\gamma_{ij})}_{ii} (a_{ij})_{ij} E^{d(\gamma_{ij})}_{jj} \in V &\implies a_{ij}E_{i_0 i_0} = E^{r(\gamma_{ij})}_{i_0 i}(a_{ij}E_{ij})E^{d(\gamma_{ij})}_{j i_0} \in V \\
    &\implies a_{ij} \in U.
\end{align*}
Hence, $V \subseteq \M_I(U)(\overline{\Sigma})$.

Conversely, let $(a_{ij})_{ij} \in \h(\M_I(U)(\overline{\Sigma}))$. For each $i,j \in I$, let $\gamma_{ij} := \deg a_{ij}$. Then
$$
a_{ij} \in U \Rightarrow a_{ij}E_{i_0 i_0} \in V \Rightarrow a_{ij}E_{ij} = E^{r(\gamma_{ij})}_{i i_0}(a_{ij}E_{i_0 i_0})E^{d(\gamma_{ij})}_{i_0 j} \in V.
$$
Therefore, $(a_{ij})_{ij} = \sum_{i,j \in I} a_{ij} E_{ij} \in V$.
\end{proof}

Now we show how the graded Jacobson radical behaves under the graded matrix ring construction. It generalizes \cite[Proposition 5.14]{chainconditions_Jacobsonradical}.

\begin{proposition}
\label{prop: radgr(M_I(R))}
Let $R$ be a $\Gamma$-graded ring, and let $\overline{\Sigma} = (\Sigma_i)_{i \in I} \in \mathcal{P}(\Gamma)^I$ be a fully matricial sequence for $R$. 
Then $\radgr\left( \M_I(R)(\overline{\Sigma}) \right) = \M_I \left( \radgr (R) \right)(\overline{\Sigma})$.
\end{proposition}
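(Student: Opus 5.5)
By \Cref{lem: ideais dos aneis de matrizes}, $\radgr(\M_I(R)(\overline{\Sigma}))$ is a graded ideal of $S:=\M_I(R)(\overline{\Sigma})$. Hence it equals $\M_I(U)(\overline{\Sigma})$ for a unique graded ideal $U$ of $R$, namely $U=\{a\in R: aE_{i_0i_0}\in\radgr(S)\}$ for a fixed $i_0\in I$. So it suffices to prove $U=\radgr(R)$. I would do this with the element characterization in \Cref{teo: a carac de radgr(R)}, checked on homogeneous elements.

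\emph{Inclusion $\radgr(R)\subseteq U$.} Rather than using elements, I would use \Cref{prop:outras_caracs_rad(R)}(1) in $S$. Set $V:=\M_I(\radgr(R))(\overline{\Sigma})$; it is a graded ideal of $S$. It suffices to show that $\mathbb{I}_e+V_e\subseteq \U(S_e)$ for every $e\in\Gamma_0$.

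An element of $S_e$ has $(i,j)$-entry in $R_{\sigma_i\tau_j^{-1}}$, where $\sigma_i\in\Sigma_i e$ and $\tau_j\in\Sigma_j e$. Since $I_e$ is finite and each $\Sigma_i e$ has at most one element, $S_e$ is a finite matrix ring. Concretely, $S_e\cong \End$ of the finitely generated graded module $\bigoplus_{i\in I_e}1_{r(\sigma_i)}R(\sigma_i)$ in degree $e$, i.e.\ $S_e\cong\End_{\text{gr-}R}(P)$ with $P=\bigoplus_{i\in I_e} R(\sigma_i^{-1})$-type shifts. Under this identification, $V_e$ corresponds to matrices with entries in $\radgr(R)$.

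Then I would show that $\mathbb{I}_e - x$ is invertible for $x\in V_e$ by the standard argument for $\M_n$ with generalized entries. First note that the diagonal entries $x_{ii}$ lie in $\radgr(R)_{r(\sigma_i)}=\rad(R_{r(\sigma_i)})$ by \Cref{prop:outras_caracs_rad(R)}(2). Then induct on $|I_e|$: perform block elimination on the first row, where the Schur complement again has entries of the form $\radgr(R)+\radgr(R)R\radgr(R)\subseteq\radgr(R)$. Concretely, $1-x_{11}$ is a unit in $R_{r(\sigma_1)}$, and the Schur complement $\mathbb{I}'-(x'+x_{\ast1}(1-x_{11})^{-1}x_{1\ast})$ is again of the same shape. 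The degrees must be tracked so that products are defined; this works because $\sigma_i\tau^{-1}\cdot\tau\rho^{-1}=\sigma_i\rho^{-1}$.

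\emph{Inclusion $U\subseteq\radgr(R)$.} Let $a\in U_\gamma$, so that $aE_{i_0i_0}\in\radgr(S)$ has degree $\delta$ with $\gamma=\sigma\delta\tau^{-1}$, where $\sigma,\tau\in\Sigma_{i_0}$. By \Cref{prop:outras_caracs_rad(R)}(2) it suffices to show that $ax\in\rad(R_{r(\gamma)})$ for all $x\in R_{\gamma^{-1}}$, which I would read off from \Cref{teo: a carac de radgr(R)}(3). The matrix $xE_{i_0i_0}$ lies in $S_{\delta^{-1}}$, so $(aE_{i_0i_0})(xE_{i_0i_0})=axE_{i_0i_0}\in\radgr(S)_{r(\delta)}$. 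Hence $\mathbb{I}_{r(\delta)}-axE_{i_0i_0}$ is invertible in $S_{r(\delta)}$. Multiplying by the idempotent $E^{r(\gamma)}_{i_0i_0}$ on both sides, the corner $E_{i_0i_0}^{r(\gamma)}S_{r(\delta)}E_{i_0i_0}^{r(\gamma)}\cong R_{r(\gamma)}$. Moreover, $axE_{i_0i_0}$ lives inside this corner and commutes with the idempotent, so $1_{r(\gamma)}-ax$ is invertible in $R_{r(\gamma)}$. By \Cref{teo: a carac de radgr(R)}, $a\in\radgr(R)$.

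The main obstacle is the first inclusion: carrying out the unit-lifting for the finite matrix ring $S_e$ while keeping the groupoid degrees consistent. If the Schur-complement induction becomes messy, I would instead verify \Cref{teo: a carac de radgr(R)}(2) directly for homogeneous matrices $y=aE_{ij}$ with $a\in\radgr(R)$. For any $z\in S_{\deg(y)^{-1}}$, the product $yz$ has nonzero entries only in row $i$, of the form $a z_{j\ast}$. So $\mathbb{I}-yz$ is "identity plus one row", and it is invertible as soon as its $(i,i)$ entry $1-az_{ji}$ is a unit, which holds by \Cref{teo: a carac de radgr(R)}. Thus each $aE_{ij}$ lies in $\radgr(S)$, and since $\radgr(S)$ is additive and $V$ is generated additively by such elements, we get $V\subseteq\radgr(S)$.
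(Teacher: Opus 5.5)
Your proposal is correct, and most of it coincides with the paper. Three parts match. First, the reduction to $U=\{a\in R: aE_{i_0i_0}\in\radgr(S)\}$ via \Cref{lem: ideais dos aneis de matrizes}. Second, the proof of $U\subseteq\radgr(R)$ by inverting $\mathbb{I}_{r(\delta)}-(aE_{i_0i_0})(xE_{i_0i_0})$ and reading off the $(i_0,i_0)$ entry. Third, your fallback for the reverse inclusion: testing \Cref{teo: a carac de radgr(R)}(2) on single-entry matrices $aE_{ij}$, where $\mathbb{I}_{r(\gamma)}-yz$ is the identity minus a matrix supported on one row. That is exactly the paper's argument. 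The paper first inverts the $(i,i)$ entry using $p=\mathbb{I}_{r(\gamma)}+(b-1_{r(\sigma_i)})E_{ii}$, then inverts the remaining square-zero off-diagonal part of that row.

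Your primary route for $\M_I(\radgr(R))(\overline{\Sigma})\subseteq\radgr(S)$ is genuinely different.
\begin{itemize}
\item You apply the ideal-level criterion \Cref{prop:outras_caracs_rad(R)}(1) to the graded ideal $V$, which reduces the problem to the components of degree $e\in\Gamma_0$.
\item Since $I_e$ is finite, $S_e$ is a finite generalized matrix ring with $(i,j)$ entries in $R_{\sigma_i\sigma_j^{-1}}$, where $d(\sigma_i)=e$.
\item The classical Schur-complement induction then shows that $\mathbb{I}_e-x$ is a two-sided unit for $x\in V_e$. The corner $1_{r(\sigma_1)}-x_{11}$ is a unit because $x_{11}\in\radgr(R)_{r(\sigma_1)}=\rad(R_{r(\sigma_1)})$, and the complement again has entries in $\radgr(R)$ of the right degrees.
\end{itemize}
For the induction to close, state the hypothesis for arbitrary finite families $\sigma_1,\dots,\sigma_n$ with common domain $e$ (equivalently, for restrictions of $\overline{\Sigma}$ to subsets of $I$). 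The reason is that the Schur complement lives in a corner of $S_e$, not in some $S_{e'}$.

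Comparing the two: your route makes the analogy with $\rad(\M_n(A))=\M_n(\rad(A))$ transparent and gives two-sided invertibility on whole components. It costs an induction and more degree bookkeeping. The paper's element-wise test needs only right invertibility of one explicit matrix per homogeneous generator, and no induction.

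Three minor points:
\begin{itemize}
\item That $\radgr(S)$ is a graded ideal comes from the general theory (e.g.\ \Cref{prop:outras_caracs_rad(R)}(1)). The lemma on ideals of matrix rings only supplies the form $\M_I(U)(\overline{\Sigma})$.
\item The identification of $S_e$ with a graded endomorphism ring is stated loosely and is never used, so drop it.
\item In the second inclusion, the appeal to \Cref{prop:outras_caracs_rad(R)}(2) is superfluous. Invertibility of $1_{r(\gamma)}-ax$ for all $x\in R_{\gamma^{-1}}$ is exactly condition \Cref{teo: a carac de radgr(R)}(3).
\end{itemize}
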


\begin{proof}

($\supseteq$) It suffices to prove that if $\gamma\in\Gamma$, $i,j\in I$, and $a\in\h(R)$ are such that $aE_{ij}\in\M_I \left( \radgr (R) \right)(\overline{\Sigma})_\gamma$, then $aE_{ij}\in \radgr\left( \M_I(R)(\overline{\Sigma}) \right)$.
Take the unique $\sigma_i \in \Sigma_i$ and $\tau_j \in \Sigma_j$ such that $d(\sigma_i) = r(\gamma)$ and $d(\tau_j) = d(\gamma)$.
Then $a\in\radgr(R)_{\sigma_i \gamma \tau_j^{-1}}$.
We will show that condition (2) in \cref{teo: a carac de radgr(R)} holds.
Let $(x_{kl})_{kl}\in\M_I \left(R \right)(\overline{\Sigma})_{\gamma^{-1}}$.
Then
\begin{equation}
\label{eq: rad matrizes}
    \mathds{I}_{r(\gamma)}-(aE_{ij})\cdot (x_{kl})_{kl}=\mathds{I}_{r(\gamma)}-ax_{ji}E_{ii}-\sum_{l\in I\setminus \{i\}}ax_{jl}E_{il}.
\end{equation}
The $(i,i)$-entry of this matrix is $1_{r(\sigma_i)}-ax_{ji}$. 
Since $x_{ji}\in R_{\Sigma_j\gamma^{-1}\Sigma_i^{-1}}=R_{\tau_j\gamma^{-1}\sigma_i^{-1}}$, it follows from \cref{teo: a carac de radgr(R)}(3) that $1_{r(\sigma_i)}-ax_{ji}\in\U(R_{r(\sigma_i)})$. 
Let $b$ be the inverse of $1_{r(\sigma_i)}-ax_{ji}$ in $R_{r(\sigma_i)}$, that is, $bax_{ji}=b-1_{r(\sigma_i)}=ax_{ji}b$.
Then the matrix $p=\mathds{I}_{r(\gamma)}+(b-1_{r(\sigma_i)})E_{ii}$ is the inverse of $\mathds{I}_{r(\gamma)}-ax_{ji}E_{ii}$ in $\M_I \left(R \right)(\overline{\Sigma})_{r(\gamma)}$.
Therefore, from \eqref{eq: rad matrizes} we get
\begin{align*}
    \left(\mathds{I}_{r(\gamma)}-(aE_{ij})\cdot (x_{kl})_{kl}\right)\cdot p&=\mathds{I}_{r(\gamma)}-\sum_{l\in I\setminus \{i\}}\left(ax_{jl}E_{il}\right)\cdot \left(\mathds{I}_{r(\gamma)}+(b-1_{r(\sigma_i)})E_{ii}\right)\\
    &=\mathds{I}_{r(\gamma)}-\sum_{l\in I\setminus \{i\}}ax_{jl}E_{il},
\end{align*}
which is an invertible element of $\M_I \left(R \right)(\overline{\Sigma})_{r(\gamma)}$, whose inverse is the matrix \linebreak $\mathds{I}_{r(\gamma)}+\sum_{l\in I\setminus \{i\}}ax_{jl}E_{il}$.
Hence, $\mathds{I}_{r(\gamma)}-(aE_{ij})\cdot (x_{kl})_{kl}$ is right invertible in \linebreak $\M_I \left(R \right)(\overline{\Sigma})_{r(\gamma)}$, as desired.

$(\subseteq)$ As we saw in the proof of \Cref{lem: ideais dos aneis de matrizes}, fixing $i\in I$, the graded ideal $J:=\{a\in R: aE_{ii}\in \radgr( \M_I(R)(\overline{\Sigma}) )\}$ satisfies $\radgr\left( \M_I(R)(\overline{\Sigma}) \right) = \M_I \left( J \right)(\overline{\Sigma})$. 
We will prove that $J\subseteq \radgr(R)$. 
Let $\delta\in\Gamma$, $a\in J_\delta$, and $x\in R_{\delta^{-1}}$.
Then $aE_{ii}\in\radgr(\M_I(R)(\overline{\Sigma}))_{\sigma_i^{-1}\delta\tau_i}$ and $xE_{ii}\in\M_I(R)(\overline{\Sigma})_{\tau_i^{-1}\delta^{-1}\sigma_i}$, where $\sigma_i,\tau_i \in \Sigma_i$ are unique such that $r(\sigma_i) = r(\delta)$ and $r(\tau_i) = d(\delta)$, respectively.
By \Cref{teo: a carac de radgr(R)}(3), we have
\[\mathds{I}_{d(\sigma_i)}-(aE_{ii})(xE_{ii})\in\U(\M_I(R)(\overline{\Sigma})_{d(\sigma_i)}),\]
and therefore its $(i,i)$-entry $1_{r(\sigma_i)}-ax$ belongs to $\U(R_{r(\delta)})$.
Thus, $a\in\radgr(R)$ by \Cref{teo: a carac de radgr(R)}.
\end{proof}

Before proceeding, we apply \Cref{prop: radgr(M_I(R))} to show that certain properties pass from the base graded ring to the graded matrix ring, and subsequently present a family of one-sided $\Gamma_0$-artinian rings that are gr-semiprimary.

\begin{corollary}\label{coro: R semilocal ==> M_I(R) semilocal}
    Let $R$ be a $\Gamma$-graded ring and let $\overline{\Sigma} = (\Sigma_i)_{i \in I} \in \mathcal{P}(\Gamma)^I$ be a fully matricial sequence for $R$. 
    The following statements hold:
    \begin{enumerate}
        \item If $R$ is a right $\Gamma_0$-artinian ring, then $\M_I \left( R \right)(\overline{\Sigma})$ is right $\Gamma_0$-artinian.
        \item If $R$ is a right $\Gamma_0$-noetherian ring, then $\M_I \left( R \right)(\overline{\Sigma})$ is right $\Gamma_0$-noetherian.
        \item If $R$ is a gr-semisimple ring, then $\M_I \left( R \right)(\overline{\Sigma})$ is a gr-semisimple ring.
        \item If $R$ is a gr-semilocal ring, then $\M_I \left( R \right)(\overline{\Sigma})$ is a gr-semilocal ring.
        \item If $R$ is a gr-semiprimary ring, then $\M_I \left( R \right)(\overline{\Sigma})$ is a gr-semiprimary ring.
    \end{enumerate}
\end{corollary}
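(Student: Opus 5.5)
Write $S:=\M_I(R)(\overline{\Sigma})$ and $J:=\radgr(R)$. The plan is to reduce every item to a statement about $R$, using \Cref{prop: radgr(M_I(R))} and \Cref{lem: ideais dos aneis de matrizes}.

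The key structural observation concerns the right ideals $S(e)=\mathbb{I}_eS$. For $e\in\Gamma_0$, $\mathbb{I}_eS$ consists of the matrices whose only possibly nonzero rows are the rows $i\in I_e$. In row $i$ the entries lie in $1_{r(\sigma_i)}R$, where $\sigma_i\in\Sigma_ie$, and $I_e$ is finite by condition 1) of fully matricial sequences. I will show that row $i$ of $\mathbb{I}_eS$ is gr-isomorphic to a shift of $R(r(\sigma_i))=1_{r(\sigma_i)}R$, after suitable regrading of a graded column module. One way to do this is through the Morita-type identification in \cite{groupoid_graded_semisimple}: the row module $\bigoplus_j R_{\sigma_i\gamma\Sigma_j^{-1}}$, viewed as a graded right $S$-module, should have its lattice of graded $S$-submodules isomorphic to the lattice of graded right $R$-submodules of $1_{r(\sigma_i)}R$.

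The concrete map sends a graded submodule $N$ of $1_{r(\sigma_i)}R$ to the set of rows with entries in $N$. Conversely, a graded $S$-submodule $X$ of row $i$ is sent to $\{a : aE_{ij}\in X \text{ for some } j\}$. Multiplying on the right by matrices $E^{(\cdot)}_{jk}$ moves entries between columns, exactly as in the proof of \Cref{lem: ideais dos aneis de matrizes}. This makes the correspondence inclusion-preserving and bijective.

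Given this, items (1) and (2) follow at once. $S(e)$ is a finite direct sum of modules whose graded submodule lattices are those of $R(f)$ for finitely many $f$. Finite direct sums of gr-artinian (gr-noetherian) modules are gr-artinian (gr-noetherian).

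For (3), a gr-simple submodule of $1_fR$ corresponds to a gr-simple row submodule. Hence each row, and therefore $S(e)$, is a sum of gr-simple submodules, and $S=\bigoplus_e S(e)$ is gr-semisimple.

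For (4), \Cref{prop: radgr(M_I(R))} gives $\radgr(S)=\M_I(J)(\overline{\Sigma})$, and the natural entrywise map gives a gr-isomorphism
\[S/\radgr(S)\isogr \M_I(R/J)(\overline{\Sigma}).\]
Here $\overline{\Sigma}$ remains fully matricial for $R/J$, because $\Gamma_0'(R/J)=\Gamma_0'(R)$: a unit $1_f\neq0$ does not lie in $J$, since $J_f=\rad(R_f)$. Item (4) then follows from (3).

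For (5), it remains to show that $\M_I(J)(\overline{\Sigma})$ is objectwise nilpotent. One computes $\M_I(J)(\overline{\Sigma})^n=\M_I(J^n)(\overline{\Sigma})$; it suffices to show the inclusion $\subseteq$, which holds entrywise since $J$ is an ideal. Then $\mathbb{I}_e\M_I(J^n)(\overline{\Sigma})$ has entries only in the rows $i\in I_e$, and row $i$ has entries in $1_{r(\sigma_i)}J^n$. There are finitely many such rows, and $1_fJ^{n_f}=0$ for each $f$. Taking $n$ to be the maximum of the corresponding $n_f$ gives $\mathbb{I}_e\radgr(S)^n=0$. The left side is symmetric, using columns, $d(\tau_j)=e$, and condition 1) again.

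I expect the main obstacle to be the lattice correspondence in the first paragraph, specifically keeping track of the degrees. I would first try to verify it directly with the elementary matrices $E^{r(\sigma)}_{ij}$, following the computation in \Cref{lem: ideais dos aneis de matrizes}.
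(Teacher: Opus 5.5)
Your proof is correct. For items (4) and (5) it is essentially the paper's argument: the paper also passes to $\M_I(R/J)(\overline{\Sigma})$ for (4). For (5) it also bounds $\tilde J^{m}(e)$ and $(e)\tilde J^{m}$ by the maximum of the finitely many $n_{r(\sigma_i)}$ with $i\in I_e$. Your check that $\Gamma_0'(R/J)=\Gamma_0'(R)$, so that $\overline{\Sigma}$ remains fully matricial for $R/J$, is a detail the paper leaves implicit.

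For (1)--(3) your route is different.

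\emph{The paper's route.} It works with columns. For a graded right ideal $V\subseteq S(e)$ it takes the set $V_{j_0}$ of $j_0$-th columns. It regrades $R$ as $R_{j_0}$, with $(R_{j_0})_\gamma=R_{\Sigma_{j_0}\gamma\Sigma_{j_0}^{-1}}$, so that $V_{j_0}$ becomes a graded submodule of a finite direct sum of copies of $R_{j_0}(e)$. It then shows that $V$ is recovered from $V_{j_0}$. This injective, order-preserving map suffices for the chain conditions but not for semisimplicity. So (3) is obtained indirectly, from (1), \Cref{prop: radgr(M_I(R))}, and the characterization of gr-semisimple rings as right $\Gamma_0$-artinian rings with zero graded radical.

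\emph{Your route.} You first split $S(e)=\bigoplus_{i\in I_e}E^{r(\sigma_i)}_{ii}S$ into finitely many row modules. For each row you prove a genuine lattice isomorphism between its graded $S$-submodules and the graded submodules of $R(r(\sigma_i))$, with the original grading of $R$. No regrading is needed. The same isomorphism gives (1), (2) and (3) uniformly, since it preserves arbitrary sums and gr-simplicity.

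The degree bookkeeping you were worried about does go through. Take a nonzero $a\in R_\delta$ with $r(\delta)=r(\sigma_i)$ and any column $j$. Condition 3) gives a unique $\tau\in\Sigma_j$ with $r(\tau)=d(\delta)$, so $aE_{ij}$ is a homogeneous element of the row module, of degree $\sigma_i^{-1}\delta\tau$. Moreover, $E^{d(\delta)}_{jk}$ is homogeneous in $S$, so homogeneous entries can be moved freely between columns.

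One wording point: the row module is an $S$-module, so it cannot literally be gr-isomorphic to a shift of $1_{r(\sigma_i)}R$. The lattice isomorphism you state immediately afterwards is the correct formulation, and it is all you use.
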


\begin{proof}
(1) Suppose that $R$ is right $\Gamma_0$-artinian. Fix $e\in\Gamma'_0(\M_I(R)(\overline{\Sigma}))$ and $j_0\in I$. Write 
$I_e:=\{i\in I:d(\sigma_i)=e\text{ for some }\sigma_i\in\Sigma_i\}=\{i_1,\dots,i_n\}$
for some $n\in\mathbb{N}$.

Let $V$ be a graded right ideal of $\M_I(R)(\overline{\Sigma})$ such that $V=V(e)$.
Note that if $(a_{ij})_{ij}\in V$ and $a_{ij}\neq0$, then $i\in \{i_1,\dots,i_n\}$.
Consider the set of $j_0$-columns of $V$:
\[V_{j_0}:=\left\{
\begin{pmatrix}
v_1 \\
\vdots \\
v_n
\end{pmatrix}
: \text{ there exists } (a_{ij})_{ij}\in V \text{ such that }
\begin{pmatrix}
v_1 \\
\vdots \\
v_n
\end{pmatrix}
=\begin{pmatrix}
a_{i_1j_0} \\
\vdots \\
a_{i_nj_0}
\end{pmatrix}\right\}.\]
Consider the $\Gamma$-graded ring 
$$R_{j_0}:=\bigoplus_{\gamma\in\Gamma}(R_{j_0})_\gamma,\quad\text{where }(R_{j_0})_\gamma:=R_{\Sigma_{j_0}\gamma\Sigma_{j_0}^{-1}}.$$
By \Cref{def: M_I(R)(E)}(2,3), we have $R_{j_0}=R$ as rings. 
It is then straightforward to see that $V_{j_0}$ is a $\Gamma$-graded right $R_{j_0}$-module, where $(V_{j_0})_\gamma$ consists of the vectors whose $t$-th entry lies in $R_{\Sigma_{i_t}\gamma\Sigma_{j_0}^{-1}}$ for every $t=1,\dots,n$.
In fact, $V_{j_0}$ can be regarded as a graded $R_{j_0}$-submodule of $(R_{j_0}(e))^n$.
We claim that 
\begin{equation}
\label{eq:right_ideals_M_I(R)}
    (a_{ij})_{ij}\in V\iff v_{j'}:=\begin{pmatrix}
a_{i_1j'} \\
\vdots \\
a_{i_nj'}
\end{pmatrix} \in V_{j_0} \text{ for every }j'\in I.
\end{equation}
Indeed, suppose that $(a_{ij})_{ij}\in V_\gamma$ for some $\gamma\in\Gamma$, and let $\sigma\in\Sigma_{j'}$ be such that $d(\gamma)=d(\sigma)$. Then $(a_{ij})_{ij}\cdot E_{j'j_0}^{r(\sigma)}\in V$ and its $(i_t,j_0)$-entry is $a_{i_tj'}$ for every $t=1,\dots,n$. Hence $v_{j'}\in V_{j_0}$.
Conversely, suppose that the right-hand side of \eqref{eq:right_ideals_M_I(R)} holds. Let $J':=\{j'\in I:a_{ij'}\neq0 \text{ for some }i\in I\}$, which is a finite set. Let $j'\in J'$. Since $v_{j'}\in V_{j_0}$, there exists $(b_{ij}^{(j')})_{ij}\in V$ such that $a_{i_tj'}=b_{i_tj_0}$ for every $t=1,\dots,n$. Then $(c_{ij}^{(j')})_{ij}:=\sum_{e\in\Gamma_0}(b_{ij}^{(j')})_{ij}\cdot E_{j_0j'}^{e}\in V$ is such that $a_{i_tj'}=c_{i_tj'}$ for every $t=1,\dots,n$, while every other entry of  $(c_{ij}^{(j')})_{ij}$ is zero. Therefore, $(a_{ij})_{ij}=\sum_{j'\in J'}(c_{ij}^{(j')})_{ij}\in V$, and the claim follows.

Since $R_{j_0}$ and $R$ have the same homogeneous components, it follows that $R_{j_0}$ is a right $\Gamma_0$-artinian ring. Therefore, $(R_{j_0}(e))^n$ is a gr-artinian $R_{j_0}$-module by \cite[Proposition~3.5(1)]{chainconditions_Jacobsonradical}.
Now, let $V_1\supseteq V_2 \supseteq V_3\supseteq\cdots$ be a descending chain of graded submodules of $\big(\M_I(R)(\overline{\Sigma})\big)(e)$. This induces a descending chain 
$$(V_1)_{j_0}\supseteq (V_2)_{j_0}\supseteq (V_3)_{j_0}\supseteq\cdots$$ 
of graded submodules of $(R_{j_0}(e))^n$. This chain stabilizes, since  $(R_{j_0}(e))^n$ is gr-artinian. 
By \eqref{eq:right_ideals_M_I(R)}, the chain $V_1\supseteq V_2 \supseteq V_3\supseteq\cdots$ also stabilizes. It follows that $\big(\M_I(R)(\overline{\Sigma})\big)(e)$ is gr-artinian.

(2) The proof is analogous to that of (1).

(3) It follows from (1), \Cref{prop: radgr(M_I(R))}, and the characterization of gr-semisimple rings given in \cite[Theorem~5.11]{chainconditions_Jacobsonradical}.

(4) By \Cref{prop: radgr(M_I(R))}, $\radgr\left( \M_I(R)(\overline{\Sigma}) \right) = \M_I \left( \radgr(R) \right)(\overline{\Sigma})$.
Therefore,
\[\frac{\M_I(R)(\overline{\Sigma})}{\radgr\left( \M_I(R)(\overline{\Sigma})\right)}=\frac{\M_I(R)(\overline{\Sigma})}{\M_I \left( \radgr(R) \right)(\overline{\Sigma})}\isogr\M_I\left(\frac{R}{\radgr(R)}\right)(\overline{\Sigma}) \]
is a gr-semisimple ring by (3).

(5) The graded ring $\M_I \left( R \right)(\overline{\Sigma})$ is gr-semilocal by (4).  Now set $J:=\radgr(R)$ and $\Tilde{J}:=\radgr\left(\M_I \left( R \right)(\overline{\Sigma})\right)$. By \Cref{prop: radgr(M_I(R))}, $\Tilde{J}=\M_I \left( J \right)(\overline{\Sigma})$. From this, it is straightforward to see that $\Tilde{J}^n=\M_I \left( J^n \right)(\overline{\Sigma})$ for each positive integer $n$. If $e\in \Gamma_0$, the set $I_e=\{i\in I\colon d(\sigma_i)=e \textrm{ for some } \sigma_i\in\Sigma_i\}$ is finite, and there exists a positive integer $n_e$ such that $J^{n_e}(e)=(e)J^{n_e}=0$. 
    For each $n\in\mathbb{N}$ and $\gamma\in\Gamma$, an element of $(\Tilde{J}^n)_\gamma$ is a matrix whose $(i,j)$-entry belongs to $(J^n)_{\Sigma_i\gamma\Sigma_j^{-1}}$.
    For each $e\in\Gamma_0$, let $m_e=\max\{n_{r(\sigma_i)}\colon i\in I_e, \sigma_i\in\Sigma_i, d(\sigma_i)=e\}$. Then $\Tilde{J}^{m_e}(e)=(e)\Tilde{J}^{m_e}=0$.
\end{proof}

\begin{corollary}
\label{coro:R_gr-art=>M_I(R)_G0-semiprimary}
    Let $R$ be a unital $\Gamma$-graded ring and $\overline{\Sigma} = (\Sigma_i)_{i \in I} \in \mathcal{P}(\Gamma)^I$ be a fully matricial sequence for $R$.
    If $R$ is a right gr-artinian ring, then $\M_I(R)(\overline{\Sigma})$ is a right $\Gamma_0$-artinian ring and $\radgr\big(\M_I(R)(\overline{\Sigma})\big)$ is nilpotent.
\end{corollary}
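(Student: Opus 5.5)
The first claim follows directly from Corollary~\ref{coro: R semilocal ==> M_I(R) semilocal}(1): $R$ right gr-artinian implies $R$ right $\Gamma_0$-artinian by Lemma~\ref{lem:implications_chain_conditions}(1). Hence $\M_I(R)(\overline{\Sigma})$ is right $\Gamma_0$-artinian.

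For nilpotency, I will apply Proposition~\ref{prop:nilpotence of rad(R)}(3) to $R$. Since $R$ is right gr-artinian, $J:=\radgr(R)$ is nilpotent, say $J^p=0$. Proposition~\ref{prop: radgr(M_I(R))} gives $\radgr\big(\M_I(R)(\overline{\Sigma})\big)=\M_I(J)(\overline{\Sigma})$. As noted in the proof of Corollary~\ref{coro: R semilocal ==> M_I(R) semilocal}(5), $\M_I(J)(\overline{\Sigma})^n=\M_I(J^n)(\overline{\Sigma})$. It is enough to have the inclusion $\subseteq$, which is immediate: the product of finitely many finitary matrices with entries in $J$ has entries that are finite sums of products of $n$ elements of $J$, so the entries lie in $J^n$. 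Taking $n=p$ gives $\radgr\big(\M_I(R)(\overline{\Sigma})\big)^p\subseteq \M_I(0)(\overline{\Sigma})=0$.

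The unitality hypothesis is not needed for this argument. It only ensures $\Gamma'_0(R)$ is finite, which is already implied by right gr-artinianity via \cite[Lemma~3.20]{chainconditions_Jacobsonradical}. The result is uniform over $I$, even when $I$ is infinite, because the bound $p$ does not depend on the row or column indices.

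I see no real obstacle here. The only point to check is that nilpotency of $\radgr(R)$ really holds in the right gr-artinian case, and Proposition~\ref{prop:nilpotence of rad(R)}(3) supplies exactly that.
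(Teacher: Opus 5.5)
Your proposal is correct and follows essentially the same route as the paper. Both get right $\Gamma_0$-artinianity and nilpotency of $\radgr(R)$ from \Cref{lem:implications_chain_conditions}(1) and \Cref{prop:nilpotence of rad(R)}(3), then conclude with \Cref{coro: R semilocal ==> M_I(R) semilocal}(1) and \Cref{prop: radgr(M_I(R))}; you additionally spell out the inclusion $\M_I(J)(\overline{\Sigma})^n\subseteq\M_I(J^n)(\overline{\Sigma})$, which the paper leaves implicit, and your remark that unitality is automatic (right gr-artinianity forces $\Gamma'_0(R)$ to be finite) is also correct.
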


\begin{proof}
    By \Cref{lem:implications_chain_conditions}(1) and \Cref{prop:nilpotence of rad(R)}(3), $R$ is a right $\Gamma_0$-artinian ring, and $\radgr(R)$ is nilpotent.
    The result now follows from \Cref{coro: R semilocal ==> M_I(R) semilocal}(1) and \Cref{prop: radgr(M_I(R))}.
\end{proof}

We now focus on gr-projective modules and how the graded Jacobson radical acts on them.

\begin{definition}
    Let $R$ be a $\Gamma$-graded ring.
We say that a $\Gamma$-graded right $R$-module $P$ is \emph{gr-projective} if, for all $\Gamma$-graded right $R$-modules $M$ and $N$, every surjective $g \in \Homgr(M, N)$ and every $h \in \Homgr(P, N)$, there exists $h' \in \Homgr(P, M)$ such that $g\circ h'=h$ 
    \cite[Proposition 35]{CLP}.
By \cite[Proposition 40 and Definition 28]{CLP}, $P$ is gr-projective if and only if there exist a set $I$ and $(\sigma_i)_{i\in I}\in \Gamma^I$ such that $P$ is  gr-isomorphic to a graded direct summand of $\bigoplus_{i\in I}R(\sigma_i)$.
\end{definition}

We prove the following two results by adapting the ideas from the ungraded case \cite[Propositions 17.10 and 17.14]{AndersonFuller}. 
But first we recall from \cite[Definition~4.1]{chainconditions_Jacobsonradical} that the graded Jacobson radical $\radgr(M)$ of a $\Gamma$-graded right $R$-module $M$ is defined as the intersection of all gr-maximal graded submodules of $M$, provided such submodules exist. If $M$ has no gr-maximal graded submodules, we set $\radgr(M)= M$.

\begin{proposition}
\label{prop: rad(P)}
    Let $R$ be a $\Gamma$-graded ring and $P$ be a gr-projective $\Gamma$-graded right $R$-module.
    Then $\radgr(P)=P\cdot\radgr(R)$.
\end{proposition}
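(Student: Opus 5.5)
The plan is to follow the classical argument of Anderson--Fuller (Propositions 17.10 and 17.14) in three steps: first the free case, then direct sums, then direct summands.

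\emph{Step 1: the inclusion $P\cdot\radgr(R)\subseteq\radgr(P)$ for every graded module.} I will establish $M\radgr(R)\subseteq\radgr(M)$ for any $\Gamma$-graded right $R$-module $M$. If $M$ has no gr-maximal graded submodules this is trivial. Otherwise, let $N$ be gr-maximal in $M$. Then $M/N$ is gr-simple, so it suffices to show that $S\radgr(R)=0$ for every gr-simple $S$. For this, take $0\neq s\in S_\gamma$. Then $sR=S$, and the annihilator of $s$ in $R(d(\gamma))$ is a graded right ideal $\mathfrak m$ with $R(d(\gamma))/\mathfrak m$ gr-simple. In other words, $\mathfrak m$ is gr-maximal in $1_{d(\gamma)}R$, and so its completion $\mathfrak m\oplus\bigoplus_{f\neq d(\gamma)}1_fR$ is gr-maximal in $R$. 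Hence $s\radgr(R)=s\,1_{d(\gamma)}\radgr(R)\subseteq s\,\mathfrak m=0$. Since $S$ is generated by such homogeneous $s$, this gives $S\radgr(R)=0$.

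\emph{Step 2: the reverse inclusion for a direct sum of shifts.} Let $F=\bigoplus_{i\in I}R(\sigma_i)$ and write $J=\radgr(R)$. Recall that $R(\sigma)=R(r(\sigma))=1_{r(\sigma)}R$ as modules. First I show $\radgr(R(\sigma))\subseteq R(\sigma)J$, i.e.\ $\radgr(1_eR)\subseteq 1_eJ$. Take a homogeneous $a\in\radgr(1_eR)$ with $a\notin J$. By the definition of $J$, some gr-maximal right ideal $\mathfrak m$ misses $a$, and then $1_eR\cap\mathfrak m$ is gr-maximal in $1_eR$ (I will verify this using the decomposition $R=\bigoplus_f 1_fR$) and does not contain $a$, a contradiction. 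Next, for $x=(x_i)\in\radgr(F)$, each projection $\pi_i\colon F\to R(\sigma_i)$ is a gr-homomorphism and maps $\radgr(F)$ into $\radgr(R(\sigma_i))$, because preimages of gr-maximal submodules under surjective gr-homomorphisms are gr-maximal. Hence each $x_i\in R(\sigma_i)J$, and so $x\in FJ$.

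\emph{Step 3: passing to direct summands.} Let $P$ be a graded direct summand of a free module $F$ as above, say $F=P\oplus Q$. Each gr-maximal submodule $N$ of $P$ gives a gr-maximal submodule $N\oplus Q$ of $F$, and conversely, so $\radgr(F)=\radgr(P)\oplus\radgr(Q)$. I also need to handle the convention that $\radgr(P)=P$ when $P$ has no gr-maximal submodules; I will check that the formula $\radgr(F)=\radgr(P)\oplus\radgr(Q)$ is still consistent in that case. Then
\[\radgr(P)\subseteq\radgr(F)\cap P=FJ\cap P=(PJ\oplus QJ)\cap P=PJ.\]
Finally, since gr-isomorphisms preserve both sides of the statement, the result extends to every gr-projective $P$.

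The main obstacle I expect is the identification of gr-maximal submodules of $1_eR$ with traces $1_eR\cap\mathfrak m$ of gr-maximal right ideals, together with the edge case where a summand has no gr-maximal submodules. Both steps rely on the decomposition $R=\bigoplus_f 1_fR$ and on the definition of gr-maximality among graded right ideals.
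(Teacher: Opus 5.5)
Your proposal is correct and follows essentially the same route as the paper: both reduce to a direct sum of shifts $\bigoplus_i R(\sigma_i)$ and use $\radgr(P\oplus Q)=\radgr(P)\oplus\radgr(Q)$, $\radgr(R(\sigma))=1_{r(\sigma)}\radgr(R)$ and the general inclusion $M\cdot\radgr(R)\subseteq\radgr(M)$; the paper cites these three facts from the companion paper, while you prove them directly. One small correction for Step 3: not every gr-maximal submodule of $F$ has the form $N\oplus Q$, so justify the inclusion $\radgr(P)\subseteq\radgr(F)$ you actually need by observing that for each gr-maximal $L\subseteq F$ either $P\subseteq L$ or $L\cap P$ is gr-maximal in $P$ (since $P/(L\cap P)$ embeds in the gr-simple $F/L$), which also settles your edge case where $P$ has no gr-maximal submodules.
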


\begin{proof}

    Let $(\sigma_i)_{i\in I}\in\Gamma^I$ and $Q$ be a graded module such that $P\oplus Q\isogr \bigoplus_{i\in I}R(\sigma_i)$.
    By \cite[Corollary 4.7 and Proposition 4.8(1)]{chainconditions_Jacobsonradical}, the graded Jacobson radical preserves direct sums and shifts, so we have
    \[\radgr(P)\oplus\radgr(Q)=\radgr(P\oplus Q)\isogr \radgr\left( \bigoplus_{i\in I}R(\sigma_i) \right)=\bigoplus_{i\in I}\radgr(R)(\sigma_i).\]
    Since $1_{r(\sigma_i)}\in R(\sigma)_{\sigma_i^{-1}}$, the last direct sum is contained in
    \begin{align*}
        \bigoplus_{i\in I}R(\sigma_i)\cdot\radgr(R)=\left(\bigoplus_{i\in I}R(\sigma_i)\right)\radgr(R)&\isogr (P\oplus Q)\radgr(R)\\
        &=(P\cdot\radgr(R))\oplus(Q\cdot\radgr(R)).
    \end{align*}
    The last sum is contained in $\radgr(P)\oplus\radgr(Q)$ by \cite[Corollary 4.6(3)]{chainconditions_Jacobsonradical}.
    Therefore, $\radgr(P)\oplus\radgr(Q)=(P\cdot\radgr(R))\oplus(Q\cdot\radgr(R))$, and it follows that $\radgr(P)=P\cdot\radgr(R)$.
\end{proof}

\begin{proposition}\label{prop: rad(P)=/ P}
    Let $R$ be a $\Gamma$-graded ring and $P$ be a nonzero gr-projective $\Gamma$-graded right $R$-module.
    Then $\radgr(P)\neq P$, that is, $P$ contains a gr-maximal graded submodule.
\end{proposition}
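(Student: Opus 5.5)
We follow the ungraded argument of \cite[Proposition 17.14]{AndersonFuller}, via Kaplansky/Bass's approach: first reduce to a graded direct summand of a free graded module, then use a dual basis argument. By the definition of gr-projectivity, there are a set $I$, a family $(\sigma_i)_{i\in I}\in\Gamma^I$, and a graded module $Q$ with $P\oplus Q\isogr F:=\bigoplus_{i\in I}R(\sigma_i)$. We identify $P$ with a graded direct summand of $F$ and let $\pi_i\colon F\to R(\sigma_i)$ be the coordinate projections; these are gr-homomorphisms.

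Suppose, by way of contradiction, that $\radgr(P)=P$. By \Cref{prop: rad(P)}, this means $P=P\cdot\radgr(R)$. Since $P\neq 0$, we can choose a nonzero homogeneous $x\in P_\gamma$. Because $P\subseteq F$, only finitely many coordinates $\pi_i(x)$ are nonzero, and $x$ can be written as a finite sum
\[x=\sum_{k} p_k a_k\]
with $p_k\in\h(P)$ and $a_k\in\h(\radgr(R))$.

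The key step is the dual basis trick. Since $P$ is a direct summand, each $p_k$ is a finite sum of elements $\epsilon_i\, r$, where $\epsilon_i$ is the canonical homogeneous generator of $R(\sigma_i)$, projected to $P$ along $Q$. Call these projections $y_i\in P$, with $y_i$ homogeneous of degree $\sigma_i^{-1}$, using the element $1_{r(\sigma_i)}$. Applying $P=P\radgr(R)$ to each of the finitely many relevant $y_i$ (those $i$ in a finite set $S$ containing the supports of $x$ and of all the $p_k$) gives
\[y_i=\sum_{j\in S'} y_j c_{ji}\]
with $c_{ji}\in\radgr(R)$. Enlarging $S$ so that it is closed under this process is not automatic, so instead we write each $y_i$ in terms of the free coordinates: $y_i=\sum_j \epsilon_j\pi_j(y_i)$. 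We then apply the projection $F\to P$ and obtain $\pi_j(y_i)\in\radgr(R)$ for the relevant homogeneous pieces.

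Concretely, write $x=\sum_{i\in S}\epsilon_i\pi_i(x)$. Applying the idempotent projection $e\colon F\to P$ gives $x=\sum_{i\in S} y_i\pi_i(x)$. Using $y_i\in P=P\radgr(R)$, we can write $y_i=\sum_{l\in S} \epsilon_l b_{li}$ modulo coordinates outside $S$, with $b_{li}\in\radgr(R)$ because $P\radgr(R)\subseteq F\radgr(R)=\bigoplus R(\sigma_l)\radgr(R)$. Comparing the coordinates in $S$ yields a matrix identity $v=Bv$ over the finite index set $S$, where $v=(\pi_i(x))_{i\in S}$ and $B=(b_{li})$ has homogeneous entries in $\radgr(R)$. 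Thus $(\mathbb{I}-B)v=0$.

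The main obstacle is concluding $v=0$. We view $B$ as a homogeneous element of the graded matrix ring $\M_S(R)(\overline{\Sigma})$ with $\Sigma_i=\{\sigma_i^{-1}\text{-type shifts}\}$, or more simply as an endomorphism of the finitely generated graded module $\bigoplus_{i\in S}R(\sigma_i)$. By \Cref{prop: radgr(M_I(R))}, $B$ lies in the graded Jacobson radical of the matrix ring, so $\mathbb{I}_e-B$ is invertible in the degree-$e$ component by \Cref{prop:outras_caracs_rad(R)}(1). Hence $v=0$, so $x=0$, a contradiction.

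To avoid fully matricial bookkeeping, the alternative is to apply the graded Nakayama lemma \cite[Proposition~5.12]{chainconditions_Jacobsonradical} to the finitely generated graded module $xR$. It satisfies $xR\subseteq (xR+\text{finite part})\radgr(R)$ after the truncation above, which gives the same conclusion. Finally, since $\radgr(P)\neq P$, the definition of $\radgr(P)$ implies that a gr-maximal graded submodule exists.
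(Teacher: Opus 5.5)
Your argument is essentially the paper's proof: you expand $x$ in the free coordinates over a finite support $S$, use $P=P\cdot\radgr(R)\subseteq F\cdot\radgr(R)$ to get $(\mathbb{I}_{r(\gamma)}-B)v=0$ with $B$ having homogeneous entries in $\radgr(R)$, and invert $\mathbb{I}_{r(\gamma)}-B$ via \Cref{prop: radgr(M_I(R))}; comparing only the coordinates indexed by $S$, as you do, is exactly what makes the finite matrix identity legitimate. To finish cleanly, write down $\overline{\Sigma}$ explicitly with $\sigma_i\in\Sigma_i$ (the paper uses $\sigma_i$, plus $\sigma_i^{-1}$ when $r(\sigma_i)\neq d(\sigma_i)$, completed by the remaining elements of $\Gamma'_0(R)$), so that $B$ has degree $r(\gamma)$ and $\mathbb{I}_{r(\gamma)}$ acts as the identity on $v$; also drop the closing Nakayama remark about $xR$, since it never yields a finitely generated $M$ with $M=M\cdot\radgr(R)$ (Nakayama does work on the other side: $v=Bv$ says the finitely generated graded left ideal $L=\sum_{i\in S}Rv_i$ satisfies $L=\radgr(R)L$, hence $L=0$).
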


\begin{proof}
    Suppose that $\radgr(P)=P$. We will show that $P=0$.
    Let $Q$ and $F$ be $\Gamma$-graded right $R$-modules such that $P\oplus Q=F$ where $F$ is gr-isomorphic to $\bigoplus_{i\in I}R(\sigma_i)$ for some $(\sigma_i)_{i\in I}\in\Gamma^I$.
    Since $F$ is gr-projective, \Cref{prop: rad(P)} shows that
    \begin{equation}
    \label{eq: P=FJ}
        P=\radgr(P)\subseteq\radgr(P\oplus Q)=\radgr(F)=F\cdot\radgr(R).
    \end{equation}
    For each $i \in I$, let $x_i \in F_{\sigma_i^{-1}}$ be the image of the canonical element in $\bigoplus_{i \in I} R(\sigma_i)$ having $1_{r(\sigma_i)}$ at position $i$ and zero elsewhere.  
    Now note that if $x \in F_\gamma$ for some $\gamma \in \Gamma$, then there exists a unique tuple $(a_i)_{i \in I} \in \bigoplus_{i \in I} R_{\sigma_i \gamma}$ such that $x = \sum_{i \in I} x_i a_i$.
    
    Let $\gamma\in\Gamma$ and $x\in P_\gamma$.

    By \eqref{eq: P=FJ}, there exist indices $i_1, \dots, i_n \in I$, elements $a_l \in \radgr(R)_{\sigma_{i_l} \gamma}$, and elements $a_{kl} \in \radgr(R)_{\sigma_{i_k} \sigma_{i_l}^{-1}}$ for $1 \le k, l \le n$, such that $$  x = \sum_{l=1}^{n} x_{i_l} a_l \quad \text{and} \quad \pi(x_{i_l}) = \sum_{k=1}^{n} x_{i_k} a_{kl} \quad \text{for all } 1 \le l \le n,  $$ where $\pi \colon F \to P$ is the natural projection.
    Hence,
    \[0=x-\pi(x)=\sum_{l=1}^{n}x_{i_l}a_l-\sum_{l=1}^{n}\sum_{k=1}^{n}x_{i_k}a_{kl}a_l=\sum_{k=1}^nx_{i_k}\left(a_k-\sum_{l=1}^{n}a_{kl}a_l\right)\]
    and it follows that 
    \begin{equation}
    \label{eq: P=rad(P)}
        \sum_{l=1}^{n}a_{kl}a_l=a_k\quad\text{ for all } k=1,\dots,n.
    \end{equation}
    For each $1\leq k\leq n$, let 
    \[\Sigma_k:=\begin{cases}
        \{\sigma_{i_k}\}\cup(\Gamma'_0(R)\setminus\{r(\sigma_{i_k})\}),& \text{ if } r(\sigma_{i_k})=d(\sigma_{i_k})\\
        \{\sigma_{i_k},\sigma_{i_k}^{-1}\}\cup(\Gamma'_0(R)\setminus\{r(\sigma_{i_k}),d(\sigma_{i_k})\}),& \text{ if } r(\sigma_{i_k})\neq d(\sigma_{i_k}).
    \end{cases}\]
    Then it is straightforward to see that $\overline{\Sigma}:=(\Sigma_1,\dots,\Sigma_n)$ is a fully matricial sequence for $R$ and $(a_{kl})_{kl}\in \M_n(\radgr(R))(\overline{\Sigma})_{r(\gamma)}$.
    By \Cref{prop: radgr(M_I(R))}, we have $(a_{kl})_{kl}\in \radgr(\M_n(R)(\overline{\Sigma}))_{r(\gamma)}$.
    Since 
    $$(a_{kl})_{kl} \begin{pmatrix}
a_1 \\
\vdots  \\
a_n
\end{pmatrix}
=\begin{pmatrix}
1_{r(\sigma_{i_1})} & \cdots & 0 \\
\vdots & \ddots & \vdots \\
0 & \cdots & 1_{r(\sigma_{i_n})} \\
\end{pmatrix}
\begin{pmatrix}
a_1 \\
\vdots  \\
a_n
\end{pmatrix}
=\mathds{I}_{r(\gamma)}
\begin{pmatrix}
a_1 \\
\vdots  \\
a_n
\end{pmatrix}$$
by \eqref{eq: P=rad(P)}, and $\mathds{I}_{r(\gamma)}-(a_{kl})_{kl}$ is invertible in $\M_n(R)(\overline{\Sigma})_{r(\gamma)}$ by \Cref{teo: a carac de radgr(R)}(3), it follows that $a_1=a_2=\cdots=a_n$, from which $x=0$.
Therefore, $P=0$.
\end{proof}

With these intermediate results in place, we now examine the intended examples.
First, we provide an important class of one-sided $\Gamma_0$-artinian rings that are also $\Gamma_0$-noetherian on the same side, thereby satisfying \Cref{teo: art => (noet <=> fort nilp)}.

\begin{definition}
    Let $R$ be a $\Gamma$-graded ring. We say that $R$ is \emph{right gr-hereditary} if every graded right ideal of $R$ is a gr-projective $R$-module. 
\end{definition}

\begin{corollary}
\label{coro: R G0-art gr-hered => G0-noet}
    Let $R$ be a $\Gamma$-graded ring. If $R$ is  right $\Gamma_0$-artinian and  right gr-hereditary, then $R$ is  right $\Gamma_0$-noetherian.
\end{corollary}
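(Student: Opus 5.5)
By the Weak Hopkins--Levitzki Theorem (\Cref{teo: art => (noet <=> fort nilp)}, equivalence $(1)\Leftrightarrow(5)$), it suffices to show that $J^\infty=\{0\}$, where $J:=\radgr(R)$.

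The plan is to argue by contradiction. Suppose $J^\infty\neq 0$. Since $R$ is right gr-hereditary and $J^\infty$ is a graded right ideal (indeed a graded ideal), $J^\infty$ is a nonzero gr-projective graded right $R$-module. By \Cref{prop: rad(P)}, $\radgr(J^\infty)=J^\infty\cdot J$. By \Cref{prop: J infinity}(2), $J^\infty\cdot J=J^\infty$, so $\radgr(J^\infty)=J^\infty$. This contradicts \Cref{prop: rad(P)=/ P}, which says a nonzero gr-projective module $P$ satisfies $\radgr(P)\neq P$. Hence $J^\infty=0$, so $J$ is right objectwise nilpotent by \Cref{prop: J infinity}(3), and $R$ is right $\Gamma_0$-noetherian by \Cref{teo: art => (noet <=> fort nilp)}.

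The substantive work is already contained in \Cref{prop: rad(P)=/ P}: it replaces Nakayama's lemma, which is unavailable here because $J^\infty$ need not be $\Gamma_0$-finitely generated. Given the earlier results, the corollary itself is a short assembly. The only point to check is that \Cref{prop: J infinity}(2) applies. It needs only right $\Gamma_0$-artinianity, which holds by hypothesis, so there is no genuine obstacle.
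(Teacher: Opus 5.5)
Your proposal is correct and follows essentially the same route as the paper. You show that $J^\infty$ is gr-projective with $J^\infty\cdot J=J^\infty$, which forces $J^\infty=0$ by \Cref{prop: rad(P)} and \Cref{prop: rad(P)=/ P}. The only difference is the last step: you cite \Cref{teo: art => (noet <=> fort nilp)} $(1)\Leftrightarrow(5)$ directly, whereas the paper applies \Cref{coro: R art => R/J^infty noet} to $R\isogr R/J^\infty$, and that corollary itself rests on the same theorem.
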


\begin{proof}
Set $J:=\radgr(R)$.
Note that $J^\infty$ is a graded (right) submodule of the gr-projective module $R_R$.  Thus, $J^\infty$ is a right gr-projective module because $R$ is right gr-hereditary. By \Cref{prop: rad(P)} and \Cref{prop: rad(P)=/ P}, $J^\infty\cdot J\neq J^\infty$ if $J^\infty\neq 0$. On the other hand, since $R$ is  right $\Gamma_0$-artinian,  $J^\infty=J^\infty\cdot J$ by \Cref{prop: J infinity}(2). Hence $J^\infty=\{0\}$. By \Cref{coro: R art => R/J^infty noet}, $R\isogr R/J^\infty$ is right $\Gamma_0$-noetherian.
\end{proof}

Using the theory developed for upper triangular matrix rings in the preceding section, we characterize when they are right gr-hereditary and right $\Gamma_0$-artinian.

\begin{proposition}
\label{prop: UT_I(A) gr-hered}
   Let $A$ be a semisimple unital ring, $I$ a totally ordered set, and $\Gamma := I \times I$. Consider the $\Gamma$-graded ring $R := \UT_I(A)$, and set $I_{\ge i} := \{j \in I : i \le j\}$ for each $i \in I$. The following assertions are equivalent:
   
   \begin{enumerate}
       \item $R$ is a right $\Gamma_0$-artinian ring and a right $\Gamma_0$-noetherian ring.
       \item For each $i\in I$, every nonempty subset of $I_{\geq i}$ has a maximum and a minimum.
       \item $R$ is a right $\Gamma_0$-artinian ring and a right gr-hereditary ring.
   \end{enumerate}
\end{proposition}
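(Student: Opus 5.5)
The plan is to prove $(1)\Rightarrow(2)\Rightarrow(3)\Rightarrow(1)$, after first describing the graded right ideals of $R$ concretely. Fix $i\in I$ and set $e=(i,i)$. Then $R(e)=1_eR=\bigoplus_{j\geq i}AE_{ij}$. Since $aE_{ij}\cdot bE_{jk}=abE_{ik}$ for $j\le k$, a graded submodule $N\subgr R(e)$ is the same thing as a family $(U_j)_{j\in I_{\ge i}}$ of right ideals of $A$ with $N_{(i,j)}=U_jE_{ij}$, subject only to the condition $U_j\subseteq U_k$ whenever $i\le j\le k$. Indeed, right multiplication by $E_{jk}$ gives $U_j\subseteq U_k$, and conversely every such increasing family is closed under the action. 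Because $A$ is semisimple, it has finite length, say $\ell$. Hence every such family is constant outside at most $\ell$ ``jump'' places, and each jump is recorded by an upward-closed subset (final segment) of $I_{\ge i}$.

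\textbf{(1)$\Rightarrow$(2).} Assume some nonempty $S\subseteq I_{\ge i}$ has no maximum. Then there is a strictly increasing sequence $s_1<s_2<\cdots$ in $S$, and the graded submodules $N_n:=\bigoplus_{j\ge s_n}AE_{ij}=E_{is_n}R$ form a strictly decreasing chain in $R(e)$. This contradicts right $\Gamma_0$-artinianity. Similarly, if $S$ has no minimum, there is a strictly decreasing sequence $s_1>s_2>\cdots$, and the same modules form a strictly increasing chain, contradicting right $\Gamma_0$-noetherianity.

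Next I note that a totally ordered set in which every nonempty subset has both a maximum and a minimum is finite. It is well-ordered, and also well-ordered under the reverse order. If it were infinite, then successively removing minima would produce an infinite strictly increasing sequence, and that sequence has no maximum. So (2) says exactly that every $I_{\ge i}$ is finite.

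\textbf{(2)$\Rightarrow$(3).} Since $I_{\ge i}$ is finite, $R(e)$ is a finite direct sum of the groups $AE_{ij}$. Each $AE_{ij}$ has finite length as an $A$-module, so every chain of families $(U_j)$ stabilizes, and $R(e)$ is gr-artinian (indeed of finite gr-length). Thus $R$ is right $\Gamma_0$-artinian.

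For gr-hereditarity, let $V$ be a graded right ideal. Then $V=\bigoplus_{e}V(e)$, and a direct sum of gr-projectives is gr-projective, so it suffices to treat $N=V(e)\subgr R(e)$ with family $(U_j)$. List $I_{\ge i}=\{j_1<\cdots<j_m\}$ and set $U_{j_0}:=0$. Using semisimplicity, choose right ideals $W_t$ with $U_{j_t}=U_{j_{t-1}}\oplus W_t$; each $W_t=f_tA$ for an idempotent $f_t\in A$. I claim that
\[
N=\bigoplus_{t=1}^m f_tE_{ij_t}R ,
\]
and that $f_tE_{ij_t}R\isogr (f_tE_{j_tj_t}R)(\sigma_t)$ for a suitable shift $\sigma_t=(i,j_t)$. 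The right-hand side is a graded direct summand of $R(\sigma_t)$, since $E_{j_tj_t}R=f_tE_{j_tj_t}R\oplus(1-f_t)E_{j_tj_t}R$. By the characterization of gr-projectives from \cite{CLP} quoted above, $N$ is then gr-projective.

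\textbf{(3)$\Rightarrow$(1).} This is exactly \Cref{coro: R G0-art gr-hered => G0-noet}.

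The main obstacle is the direct-sum decomposition of $N$ in (2)$\Rightarrow$(3). The sum is direct because the $W_t$ are independent in each column: column $j_s$ of $f_tE_{ij_t}R$ is $W_t$ for $t\le s$ and $0$ for $t>s$, and $W_1\oplus\cdots\oplus W_s=U_{j_s}$. I will check that the sum exhausts $N$ column by column using the same identity $U_{j_s}=W_1\oplus\cdots\oplus W_s$. I also need to verify the grading of the shift isomorphism, namely that $f_tE_{ij_t}x\mapsto f_tE_{j_tj_t}x$ preserves homogeneous components after shifting by $\sigma_t$.
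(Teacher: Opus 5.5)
Your proof is correct, and for the substantive implication $(2)\Rightarrow(3)$ it follows the paper's construction. You split the increasing family $(U_j)$ of right ideals into complements and write a graded submodule of $R(e)$ as a direct sum of shifts of graded direct summands $W_tE_{j_tj_t}R$ of $R(e_t)$, where $e_t=(j_t,j_t)$. Your $(3)\Rightarrow(1)$ is the same appeal to the corollary on right gr-hereditary right $\Gamma_0$-artinian rings.

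The difference is in $(1)\Leftrightarrow(2)$. The paper just cites \cite[Proposition~3.58(1,2)]{chainconditions_Jacobsonradical}, and gets from there the $\Gamma_0$-artinianity needed in (3). You instead prove $(1)\Rightarrow(2)$ directly with the chains $E_{is_n}R=\bigoplus_{j\ge s_n}AE_{ij}$. You also observe, correctly, that (2) just says each $I_{\ge i}$ is finite, which makes artinianity immediate. This makes your argument self-contained and lets you index the summands by all of $I_{\ge i}$, allowing $W_t=0$. The paper instead groups indices into level sets $I_t$ of $j\mapsto U_j$ and takes $j_t=\min I_t$. That choice uses only the existence of minima, so its gr-projectivity step does not need finiteness.

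One correction in the step you flagged. With the paper's convention $M(\sigma)_\gamma=M_{\sigma\gamma}$ (when $d(\sigma)=r(\gamma)$), the shift must be $\sigma_t=(j_t,i)$, not $(i,j_t)$. The element $(i,j_t)$ is the degree of the generator $f_tE_{ij_t}$, whereas $1_{r(\sigma)}$ sits in $R(\sigma)_{\sigma^{-1}}$. With $\sigma_t=(j_t,i)$ you get $(j_t,i)(i,l)=(j_t,l)$. Hence $(f_tE_{j_tj_t}R)(\sigma_t)_{(i,l)}=f_tAE_{j_tl}$, which matches $(f_tE_{ij_t}R)_{(i,l)}=f_tAE_{il}$. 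Also $r(\sigma_t)=(j_t,j_t)$, so $R(\sigma_t)=R(e_t)(\sigma_t)$, and $(f_tE_{j_tj_t}R)(\sigma_t)$ is a graded direct summand of it. For $j_t\neq i$, the choice $(i,j_t)$ makes the shifted module zero, so your stated gr-isomorphism would fail. The paper uses $\sigma_t=(j_t,i_0)$.
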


\begin{proof}
    $(1)\Longleftrightarrow(2)$: It follows from \cite[Proposition~3.58(1,2)]{chainconditions_Jacobsonradical}.

    $(3)\Longrightarrow(1)$: It follows from \Cref{coro: R G0-art gr-hered => G0-noet}.

    $(2)\Longrightarrow(3)$: 
   Let $e = (i_0, i_0) \in \Gamma_0$. We show that every graded submodule of $R(e)$ is gr-projective.

   Let $X\subgr R(e)$.
   By \cite[Lemma~3.57(1)]{chainconditions_Jacobsonradical}, there exists a family $\{U_j: i_0 \le j\}$ of right ideals of $A$ such that $X_{(i_0,j)} = U_j E_{i_0 j}$ and $U_j \subseteq U_{j'}$ if $i_0\leq j \leq j'$.
   Since $A$ is semisimple, we can write
   $$\{U_j: i_0 \le j\}=\{V_1, V_1\oplus V_2, V_1\oplus V_2\oplus V_3,\dots, V_1\oplus\cdots\oplus V_n\}$$ 
   for some positive integer $n$ and right ideals $V_1,\dots,V_n$ of $A$.
   For $1\leq t\leq n$, set 
   $$I_t:=\{j\in I_{\geq i_0}: U_j=V_1\oplus\cdots \oplus V_t\},$$
   $$j_t:=\min I_t,\qquad 
   P_t:=(V_tE_{j_tj_t})R,\qquad\text{and}\qquad \sigma_t:=(j_t,i_0).$$
   Since $V_t$ is a direct summand of $A$, it follows that $P_t$ is a graded direct summand of $E_{j_tj_t}R=R(e_t)$, where $e_t=(j_t,j_t)$, and therefore $P_t$ is gr-projective for all $t=1,\dots,n$.
   Since shifts and direct sums of gr-projective modules are gr-projective, it suffices to construct a gr-isomorphism of $R$-modules
   \[\varphi:X\longrightarrow P_1(\sigma_1)\oplus\cdots\oplus P_n(\sigma_n).\]
   Let $j\in I_{\geq i_0}$, and $aE_{i_0j}\in X_{(i_0,j)}=U_jE_{i_0j}$. 
   Take $1\leq t\leq n$ such that 
   $U_j=V_1\oplus\cdots\oplus V_t$, and write $a=a_1+\cdots+a_t$ with $a_k\in V_k$ for all $k=1,\dots,t$.
   Note that $j_1<j_2<\cdots<j_t\leq j<j_{t+1}<\cdots <j_n$, and define 
   $$\varphi(aE_{i_0j})=(a_1E_{j_1j},\cdots,a_tE_{j_tj},0,\cdots,0)\in (P_1)_{\sigma_1(i_0,j)}\oplus\cdots\oplus(P_n)_{\sigma_n(i_0,j)}.$$ 
   It is straightforward to check that this gives the desired gr-isomorphism.
\end{proof}

We present an example of a graded ring that is right gr-hereditary and right $\Gamma_0$-artinian but fails to satisfy the equivalences in the Strong Hopkins--Levitzki Theorem~\ref{teo: hopkins-levitski}.

\begin{example}
Let $I:=\mathbb{Z}_{<0}$,  $\Gamma:=I\times I$, and $D$ be a division ring.
By \Cref{prop: UT_I(A) gr-hered}, $R:=\UT_I(A)$ is a right $\Gamma_0$-artinian ring and a right gr-hereditary ring.
However, $R$ is not gr-semiprimary as we showed in \Cref{ex:weak_HopLev_but_not_strong_HopLev}(1).\qed
\end{example}


\section{\texorpdfstring{$d$}{d}-finitely generated graded rings}
\label{sec:d-fg}

In this section, we study $d$-finitely generated rings. This class is particularly interesting because several of its fundamental properties can be inferred from a small part of the ring. Furthermore, we establish a connection between these rings and the representation theory of algebras.

\begin{definition}
    Let $R$ be a $\Gamma$-graded ring. We say that $R$ is \emph{$d$-finitely generated} if the regular module $R_R$ is a $d$-finitely generated right $R$-module. In other words, there exists a finite set $\Delta_0\subseteq \Gamma_0$ such that $R=\sum_{e\in\Delta_0}R1_eR$.
\end{definition}

The concept of a $d$-finitely generated ring is symmetric. Specifically, if a graded left $R$-module $M$ is defined as $r$-finitely generated when $M = \sum_{e \in \Delta_0} R 1_e M$ for a finite subset $\Delta_0 \subseteq \Gamma_0$, a graded ring $S$ is $r$-finitely generated precisely when ${}_S S$ is $r$-finitely generated. Under this formulation, being $d$-finitely generated and being $r$-finitely generated are equivalent properties for a graded ring.

   We present some examples of $d$-finitely generated graded rings.

   \begin{examples}
   
       (1) Group graded rings or, more generally, groupoid-graded rings $R$ satisfying the condition that $\Gamma_0'(R) = \{e \in \Gamma_0 \colon 1_e \neq 0\}$ is finite.

(2)  Graded rings that satisfy the ascending chain condition on graded (two-sided) ideals.

       (3) Let $A$ be a unital ring.
    Denote by \mbox{$\proj$-$A$} the category of finitely generated projective right $A$-modules, and let $\mathcal{C}$ be a full subcategory of $\proj$-$A$ with $A_A\in\mathcal{C}_0$. 
    It was shown in the proof of \cite[Proposition 3.31]{chainconditions_Jacobsonradical},  that $R_\mathcal{C}=R_\mathcal{C}1_fR_\mathcal{C}$, where $\Gamma:=\mathcal{C}_0\times\mathcal{C}_0$ and $f:=(A,A)\in\Gamma_0$.

    (4) Let $A$ be a unital ring and $\mathcal{C}$ a small full subcategory of the category of right $A$-modules. Assume that every object in $\mathcal{C}$ has finite length and that there exists a finite set $\mathcal{X} \subseteq \mathcal{C}_0$ of indecomposable $A$-modules such that every object in $\mathcal{C}$ is isomorphic to a finite direct sum of modules from $\mathcal{X}$. 
    The proof of \cite[Proposition 3.37]{chainconditions_Jacobsonradical} establishes that $R_\mathcal{C}=\sum_{f\in\Delta_0}R_\mathcal{C}1_fR_\mathcal{C}$, where $\Delta_0:=\{(X,X):X\in\mathcal{X}\}$.
    \qed
   \end{examples}

One of the main reasons we can infer properties of the entire ring from a small part is the following result.

\begin{lemma}\label{lem:d_fg_iso_direct_sum}
 Let $R$ be a $d$-finitely generated $\Gamma$-graded ring, and write $R = \sum_{e \in \Delta_0} R 1_e R$ for a finite subset $\Delta_0 \subseteq \Gamma_0$. Then for any $f \in \Gamma_0'(R)$, there exist a positive integer $n$, elements $e_1, \dots, e_n \in \Delta_0$, elements $\sigma_i \in f\Gamma e_i$ for $i = 1, \dots, n$, and an injective gr-homomorphism $R(f) \longrightarrow R(\sigma_1) \oplus \cdots \oplus R(\sigma_n).$
\end{lemma}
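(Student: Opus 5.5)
The plan is to embed $R(f)$ into a finite sum of shifts by exploiting the factorization $1_f\in\sum_{e\in\Delta_0}R1_eR$. Since $f\in\Gamma_0'(R)$, we have $1_f\in R_f$. The idea is to write $1_f=1_f\cdot 1_f$ as a finite sum of products of homogeneous elements passing through the local units $1_e$, $e\in\Delta_0$. Then right multiplication by the left factors will give the map into the free modules $R(\sigma_i)$.

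First, using $R=\sum_{e\in\Delta_0}R1_eR$ and the grading, I write
\[1_f=\sum_{i=1}^n x_iy_i,\]
where each $x_i\in R_{\sigma_i}$ and $y_i\in R_{\sigma_i^{-1}}$ are homogeneous, with $e_i\in\Delta_0$, $x_i=x_i1_{e_i}$ and $y_i=1_{e_i}y_i$. To obtain this, take an arbitrary expression $1_f=\sum a b$ with $a\in R1_e$ and $b\in 1_eR$. Decompose $a$ and $b$ into homogeneous components and keep only the degree-$f$ part. The terms $a_\alpha b_\beta$ with $\alpha\beta=f$ satisfy $r(\alpha)=f$ and $d(\alpha)=e$, since $a_\alpha=a_\alpha1_e$ forces $d(\alpha)=e$ or $a_\alpha=0$. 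Hence $\alpha\in f\Gamma e$ and $\beta=\alpha^{-1}$. Relabel these as $x_i\in R_{\sigma_i}$ and $y_i\in R_{\sigma_i^{-1}}$ with $\sigma_i\in f\Gamma e_i$.

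Next, I define $\varphi:R(f)=1_fR\to R(\sigma_1)\oplus\cdots\oplus R(\sigma_n)$ by
\[\varphi(r)=(y_1r,\dots,y_nr).\]
This is a homomorphism of right $R$-modules because it is given by left multiplication. Injectivity follows since $\sum_i x_i(y_ir)=1_fr=r$ for $r\in 1_fR$. Moreover $y_ir\in 1_{e_i}R\subseteq R(e_i)=R(r(\sigma_i^{-1}))$, which by \cite[Lemma 2.6]{groupoid_graded_semisimple} equals $R(\sigma_i^{-1})$ as an $R$-module.

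The main obstacle is matching the degree conventions for shifts so that $\varphi$ is a gr-homomorphism, that is, degree-preserving. Take $r\in R_\gamma$ with $r(\gamma)=f$. Then $y_ir\in R_{\sigma_i^{-1}\gamma}$, and by definition $M(\tau)_\gamma=M_{\tau\gamma}$. So $y_ir$ lies in $R(\sigma_i^{-1})_\gamma$, and $\varphi$ lands degree-preservingly in $\bigoplus_i R(\sigma_i^{-1})$. Thus the computation naturally produces shifts by $\sigma_i^{-1}\in e_i\Gamma f$.

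To reach the paper's normalization $\sigma_i\in f\Gamma e_i$, I will first check the convention carefully: $1_{r(\sigma_i)}\in R(\sigma_i)_{\sigma_i^{-1}}$, as used in the proof of \Cref{prop: rad(P)}. If the statement's shift really requires $R(\sigma_i)$ with $\sigma_i\in f\Gamma e_i$, I will simply rename, setting the shift elements to be $\sigma_i^{-1}$. Since the statement asks for existence of elements indexed as given, I expect the identification to be just this relabeling, i.e. $\sigma_i:=\deg(y_i)^{-1}=\deg(x_i)$. I would verify the final index bookkeeping against the definition of $M(\sigma)$ before finalizing.
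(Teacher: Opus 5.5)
Your argument is the paper's proof. The paper writes $1_f=\sum_i u_iv_i$ with $v_i\in R_{\sigma_i}$, $d(\sigma_i)=f$, $r(\sigma_i)\in\Delta_0$, sends $a\mapsto(v_1a,\dots,v_na)$, and gets injectivity from the left inverse $(z_1,\dots,z_n)\mapsto\sum_i u_iz_i$; this is your identity $\sum_i x_iy_ir=r$ with $v_i=y_i$ and $u_i=x_i$.

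Your degree computation is correct and gives shifts by $\sigma_i^{-1}\in e_i\Gamma f$. That is exactly what the paper's own proof produces, and it is how the lemma is applied in \Cref{prop: R=R1_eR => G0-art so depende dos finitos e}, so the ``$f\Gamma e_i$'' in the statement is a slip.

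Drop your proposed relabeling, though. Renaming $\sigma_i:=\deg(x_i)$ does not turn the target $R(\sigma_i^{-1})$ into $R(\sigma_i)$. Moreover, the literal version cannot be achieved. $R(f)$ is concentrated in degrees $\gamma$ with $r(\gamma)=f$, while $R(\sigma)$ is concentrated in degrees with $r(\gamma)=d(\sigma)$. Hence $\Homgr(R(f),R(\sigma))=0$ whenever $d(\sigma)\neq f$, which happens for every $i$ when $f\notin\Delta_0$. Just state the conclusion with the shifts in $e_i\Gamma f$.
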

\begin{proof}
    Let $f\in\Gamma_0$.
    Since $1_f\in\sum_{e\in\Delta_0}R1_eR$, we can write $1_f=\sum_{i=1}^nu_iv_i$ with $u_i\in R_{\sigma_i^{-1}}$, $v_i\in R_{\sigma_i}$, $d(\sigma_i)=f$, and $r(\sigma_i)\in\Delta_0$.
    This allows us to consider the following gr-homomorphisms
    \begin{align*}
        \varphi\colon R(f)&\longrightarrow R(\sigma_1)\oplus\cdots\oplus R(\sigma_n), & \psi\colon R(\sigma_1)\oplus\cdots\oplus R(\sigma_n)&\longrightarrow R(f) \\
        a&\longmapsto (v_1a,\dots,v_na) & (z_1,\dotsc,z_n)&\longmapsto \sum_{i=1}^nu_iz_i.
    \end{align*}
The result follows from the fact that $\psi \varphi$ is the identity map on $R(f)$.
\end{proof}

The next results establish the transfer of structural properties from a part of the graded ring $R$ to the whole ring.

\begin{proposition}
\label{prop: R=R1_eR => G0-art so depende dos finitos e}
    Let $R$ be a $d$-finitely generated $\Gamma$-graded ring, and write $R = \sum_{e \in \Delta_0} R 1_e R$ for a finite subset $\Delta_0 \subseteq \Gamma_0$. Set $R_\Delta = \bigoplus_{e, e' \in \Delta_0} 1_e R 1_{e'}$. 
    The following statements are equivalent:
    \begin{enumerate}
        \item $R$ is a right $\Gamma_0$-artinian (resp.\ right $\Gamma_0$-noetherian, gr-semisimple) ring.
        \item $R_\Delta$ is a right gr-artinian (resp.\ right gr-noetherian, gr-semisimple) ring.
        \item $R(e)$ is a gr-artinian (resp.\ gr-noetherian, gr-semisimple) $R$-module for all $e\in\Delta_0$.
    \end{enumerate}
\end{proposition}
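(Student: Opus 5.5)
We argue through the cycle $(1)\Rightarrow(3)\Rightarrow(1)$ together with $(2)\Leftrightarrow(3)$, handling the artinian, noetherian and semisimple cases in parallel. The implication $(1)\Rightarrow(3)$ is immediate from the definitions: right $\Gamma_0$-artinian means precisely that every $R(e)$ is gr-artinian, and similarly in the noetherian case. In the gr-semisimple case, $R_R$ is gr-semisimple, and a graded submodule (indeed a direct summand) $R(e)$ of a gr-semisimple module is gr-semisimple.

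For $(3)\Rightarrow(1)$ we use \Cref{lem:d_fg_iso_direct_sum}. Given $f\in\Gamma_0'(R)$, it embeds $R(f)$ into $R(\sigma_1)\oplus\cdots\oplus R(\sigma_n)$ with $r(\sigma_i)\in\Delta_0$. Moreover, $\psi\varphi=\id$ shows that $R(f)$ is a graded direct summand.

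The shifted module $R(\sigma_i)$ has the same graded submodules as $R(r(\sigma_i))$, since shifting only relabels degrees and $M(\sigma)=M(r(\sigma))$ as modules. Therefore gr-artinianity, gr-noetherianity and gr-semisimplicity transfer from $R(r(\sigma_i))$ to $R(\sigma_i)$. Finite direct sums preserve all three properties, and so do graded submodules or direct summands. Hence $R(f)$ has the property for every $f$. For $f\notin\Gamma_0'(R)$ we have $R(f)=0$, so there is nothing to check. This gives right $\Gamma_0$-artinian/noetherian. In the semisimple case $R_R=\bigoplus_f R(f)$ is then a sum of gr-semisimple modules, hence gr-semisimple.

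For $(2)\Leftrightarrow(3)$ the plan is to use the standard correspondence for the idempotent-type subring $R_\Delta=\epsilon R\epsilon$, where $\epsilon=\sum_{e\in\Delta_0}1_e$; this sum is finite. Since $\Gamma_0'(R_\Delta)\subseteq\Delta_0$ is finite, $R_\Delta$ is unital. Hence right gr-artinian/noetherian for $R_\Delta$ is equivalent to each $R_\Delta(e)=1_eR\epsilon$ being gr-artinian/noetherian over $R_\Delta$, for $e\in\Delta_0$.

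The key step, and the main obstacle, is a lattice isomorphism between the graded $R$-submodules of $R(e)=1_eR$ and the graded $R_\Delta$-submodules of $1_eR\epsilon$. We use the maps $X\mapsto X\epsilon$ and $Y\mapsto YR$. The identity $(YR)\epsilon=Y$ uses $R\epsilon\subseteq R$ and $Y\subseteq YR\epsilon$, and $YR\epsilon = Y\epsilon R\epsilon\subseteq Y$. The identity $X\epsilon R=X$ uses $d$-finite generation: $X=XR=X\sum_{e\in\Delta_0}R1_eR\subseteq X\epsilon R\subseteq X$.

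This order isomorphism transfers both chain conditions. It also transfers gr-semisimplicity, since the isomorphism preserves sums and gr-simple submodules correspond. Alternatively, for the semisimple case one may use the characterization via $\radgr$ and artinianity \cite[Theorem 5.11]{chainconditions_Jacobsonradical} together with \Cref{prop:outras_caracs_rad(R)}(3)-type compatibility $\epsilon\radgr(R)\epsilon=\radgr(R_\Delta)$, if the direct lattice argument gets messy. Finally, both $R$ and $R_\Delta$ gr-semisimple on one side is equivalent to the other side by \cite[Corollary~5.28]{groupoid_graded_semisimple}, so no side issues arise.
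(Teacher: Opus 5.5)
Your proof is correct. Its $(3)\Rightarrow(1)$ step is exactly the paper's: the split embedding of \Cref{lem:d_fg_iso_direct_sum}, plus closure under shifts, finite direct sums and graded submodules.

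The difference is in how you bring in $R_\Delta$.

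\textbf{The paper's route.} It runs the cycle $(1)\Rightarrow(2)\Rightarrow(3)\Rightarrow(1)$. For $(1)\Rightarrow(2)$ it simply cites earlier results (\cite[Lemmas 3.28(2) and 3.20]{chainconditions_Jacobsonradical} and \cite[Proposition~5.32(1)]{groupoid_graded_semisimple}). For $(2)\Rightarrow(3)$ it uses only the identity $X=X1_{\Delta_0}R$, which is your $X=X\epsilon R$, to pull a stabilizing chain back to $R(e)$. It treats semisimplicity separately, by writing $1_e=s_1+\cdots+s_n$ and checking element-wise that each $s_iR$ is gr-simple.

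\textbf{Your route.} You take $(1)\Rightarrow(3)$ as trivial. You then complete that identity to a full order isomorphism $X\mapsto X\epsilon$, $Y\mapsto YR$ between the graded $R$-submodules of $1_eR$ and the graded $R_\Delta$-submodules of $1_eR\epsilon=R_\Delta(e)$. This gives both directions of $(2)\Leftrightarrow(3)$ at once. It also handles all three properties uniformly. Chain conditions transfer by monotonicity. Semisimplicity transfers via atoms and arbitrary joins, which any order isomorphism of complete lattices preserves.

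\textbf{Comparison.} Your argument is more self-contained and symmetric, and it avoids the external citations. The paper's is shorter given those prior results, and its cited semisimple statement does not even need $\Delta_0$ finite. The $\radgr$-based fallback you mention is unnecessary, since the lattice argument already goes through.
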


\begin{proof}
    $(1)\Rightarrow(2)$: The artinian and noetherian cases follow from \cite[Lemmas 3.28(2) and 3.20]{chainconditions_Jacobsonradical}, while the gr-semisimple case, with $\Delta_0$ not necessarily finite, was proved in \cite[Proposition~5.32(1)]{groupoid_graded_semisimple}.
    
    $(2)\Rightarrow(3)$: 
    Suppose that $R_\Delta$ is a right gr-artinian ring.
    Let $e\in\Delta_0$ and $X_1\supseteq X_2\supseteq \dotsb \supseteq X_n\supseteq \dotsb$ be a descending chain of graded submodules of $R(e)$.
For each $X\subgr R(e)$, denote $X1_{\Delta_0}:=\sum_{e'\in\Delta_0}X1_{e'}$, so that we have the following descending chain of graded right ideals of $R_\Delta$:
\[X_11_{\Delta_0}\supseteq X_21_{\Delta_0}\supseteq \dotsb \supseteq X_n1_{\Delta_0}\supseteq \dotsb.\]
Therefore, there exists $n\in\mathbb{N}$ such that $X_n1_{\Delta_0}=X_{n+l}1_{\Delta_0}$ for every $l\geq1$.
Then 
\[X_n=X_nR\subseteq\sum_{e'\in\Delta_0}X_nR1_{e'}R\subseteq X_n1_{\Delta_0}R=X_{n+l}1_{\Delta_0}R\subseteq X_{n+l}\]
for every $l\geq1$.
Therefore, $R(e)$ is gr-artinian for each $e\in\Delta_0$.

The noetherian case is proved in an analogous way.

    Suppose that $R_\Delta$ is a gr-semisimple ring.
    Let $e\in\Delta_0$ and write $1_e=s_1+\cdots+s_n$, where $s_1,...,s_n\in R_e$ and $s_iR_\Delta$ is a gr-simple $R_\Delta$-module for all $i=1,...,n$. 
    Fix $i=1,...,n$ and $0\neq a\in s_iR$.
    Then $\sum_{e'\in\Delta_0}aR1_e'$ is a nonzero graded $R_\Delta$-submodule of $s_iR_\Delta$, and therefore $s_i\in\sum_{e'\in\Delta_0}aR1_e'\subseteq aR$.
    Therefore, $s_iR$ is gr-simple, and it follows that $R(e)=1_eR=\sum_{i=1}^ns_iR$ is gr-semisimple  for each $e\in\Delta_0$.

    $(3)\Rightarrow(1)$: Let $f\in\Gamma_0'(R)$.
    By \Cref{lem:d_fg_iso_direct_sum}, there exists a gr-injective gr-homomorphism $R(f)\longrightarrow R(\sigma_1)\oplus\cdots\oplus R(\sigma_n)$ for some  $\sigma_i\in \Gamma$ with $d(\sigma_i)=f$ and $r(\sigma_i)\in\Delta_0$ for $i = 1, \dots, n$.
    Note that $R(\sigma_i)=R(r(\sigma_i))(\sigma_i)$ for each $1\leq i\leq n$.
    The result now follows because gr-artinianity, gr-noetherianity, and gr-semisimplicity are preserved under shifts, finite direct sums, and graded submodules, see \cite[Propositions 3.5(1) and 3.4]{chainconditions_Jacobsonradical} and \cite[Corollary~54]{CLP}.
\end{proof}

\begin{corollary}
\label{coro: R=R1_eR => gr-semilocal so depende dos finitos e}
    Let $R$ be a $d$-finitely generated $\Gamma$-graded ring, and write $R = \sum_{e \in \Delta_0} R 1_e R$ for a finite subset $\Delta_0 \subseteq \Gamma_0$. 
    Then $R$ is gr-semilocal if and only if $1_eR1_e$ is gr-semilocal for every $e\in\Delta_0$.
\end{corollary}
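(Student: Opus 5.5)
The plan is to prove the two implications separately, passing between $R$ and the corners $1_eR1_e$ by means of the graded Jacobson radical. I will use two facts stated earlier: the gr-semisimple case of \Cref{prop: R=R1_eR => G0-art so depende dos finitos e}, and \Cref{prop:outras_caracs_rad(R)}(3), namely $1_e\radgr(R)1_e=\radgr(1_eR1_e)$. Write $J:=\radgr(R)$ and $\bar R:=R/J$.

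\emph{($\Rightarrow$)} If $R$ is gr-semilocal, then $1_eR1_e$ is gr-semilocal for every $e\in\Gamma_0$, and in particular for every $e\in\Delta_0$. This is exactly the statement of \cite[Proposition 6.13]{chainconditions_Jacobsonradical}, which is already used in the proof of \Cref{prop:R_gr-semiprimary=>Re_semiprimary}, so this direction only needs a citation. For a self-contained argument, I would note that $1_e\bar R1_e\isogr 1_eR1_e/1_eJ1_e=1_eR1_e/\radgr(1_eR1_e)$. A corner of a gr-semisimple ring is gr-semisimple: apply \Cref{prop: R=R1_eR => G0-art so depende dos finitos e} $(1)\Rightarrow(2)$ to the singleton $\{e\}$, which the gr-semisimple case there allows because finiteness of $\Delta_0$ is not needed.

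\emph{($\Leftarrow$)} First, $\bar R$ is again $d$-finitely generated with the same $\Delta_0$, since $\bar R=\sum_{e\in\Delta_0}\bar R\bar 1_e\bar R$ is the image of $R=\sum_{e\in\Delta_0}R1_eR$. By \Cref{prop: R=R1_eR => G0-art so depende dos finitos e} applied to $\bar R$, it is enough to show that $\bar R(e)=\bar 1_e\bar R$ is a gr-semisimple $\bar R$-module for each $e\in\Delta_0$. By hypothesis each corner $1_e\bar R1_e\isogr 1_eR1_e/\radgr(1_eR1_e)$ is gr-semisimple. Moreover, \Cref{prop:outras_caracs_rad(R)}(4) together with (3) give $\radgr(\bar R)=0$, so $\bar R$ is a graded ring with zero graded Jacobson radical whose corners at $\Delta_0$ are gr-semisimple.

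The main obstacle is passing from a gr-semisimple corner $\bar 1_e\bar R\bar 1_e$ to a gr-semisimple module $\bar 1_e\bar R$. My first attempt is as follows. Decompose $\bar 1_e=s_1+\dots+s_n$ into orthogonal homogeneous idempotents of degree $e$ such that each $s_i\bar R\bar 1_e$ is gr-simple over the corner. For $0\neq a\in \h(s_i\bar R)$, radical-freeness of $\bar R$ gives $a\notin\radgr(\bar R)$. By \Cref{teo: a carac de radgr(R)} there is then a homogeneous $x$ with $1-ax$ not invertible, so $ax\neq 0$, and hence $a\bar R\bar 1_{e'}\neq0$ for some $e'$. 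The difficulty is that $e'$ need not equal $e$.

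To handle this, I would use the $d$-finite generation as in \Cref{lem:d_fg_iso_direct_sum}: write each $\bar 1_{f}$ as $\sum u_kv_k$ through objects of $\Delta_0$, so that one can arrange $a\bar R\bar 1_{e''}\neq0$ with $e''\in\Delta_0$. Applying the same idea to the finite corner ring $\bar R_\Delta=\bigoplus_{e,e'\in\Delta_0}\bar 1_e\bar R\bar 1_{e'}$ reduces the whole problem to a unital $\Delta_0\times\Delta_0$-graded ring, since $\Delta_0$ is finite. For such a unital ring, gr-semilocality can be checked on the corners $1_e\bar R_\Delta 1_e$ for $e\in\Delta_0$, which is the unital case of \cite[Proposition 6.13]{chainconditions_Jacobsonradical}. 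This shows that $\bar R_\Delta$ is gr-semilocal; since its radical is zero, it is gr-semisimple. Finally, \Cref{prop: R=R1_eR => G0-art so depende dos finitos e} $(2)\Rightarrow(1)$ shows that $\bar R$ is gr-semisimple, so $R$ is gr-semilocal.
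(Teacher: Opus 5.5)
Your argument is correct and is essentially the paper's. For ($\Rightarrow$) you cite \cite[Proposition 6.13]{chainconditions_Jacobsonradical}, and for ($\Leftarrow$) you use the unital case of that proposition on the finite corner ring, then lift gr-semisimplicity to $\bar R=R/\radgr(R)$ via \Cref{prop: R=R1_eR => G0-art so depende dos finitos e} $(2)\Rightarrow(1)$, exactly as the paper does. The only difference is cosmetic: you apply \cite[Proposition 6.13]{chainconditions_Jacobsonradical} to $\bar R_\Delta$ and so need $\radgr(\bar R_\Delta)=\bigoplus_{e,e'\in\Delta_0}\bar 1_e\radgr(\bar R)\bar 1_{e'}=0$, which follows at once from \Cref{teo: a carac de radgr(R)} but which you should state explicitly, whereas the paper applies it to $R_\Delta$ and passes to $R_\Delta/\radgr(R_\Delta)\isogr\bar R_\Delta$. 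The exploratory paragraph with the idempotents $s_i$ plays no role and can be deleted.
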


\begin{proof}
    The ``only if'' part follows from \cite[Proposition 6.13]{chainconditions_Jacobsonradical}.

    Suppose that $1_eR1_e$ is gr-semilocal for every $e\in\Delta_0$.
    By \cite[Proposition 6.13]{chainconditions_Jacobsonradical}, $R_\Delta=\bigoplus_{e,e'\in\Delta_0}1_eR1_{e'}$ is a gr-semilocal ring.
    Let $J:=\radgr(R)$ and $\overline{R}:=R/J$.
    From \Cref{teo: a carac de radgr(R)}(3), it is easy to see that $\radgr(R_\Delta)=\bigoplus_{e,e'\in\Delta_0}1_eJ1_{e'}=:J_\Delta$.
    Note that $\overline{R}=\sum_{e\in\Delta_0}\overline{R}\overline{1_e}\overline{R}$, where $\overline{1_e}=1_e+J$, and
    \[\overline{R}_\Delta=\bigoplus_{e,e'\in\Delta_0}1_e\overline{R}1_{e'}\isogr\frac{R_\Delta}{J_\Delta}=\frac{R_\Delta}{\radgr(R_\Delta)}\]
    is gr-semisimple. 
    It follows from \Cref{prop: R=R1_eR => G0-art so depende dos finitos e}(3) that $\overline{R}$ is gr-semisimple.
\end{proof}

\begin{lemma}
\label{lem: R=R1_eR => todas nilpotencias coincidem}
    Let $R$ be a $d$-finitely generated $\Gamma$-graded ring, and write $R = \sum_{e \in \Delta_0} R 1_e R$ for a finite subset $\Delta_0 \subseteq \Gamma_0$. 
    Suppose that $U$ is a graded ideal of $R$. Then $U$ is nilpotent $\Leftrightarrow$ $U$ is right (left) objectwise nilpotent $\Leftrightarrow$ $U$ is $\Gamma_0$-nilpotent $\Leftrightarrow$ $1_eU1_e$ is nilpotent for every $e\in\Delta_0$.
\end{lemma}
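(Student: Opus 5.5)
The plan is to prove the cycle: nilpotent $\Rightarrow$ right (left) objectwise nilpotent $\Rightarrow$ $\Gamma_0$-nilpotent $\Rightarrow$ $1_eU1_e$ nilpotent for every $e\in\Delta_0$ $\Rightarrow$ nilpotent. The first two implications are \Cref{lem: implications nilpotency conditions}. The third follows from \Cref{lem: eqiv nil e G_0-nilpotent}(2), which gives, for each $e\in\Gamma_0$, some $n_e$ with $(1_eU1_e)^{n_e}=0$. Everything therefore reduces to the last implication, which is the only place where $d$-finite generation is used.

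So suppose $(1_eU1_e)^{n_e}=0$ for every $e\in\Delta_0$, and let $N:=\max_{e\in\Delta_0} n_e$. By \eqref{eq: 1eU1e nilp => U G0-nilp2} we have $(1_eU)^{N+1}=0$ for each $e\in\Delta_0$.

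Next, write $1_eU=1_eR\cdot U$-type sums using $R=\sum_{e\in\Delta_0}R1_eR$. Since $U$ is a two-sided ideal and $UR\subseteq U$, we get $U=UR=\sum_{e\in\Delta_0}UR1_eR\subseteq\sum_{e\in\Delta_0}U1_eR$. Each summand $V_e:=U1_eR$ is a graded right ideal contained in $U$. It is nilpotent, since
\[V_e^{m+1}=U1_eR\,(U1_eR)^m\subseteq U(1_eRU)^m 1_eR\subseteq U(1_eU)^m R,\]
using $RU\subseteq U$. This vanishes once $m\ge N+1$.

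Thus $U=\sum_{e\in\Delta_0}V_e$ is a finite sum of nilpotent graded right ideals. By \Cref{lem: soma de nilpotentes}, $U$ is nilpotent.

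The left versions are handled symmetrically, using $R=\sum_{e\in\Delta_0} R1_eR$ again (the condition is symmetric) and writing $U\subseteq\sum_e R1_eU$.

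The main point needing care is the rewriting $U\subseteq \sum_{e\in\Delta_0}U1_eR$ and the bound on the powers of $U1_eR$. One must check that inserting $R$ between factors keeps everything inside $U$, so that the pattern $1_eU1_eU\cdots$ actually appears. If the bound via $(1_eU)^m$ turns out awkward, a fallback is to bound directly by $(1_eU1_e)^m$, using $1_eRU1_e\subseteq 1_eU1_e$, at the cost of one extra factor.
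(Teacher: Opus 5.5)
Your proof is correct, but the last implication is organized differently from the paper's. The paper writes $U=RU_\Delta R$ with $U_\Delta=\bigoplus_{e,f\in\Delta_0}1_eU1_f$ and proves inductively that $U^n=RU_\Delta^nR$. It then shows that $U_\Delta$ is nilpotent inside the corner ring $R_\Delta=\bigoplus_{e,f\in\Delta_0}1_eR1_f$, where $U_\Delta$ is $\Delta_0$-nilpotent with $\Delta_0$ finite (the ``in particular'' part of \Cref{lem: soma de nilpotentes}).

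You never leave $R$. You use the one-sided decomposition $U=\sum_{e\in\Delta_0}U1_eR$ into graded right ideals of $R$, bound each summand directly, and apply the finite-sum lemma in $R$ itself. This avoids both the corner ring and the identity $U^n=RU_\Delta^nR$.

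The paper reuses that identity in \Cref{rem: nilpotency index} to get the explicit index $1+\sum_{e\in\Delta_0}n_e$. Your route gives the same index if you adopt what you call the fallback, which is actually sharper, not costlier. Since $1_e$ is idempotent, $(U1_eR)^{m+1}=U(1_eRU1_e)^mR\subseteq U(1_eU1_e)^mR$, so $(U1_eR)^{n_e+1}=0$. The usual pigeonhole argument for a sum of nilpotent right ideals then gives $U^{1+\sum_{e}n_e}=0$.

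Two minor remarks. First, the equality $U=UR$ rests on the local units ($u=u1_{d(\gamma)}$ for $u\in U_\gamma$), not on $UR\subseteq U$. Second, no separate left-handed argument is needed, since left objectwise nilpotency already lies on your cycle between nilpotency and $\Gamma_0$-nilpotency.
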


\begin{proof}
    All the implications $(\Rightarrow)$ hold by \Cref{lem: implications nilpotency conditions} and \Cref{lem: eqiv nil e G_0-nilpotent}(2).
    Consequently, we assume that $1_e U 1_e$ is nilpotent for all $e \in \Delta_0$, and show that $U$ is itself nilpotent. 
    We have that
    \[U=RUR=\sum_{e,f\in\Delta_0}R1_eRUR1_fR =\sum_{e,f\in\Delta_0}R1_eU1_fR=RU_\Delta R,\]
    where $U_\Delta:=\bigoplus_{e,f\in\Delta_0}1_eU1_f$.
    If $U^n=RU_\Delta^nR$ for some $n\in\mathbb{N}$, then
    \begin{align*}
U^{n+1} &= U^n U = RU_\Delta^n R U_\Delta R \\
&= \sum_{e,f\in\Gamma_0} R(U_\Delta^n 1_e) R(1_f U_\Delta) R = RU_\Delta^n R_\Delta U_\Delta R = RU_\Delta^{n+1} R.
\end{align*}
    Therefore 
    \begin{equation}
    \label{eq: Un = R Un R}
        U^n=RU_\Delta^n R \quad\text{ for all } n\in\mathbb{N}.
    \end{equation}
    Note that $1_eU_\Delta1_e=1_eU1_e$ is nilpotent for each $e\in\Delta_0$, that is, $U_\Delta$ is a $\Delta_0$-nilpotent graded ideal of the graded ring $R_\Delta:=\bigoplus_{e,f\in\Delta_0}1_eR1_f$. 
    Since $\Delta_0$ is finite, it follows from \Cref{lem: soma de nilpotentes} that $U_\Delta$ is nilpotent, which implies that $U$ is nilpotent by \eqref{eq: Un = R Un R}.
\end{proof}

\begin{remark}
\label{rem: nilpotency index}
    Let $R$ be a $d$-finitely generated $\Gamma$-graded ring, and write $R = \sum_{e \in \Delta_0} R 1_e R$ for a finite subset $\Delta_0 \subseteq \Gamma_0$. 
    Let $U$ be a graded ideal of $R$ such that $1_eU1_e$ is nilpotent for every $e\in\Delta_0$. By \Cref{lem: R=R1_eR => todas nilpotencias coincidem}, there exists a positive integer $n$ such that $U^n=0$.
    We can compute such an integer explicitly. 
    For each $e\in\Delta_0$, let $n_e$ be a positive integer such that $(1_eU1_e)^{n_e}=0$. 
    We claim that $U_\Delta^n=0$, where $n=1+\sum_{e\in\Delta_0}n_e$.
    Indeed, each element of $U_\Delta^n$ is a sum of products of the form $x_1x_2\cdots x_n$, with $x_i\in U_\Delta(e_i)$ for some $e_i\in\Delta_0$.
    In such a product, there exists $e\in\Delta_0$ such that $n_e+1$ of the elements in $\{x_1,x_2,\dots, x_n\}$ lie in $U_\Delta(e)$.
    By \eqref{eq: 1eU1e nilp => U G0-nilp2}, we have $U_\Delta(e)^{n_e+1}=0$, and it follows that $U_\Delta^n=0$, as desired.
    Then $U^n=0$ by \eqref{eq: Un = R Un R}.\qed
\end{remark}

\begin{proposition}
\label{prop: R=R1_eR => J nilpot}
    Let $R$ be a $d$-finitely generated $\Gamma$-graded ring, and write $R = \sum_{e \in \Delta_0} R 1_e R$ for a finite subset $\Delta_0 \subseteq \Gamma_0$. 
    Then $\radgr(R)$ is nilpotent $\Leftrightarrow$ $\radgr(R)$ is right (left) objectwise nilpotent $\Leftrightarrow$ $\radgr(R)$ is $\Gamma_0$-nilpotent $\Leftrightarrow$ $\radgr(1_eR1_e)$ is nilpotent for every $e\in\Delta_0$.
\end{proposition}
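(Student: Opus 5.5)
This is essentially the case $U=\radgr(R)$ of \Cref{lem: R=R1_eR => todas nilpotencias coincidem}, so the plan is to reduce to that lemma. The one additional input needed is \Cref{prop:outras_caracs_rad(R)}(3), which gives $1_e\radgr(R)1_e=\radgr(1_eR1_e)$ for every $e\in\Gamma_0$.

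First, $J:=\radgr(R)$ is a graded (two-sided) ideal of $R$, so \Cref{lem: R=R1_eR => todas nilpotencias coincidem} applies with $U=J$. It yields that $J$ is nilpotent, that $J$ is right (left) objectwise nilpotent, that $J$ is $\Gamma_0$-nilpotent, and that $1_eJ1_e$ is nilpotent for every $e\in\Delta_0$, and that these conditions are all equivalent. Second, for each $e\in\Delta_0$ we replace $1_eJ1_e$ by $\radgr(1_eR1_e)$ using \Cref{prop:outras_caracs_rad(R)}(3). The last condition of the lemma then becomes exactly ``$\radgr(1_eR1_e)$ is nilpotent for every $e\in\Delta_0$'', and the chain of equivalences in the statement follows.

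There is no real obstacle, since the substantive work (the identity $U^n=RU_\Delta^nR$ together with \Cref{lem: soma de nilpotentes} applied to the finitely many corners) was already done in the lemma. The only point to check is that the forward implications in the lemma also cover the left-sided versions. This holds by \Cref{lem: implications nilpotency conditions} and the symmetry of $\Gamma_0$-nilpotency from \Cref{lem: eqiv nil e G_0-nilpotent}(3), so both one-sided versions of objectwise nilpotency fit in the chain. If desired, \Cref{rem: nilpotency index} also gives the explicit bound $\radgr(R)^{n}=0$ with $n=1+\sum_{e\in\Delta_0}n_e$, where $\radgr(1_eR1_e)^{n_e}=0$.
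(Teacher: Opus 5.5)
Your proposal is correct and matches the paper's proof: use \Cref{prop:outras_caracs_rad(R)}(3) to identify $1_e\radgr(R)1_e$ with $\radgr(1_eR1_e)$, then apply \Cref{lem: R=R1_eR => todas nilpotencias coincidem} to the graded ideal $U=\radgr(R)$. Your side remarks on the left-sided versions and on the explicit nilpotency bound from \Cref{rem: nilpotency index} are also consistent with the paper.
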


\begin{proof}
    By \Cref{prop:outras_caracs_rad(R)}(3), $\radgr(1_eR1_e)=1_e\radgr(R)1_e$ for every $e\in\Delta_0$.
    Therefore, the result follows from \Cref{lem: R=R1_eR => todas nilpotencias coincidem}.
\end{proof}

\begin{corollary}
\label{coro: R=R1_eR => gr-semiprimary so depende dos finitos e}
    Let $R$ be a $d$-finitely generated $\Gamma$-graded ring, and write $R = \sum_{e \in \Delta_0} R 1_e R$ for a finite subset $\Delta_0 \subseteq \Gamma_0$. 
    Then $R$ is gr-semiprimary $\Leftrightarrow$ $R$ is gr-semilocal and $\radgr(R)$ is nilpotent $\Leftrightarrow$ $1_eR1_e$ is gr-semiprimary for every $e\in\Delta_0$.
\end{corollary}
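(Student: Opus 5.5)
The plan is to prove the chain of equivalences by routing everything through \Cref{coro: R=R1_eR => gr-semilocal so depende dos finitos e} and \Cref{prop: R=R1_eR => J nilpot}, which already reduce gr-semilocality and the various nilpotency conditions on $\radgr(R)$ to the finitely many corner rings $1_eR1_e$ with $e\in\Delta_0$.

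For the first equivalence, recall that by definition $R$ is gr-semiprimary exactly when $R$ is gr-semilocal and $\radgr(R)$ is objectwise nilpotent, that is, both left and right objectwise nilpotent. By \Cref{prop: R=R1_eR => J nilpot}, for a $d$-finitely generated ring, right objectwise nilpotency of $\radgr(R)$, left objectwise nilpotency of $\radgr(R)$, and nilpotency of $\radgr(R)$ are all equivalent. Hence objectwise nilpotency is equivalent to nilpotency, which gives the first $\Leftrightarrow$. The direction from nilpotency to objectwise nilpotency is in any case just \Cref{lem: implications nilpotency conditions}.

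For the second equivalence, first assume $R$ is gr-semilocal and $\radgr(R)$ is nilpotent. \Cref{prop:R_gr-semiprimary=>Re_semiprimary}(2) then gives directly that $1_eR1_e$ is gr-semiprimary for every $e\in\Gamma_0$, in particular for every $e\in\Delta_0$. Conversely, assume $1_eR1_e$ is gr-semiprimary for each $e\in\Delta_0$. Then each such $1_eR1_e$ is gr-semilocal, so $R$ is gr-semilocal by \Cref{coro: R=R1_eR => gr-semilocal so depende dos finitos e}.

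It remains to get nilpotency of $\radgr(R)$ in this converse direction. For each $e\in\Delta_0$, the ring $1_eR1_e$ has $\Gamma'_0(1_eR1_e)\subseteq\{e\}$, which is finite. Its radical $\radgr(1_eR1_e)$ is objectwise nilpotent, and by \Cref{lem: implications nilpotency conditions} it is therefore $\Gamma_0$-nilpotent. By the last part of \Cref{lem: soma de nilpotentes}, it is then nilpotent. The alternative route is to note that $1_e\radgr(1_eR1_e)^{n}=0$ already equals $\radgr(1_eR1_e)^n$, since $1_e$ is the identity of the corner. \Cref{prop: R=R1_eR => J nilpot} then yields that $\radgr(R)$ is nilpotent.

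The only delicate point is this last step: passing from objectwise nilpotency in the corner to genuine nilpotency. It is handled by the observation that the corner ring is unital with identity $1_e$, so that $1_e\radgr(1_eR1_e)^n=\radgr(1_eR1_e)^n$. Everything else is direct citation.
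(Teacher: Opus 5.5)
Your proof is correct and takes essentially the paper's route: the paper's proof just combines \Cref{coro: R=R1_eR => gr-semilocal so depende dos finitos e} with \Cref{prop: R=R1_eR => J nilpot}. Your additions are harmless elaborations of that argument: invoking \Cref{prop:R_gr-semiprimary=>Re_semiprimary}(2) for the forward direction, and noting that objectwise nilpotency in the unital corner $1_eR1_e$ is just nilpotency.
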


\begin{proof}
    This follows from \Cref{coro: R=R1_eR => gr-semilocal so depende dos finitos e} and \Cref{prop: R=R1_eR => J nilpot}.
\end{proof}

\begin{corollary}
\label{coro: R=R1_eR => gr-semiprimary}
    Let $R$ be a $d$-finitely generated $\Gamma$-graded ring, and write $R = \sum_{e \in \Delta_0} R 1_e R$ for a finite subset $\Delta_0 \subseteq \Gamma_0$. 
    If $1_eR1_e$ is a one-sided gr-artinian ring for each $e\in\Delta_0$, then $R$ is gr-semiprimary.
\end{corollary}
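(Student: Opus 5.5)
By \Cref{coro: R=R1_eR => gr-semiprimary so depende dos finitos e}, it suffices to show that $1_eR1_e$ is gr-semiprimary for every $e\in\Delta_0$. Here $1_eR1_e$ is regarded as an $e\Gamma e$-graded ring, that is, as a ring graded by the isotropy group of $e$. So we must show two things for each $e\in\Delta_0$: that $1_eR1_e$ is gr-semilocal, and that its graded Jacobson radical is objectwise nilpotent on both sides.

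Fix $e\in\Delta_0$ and assume, say, that $S:=1_eR1_e$ is right gr-artinian; the left case is symmetric, using the left-hand versions of the same results. Since $S$ is graded by a group, we have $\Gamma'_0(S)\subseteq\{e\}$, which is finite. Hence \Cref{lem:implications_chain_conditions} together with the remark that follows it shows that $S$ is right $\Gamma_0$-artinian. Then \Cref{prop: R gr-art => R/rad(R) gr-ss} gives that $S$ is gr-semilocal.

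Next, \Cref{prop:nilpotence of rad(R)}(3) gives that $\radgr(S)$ is nilpotent. By \Cref{lem: implications nilpotency conditions}, nilpotency implies both left and right objectwise nilpotency. Therefore $S$ is gr-semiprimary. (Alternatively, \Cref{coro: fort art => fort noet}(2) yields the nilpotency directly.)

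Applying this for every $e\in\Delta_0$ and invoking \Cref{coro: R=R1_eR => gr-semiprimary so depende dos finitos e} shows that $R$ is gr-semiprimary. The only point needing care is that a one-sided gr-artinian group-graded ring qualifies as gr-semiprimary in the sense of this paper, which is exactly what the paper's results for finite $\Gamma'_0$ supply. The hypothesis is allowed to be right gr-artinian for some $e$ and left gr-artinian for others, and the argument is unaffected, since it is applied separately to each $e$.
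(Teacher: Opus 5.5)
Your proposal is correct and follows the paper's proof. You reduce via \Cref{coro: R=R1_eR => gr-semiprimary so depende dos finitos e} to showing each corner $1_eR1_e$ is gr-semiprimary, and you get that from \Cref{prop: R gr-art => R/rad(R) gr-ss} and \Cref{prop:nilpotence of rad(R)}(3). The appeal to finiteness of $\Gamma'_0(S)$ is unnecessary but harmless, since \Cref{lem:implications_chain_conditions}(1) already gives gr-artinian $\Rightarrow$ $\Gamma_0$-artinian without any finiteness assumption.
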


\begin{proof}
    By \Cref{prop: R gr-art => R/rad(R) gr-ss} and \Cref{prop:nilpotence of rad(R)}(3), $1_eR1_e$ is gr-semiprimary for every $e\in\Delta_0$.
    It follows from \Cref{coro: R=R1_eR => gr-semiprimary so depende dos finitos e} that $R$ gr-semiprimary.
\end{proof}

\begin{corollary}
\label{coro: R=R1_eR G_0-art => rad(R) nilpot}
    Let $R$ be a $d$-finitely generated $\Gamma$-graded ring. If $R$ is right $\Gamma_0$-artinian, then $\radgr(R)$ is nilpotent and $R$ is a right $\Gamma_0$-noetherian ring.
\end{corollary}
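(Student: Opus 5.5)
The plan is to reduce to the preceding results on $d$-finitely generated rings. Write $R=\sum_{e\in\Delta_0}R1_eR$ with $\Delta_0$ finite, and set $J:=\radgr(R)$.

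\textbf{Step 1: $J$ is $\Gamma_0$-nilpotent.} Since $R$ is right $\Gamma_0$-artinian, \Cref{teo: R G_0-art => rad(R) G_0-nilpotente}(1) gives this directly. Concretely, for each $e\in\Gamma_0$ the corner $1_eJ1_e=\radgr(1_eR1_e)$ is nilpotent. This holds because $1_eR1_e$ is right gr-artinian, combined with \Cref{prop:nilpotence of rad(R)}(3) and \Cref{prop:outras_caracs_rad(R)}(3).

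\textbf{Step 2: $J$ is nilpotent.} Apply \Cref{prop: R=R1_eR => J nilpot}. For a $d$-finitely generated ring, $\Gamma_0$-nilpotency of $\radgr(R)$ is equivalent to nilpotency. Equivalently, it suffices that $\radgr(1_eR1_e)$ is nilpotent for each $e\in\Delta_0$, and Step 1 provides exactly that. This gives the first claim.

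\textbf{Step 3: $R$ is right $\Gamma_0$-noetherian.} Nilpotency of $J$ implies right objectwise nilpotency by \Cref{lem: implications nilpotency conditions}. Then the Weak Hopkins--Levitzki Theorem (\Cref{teo: art => (noet <=> fort nilp)}, implication (4)$\Rightarrow$(1)) yields that $R$ is right $\Gamma_0$-noetherian. Alternatively, one could combine gr-semilocality (\Cref{prop: R gr-art => R/rad(R) gr-ss}) with \Cref{coro: hopkins levitski usual}(2) applied to $R_R$.

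There is no real obstacle here, since every step invokes an earlier result. The only point to check is that the hypotheses of \Cref{prop: R=R1_eR => J nilpot} are met, namely $d$-finite generation with a finite $\Delta_0$. This is given, and the heavy lifting (\Cref{lem: R=R1_eR => todas nilpotencias coincidem}, via $J^n=RJ_\Delta^nR$) has already been done there.
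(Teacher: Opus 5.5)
Your proof is correct and matches the paper's: you get $\Gamma_0$-nilpotency of $\radgr(R)$ from \Cref{teo: R G_0-art => rad(R) G_0-nilpotente}, and then nilpotency from \Cref{prop: R=R1_eR => J nilpot}. The only difference is the last step: your main route uses the implication (4)$\Rightarrow$(1) of the Weak Hopkins--Levitzki Theorem, while the paper uses gr-semilocality together with \Cref{coro: hopkins levitski usual}(2), which is exactly your stated alternative; both are valid.
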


\begin{proof}
    By \Cref{prop: R gr-art => R/rad(R) gr-ss}, $R$ is a gr-semilocal ring.
    By \Cref{teo: R G_0-art => rad(R) G_0-nilpotente}, $\radgr(R)$ is $\Gamma_0$-nilpotent, and it follows from \Cref{prop: R=R1_eR => J nilpot} that $\radgr(R)$ is nilpotent.
    Therefore, $R$ is a right $\Gamma_0$-noetherian ring by \Cref{coro: hopkins levitski usual}(2).
\end{proof}

\begin{example}
    Let $A$ be a right (or left) artinian ring with unity.
    Denote by \mbox{$\proj$-$A$} the category of finitely generated projective right $A$-modules, and let $\mathcal{C}$ be a full subcategory of $\proj$-$A$ with $A_A\in\mathcal{C}_0$.
    By \cite[Proposition 3.31]{chainconditions_Jacobsonradical} and \Cref{coro: R=R1_eR G_0-art => rad(R) nilpot}, $R_\mathcal{C}$ is a one-sided $\Gamma_0$-artinian ring  that is gr-semiprimary with nilpotent graded Jacobson radical.\qed
\end{example}

In the following result, further examples of graded rings satisfying the conditions of \Cref{coro: hopkins levitski usual} are constructed from $d$-finitely generated rings.

\begin{proposition}
\label{prop: rad of the category of finite length mod is semiprimary}
    Let $A$ be a unital ring, and $\mathcal{C}$ be a small full subcategory of the category of right $A$-modules. If any of the following conditions hold, then the $\mathcal{C}_0\times\mathcal{C}_0$-graded ring $R_\mathcal{C}$ is gr-semilocal with $\radgr(R_\mathcal{C})$ being nilpotent. 
    \begin{enumerate}
        \item $A$ is a semiprimary ring, $A_A\in\mathcal{C}_0$, and every object of $\mathcal{C}$ is a finitely generated projective $A$-module.
        \item All objects of $\mathcal{C}$ are modules of finite length, and there exists a finite set of indecomposable $A$-modules $\mathcal{X}\subseteq \mathcal{C}_0$ such that each object of $\mathcal{C}$ is isomorphic to a finite direct sum of elements of $\mathcal{X}$.
        \item $A$ is a representation-finite algebra over a field $k$ and $\mathcal{C}$ is a skeleton of the category $\fgmod$-$A$ of finitely generated right $A$-modules.
    \end{enumerate}
\end{proposition}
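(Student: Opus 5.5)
The plan is to reduce all three cases to results already established for $d$-finitely generated rings. Since $R_\mathcal{C}$ is $d$-finitely generated in each case, \Cref{coro: R=R1_eR => gr-semiprimary so depende dos finitos e} says it is enough to find a finite $\Delta_0$ with $R_\mathcal{C}=\sum_{e\in\Delta_0}R_\mathcal{C}1_eR_\mathcal{C}$ and to show that $1_eR_\mathcal{C}1_e$ is gr-semiprimary for every $e\in\Delta_0$. For $e=(X,X)$, the ring $1_eR_\mathcal{C}1_e$ is just $\End_A(X)$, concentrated in degree $e$ of the group $\{e\}$. For such a trivially graded unital ring, gr-semiprimary means semiprimary in the ordinary sense. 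Indeed, $\radgr=\rad$ by \Cref{prop:outras_caracs_rad(R)}(2), and gr-semisimplicity of the quotient is ordinary semisimplicity. So in each case it suffices to check that the endomorphism rings of finitely many chosen objects are semiprimary.

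\textbf{Case (1).} Take $\Delta_0=\{(A,A)\}$. By Examples (3) of \Cref{sec:d-fg} we have $R_\mathcal{C}=R_\mathcal{C}1_fR_\mathcal{C}$ with $f=(A,A)$. Moreover $1_fR_\mathcal{C}1_f=\End_A(A_A)\cong A$ is semiprimary by hypothesis. The conclusion then follows from \Cref{coro: R=R1_eR => gr-semiprimary so depende dos finitos e}.

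\textbf{Case (2).} Take $\Delta_0=\{(X,X):X\in\mathcal{X}\}$. By Examples (4) we have $R_\mathcal{C}=\sum_{f\in\Delta_0}R_\mathcal{C}1_fR_\mathcal{C}$. For $X\in\mathcal{X}$, the module $X$ is indecomposable of finite length, so by Fitting's lemma $\End_A(X)$ is local. Its radical is nilpotent, for example by \cite[Exercise 21.24]{LamExercises} as already used in \Cref{coro: rad of the category of finite length mod}. A local ring has a division ring as its quotient by the radical, so $\End_A(X)$ is semiprimary. Again \Cref{coro: R=R1_eR => gr-semiprimary so depende dos finitos e} gives the result.

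\textbf{Case (3).} This reduces to (2). For a representation-finite algebra $A$, every finitely generated module has finite length, since $A$ is finite-dimensional. There are only finitely many indecomposables up to isomorphism. Because $\mathcal{C}$ is a skeleton, it contains exactly one representative of each isomorphism class. Let $\mathcal{X}$ be the set of indecomposable objects of $\mathcal{C}$; this set is finite. By Krull--Schmidt, every object of $\mathcal{C}$ is isomorphic to a finite direct sum of members of $\mathcal{X}$, so the hypotheses of (2) hold.

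The only step needing care is the nilpotence of $\rad\End_A(X)$ for $X$ of finite length. It is exactly the cited exercise, so I expect no real obstacle. The rest is bookkeeping that identifies $1_eR_\mathcal{C}1_e$ with a trivially graded endomorphism ring.
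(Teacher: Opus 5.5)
Your proof is correct. It shares the paper's core idea: use the finite $\Delta_0$ from Examples (3) and (4) of \Cref{sec:d-fg} to reduce to finitely many corner rings (namely $\End_A(A_A)\cong A$, or $\End_A(X)$ for $X\in\mathcal{X}$), whose radicals are nilpotent. Case (3) is reduced to case (2) exactly as in the paper.

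The difference is how you obtain gr-semilocality. The paper imports it directly from \cite[Proposition~6.16(6) and (1)]{chainconditions_Jacobsonradical}. It uses the $d$-finitely generated machinery, via \Cref{prop: R=R1_eR => J nilpot}, only for nilpotence.

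You instead get both conclusions at once from \Cref{coro: R=R1_eR => gr-semiprimary so depende dos finitos e}. This obliges you to check two further facts, and you verify both correctly:
\begin{enumerate}
\item the corner rings are semilocal ($A$ is semiprimary in case (1), and $\End_A(X)$ is local by Fitting's lemma in case (2));
\item gr-semiprimary coincides with semiprimary for a ring concentrated in the degree of the trivial group $e\Gamma e=\{e\}$.
\end{enumerate}

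Your route is self-contained within the present paper, apart from Fitting's lemma and the nilpotence exercise, and treats both conclusions uniformly. The paper's route is shorter because it leans on the companion paper for the semilocal half.
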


\begin{proof}
    (1) 
    Since $A$ is semilocal, $R_\mathcal{C}$ is a gr-semilocal ring by \cite[Proposition 6.16(6)]{chainconditions_Jacobsonradical}.
    It was shown in the proof of \cite[Proposition~3.31]{chainconditions_Jacobsonradical} that $R_\mathcal{C}=R_\mathcal{C}1_{(A,A)}R_\mathcal{C}$.
    Since $\radgr(1_{(A,A)}R_\mathcal{C}1_{(A,A)})=\rad(\mathcal{C}(A,A))$ is nilpotent, it follows from \Cref{prop: R=R1_eR => J nilpot} that $\radgr(R_\mathcal{C})$ is nilpotent.
    
    (2) By \cite[Proposition 6.16(1)]{chainconditions_Jacobsonradical}, $R_\mathcal{C}$ is a gr-semilocal ring.
    Following the proof of \cite[Proposition~3.37]{chainconditions_Jacobsonradical}, we obtain $R_\mathcal{C}=\sum_{e\in\Delta_0}R_\mathcal{C}1_eR_\mathcal{C}$, where $\Delta_0 := \{(X,X) \mid X\in\mathcal{X}\}$.
    Moreover, for each $e=(X,X)\in\Delta_0$, the ring $1_eR_\mathcal{C}1_e=\mathcal{C}(X,X)=\End_A(X)$ has a nilpotent Jacobson radical by \cite[Exercise~21.24]{LamExercises}.
    Therefore, $\radgr(R_\mathcal{C})$ is nilpotent by \Cref{prop: R=R1_eR => J nilpot}.

    (3) This follows from (2) because the number of isomorphism classes of indecomposable $A$-modules is finite.
    For another approach, see \cite[Corollary II.1.12 and Lemma VI.1.2]{Coelho}.
\end{proof}

\begin{remark}
\label{rem: nilpotency index J(mod-A)}
    We can exhibit the integers that attest the nilpotency of the graded Jacobson radical in the items of \Cref{prop: rad of the category of finite length mod is semiprimary}.

    (1) Let $A$ be a unital ring, and $\mathcal{C}$ be a small full subcategory of the category of right $A$-modules such that $A_A\in\mathcal{C}_0$ and every object of $\mathcal{C}$ is a finitely generated projective $A$-module. Then $R_\mathcal{C}=R_\mathcal{C}1_{(A,A)}R_\mathcal{C}$, and it follows from \Cref{rem: nilpotency index} that $\radgr(R_\mathcal{C})^{n+1}=0$ if $\rad(A)^n=0$.

    (2) Let $A$ be a unital ring and $\mathcal{C}$ a small full subcategory of right $A$-modules. Assume all objects of $\mathcal{C}$ are of finite length, and fix a finite set $\mathcal{X} \subseteq \mathcal{C}_0$ of indecomposable $A$-modules such that each object of $\mathcal{C}$ decomposes as a finite direct sum of objects in $\mathcal{X}$. 
    We have $R_\mathcal{C}=\sum_{e\in\Delta_0}R_\mathcal{C}1_eR_\mathcal{C}$, where $\Delta_0:=\{(X,X):X\in\mathcal{X}\}$.
    By \cite[Exercise~21.24]{LamExercises}, $\radgr(R_\mathcal{C})_{(X,X)}^{c(X)}=\rad(\mathcal{C}(X,X))^{c(X)}=0$, where $X\in\mathcal{X}$ and $c(X)$ is the length of $X$.
    By \Cref{rem: nilpotency index}, $\radgr(R_\mathcal{C})^{n}=0$, where $n=1+\sum_{X\in\mathcal{X}}c(M)$.\qed
\end{remark}

Let $k$ be a field, $A$ be a finite dimensional $k$-algebra, and $\fgmod$-$A$ be the category of all finitely generated right $A$-modules. 
The definition and basic properties of the radical $\rad_A$ of $\fgmod$-$A$ can be found, for example, in \cite[\S II.1]{Coelho}, where useful characterizations are also given \cite[Theorem~II.1.17]{Coelho}. 
The radical powers $\rad_A^n$ are defined inductively as follows: for each $n\geq2$ and $A$-modules $M,N$, the additive group $\rad_A^n(M,N)$  consist of compositions $g\circ f$ where $f\in\rad_A(M,X)$ and $g\in\rad_A^{n-1}(X,N)$ for some $A$-module $X$. 
The infinite radical is $\rad^\infty:=\bigcap_{n\geq 1}\rad_A^n$.
For more details, see \cite[pp.\ 55 and 89]{Coelho}. 
We relate these definitions with our concept of graded Jacobson radical in the following result.

\begin{lemma}
\label{lem: rad_A = J(mod-A)}
    Let $A$ be a finite-dimensional $k$-algebra over a field $k$, and let $\mathcal{C}$ be a skeleton of the category $\fgmod$-$A$ of finitely generated right $A$-modules.
    Let $M,N$ be right $A$-modules and $\varphi_M:M'\to M$, $\varphi_N:N'\to N$ be isomorphisms for some $M',N'\in\mathcal{C}_0$.
    Then the assignment $g\mapsto \varphi_N^{-1}g\varphi_M$ defines an isomorphism of additive groups $\rad_A^n(M,N)\to (\radgr(R_\mathcal{C})^n)_{(N',M')}$ for each $n\geq 1$.
\end{lemma}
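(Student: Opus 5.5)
The plan is to first reduce to the case where $M,N$ are themselves objects of $\mathcal{C}$, and then prove the equality $\rad_A^n(M,N)=(\radgr(R_\mathcal{C})^n)_{(N,M)}$ by induction on $n$.

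\textbf{Reduction.} Conjugation by isomorphisms preserves $\rad_A^n$, since $\rad_A^n$ is a two-sided ideal of $\fgmod$-$A$. Hence $g\mapsto\varphi_N^{-1}g\varphi_M$ is an additive bijection $\rad_A^n(M,N)\to\rad_A^n(M',N')$, and it is clearly additive and invertible. So it suffices to treat $M,N\in\mathcal{C}_0$ with $\varphi_M,\varphi_N$ identities.

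\textbf{Case $n=1$.} Recall the characterization \cite[Theorem~II.1.17]{Coelho}: $g\in\rad_A(M,N)$ if and only if $1_M-g'g$ is invertible for every $g'\in\Hom_A(N,M)$. By \Cref{teo: a carac de radgr(R)}(3), applied with $\gamma=(N,M)$, this is exactly the condition that $g\in\radgr(R_\mathcal{C})_{(N,M)}$. Here we use that in $R_\mathcal{C}$ the product of $g$ and $g'$ is composition, with the index convention of \Cref{ex: anel de categoria}. Alternatively, one can use \Cref{coro: rad of the category of finite length mod} together with the nilpotency characterization of $\rad_A$ in \cite{Coelho}.

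\textbf{Inductive step.} The main point is to match the two notions of products. $(\radgr(R_\mathcal{C})^n)_{(N,M)}$ consists of finite sums of products of homogeneous radical elements whose degrees compose to $(N,M)$. Every such product passes only through intermediate objects of $\mathcal{C}$, so it is a composition $g\circ f$ with $f\in\rad_A(M,X)$ and $g\in\rad_A^{n-1}(X,N)$ for some $X\in\mathcal{C}_0$, using the case $n=1$ and the induction hypothesis. This gives the inclusion $\supseteq$ in the identification, and additivity of $\rad_A^n$ handles the sums.

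\textbf{Main obstacle.} The converse inclusion is harder, because in $\rad_A^n$ the intermediate module $X$ is an arbitrary $A$-module, not necessarily in $\mathcal{C}$. I would fix this as follows.
\begin{enumerate}
\item Since $\mathcal{C}$ is a skeleton, choose an isomorphism $\psi:X'\to X$ with $X'\in\mathcal{C}_0$.
\item Rewrite $g\circ f=(g\psi)\circ(\psi^{-1}f)$.
\item Because $\rad_A$ and $\rad_A^{n-1}$ are ideals, $\psi^{-1}f\in\rad_A(M,X')$ and $g\psi\in\rad_A^{n-1}(X',N)$.
\end{enumerate}
By the case $n=1$ and the induction hypothesis, $g\circ f$ is then a product in $R_\mathcal{C}$ of an element of $\radgr(R_\mathcal{C})$ and an element of $\radgr(R_\mathcal{C})^{n-1}$ of the right degrees, so it lies in $(\radgr(R_\mathcal{C})^n)_{(N,M)}$. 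Since $\rad_A^n(M,N)$ is additively generated by such compositions, this gives the remaining inclusion and completes the induction.
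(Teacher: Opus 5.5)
Your proof is correct and follows essentially the paper's route: both rest on the degree-one identification $\rad_A(X,Y)=\radgr(R_\mathcal{C})_{(Y,X)}$, obtained by comparing \cite[Theorem~II.1.17]{Coelho} with \Cref{teo: a carac de radgr(R)}(3), and on the ideal property of $\rad_A$ to replace arbitrary intermediate modules by isomorphic objects of the skeleton. The paper replaces all $n-1$ intermediate modules of a factorization $g=g_n\cdots g_1$ at once rather than by induction; your version is slightly more complete, since you also check that products in $\radgr(R_\mathcal{C})^n$ land in $\rad_A^n$, which gives surjectivity and which the paper leaves implicit. (A minor point: \Cref{teo: a carac de radgr(R)}(3) literally gives invertibility of $1_N-gg'$ rather than $1_M-g'g$, which is harmless because Coelho's characterization covers both sides.)
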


\begin{proof}
    Let $n\geq 1$ and $g\in\rad^n_A(M,N)$. 
    Then there exist right $A$-modules $M_0,\dots,M_n$ with $M_0=M$ and $M_n=N$, and homomorphisms $g_i\in\rad_A(M_{i-1},M_i)$ for each $1\leq i\leq n$ such that $g=g_ng_{n-1}\cdots g_2g_1$.
    For each $i=1,\dots, n-1$, let $\varphi_i:M_i'\to M_i$ an isomorphism for some $M_i'\in\mathcal{C}_0$.
    Then
    \[\varphi_N^{-1}g\varphi_M=\varphi_N^{-1}g_n\varphi_{n-1}\varphi_{n-1}^{-1}g_{n-1}\cdots g_2\varphi_1\varphi_1^{-1}g_1\varphi_M.\]
    Set $\varphi_0 := \varphi_M$, $\varphi_n := \varphi_N$, $M_0' := M'$, and $M_n' := N'$. Since $\rad_A$ is an ideal of the category $\operatorname{mod}\text{-}A$ (see \cite[Lemma~II.1.6]{Coelho}) and $\rad_A(X,Y) = \radgr(R_\mathcal{C})_{(Y,X)}$ for all $X,Y \in \mathcal{C}_0$ (compare \cite[Theorem~II.1.17(d)]{Coelho} with \Cref{teo: a carac de radgr(R)}(3)), we have
\[
\varphi_i^{-1} g_i \varphi_{i-1} \in \rad_A(M_{i-1}', M_i') = \radgr(R_\mathcal{C})_{(M_i', M_{i-1}')} \quad \text{for all } i = 1, \dots, n.
\]
Therefore, $\varphi_N^{-1} g \varphi_M \in (\radgr(R_\mathcal{C}))^n$.
\end{proof}

We conclude the paper by establishing the connection between $d$-finitely generated rings and the representation theory of algebras.

\begin{proposition}
\label{prop: carac A rep-finita}
    Let $k$ be a field, $A$ be a finite dimensional $k$-algebra, and \mbox{$\fgmod$-$A$} be the category of all finitely generated right $A$-modules.
    Let $\mathcal{C}$ be a skeleton of $\fgmod$-$A$, and set $\Gamma:=\mathcal{C}_0\times\mathcal{C}_0$. 
    The following statements are equivalent:
    \begin{enumerate}
        \item $A$ is representation-finite;
        \item $R_\mathcal{C}$ is a right (resp. left) $\Gamma_0$-artinian ring;
        \item $\radgr(R_\mathcal{C})$ is nilpotent;
        \item $\radgr(R_\mathcal{C})$ is right (resp. left) objectwise nilpotent;
        \item $\bigcap_{n\in\mathbb{N}}(\radgr(R_\mathcal{C}))^n=0$.
    \end{enumerate}
\end{proposition}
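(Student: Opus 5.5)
The plan is to prove the cycle $(1)\Rightarrow(3)\Rightarrow(4)\Rightarrow(5)\Rightarrow(1)$, and to attach $(2)$ via $(1)\Rightarrow(2)\Rightarrow(4)$. The representation-theoretic input will be the Harada--Sai lemma and Auslander's theorem that $\rad_A^\infty=0$ forces $A$ to be representation-finite. The bridge between the two languages is \Cref{lem: rad_A = J(mod-A)}: for every $n$ it identifies $\rad_A^n(M,N)$ with $(\radgr(R_\mathcal{C})^n)_{(N',M')}$. Consequently $\bigcap_n\radgr(R_\mathcal{C})^n=0$ if and only if $\rad_A^\infty=0$, and $\radgr(R_\mathcal{C})$ is nilpotent if and only if $\rad_A^n=0$ for some $n$.

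For $(1)\Rightarrow(3)$ I simply invoke \Cref{prop: rad of the category of finite length mod is semiprimary}(3). The implication $(3)\Rightarrow(4)$ is \Cref{lem: implications nilpotency conditions}. For $(4)\Rightarrow(5)$ in the right-hand version, write $J=\radgr(R_\mathcal{C})$. For each $e$ we have $1_eJ^{n_e}=0$, so $1_eJ^\infty\subseteq 1_eJ^{n_e}=0$. Summing over $e$ gives $J^\infty=\bigoplus_e 1_eJ^\infty=0$. The left-hand version is symmetric.

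For $(5)\Rightarrow(1)$: by the bridge, $(5)$ says $\rad_A^\infty=0$ on $\fgmod$-$A$, and Auslander's theorem (for instance \cite[Chapter VI]{Coelho}) then gives representation-finiteness. I expect this to be the main obstacle. It is the genuinely deep direction, using the Harada--Sai lemma together with the argument that infinitely many indecomposables produce a nonzero map in $\rad^\infty$. I intend to cite it rather than reprove it, so the only care needed is checking that the bridge translates the hypothesis exactly.

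For $(1)\Rightarrow(2)$: under $(1)$, $R_\mathcal{C}$ is $d$-finitely generated, by the examples in \Cref{sec:d-fg}, via the finite set $\mathcal{X}$ of indecomposables. By \Cref{prop: R=R1_eR => G0-art so depende dos finitos e}, it then suffices to show that $R(e)$ is gr-artinian for each $e=(X,X)$ with $X\in\mathcal{X}$. I would get this from finite gr-length: $R_\mathcal{C}$ is gr-semilocal and $J^n=0$ by \Cref{prop: rad of the category of finite length mod is semiprimary}, and by \Cref{coro: hopkins levitski usual} artinian is equivalent to noetherian. So it is enough to show that $R(e)$ is finitely generated and noetherian. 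Each layer $R(e)J^{i}/R(e)J^{i+1}$ is a gr-semisimple module whose homogeneous components are finite-dimensional $k$-spaces $\Hom$-groups, and only finitely many objects up to the skeleton enter through $\mathcal{X}$. That yields finite length.

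For $(2)\Rightarrow(4)$: $R_\mathcal{C}$ is right $\Gamma_0$-artinian, so by \Cref{prop: J infinity}(3) it suffices to show $J^\infty=0$. Via the bridge, this means showing $\rad_A^\infty=0$. Here I would use the finite dimensionality of the $\Hom$-spaces. By \Cref{prop: J infinity}(1), $J^\infty(e)=J^{n_e}(e)$, and Nakayama-type arguments fail in general, so I would instead argue as follows. Right $\Gamma_0$-artinianity gives a descending chain condition on $\rad_A^n(X,-)$ for each $X$. By the standard result that the functor $\Hom_A(X,-)$ restricted to $\fgmod$-$A$ has finite length exactly when it is artinian and the algebra is representation-finite (Auslander's characterization), this yields $(1)$ directly, which closes the circle since $(1)\Rightarrow(4)$ is already established. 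The left-hand versions of $(2)$ and $(4)$ follow by applying the same argument to $A^{op}$ via duality $D=\Hom_k(-,k)$.
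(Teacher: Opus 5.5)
Your cycle $(1)\Rightarrow(3)\Rightarrow(4)\Rightarrow(5)\Rightarrow(1)$ is sound and matches the paper. Your route to $(1)\Rightarrow(2)$ differs from the paper's, which simply cites the earlier work, and it can be made rigorous. A gr-simple summand $S$ of a layer $J^i(e)/J^{i+1}(e)$ satisfies $S1_f\neq 0$ for some $f\in\Delta_0$, and those components are finite-dimensional. Quicker still, $R_\Delta$ is a finite-dimensional algebra, so \Cref{prop: R=R1_eR => G0-art so depende dos finitos e} applies directly.

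The gap is $(2)\Rightarrow(4)$, which is your only link from $(2)$ back into the cycle, and what you offer there is not an argument.
\begin{enumerate}
\item As you state the ``standard result'', it cannot produce representation-finiteness from artinianity.
\item It has the wrong variance. Right $\Gamma_0$-artinianity concerns $1_eR_\mathcal{C}=\bigoplus_Y\mathcal{C}(Y,X)$, i.e.\ subfunctors of $\Hom_A(-,X)$, not $\Hom_A(X,-)$.
\item The statement you actually need, that artinianity of every $\Hom_A(-,X)$ forces $A$ to be representation-finite, is exactly the content of this implication.
\end{enumerate}
This content is not formal: one-sided $\Gamma_0$-artinian rings can have $J^\infty\neq 0$ (\Cref{prop: contra-exemplos J fort nilp}(5)).

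The missing ingredient is the Nakayama-type argument you set aside. It does work here, because $\fgmod$-$A$ has sink maps.
\begin{enumerate}
\item For $e=(M,M)$, write $M\cong M_1\oplus\cdots\oplus M_n$ with each $M_i$ indecomposable, and pick right almost split morphisms $g_i\colon X_i\to M_i$.
\item With $X\cong\bigoplus_i X_i$, put $x=\sum_i q_ig_i\pi_i\in J_{(M,X)}$.
\item For every $v\in\rad_A(V,M)$, each $p_iv$ factors through $g_i$. Hence $J(e)=xR_\mathcal{C}$, so $J$ is $\Gamma_0$-gr-cyclic as a right ideal.
\item By \Cref{lema: M fg e J G0-fg => MJ fg}, each $J^\infty(e)=J^{n_e}(e)$ is then finitely generated.
\item Now $J^\infty=J^\infty J$, so graded Nakayama gives $J^\infty=0$.
\end{enumerate}
This is \Cref{prop: J infinity}(4), i.e.\ $(2)\Rightarrow(4)$ of \Cref{teo: art => (noet <=> fort nilp)}. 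For the left-hand version, use left almost split morphisms $f_i\colon M_i\to Y_i$ to get $(e)J=R_\mathcal{C}y$. Your duality with $A^{op}$ would also work, once the right-hand case is actually established.
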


\begin{proof}
    $(1)\Rightarrow(2)$: This follows from \cite[Proposition~3.37(2)]{chainconditions_Jacobsonradical}.

    $(2)\Rightarrow(4)$: By \Cref{teo: art => (noet <=> fort nilp)}, it suffices we prove that $\radgr(R_\mathcal{C})$ is $\Gamma_0$-finitely generated as a right ideal and as a left ideal. 

    Let $M\in\mathcal{C}_0$, take indecomposable modules $M_1,\dots,M_n\in\mathcal{C}_0$ such that $M\cong M_1\oplus\cdots\oplus M_n$, and let $p_i:M\to M_i$ and $q_i:M_i\to M$ be the corresponding canonical maps.
    
    For each $i=1,\dots,n$, by \cite[Corollary II.3.13]{Coelho}, there exist a right almost split morphism $g_i:X_i\to M_i$ and a left almost split morphism $f_i:M_i\to Y_i$.
    That is, for each $i=1,\dots,n$, $g_i\in\rad_A(X_i,M_i)$, $f_i\in\rad_A(M_i,Y_i)$ and, for all $v\in\rad_A(V,M_i)$ and $u\in\rad_A(M_i,U)$, there exist morphisms $v':V\to X_i$ and $u':Y_i\to U$ such that $v=g_iv'$ and $u=u'f_i$ \cite[Definition II.2.10]{Coelho}.

    Take $X,Y\in\mathcal{C}_0$ such that $X_1\oplus\cdots\oplus X_n\cong X$ and $Y_1\oplus\cdots\oplus Y_n\cong Y$, and let $\pi_i:X\to X_i$ and $\iota_i:Y_i\to Y$ be the corresponding canonical maps. 
    Let 
    $$x:=q_1g_1\pi_1+\cdots+q_ng_n\pi_n\in \rad_A(X,M)=\radgr(R_\mathcal{C})_{(M,X)};$$ 
    $$y:=\iota_1f_1p_1+\cdots+\iota_nf_np_n\in \rad_A(M,Y)=\radgr(R_\mathcal{C})_{(Y,M)}.$$
    Set $J:=\radgr(R_\mathcal{C})$ and $e:=(M,M)\in\Gamma_0$.
    We claim that $J(e)=xR_\mathcal{C}$ and $(e)J=R_\mathcal{C}y$.
    In fact, let $U,V\in\mathcal{C}_0$, $v\in J_{(M,V)}$, and $u\in J_{(U,M)}$.
    For each $i=1,\dots,n$, we have $p_iv\in J_{(M_i,V)}=\rad_A(V,M_i)$ and $uq_i\in J_{(U,M_i)}=\rad_A(M_i,U)$, and it follows that there exist morphisms $v'_i:V\to X_i$ and $u'_i:Y_i\to U$ such that $p_iv=g_iv'_i$ and $uq_i=u'_if_i$.
    Let $v':V\to X$ and $u':Y\to U$ be morphisms such that $\pi_iv'=v_i'$ and $u'\iota_i=u'_i$ for all $i=1,\dots,n$.
    Therefore, we have that $v=\id_Mv=\sum_{i=1}^nq_ip_iv=\sum_{i=1}^nq_ig_iv'_i=\sum_{i=1}^nq_ig_i\pi_iv'=xv'\in xR_{\mathcal{C}}$ and
    $u=u\id_M=\sum_{i=1}^nuq_ip_i=\sum_{i=1}^nu'_if_ip_i=\sum_{i=1}^nu'\iota_if_ip_i=u'y\in R_{\mathcal{C}}y$.

    Hence, $\radgr(R_\mathcal{C})$ is $\Gamma_0$-gr-cyclic as a right ideal and as a left ideal, as desired.

    $(4)\Rightarrow(5)$: Let $J:=\radgr(R_\mathcal{C})$. 
    Suppose that $J$ is right objectwise nilpotent, that is, there exists $(n_e)_{e\in\Gamma_0}\in\mathbb{N}^{\Gamma_0}$ such that $J^{n_e}(e)=0$ for every $e\in\Gamma_0$.
    Then $(\bigcap_{n\in\mathbb{N}}J^n)(e)=\bigcap_{n\in\mathbb{N}}J^n(e)=0$ for every $e\in\Gamma_0$, and it follows that $\bigcap_{n\in\mathbb{N}}J^n=0$.
    The left case is analogous.

    $(5)\Rightarrow(1)$: Suppose that $\bigcap_{n\in\mathbb{N}}(\radgr(R_\mathcal{C}))^n=0$.
    By \Cref{lem: rad_A = J(mod-A)}, we have $\rad_A^\infty=0$, and it follows from \cite[Theorem VI.1.5]{Coelho} that $A$ is representation-finite.

    $(1)\Rightarrow(3)$: Follows from \Cref{prop: rad of the category of finite length mod is semiprimary}(3). 

    $(3)\Rightarrow(5)$: This is immediate.
\end{proof}

\begin{remark}
    Let $k$ be a field, $A$ be a finite dimensional $k$-algebra, $\fgmod$-$A$ be the category of all finitely generated right $A$-modules, $\mathcal{C}$ be a skeleton of $\fgmod$-$A$, and $\Gamma:=\mathcal{C}_0\times\mathcal{C}_0$.

    (1) In the proof of $(2)\Rightarrow(4)$ in \Cref{prop: carac A rep-finita}, we saw that $\radgr(R_\mathcal{C})$ is $\Gamma_0$-gr-cyclic as a right ideal and as a left ideal.
    It follows from \Cref{lema: M fg e J G0-fg => MJ fg} that $\radgr(R_\mathcal{C})^n$ is $\Gamma_0$-gr-cyclic as a right ideal and as a left ideal for every $n\in\mathbb{N}$.
    This is also a consequence of \cite[Lemma 7.10]{AuslanderReitenSmalo1995} and \Cref{lem: rad_A = J(mod-A)}.

    (2) If $A$ is a representation-finite algebra, then it follows from \Cref{rem: nilpotency index J(mod-A)}(2) and \Cref{lem: rad_A = J(mod-A)} that $\rad_A^{c(X_1)+c(X_2)+\cdots+c(X_s)+1}=0$ where $X_1,X_2,\dots,X_s$ are the finite (up to isomorphism) indecomposable finitely generated right $A$-modules.
    In particular, $\rad_A^{sm+1}=0$ if $m$ is a bound on the length of  indecomposable finitely generated right $A$-modules and $s$ is the number of indecomposable finitely generated right $A$-modules up to isomorphism.
    Compare with \cite[Lemma VI.1.2]{Coelho}. \qed
\end{remark}

\bibliographystyle{amsplain}

\end{document}